\documentclass[twoside,11pt,preprint]{article}

%

%
%
%

\usepackage{jmlr2e}


\usepackage{amsmath}
\usepackage{bm}
\usepackage{bbm}

\usepackage{algorithm}
\usepackage{algpseudocode}

\newcommand\Algphase[2]{%
  \vspace*{-.5\baselineskip}%
  \Statex\hspace*{\dimexpr-18pt\relax}%
  \Statex\hspace*{-18pt}\textbf{Step #1:} #2}

\newcommand{\sgn}{\mathrm{sgn}}


\global\long\def\Reals{\mathbb{R}}


\global\long\def\intd{\mathrm{d}} 

\newcommand{\1}{\boldsymbol{1}}

\DeclareMathOperator{\sign}{sgn}

\newcommand{\EE}{\mathbb{E}}

\newcommand{\PP}{\mathbb{P}}

\usepackage{mathtools}
\RequirePackage{xcolor}

\providecommand\given{} 
\newcommand\SetSymbol[1][]{
	\nonscript\,#1:\nonscript\,\mathopen{}\allowbreak}
\DeclarePairedDelimiterX\Set[1]{\lbrace}{\rbrace}%
{ \renewcommand\given{\SetSymbol[]} #1 }
\DeclarePairedDelimiter{\Inner}{\langle}{\rangle}


\newcommand{\Block}{\mathfrak{B}}

\DeclarePairedDelimiter{\braces}{\{}{\}}
\DeclarePairedDelimiter{\ip}{\langle}{\rangle}
\DeclarePairedDelimiter{\brackets}{(}{)}
\DeclarePairedDelimiter{\sqbrackets}{[}{]}
\DeclarePairedDelimiter{\norm}{\lVert}{\rVert}
\DeclarePairedDelimiter{\normF}{\lVert}{\rVert_{\textnormal{F}}}
\DeclarePairedDelimiter{\abs}{\lvert}{\rvert}
\DeclarePairedDelimiter{\floor}{\lfloor}{\rfloor}
\DeclarePairedDelimiter{\ceil}{\lceil}{\rceil}

\newcommand{\RR}{\mathbb{R}}
\newcommand{\NN}{\mathbb{N}}
\newcommand{\II}{\mathbbm{1}}

\newcommand{\sg}{\gamma^*}



\newcommand{\psgap}{\gamma_{\mathrm{ps}}}
\newcommand{\SmoothClass}{\Theta_{\delta,\epsilon,\zeta}^{s_0,s_1}(R)}
\newcommand{\RegulClass}{\Sigma_{\sg}(L)}

\newcommand{\Father}{f^{\Phi_{Jk}}}
\newcommand{\Mother}{f^{\Psi_{jk}}}
\newcommand{\HatFather}{\hat{f}^{\Phi_{Jk}}}
\newcommand{\HatMother}{\hat{f}^{\Psi_{jk}}}

\newcommand{\A}{\alpha}
\newcommand{\B}{\beta}
\newcommand{\AMother}{\A^{\Psi_{jk}}}
\newcommand{\HatAMother}{\hat{\A}^{\Psi_{jk}}}
\newcommand{\BMother}{\B^{\Psi_{jk}}}
\newcommand{\HatBMother}{\hat{\B}^{\Psi_{jk}}}

\usepackage{soulutf8}
\usepackage[textsize=tiny]{todonotes}
\setlength{\marginparwidth}{2.5cm}

\newcommand{\REV}{}
\newcommand{\ENDREV}{}


\usepackage{lastpage}

\firstpageno{1}

\begin{document}

\title{Frontiers to the learning of nonparametric hidden Markov models}

\author{\name Kweku Abraham \email lkwa2@cam.ac.uk \\
       \addr University of Cambridge, Statistical Laboratory\\
       Wilberforce Road\\
       Cambridge CB3 0WB, UK
         \AND
       \name Elisabeth Gassiat \email elisabeth.gassiat@universite-paris-saclay.fr \\
       \addr Universit\'e Paris-Saclay, CNRS\\
       Laboratoire de mathématiques d’Orsay\\
       91405, Orsay, France
              \AND
       \name Zacharie Naulet \email znaulet@inrae.fr \\
 \addr Université Paris-Saclay, INRAE\\
 MaIAGE\\
 78350, Jouy-en-Josas, France }

\maketitle

\begin{abstract}
  Hidden Markov models (HMMs) are flexible tools for clustering dependent data coming from unknown populations, allowing nonparametric modelling of the population densities. Identifiability fails when the data is in fact independent and identically distributed (i.i.d.), and we study the frontier between learnable and unlearnable two-state nonparametric HMMs. Learning the parameters of the HMM requires solving a nonlinear inverse problem whose difficulty depends not only on the smoothnesses of the populations but also on the distance to the i.i.d. boundary of the parameter set. The latter difficulty is mostly ignored in the literature in favour of assumptions precluding nearly independent data. This is the first work conducting a precise nonasymptotic, nonparametric analysis of the minimax risk taking into account all aspects of the hardness of the problem, in the case of two populations. Our analysis reveals an unexpected interplay between the distance to the i.i.d. boundary and the relative smoothnesses of the two populations: a surprising and intriguing transition occurs in the rate when the two densities have differing smoothnesses. We obtain upper and lower bounds revealing that, close to the i.i.d. boundary, it is possible to ``borrow strength'' from the estimator of the smoother density to improve the risk of the other.
\end{abstract}

\begin{keywords}
  Hidden Markov Models, Mixture Models, Inverse Problems, Nonparametric Estimation, Minimax
\end{keywords}

\section{Introduction}
\label{sec:intro}

\subsection{Context and aim}
\label{sec:context}

Hidden Markov Models (HMMs) are a class of probabilistic models that play an important role in computer science and machine learning, particularly in the analysis of data sequences. They are widely used in various applications, including speech recognition and natural language processing, due to their ability to model hidden states that evolve over time. This makes them ideal for capturing the evolution of sequences from different populations, effectively functioning as time-varying mixture models. Mixture models used for i.i.d. data require modelling assumptions on the population distributions (also called emission distributions), for example that they come from a parametric distribution; an advantage of HMMs is that identification can be obtained without such prior modelling \citep{MR3889699}. Thus, HMMs can be viewed as nonparametric mixture models that allow for greater flexibility in the emission distributions, making them particularly valuable in machine learning for their adaptability and robustness \citep{CC00,Lef03,LWM03,SC09,MR2797735}.
Such flexibility has been discovered and studied in the recent years, see Section \ref{sec:related} for references and discussion. However, all theoretical results in this literature are  asymptotic in nature, that is with the length $n$ of the data sequence tending to infinity while model parameters are fixed. When the sequence of data is not far from being a sequence of i.i.d. observations, algorithms become unstable, making the output of the algorithms questionable \citep{Phys:2020}. 
This is due to the fact that nonparametric mixtures are highly nonidentifiable and that identification algorithms for nonparametric HMMs proposed in previous literature involve tuning parameters for which no clues are given to address this issue. 
\REV Indeed, in HMMs, the set of hidden Markov chain parameters and emission distributions can be divided into two subsets, the one for which the observations are not independent random variables (where identification is possible) and the one for which they form an i.i.d. sequence (where identification becomes impossible), and these two subsets share a boundary. Approaching the boundary makes learning more difficult.

The aim of our paper is to understand, in the possible learning properties of nonparametric HMMs,  the interplay between the closeness to this boundary and the number of observations. \ENDREV
The method we adopt for this purpose is to obtain nonasymptotic minimax rates in which the dependence to the i.i.d. frontier appears clearly together with the usual parameters such as the number of observations and the smoothness of probability emission densities. To obtain the upper bound, we propose a new estimation method which is straightforward to implement.

\subsection{Contributions}
\label{sec:contributions}

We consider a two-state HMM with real-valued emissions, in which we observe the first $n$ entries of a sequence $\bm{Y}=(Y_1,Y_2,\dots)\in [0,1]^\NN$ which, under a parameter $\theta=(p,q,f_0,f_1)$, satisfies 
\begin{equation}\label{eqn:model}
	\begin{split}
		\PP_\theta (Y_n\in A \mid \bm{X}) &= \int_A f_{X_n}(y)\intd y,\\
		\bm{X}=(X_n)_{n\in \NN} &\sim \operatorname{Markov}(\pi,Q_\theta),
	\end{split}
\end{equation}
with the $Y_n,~n\in\NN$ conditionally independent given $\bm{X}$.
The vector $\bm{X}$ of `hidden states', which we assume is started from its invariant distribution $X_1\sim \pi$, takes values in $\braces{0,1}^\NN$. The transition matrix of the
chain is given by
\begin{equation}
	Q=Q_\theta%
	\coloneqq%
	\begin{pmatrix}
		1 - p & p\\
		q & 1 - q
	\end{pmatrix}%
	,
\end{equation}
with the convention that for $j\geq 1$,
$\PP_{\theta}(X_{j+1} = 0 \mid X_j = 0) = 1-p<1$ and
$\PP_{\theta}(X_{j+1} = 0 \mid X_j = 1) = q>0$. The functions $f_0,f_1\in L^2([0,1])$ are density functions. Thus all $Y_k$, $k\geq 1$ follow the mixture distribution $\pi_0 f_0 + \pi_1 f_1$. 

The goal is to estimate the parameter $\theta$. This is a nonlinear inverse problem known to be solvable, up to a label-switching issue, even without any modelling assumptions on $f_0$ and $f_1$ \citep{MR3439359, MR3509896}: specifically, given that the highly non-identifiable i.i.d. nonparametric mixture  is a degenerate submodel of a HMM, under conditions which rule out independence. There are three ways in which the data $(Y_n)_{n\in\NN}$ can fail to exhibit dependence: when the hidden states themselves are in reality independently distributed; when the emission distributions are identical; or when only one population is observed. 
We adopt the minimax paradigm and we analyse the smallest maximum risk attainable over the following class of parameters. We define for some $\delta,\epsilon\in (0,1)$ and some $\zeta,s_0,s_1,R>0$ \begin{equation}\label{eqn:smoothclass}
	\SmoothClass%
	\coloneqq
	\braces{ \theta\;:  p,q \geq\delta,\;\abs{1-p - q} \geq \epsilon,\:\norm{f_{0}-f_{1}}_{L^2} \geq \zeta, \; \norm{f_i}_{B_{2,\infty}^{s_i}}\leq R}.
\end{equation}
Here  $\norm{\cdot}_{B_{2,\infty}^s}$ denotes a Besov norm whose precise definition as used in this paper is delayed to equation~\eqref{eq:def:besov} below. \REV The space $B_{2,\infty}^s$ can be thought to be similar to the subspace of $s$-times differentiable functions with continuous $s$-derivative that are square-integrable, but it allows for slightly more general functions with comparable smoothness. We refer to \cite{triebel1983} for a thorough introduction to Besov spaces and their history. \ENDREV The quantities $\delta,\epsilon$ and $\zeta$ lower bound the ``distance'' to the i.i.d.\ submodel. Indeed if $\delta=0$, we may be unable to estimate both $f_0$ and $f_1$ since we may see data from one of these alone; if $\zeta=0$ we may be unable to estimate $p$ and $q$; and if $\epsilon=0$ then we may be unable to identify the contributions of $f_0$ and $f_1$ to the mixture $\pi_0 f_0 + \pi_1f_1$. We use concentration inequalities for Markov chains \citep{Paulin2015} to build our estimators. 
This requires us to slightly shrink the set $\SmoothClass$ and restrict our attention to parameters that are also in
\begin{equation}
  \label{eqn:def:RegulClass}
  \RegulClass%
	\coloneqq \Set{\theta \given 1-\abs{1-p-q} \geq \sg,\ \max_{j=0,1}\norm{f_j}_{\infty} \leq L},
\end{equation}
i.e.\ parameters with uniformly bounded emission densities (here $\norm{\cdot}_{\infty}$ denotes the usual supremum norm) and having an absolute spectral gap. \REV The assumption that the Markov chain starts from its stationary distribution could be relaxed as explained in \cite[Section~3.3]{Paulin2015}, at the price of increasing the constants in the upper bounds, and longer proofs. \ENDREV We throughout use $\PP_{\theta}$ to denote the law of $(\bm{X},\bm{Y})$, and all induced marginal and conditional laws. 

We are mainly interested in the regimes where $\delta,\epsilon,\zeta$ can be eventually small, and how the minimax risks for $Q$ and $f_0,f_1$ over $\SmoothClass$ are affected in these regimes.%

The main message of our theorems may now be stated informally as follows (up to label switching and technical details relative to smoothnesses). The symbol $\asymp$ in the theorem means that expressions on the left and right side of $\asymp$ are proportional with a proportionality constant eventually depending on $R$, $L$ and the absolute spectral gap of the chain $\bm{X}$, but nothing else.
\begin{theorem}[Informal]
\label{th:informal}
  The minimax rate for estimating the transition matrix $Q$ satisfies, for any norm $\norm{\cdot}$,
  \begin{equation*}
    \inf_{\hat{Q}}\sup_{\theta\in \SmoothClass\cap \RegulClass}\EE_{\theta}\big(\|\hat{Q} - Q\|^2 \big)%
    \asymp \frac{\max(\delta,\epsilon\zeta)^2}{\delta^2\epsilon^4\zeta^6}\frac{1}{n}.
  \end{equation*}
  The minimax rates for estimating $f_0$ and $f_1$ when $s_0=s_1=s$ satisfy
  \begin{equation*}
    \inf_{\hat{f}_j}\sup_{\theta\in \SmoothClass \cap \RegulClass}\EE_{\theta}\big(\|\hat{f}_j - f_j\|_{L^2}^2 \big)%
    \asymp \Big(\frac{1}{\delta^2\epsilon^2\zeta^2n}
    \Big)^{2s/(2s+1)}%
    + \frac{1}{\delta^2\epsilon^4\zeta^4n},
  \end{equation*}
  while if $s_0 > s_1$ they satisfy
  \begin{align*}
    \inf_{\hat{f}_0}\sup_{\theta\in \SmoothClass \cap \RegulClass}\EE_{\theta}\big(\|\hat{f}_0 - f_0\|_{L^2}^2 \big)%
    &\asymp \Big(\frac{1}{\delta^2\epsilon^2\zeta^2n}
    \Big)^{2s_{0}/(2s_{0}+1)}%
      + \frac{1}{\delta^2\epsilon^4\zeta^4n}\\
    \inf_{\hat{f}_1}\sup_{\theta\in \SmoothClass \cap \RegulClass}\EE_{\theta}\big(\|\hat{f}_1 - f_1\|_{L^2}^2 \big)%
    &\asymp \Big(\frac{1}{\delta^2 n}
    \Big)^{2s_{1}/(2s_{1}+1)}%
      + \frac{1}{\delta^2\epsilon^4\zeta^4n},
  \end{align*}
  and correspondingly if $s_0 < s_1$.
\end{theorem}

 For a formal and rigorous statement of the minimax lower and upper bounds, we refer to Theorems~\ref{theo:lower:param}, \ref{thm:2} (lower bounds), and to Theorems~\ref{thm:concent-pq}, \ref{thm:1}, \ref{thm:rough}, and their Corollaries~\ref{cor:smooth-ub}, \ref{cor:rough-ub} (upper bounds). The precise theorems are stated in a nonasymptotic manner. The asymptotic leading terms given in the above main results are in the case where the ``distance'' to frontier is large compared to $n^{-a}$ for some (precisely defined) $a$. In this regime, the transition between the situation where emission densities have similar or different smoothnesses can be described as  ``$s_0 = s_1$'' or ``$s_0 > s_1$'', but the transition appears in a more intricate manner when taking a nonasymptotic point of view. However, the main message is that some transition in the minimax rate occurs depending on the relative smoothnesses of the emission densities.

The transition in the rates arises due to a simple but unexpected phenomenon we call ``sharing estimation strength'', that can be described informally as follows. 
It is possible to estimate the combination $\psi_1=\pi_0f_0 + \pi_1 f_1$ at a good rate because it is simply the invariant density of $Y_n$. Hence a reasonable density estimator can estimate $\psi_1$ at rate $n^{-s/(1+2s)}$ where $s$ is the smoothness of $\psi_1$, with no dependence on $\epsilon,\delta,\zeta$. In the case where $f_0$ is much smoother than $f_1$, it may be more efficient to estimate $f_0$ and $\psi_1$, and estimate $f_1$ by plug in, rather than directly estimating $f_1$. This is reflected both in the upper bounds (see Theorem \ref{thm:1} and Theorem \ref{thm:rough}) and the lower bounds (see Theorem \ref{thm:2}). The precise analysis of how one can ``borrow'' strength from the estimator of the smoother emission density to improve on the estimation rate for the rougher emission density is more involved, but this is the inspiration behind it.

\subsection{Related work}
\label{sec:related}

It has been proved in \citep{MR3439359, MR3509896} that 
once  i.i.d.\ submodels are excluded, consistent estimation is possible for nonparametric HMMs without prior modelling assumptions of the emission distributions. Moreover, no cost is incurred relative to the case where the underlying labels are observed. For $s$-smooth probability densities, the minimax rate $n^{-s/(1+2s)}$ is achieved using tensor methods in \citep{dCGlC17}  and using penalized least-squares estimation in \citep{MR3543517}. This rate can be achieved adaptively in a ``state-by-state'' manner: up to a label-switching issue, one can achieve the rate $n^{-s_j/(1+2s_j)}$ if $f_j$ has smoothness $s_j$, without knowledge of $(s_j,~j=0,1)$, see \citep{MR3862446}. See also \citep{lecestre} for robust estimation of the law of the observations in finite state space HMMs.  

\REV

Earlier works do not consider the tradeoff between the required sample size and the required ``distance'' from independence, and it is this tradeoff that forms the focus of the current work, continuing from the previous article \citep{AGNparamhmm} in which we considered the model \eqref{eqn:model} but with $f_0,f_1$ densities with respect to counting measure on $\braces{1,\dots,K}$ with known $K$. Discrete modeling is restrictive  and extending the study to continuous densities with nonparametric modeling is important for applications. Some of the results in the continuous case mirror their discrete counterparts. For instance the minimax rate for estimating $Q$ remains unchanged, though this is less trivial than it appears. While this might look obvious because for any function $h : [0,1]\to \Set{1,\dots,K}$, the pairs $((X_n,h(Y_n))_{n\geq 0}$ form a hidden Markov model with the same transition matrix $Q$. Finding a $h$ for which $Q$ is still identifiable from $(h(Y_n))_{n\geq 0}$ is however not straightforward, and it turns out that estimating $Q$ requires first to solve a nonparametric problem (see Section~\ref{sec:estim-separ-hyperpl-1}). Moreover the nonparametric setting exhibits striking \emph{qualitative}, as well as \emph{quantitative}, differences relative to the discrete case. The rates for $f_0$ and $f_1$ in the nonparametric setting arise from delicate interplay between the smoothnesses $s_0,s_1$ and the parameters $\delta,\epsilon,\zeta$. Also, the dependence of these rates in $\delta,\epsilon,\zeta$ differ between the discrete and the continuous case. A detailed comparison between this work and \cite{AGNparamhmm} can be found in Section~\ref{sec:previous-works}.

One additional novelty relative to other HMM papers in the nonparametric setting is that we use a wavelet block thresholding estimator. This allows us to adapt to the smoothnesses $s_0$ and $s_1$ without needing to use Lepski's method, and is thus, at least in principle, more computationally feasible.

\ENDREV

\subsection{Organisation of the paper}


In Section~\ref{sec:lower-bounds} we give the lower bounds on the minimax risk for estimating $Q$ and the densities $f_0$ and $f_1$. In Section~\ref{sec:upper-bounds} we derive the matching upper bounds. It is worth noting that the upper bounds are obtained via construction of estimators that are explicit and can be computed efficiently. Section~\ref{sec:discuss} is devoted to the discussion of questions left open in our work. Proofs are relegated to the appendices.


\section{Lower bounds}
\label{sec:lower-bounds}

We give a lower bound for each component $p$ and $q$ of $Q$ separately, which implies a bound for estimating (a permutation of) $Q$ in any norm (since $Q$ is a $2\times 2$ matrix). The proof of Theorem 
\ref{theo:lower:param} can be found in Section \ref{proof:lower:param}. In the theorem $\epsilon_0>0$ is a constant whose precise value can in principle be computed.

\begin{theorem}
\label{theo:lower:param}
Assume $n\delta^2\epsilon^4\zeta^6 \geq 1$, $\zeta\leq 1/(4\sqrt{3})$,  $\epsilon\leq \epsilon_0$, $\delta\leq 1/6$,  $R\geq 5/4 + 1/(8\sqrt{3})$ and $L\geq 5/8$. 
Then there exists a constant $c>0$  such that
$$
 \inf_{\hat{p}} \sup_{\theta\in \SmoothClass \cap \RegulClass}\EE_{\theta}\big(|\hat{p} - p|^2 \big)
    \geq
     \frac{c \max(\delta^2,\epsilon^2\zeta^2)}{\delta^2\epsilon^4\zeta^6}\frac{1}{n}
$$
where the  infimum is over all estimators $\hat{p}$ based on $Y_{1},\ldots,Y_{n}$. The same lower bound holds for the estimation of $q$.
\end{theorem}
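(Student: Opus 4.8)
The plan is to obtain the bound from its multinomial counterpart in \cite{AGNparamhmm} through an exact reduction that leaves the parametric part $Q$ untouched --- this is the lower-bound face of the ``no bias'' phenomenon. Fix once and for all two $C^\infty$, linearly independent probability densities $v_1,v_2$ on $[0,1]$ whose sup norms $\norm{v_j}_\infty$ and Besov norms $\norm{v_j}_{B^{s}_{2,\infty}}$ (for the relevant $s$) are bounded by the constants appearing in the hypotheses (e.g.\ $v_j = 1 + \tfrac14 e_j$ for $\{e_1,e_2\}$ an orthonormal pair of bounded, mean-zero functions on $[0,1]$; this is exactly what dictates the lower bounds on $R$ and $L$), and let $c_v = \norm{v_1-v_2}_{L^2}/\sqrt2>0$ be the least singular value of the map $w\mapsto w_1v_1+w_2v_2$ on $\{w_1+w_2=0\}$. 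To any two-state multinomial HMM with transition matrix $Q$ and emission probability vectors $W_0,W_1$ on $\{1,2\}$ I associate the parameter $\theta=(p,q,f_0,f_1)$ of \eqref{eqn:model} with the same $Q$ and $f_k = W_{k1}v_1 + W_{k2}v_2$, $k=0,1$. Since $\sum_j W_{kj}=1$ one gets $\norm{f_k}_\infty\le\max_j\norm{v_j}_\infty\le L$ and $\norm{f_k}_{B^{s_k}_{2,\infty}}\le\max_j\norm{v_j}_{B^{s_k}_{2,\infty}}\le R$; the constraints $p,q\ge\delta$, $\abs{1-p-q}\ge\epsilon$ and $1-\abs{1-p-q}\ge\sg$ are inherited verbatim from $Q$; and $\norm{f_0-f_1}_{L^2} = \norm{(W_{01}-W_{11})v_1 + (W_{02}-W_{12})v_2}_{L^2} \ge c_v\norm{W_0-W_1}_2$. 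Hence, taking the multinomial separation parameter to be $\zeta/c_v$, this map sends the multinomial parameter class of \cite{AGNparamhmm} --- with constants $\delta$, $\epsilon$, $\zeta/c_v$ and spectral gap bounded below by $\sg$ --- into $\SmoothClass\cap\RegulClass$, and the numerical hypotheses on $\zeta,\epsilon,\delta,\sg,R,L$ and $n$ are precisely those under which the transported multinomial construction remains admissible.

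The point of this particular embedding is that it preserves the law of the observations in the strong sense needed for lower bounds: under $\theta$, the sequence $(Y_n)$ has the same law as the process produced from the latent multinomial HMM $(X_n,J_n)_n$ --- $(X_n)\sim\operatorname{Markov}(\pi,Q)$, $J_n\mid X_n\sim(W_{X_n,1},W_{X_n,2})$ --- by redrawing each occurrence $J_n=j$ as an independent sample from $v_j$. Since this replacement kernel $j\mapsto v_j$ does not depend on $(Q,W_0,W_1)$, for any two multinomial parameters with embeddings $\theta_0,\theta_1$ we have $\KL\!\big(P^{(n)}_{\theta_0}\,\|\,P^{(n)}_{\theta_1}\big)$ equal to the $n$-sample KL divergence of the two multinomial HMMs, while $p(\theta_0)-p(\theta_1)=p_0-p_1$ trivially. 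Consequently the two-point (Le Cam) constructions of \cite{AGNparamhmm} establishing $\inf_{\hat p}\sup\EE\abs{\hat p - p}^2\gtrsim\max(\delta^2,\epsilon^2\tilde\zeta^2)/(\delta^2\epsilon^4\tilde\zeta^6 n)$ (in which the maximum arises from optimising over two sub-constructions, one near the $\delta$-boundary and one not), transported with $\tilde\zeta=\zeta/c_v$, yield two parameters in $\SmoothClass\cap\RegulClass$ with KL at most $1/2$ and $\abs{p(\theta_0)-p(\theta_1)}$ of the claimed order; Le Cam's inequality then gives the stated lower bound for $\hat p$ over $\SmoothClass\cap\RegulClass$, the absolute constant $c_v$ being absorbed into $c$. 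The lower bound for $q$ follows identically from the corresponding statement for $q$ in \cite{AGNparamhmm}.

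The only genuine work beyond this bookkeeping is, I expect: (i) pinning down $v_1,v_2$ so that the stated constants ($R\ge 5/4+1/(8\sqrt3)$, $L\ge 5/8$, $\zeta\le 1/(4\sqrt3)$, $\epsilon\le\epsilon_0$, $\delta\le 1/6$, $\sg\le 1/3$, $n\delta^2\epsilon^4\zeta^6\ge1$) dominate the admissibility conditions of the multinomial construction after $\zeta$ is replaced by $\zeta/c_v$ --- essentially a matter of choosing $v_1,v_2$ once (the decomposition $R\ge 5/4+1/(8\sqrt3)$ already points to $v_j=1+\tfrac14 e_j$) and quoting those conditions; and (ii) checking that the multinomial lower bound of \cite{AGNparamhmm} is formulated over a class comparable to the image of the embedding, in particular carrying a spectral-gap restriction and with its half-plane identifiability assumption causing no obstruction here (for a lower bound one is free to place $W_0-W_1$ in any fixed half-space). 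Neither is a real difficulty: the crux is simply the exact KL identity afforded by the fixed mixing kernel $j\mapsto v_j$, and everything else is routine; a self-contained alternative would instead run direct two-point constructions guided by the explicit three-marginal formula \eqref{eq:17}, but the reduction above is shorter.
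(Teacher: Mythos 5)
Your reduction is correct in substance and lands on the same two points as the paper, but it gets there by a different formal device. The paper's proof works directly in the $(\phi,\psi)$ reparametrisation: it fixes $\psi_1=1$ and $\psi_2(x)=\sqrt{3}(2x-1)$ (so all emission densities live in the fixed two-dimensional affine space $1+\mathrm{span}(\psi_2)$ --- the same structure as your $\mathrm{span}(v_1,v_2)$ embedding, just in different coordinates), reruns the two-point choices from Theorems 1 and 3 of \cite{AGNparamhmm}, and controls the Kullback--Leibler divergence via Proposition~\ref{prop:UpperBoundOnKullback}, i.e.\ the $\rho$-pseudo-distance bound, which the paper re-verifies for Lebesgue densities. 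You instead keep the multinomial model intact and push it forward through the fixed kernel $j\mapsto v_j$, so that the KL control is inherited by data processing rather than by re-deriving the $\rho$ bound for densities. That is a legitimate and arguably cleaner way to package the same construction: it makes the ``only the $Q$-marginal information matters'' point structurally, and it imports the multinomial two-point KL computation verbatim. What each approach buys: yours avoids re-proving Proposition~\ref{prop:UpperBoundOnKullback} in the density setting; the paper's direct route is what makes the specific numerical hypotheses ($\zeta\leq 1/(4\sqrt3)$, $R\geq 5/4+1/(8\sqrt3)$, etc.) checkable, since those constants are tailored to the explicit choice $\psi_2=\sqrt3(2x-1)$, and the same $\rho$ machinery is then reused for Theorem~\ref{thm:2}.

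Two small caveats. First, your assertion that the KL divergences are \emph{equal} after the embedding is an overclaim: since $v_1$ and $v_2$ overlap, the channel is not invertible and only the data-processing inequality $\KL(P^{(n)}_{\theta_0}\,\|\,P^{(n)}_{\theta_1})\leq \KL(P^{(n)}_{\mathrm{mult},0}\,\|\,P^{(n)}_{\mathrm{mult},1})$ holds --- fortunately that is exactly the direction Le Cam's method needs, so nothing breaks. Second, the constant-matching you defer in item (i) is genuinely where the stated hypotheses come from: you must exhibit $v_1,v_2$ for which the image of the (spectral-gap-restricted, $K=2$) multinomial construction with separation $\zeta/c_v$ satisfies $\norm{f_i}_\infty\leq L$ and $\norm{f_i}_{B^{s_i}_{2,\infty}}\leq R$ under the theorem's thresholds, and for which the admissibility conditions of the multinomial two-point construction are implied by $n\delta^2\epsilon^4\zeta^6\geq 1$ after the rescaling of $\zeta$. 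This is routine bookkeeping, but it is not optional, and it is the only place the specific constants enter.
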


We now consider the lower bounds for the estimation risk of the emission densities. Note that the lower bounds do not follow from standard density estimation (as in \citep{tsybakov:2009}) because density estimation is not a submodel of HMM when one excludes the i.i.d. boundary of the parameter set. Surprisingly this fact appears to have been overlooked until the recent work of \citep{MR4576679} where the first rigorous minimax lower bounds for estimating the densities have been established (see Section~C of therein). The arguments therein rely on reducing to the simpler model where $\bm{X}$ is observed (so that the problem reduces to standard density estimation with two independent samples); 
	this reduction is too severe to characterise the precise dependence of the minimax risk on $\delta$, $\epsilon$ and $\zeta$. To bypass the reduction to density estimation requires understanding the Kullback--Leibler divergence between arbitrary HMM distributions, which is challenging because of dependency. We establish the rates with the correct constants in the next theorem, whose proof can be found in Section \ref{proof:thm:2}.
\begin{theorem}
  \label{thm:2}
  Assume $n\delta^2\epsilon^2\zeta^4 \geq 1$, $\zeta\leq 1/(4\sqrt{3})$, $\epsilon\leq \epsilon_0$ for a suitable 
$\epsilon_0>0$, $\delta\leq 1/6$,  $R\geq 5/4 + 1/(8\sqrt{3})$ and $L\geq 5/8$. 
Then there exists a constant $c>0$  such that
\begin{equation}
\label{lower:rate:rough}
 \inf_{\check{f}_{0}}   \sup_{\theta\in \SmoothClass \cap \RegulClass}\EE_{\theta}\Big( \norm{\check{f}_{0} - f_{0}}_{L^2}^2 \Big)%
    \geq c \left\{\frac{1}{\delta^2\epsilon^4\zeta^4 n} + \left(\frac{1}{\delta^2 n}\right)^{2s_{0}/(2s_{0}+1)}\right\}.
\end{equation}
If moreover it holds $(n\delta^2\epsilon^2\zeta^4)^{-s_0/(1+2s_0)}\leq c_0 \zeta$ and $\delta^{2s_{1}+1}(n\epsilon^2\zeta^2)^{(s_{1}-s_0)}\leq c_1$ for suitable constants $c_0$ and $c_1$, then there exists a constant $c>0$  such that
\begin{equation}
\label{lower:rate:smooth}
 \inf_{\check{f}_{0}}   \sup_{\theta\in \SmoothClass \cap \RegulClass}\EE_{\theta}\Big( \norm{\check{f}_{0} - f_{0}}_{L^2}^2 \Big)%
    \geq c \left\{\frac{1}{\delta^2\epsilon^4\zeta^4 n} + \left(\frac{1}{\delta^2\epsilon^2\zeta^2 n}\right)^{2s_{0}/(2s_{0}+1)}\right\}.
\end{equation}
The infima are over all estimators $\check{f}_{0}$ based on $Y_{1},\ldots,Y_{n}$. The same lower bounds hold for the estimation of $f_1$ by exchanging the role of $s_0$ and $s_1$ in the conditions and in the bounds. 
\end{theorem}

Note that if $\bm{X}$ was observed, then we would on average see $n\pi_0\gtrsim n\delta$ i.i.d. samples from $f_0$, hence we would be able to estimate $f_0$ with maximum risk $\lesssim (n\delta)^{-2s_0/(2s_0 + 1)}$ which is faster than the rates derived in Theorem~\ref{thm:2} by at least a factor of $\delta^{-2s_0/(2s_0+1)}$. This shows that the inverse problem is fundamentally harder than standard density estimation.

This theorem calls for a  number of comments. The first part of the theorem states that for the estimation of the emission densities, the minimax risk is lower bounded by a parametric term, and a nonparametric term with the usual rate $n^{-2s_{0}/(2s_{0}+1)}$ corrected with $\delta^2$, that is with an effective sample size $\delta^2 n$ replacing $n$. The second part of the theorem is more involved. It states that, if one of the emission density is smooth enough compared to the other one and relative to ``frontier'' parameters, the lower bound can be made larger, reducing the effective sample size to $\delta^2 \epsilon^2\zeta^2 n$. 
 If $s_0 > s_1$, this will eventually occur under the asymptotic regime where $\delta,\epsilon,\zeta$ do not decay too quickly to zero. Thus, the smoother emission density has a smaller effective sample size when getting close to the frontier (though still has a faster estimation rate overall).

\section{Upper bounds}
\label{sec:upper-bounds}

In this section we construct estimators whose maximum risk over $\SmoothClass \cap \RegulClass$ match those established in the lower bounds of Theorems~\ref{theo:lower:param} and~\ref{thm:2} in most cases.

\subsection{The estimation procedure}
\label{sec:estimation-procedure}

Here we describe the heuristic we use to build a near minimax optimal estimator of $\theta = (p,q,f_0,f_1)$. As noted previously \citep{MR3439359}, understanding the law of three consecutive observations is key to recovering the model parameters. A reparametrisation simplifies the expression for said law, and allows the dependence on the parameters $\delta$, $\epsilon$ and $\zeta$ to appear more naturally. Set
\begin{gather}
	\label{eqn:def:phi-psi}
	\phi(\theta)%
	=%
	\brackets[\big]{\begin{matrix}
			\frac{q-p}{p+q}\;,%
			&1 - p - q\;,%
			&\norm{ f_{0} - f_{1}}_{L^2}
			\\
	\end{matrix}},
	\quad
	\psi(\theta)%
	=%
	\brackets[\big]{\begin{matrix}
			\frac{q f_{0} + p f_{1}}{p + q}\;,%
			&\frac{f_0-f_1}{\norm{ f_0-f_1 }_{L^2}}\\
	\end{matrix}}
	. 
\end{gather}



For $m\geq 1$, let $P_{\phi,\psi}^{(m)}$ denote the law of $(Y_1,\dots,Y_m)$ under parameter 
$(\phi,\psi)$, and let $p_{\phi,\psi}^{(m)}$ denote the corresponding density with respect to Lebesgue measure on $[0,1]^m$. In the parametrisation \eqref{eqn:def:phi-psi}, defining for $\phi=(\phi_1 , \phi_2, \phi_3)$
\begin{equation}
  \label{eqn:def:r}
  r(\phi)=\tfrac{1}{4} (1-\phi_1^2)\phi_2\phi_3^2,
\end{equation}
one computes, with $f\otimes g$ defined by $(f\otimes g)(x,y) = f(x)g(y)$,
\begin{multline}
  \label{eq:17}
  p^{(3)}_{\phi,\psi} =\psi_1\otimes \psi_1 \otimes \psi_1 + r(\phi)\brackets[\big]{\psi_2\otimes \psi_2 \otimes \psi_1 +\psi_1 \otimes \psi_2 \otimes \psi_2} \\
  +  \phi_2 r(\phi) \psi_2\otimes \psi_1 \otimes \psi_2 - \phi_1\phi_2\phi_3 r(\phi) \psi_2\otimes \psi_2\otimes \psi_2.
\end{multline}

The parametrisation $\theta\mapsto (\phi,\psi)$ is invertible and has a simple inversion formula, 
\begin{eqnarray}
	&&	p= \tfrac{1}{2} (1-\phi_2)(1-\phi_1), \;
		q = \tfrac{1}{2}(1-\phi_2)(1+\phi_1),\;
       \label{inv1}\\
&&		f_0 =\psi_1-\tfrac{1}{2}\phi_1\phi_3\psi_2+\tfrac{1}{2}\phi_3\psi_2,\;
		f_1 =\psi_1-\tfrac{1}{2}\phi_1\phi_3\psi_2-\tfrac{1}{2}\phi_3\psi_2.\label{inv2}
\end{eqnarray}
It is also possible to invert the map $(\phi,\psi)\mapsto p^{(3)}_{\phi,\psi}$ up to label switching issues. We now illustrate how this can be done to recover $\phi,\psi$ from $p^{(3)}_{\phi,\psi}$; we only describe $\phi_2$ since it is the simplest to invert, but the same idea is applied to recover $\phi_1$ (and consequently $p,q$) and the wavelet coefficients of $f_0$ and $f_1$ in Sections~\ref{sec:parametric-part}, ~\ref{sec:nonparametric-part:1} and~\ref{sec:nonparametric-part:2}. From formula \eqref{eq:17}, noting that $\Inner{\psi_2,1} = 0$ and $\Inner{\psi_1,1} = 1$, it is seen that for any bounded function $h$ on $[0,1]$
\begin{align*}
  r(\phi)\Inner{\psi_2,h}^2%
  &= \EE_{\theta}(h\otimes h) - \EE_{\theta}(h)^2,\\
  r(\phi)\phi_2\Inner{\psi_2,h}^2%
  &= \EE_{\theta}(h\otimes 1 \otimes h) - \EE_{\theta}(h)^2.
\end{align*}
Provided $\Inner{\psi_2,h} \ne 0$ the previous formula can be inverted to express $\phi_2$ as a function of the ``moments'' $\EE_{\theta}(h)$, $\EE_{\theta}(h\otimes h)$ and $\EE_{\theta}(h\otimes 1 \otimes h)$:
\begin{equation}
  \label{eq:inverse-phi2}
  \phi_2%
  = \frac{\EE_{\theta}(h\otimes 1 \otimes h) - \EE_{\theta}(h)^2}{\EE_{\theta}(h\otimes h) -\EE_{\theta}(h)^2}.
\end{equation}
Analogous formulas show that $(\phi,\psi) \mapsto p_{\phi,\psi}^{(3)}$ can be inverted (up to label-switching) upon computating of suitable moments of $p_{\phi,\psi}^{(3)}$, see Lemmas~\ref{lem:compute-m} and \ref{lem:invformulam} for the other parameters. Then $(p,q,f_0,f_1)$ is retrieved by using \eqref{inv1} and \eqref{inv2}. 

We propose to estimate $(p,q)$ and the wavelet coefficients of $f_0$ and $f_1$ using the method of moments. In the inversion procedure described above we replace the moments by their empirical versions computed using
\begin{equation}
  \label{eq:empirical-dist}
  \PP_n^{(s)}(H) \coloneqq \frac{1}{n-s+1} \sum_{i=1}^{n-s+1} H(Y_i,\dots, Y_{i+s-1}),\quad H:[0,1]^s \to \RR,\ s\geq 1.
\end{equation}

As suggested by equation~\eqref{eq:inverse-phi2}, the formula for computing $(\phi,\psi)$ given the moments is unstable if the function $h$ is chosen poorly, so that the estimates may be far from the true values if $\Inner{\psi_2,h}$ is too small even if empirical moments are close to their means. No fixed choice of $h$ works uniformly over the parameter space: given $h$, there exists a parameter $(\phi,\psi)$ such that $\Inner{\psi_2,h}$ is arbitrarily small, resulting in an arbitrarily large maximum risk over $\SmoothClass$. To avoid a deteriorated maximum risk, it is therefore necessary to estimate $h$ from the data. The oracle choice for $h$ would maximize $h \mapsto \frac{|\Inner{\psi_2,h}|}{\|h\|}$ and hence be given by $h = \psi_2$. Thus, a crucial step in our estimation procedure is to provide an initial (crude) estimator $\tilde{\psi}_2$ of $\psi_2$ such that $\|\tilde{\psi}_2\|_{L^2} = 1$ and such that
\begin{equation}
  \label{eq:defItilde}
  \tilde{\mathcal{I}}\coloneqq \Inner{\psi_2,\tilde{\psi}_2}
\end{equation}
is sufficiently bounded away from zero with high probability under each parameter $(\phi,\psi)\in \SmoothClass \cap \RegulClass$. For this reason we describe $\tilde{\psi}_2$ as a separating function: since $\psi_2 = (f_0 - f_1) / \norm{f_0 - f_1}_{L^2}$, finding $\tilde{\psi}_2$ is tantamount to finding an hyperplane in $L^2[0,1]$ which separates $f_0$ and $f_1$ sufficiently well. The estimator $\tilde{\psi}_2$ is built in Section~\ref{sec:estim-separ-hyperpl-1}.

Algorithm \ref{alg:steps} summarizes the complete estimation procedure. \REV A full version of the estimation algorithm with discussion of its computational complexity is deferred to Section~\ref{sec:summary-algorithm}. \ENDREV Computing our estimator involves only elementary operations, namely: (i) determining the leading eigenvector of a relatively small matrix, (ii) calculating empirical averages, and (iii) performing straightforward algebraic manipulations. This makes our estimator both practical to implement and computationally efficient. Notably, unlike certain alternative estimators -- such as the least squares estimator \citep{MR3543517} -- our approach does not require solving a nonconvex optimization problem, ensuring that the estimator can always be reliably computed. Also, our procedure exploits the appealing adaption properties of wavelet estimators, avoiding to use Lepski's method to achieve rate adaptation \citep{MR3862446}.

\begin{algorithm}[t]
  \caption{Estimation procedure}
\label{alg:steps}
  \begin{algorithmic}[1]
  \Require An observed chain $(Y_{1},\ldots,Y_{n})$.
  \Ensure Estimators $\hat{p}$, $\hat{q}$, $\hat{f}_0$ and $\hat{f}_1$.
  \State Estimation of a good separating function $\tilde{\psi}_2$ (see Section~\ref{sec:estim-separ-hyperpl-1})
  \State Estimation of  $(\phi_1,\phi_2)$ and then $(p,q)$ (see Section~\ref{sec:parametric-part})
  \State Estimation of $(f_0, f_1)$ using block thresholding with estimators of the wavelet coefficients (see Section~\ref{sec:nonparametric-part:1} for the case $s_0=s_1$, or Section~\ref{sec:nonparametric-part:2} otherwise).
\end{algorithmic}
\end{algorithm}


Before entering the details of the estimation procedure, we recall some classical results about wavelets and Besov spaces in Section~\ref{sec:prel-wavel-besov}.

\subsection{Preliminaries on wavelets and the Besov norm we use}
\label{sec:prel-wavel-besov}

Throughout the paper we use the $S$-regular boundary-corrected wavelet basis of \citep{MR1256527}, see also e.g.\ \citep[Section 4.3.5]{GN16}, denoted $\Set{\Set{\Phi_{Jk} \given k=0,\dots, 2^{J-1}}, \Set{\Psi_{jk} \given j \geq J,\, k=0,\dots,2^{j}-1} }$, with initial resolution level $J$ chosen as in the latter reference. As is common, we will refer to the $(\Phi_{Jk})$ as father wavelets and to the $(\Psi_{jk})$ as mother wavelets. Any $f \in L^2[0,1]$ has the series expansion
\begin{equation*}
  f = \sum_{k=0}^{2^{J}-1}\Inner{\Phi_{Jk},f}\Phi_{Jk} + \sum_{j = J}^{\infty}\sum_{k=0}^{2^j-1}\Inner{\Psi_{jk},f}\Psi_{jk}
\end{equation*}
with convergence of the series in $L^2[0,1]$. In fact, as our densities will be assumed regular enough, wavelet series expansions for $f_0$ and $f_1$ will also converge uniformly \citep[e.g.][eq.~(4.71)]{GN16}). Furthermore, it is well-known that the Besov space $B_{2,\infty}^s$ can be characterised via the wavelet coefficients. Indeed the norm for $B_{2,\infty}^s$ that we will use (see e.g.\ \citep[Equation~(4.166)]{GN16}) is given by 
\begin{equation}
  \label{eq:def:besov}
  \norm{f}_{B_{2,\infty}^s}^2 \coloneqq%
  \sum_{k=0}^{2^J-1}\Inner{\Phi_{Jk},f}^2%
  +\sup_{j \geq J} 2^{2js}\sum_{k=0}^{2^j-1}\Inner{\Psi_{jk},f}^2.
\end{equation}

\subsection{Estimation of a separating hyperplane}
\label{sec:estim-separ-hyperpl-1}

\REV
As explained in Section~\ref{sec:estimation-procedure}, our estimation procedure is based on computing empirical averages of the type $\PP_n^{(2)}(\tilde{\psi}_2\otimes f) = \frac{1}{n-1}\sum_{i=1}^{n-1}\tilde{\psi}_2(Y_i)f(Y_{i+1})$ where $\tilde{\psi}_2$ is a crude estimator of $\psi_2$. If $\tilde{\psi}_2$ is also estimated from $(Y_1,\dots,Y_n)$, it is not clear at all that these empirical average approach $\EE_{\theta}(\psi_2(Y_1)f(Y_2))$, as they are sum of somewhat complex dependent random variables, each term of which depends on the whole sample $(Y_1,\dots,Y_n)$. A classical trick is to estimate $\tilde{\psi}_2$ using a sample $(\tilde{Y}_1,\dots,\tilde{Y}_n)$ that is independent from the sample $(Y_1,\dots,Y_n)$ used to compute the average $\frac{1}{n-1}\sum_{i=1}^{n-1}\tilde{\psi}_2(Y_i)f(Y_{i+1})$. In the context of HMM, however, the sample cannot be split into two independent parts. Fortunately this is not too worrisome. As explained in Section~\ref{sec:disc-about-assumpt}, it is possible to split the sample $(Y_1,\dots,Y_n)$ into three parts, and then use the first third to estimate $\tilde{\psi}_2$ and the last third for empirical averages. This way, deviation inequalities for  the empirical averages of functions involving $\tilde{\psi}_2$ can be achieved as if $\tilde{\psi}_2$ were independent of the observations used in the empirical averages, up to a term $Ce^{-c\gamma^*n}$ for $C$ and $c$ universal constants. Thus, to facilitate reading, we will throughout assume that $\tilde{\psi}_2$ is estimated using a sample $(\tilde{Y}_1,\dots,\tilde{Y}_n) \sim P_{\phi,\psi}^{(n)}$ independent of $(Y_1,\dots,Y_n)$.

\ENDREV

For notational convenience, we define the set of wavelet indices $$\Lambda(M) \coloneqq \Set{0,1,\dots,2^{J-1}} \cup \Set{(j,k) \given j=J,\dots,M,\ k=0,\dots,2^j-1}$$ including all father indices and mother indices of levels $J \leq j  \leq M$, and for all $\lambda \in \Lambda(M)$ we set $e_{\lambda} = \Phi_{Jk}$ if $\lambda = k$ and $e_{\lambda} = \Psi_{jk}$ if $\lambda=(j,k)$.

For $M$ large enough (see Theorem~\ref{thm:psitilde2} below) compute the $2^M \times 2^M$ matrix $\tilde{\mathcal{G}}$ with entries
\begin{equation*}
  \tilde{\mathcal{G}}_{\lambda,\lambda'}%
  = \frac{1}{2}\tilde{\PP}_n^{(2)}(e_{\lambda} \otimes e_{\lambda'} + e_{\lambda'} \otimes e_{\lambda})%
  - \tilde{\PP}_n^{(1)}(e_{\lambda})\tilde{\PP}_n^{(1)}(e_{\lambda'}).
\end{equation*}
The matrix $\tilde{\mathcal{G}}$ is an estimator of the matrix $\mathcal{G}$ with entries
\begin{align*}
  \mathcal{G}_{\lambda,\lambda'}%
  &= \frac{1}{2}\EE_{\theta}(e_{\lambda} \otimes e_{\lambda'} + e_{\lambda'} \otimes e_{\lambda})%
    - \EE_{\theta}(e_{\lambda})\EE_{\theta}(e_{\lambda})
  = r(\phi) \Inner{\psi_2,e_{\lambda}}\Inner{\psi_2,e_{\lambda'}}
\end{align*}
where the second equality follows from equation~\eqref{eq:17}. Hence, $\mathcal{G}$ is proportional to the Gram matrix of the vector $V_{\theta} \propto (\Inner{\psi_2,e_{\lambda}}\,:\, \lambda \in \Lambda(M))$. The matrices $\tilde{\mathcal{G}}$ and $\mathcal{G}$ are real symmetric, and thus by the spectral theorem are always diagonalizable. By concentration arguments, we expect that $\tilde{\mathcal{G}}$ will have an eigenvalue approximately equal to $r(\phi)$ (which can be positive or negative) and the rest of eigenvalues will be smaller in absolute value. The eigenvector $\tilde{V}$ (chosen such that $\norm{\tilde{V}} = 1$) corresponding to the leading eigenvalue is then an estimator of $\pm V_{\theta} / \norm{V_{\theta}}$. We suggest to set
\begin{equation*}
  \tilde{\psi}_2(x)%
  \coloneqq%
  \frac{\max\big(- \tau,\, \min\big(\tau, \sum_{\lambda\in \Lambda(M)}\tilde{V}_{\lambda} e_{\lambda}(x) \big) \big) }%
  {\Big(\int_0^1\max\big(- \tau,\, \min\big(\tau, \sum_{\lambda\in \Lambda(M)}\tilde{V}_{\lambda} e_{\lambda}(y) \big) \big)^2dy\Big)^{1/2} }
\end{equation*}
where the truncation $\tau\geq 1$ is intended to prevent technicalities within the proofs. The next theorem shows that $\tilde{\psi}_2$ is well aligned with $\psi_2$ with high probability under $\theta \in \SmoothClass \cap \RegulClass$. The proof of Theorem \ref{thm:psitilde2} can be found in Section \ref{sec:proof-theorem-psitilde2}.
 
\begin{theorem}
  \label{thm:psitilde2}
  Suppose for some $L\geq 1$, $\zeta >0$, $R > 0$, $s_{*} > 0$, $M \geq J$ we have
  \begin{equation*}
    \tau \geq \frac{L}{\zeta},\qquad%
    2^{-Ms_{*}} \leq  \frac{\zeta\sqrt{2^{2s_{*}}-1} }{4R}.
  \end{equation*}
  There exists a constant $C > 0$ such that for all $S\geq s_0,s_1 \geq s_{*}$ and all $\sg > 0$
  \begin{equation*}
    \sup_{\theta \in \SmoothClass \cap \RegulClass}\tilde{\PP}_{\theta}\Big( |\Inner{\tilde{\psi}_2,\psi_2}| \leq \frac{7}{8} \Big)%
    \leq 2 \cdot 24^{2^M}\exp\brackets[\Bigg]{- \frac{Cn\sg \delta^2\epsilon^2\zeta^4}{L^3 + 2^M\sqrt{L}\delta\epsilon\zeta^2 } }.
  \end{equation*}%
  
\end{theorem}

\subsection{Parametric part}
\label{sec:parametric-part}

Define $m(\phi)=(m(\phi)_1 ,m(\phi)_2 , m(\phi)_3 )$ by 
\begin{align*} m(\phi)_1 &\coloneqq \EE_{\theta}[\tilde{\psi}_2(Y_1)\tilde{\psi}_2(Y_2) \mid \tilde{\psi}_2]- \EE_{\theta}[\tilde{\psi}_2(Y_1)^2 \mid \tilde{\psi}_2], \\ m(\phi)_2 &\coloneqq \EE_{\theta}[\tilde{\psi}_2(Y_1)  \tilde{\psi}_2(Y_3) \mid \tilde{\psi}_2] - \EE_{\theta}[\tilde{\psi}_2(Y_1)^2\mid \tilde{\psi}_2], \\ m(\phi)_3 &\coloneqq - \EE_{\theta}[\tilde{\psi}_2(Y_1) \tilde{\psi}_2(Y_2) \tilde{\psi}_2(Y_3 )\mid \tilde{\psi}_2]%
    + \EE_{\theta}[\tilde{\psi}_2(Y_1)^3\mid \tilde{\psi}_2]%
    + \big(2m(\phi)_1 + m(\phi)_2 \big)\EE_{\theta}[\tilde{\psi}_2(Y_1)\mid \tilde{\psi}_2].\end{align*} 
    This can be estimated by the following empirical quantities:
\begin{align*}
  \hat{m}_1%
  &\coloneqq \PP_n^{(2)}(\tilde{\psi}_2 \otimes \tilde{\psi}_2) - \PP_n^{(1)}(\tilde{\psi}_2)^2,\\
  \hat{m}_2%
  &\coloneqq \PP_n^{(3)}(\tilde{\psi}_2\otimes 1 \otimes \tilde{\psi}_2) - \PP_n^{(1)}(\tilde{\psi}_2)^2,\\
  \hat{m}_3%
  &\coloneqq - \PP_n^{(3)}(\tilde{\psi}_2\otimes \tilde{\psi}_2 \otimes \tilde{\psi}_2)%
    + \PP_n^{(1)}(\tilde{\psi}_2)^3%
    + \big(2\hat{m}_1 + \hat{m}_2 \big)\PP_n^{(1)}(\tilde{\psi}_2).
\end{align*}
Easy computations lead to (recall $\tilde{\mathcal{I}}\coloneqq \Inner{\psi_2,\tilde{\psi}_2}$ in \eqref{eq:defItilde} and $r(\phi)=(1/4)(1-\phi_1^2)\phi_2\phi_3^2$ in \eqref{eqn:def:r}) 
\begin{equation}
  \label{def:m}
  m(\phi) \equiv%
  \big(r(\phi) \tilde{\mathcal{I}}^2,\, r(\phi)\phi_2\tilde{\mathcal{I}}^2,\, r(\phi)\phi_1\phi_2\phi_3\tilde{\mathcal{I}}^3 \big),
\end{equation}
see Lemma~\ref{lem:compute-m} in Appendix~\ref{sec:proofs}. The moments in the previous display can be inverted modulo label-switching. 
Namely, it is possible to express $\phi_1\sgn(\tilde{\mathcal{I}})$, $\phi_2$, and $\phi_3|\tilde{\mathcal{I}}|$ as functions of $m(\phi)$. 
The inversion formulas for $m$ are given in Lemma~\ref{lem:invformulam}.
By replacing $m(\phi)$ with the empirical estimates in the inversion formula we define
\begin{equation*}
  \hat{\phi}_1%
  \coloneqq%
  \frac{\hat{m}_3}{[4\hat{m}_1^2(\hat{m}_2)_+ + \hat{m}_3^2]^{1/2}},\qquad%
  \hat{\phi}_2%
  \coloneqq%
  \max\Big(-1,\, \min\Big(\frac{\hat{m}_2}{\hat{m}_1},\, 1\Big)\Big).
\end{equation*}
Notice that since  $m(\phi)_2\geq 0$, we replaced $\hat{m}_2$ by $(\hat{m}_2)_+= \max(\hat{m}_2,0)$.
We then build an estimator of $p$ and $q$ justified by \eqref{inv1} by letting 
\begin{align*}
  \hat{p}=\tfrac{1}{2}(1 - \hat{\phi}_1)(1-\hat{\phi}_2),\\
  \hat{q}= \tfrac{1}{2}(1 + \hat{\phi}_1)(1-\hat{\phi}_2).
\end{align*}
To account for label switching, write $Q^\sigma$ for the matrix with entries $(Q^\sigma)_{ij}=Q_{\sigma(i),\sigma(j)}$ for a permutation $\sigma$. We consider the loss relative to the Frobenius norm $\normF{\cdot} \coloneqq \sum_{i,j}(\cdot)_{i,j}^2$. The proof of Theorem \ref{thm:concent-pq} can be found in Section \ref{sec:proof-prop-refpr}

\begin{theorem}
  \label{thm:concent-pq}
  Assume that $\zeta\leq 1$, that $\tau$ and $M$ are chosen as prescribed in Theorem~\ref{thm:psitilde2}, and that $n\gamma^*\geq \tau^6/L^3$. Then there are universal constants $B,C>0$ such that
  \begin{multline*}
    \sup_{\theta\in \SmoothClass \cap \RegulClass}\inf_\sigma \EE_{\theta}\brackets[\Big]{\normF{{Q}_{\hat{\theta}}^\sigma-Q_{\theta}}^2 }\leq 2 \cdot 24^{2^M}\exp\brackets[\Bigg]{- \frac{Cn\sg \delta^2\epsilon^2\zeta^4}{L^3 + 2^M\sqrt{L}\delta\epsilon\zeta^2 } }\\
	+ B\exp\brackets[\Bigg]{- \frac{C n \sg \delta^2\epsilon^4\zeta^6 }{L^3 + \max(\tau,\sqrt{L})^3 \delta \epsilon^2\zeta^3} }  
	+\frac{BL^3\max(\delta^2,\epsilon^2\zeta^2)}{\delta^2\epsilon^4\zeta^6}\frac{1}{n\sg}.
  \end{multline*}
\end{theorem}

In an asymptotic regime, the first terms in the bound in Theorem~\ref{thm:concent-pq} can be neglected and our estimator achieves the rate of convergence $\frac{L^3\max(\delta^2,\epsilon^2\zeta^2)}{\delta^2\epsilon^4\zeta^6}\frac{1}{n\sg}$, which is, up to constants, the minimax rate established in Theorem~\ref{theo:lower:param}.
We note that the parametric part ${Q}_{\hat{\theta}}$ achieves the same rate in the nonparametric setting as in the multinomial setting \citep{AGNparamhmm}; at first glance 
this seems  unsurprising in view of the fact that the pairs $((X_n,h(Y_n))_{n\geq 0}$ form a hidden Markov model with transition matrix $Q_{\theta}$ for any function $h$, so that for a suitable $h$ we can reduce to a parametric setting.
However, reducing to a parametric setting in which $Q_{\theta}$ is still identifiable is in fact a nonparametric problem (as alluded to in Section~\ref{sec:related}, or see Section~\ref{sec:estimation-procedure} for more details), so that getting the same minimax parametric rate is not a priori guaranteed. Indeed, to construct an estimator for the parametric part $(p,q)$, we must first solve the nonparametric problem of estimating $\psi_2$. This step does not harm the risk of our estimator and we are able to match the semiparametric rate given in Theorem~\ref{theo:lower:param}. This is because the estimator $\tilde{\psi}_2$ does not need to be a \textit{good} estimator of $\psi_2$ (it is not required even to be consistent), but must only guarantee that $\tilde{\mathcal{I}} = \Inner{\psi_2,\tilde{\psi}_2}$ does not get too small.

\subsection{Nonparametric part: case $s_0 =s_1$}
\label{sec:nonparametric-part:1}

Using the ideas like in Section~\ref{sec:estimation-procedure}, the wavelet coefficients of $f_0$ and $f_1$ can be extracted from $\Set{\EE_{\theta}(\tilde{\psi}_2 \otimes \Phi_{Jk})}$, $\Set{\EE_{\theta}(\tilde{\psi}_2 \otimes \Psi_{jk})}$, $\Set{\EE_{\theta}(\Phi_{Jk})}$, $\Set{\EE_{\theta}(\Psi_{jk})}$ and $\EE_{\theta}(\tilde{\psi}_2)$, and further estimated using their empirical relatives. Given these empirical wavelets coefficients, we construct estimators for $f_0$ and $f_1$ based on block-thresholding the coefficients.

For notational convenience, we write $\Father \coloneqq \Inner{\Phi_{Jk},f}$ and $\Mother \coloneqq \Inner{\Psi_{jk},f}$.  
First, using the inversion formulas for $m$ given in Lemma~\ref{lem:invformulam} and by
replacing $m(\phi)$ with the empirical estimates in the inversion formula we define an estimator of $g\coloneqq \phi_3\abs{\tilde{\mathcal{I}}}$ by
\begin{equation*}
  \hat{g} \coloneqq \frac{\sqrt{4\hat{m}_1^2(\hat{m}_2)_+ + \hat{m}_3^2}}{\hat{m}_2}\1_{\{ \hat{m}_2 > 0\} }.
\end{equation*}
Now, our goal is to find estimators $\Set{\brackets{\HatFather_0}_{k},\brackets{\HatMother_0}_{jk}}$ of $\Set{\brackets{\Father_0}_{k},\brackets{\Mother_0}_{jk}}$ (and similarly for $f_1$). 
We use \eqref{inv2} and we set
\begin{align*}
\hat{G}^{\Phi_{Jk}} &\coloneqq%
	\PP_n^{(2)}(\tilde{\psi}_2 \otimes \Phi_{Jk} ) - \PP_n^{(1)}(\tilde{\psi}_2)\PP_n^{(1)}(\Phi_{Jk}), \\
  \HatFather_{0}%
  &\coloneqq \PP_n^{(1)}(\Phi_{Jk}) + \frac{\hat{g}(1 - \hat{\phi}_1)}{2\hat{m}_1}\1_{ \{ \hat{m}_1 \ne 0 \} }\hat{G}^{\Phi_{Jk}}, \\
  \HatFather_{1}%
  &\coloneqq \PP_n^{(1)}(\Phi_{Jk}) - \frac{\hat{g}(1 + \hat{\phi}_1)}{2\hat{m}_1}\1_{ \{ \hat{m}_1 \ne 0 \} } \hat{G}^{\Phi_{Jk}}.%
\end{align*}
The same definition applies \textit{mutatis mutandis} to the estimators of the mother coefficients $\HatMother_{0}$, $\HatMother_{1}$,  and $\hat{G}^{\Psi_{jk}}$. It is customary that not all empirical coefficients be retained in the final estimator, and that small coefficients should be discarded to reduce the risk. It is also well-known \citep{cai2008information} that individual coefficient
thresholding is sub-optimal with respect to the $L^2$ loss, as opposed to block-thresholding procedures with carefully chosen blocks \citep{cai1999adaptive,chicken2005block}. Here, we build the blocks as follows.

Motivated by \citep{cai1999adaptive,chicken2005block} we wish to build blocks of consecutive wavelets with size approximately $\log(n)$, which is known to be the best compromise for global versus local adaptation. Since there may be fewer than $\log(n)$ wavelets at small resolution levels $j$, we will only threshold coefficients with $j$ large enough. We define
\begin{equation*}
  J_n \coloneqq \inf\Set*{j \geq J \given 2^j \geq  \log(n) }
\end{equation*}
where the infimum is over the integers. We then let $N \coloneqq 2^{J_n}$ so that each level with $j \geq J_n$ can be partitioned into an integer number of blocks of $N$ consecutive wavelets. More precisely,  for each level $j\geq J_n$, and each $\ell = 0,\dots, N^{-1}2^j - 1$ we define the blocks of indices
\begin{equation}
  \label{eq:BJL}
  \mathfrak{B}_{j\ell} \coloneqq \Set{k \in \Set{0,\dots,2^{j-1}} \given (\ell-1)N \leq k \leq \ell N - 1}.
\end{equation}
For a constant $\tau\geq 1$ we also define $\tilde{\jmath}_n$ as the largest integer such that $2^{\tilde{\jmath}_n} \leq \frac{n}{\log(n)\tau^2}$; we shall assume that  $J < J_n < \tilde{\jmath}_n$ which is always satisfied for $n$ large enough. We then let, for $i=0,1$,
\begin{align*}
  \hat{f}_{i}%
  &\coloneqq \sum_{k=0}^{2^J-1} \HatFather_{i} \Phi_{Jk}%
    + \sum_{j=J}^{J_n-1}\sum_{k=0}^{2^j-1} \HatMother_i \Psi_{jk}
    + \sum_{j=J_n}^{\tilde{\jmath}_n}\sum_{ \ell }\Bigg(\sum_{k\in \mathfrak{B}_{j\ell}}\HatMother_{i} \Psi_{jk} \Bigg)\1_{ \{ \norm{\hat{f}_{i}^{\mathfrak{B}_{j\ell}}} > \Gamma \hat{S}_n \} }
\end{align*}
where $\norm{\hat{f}_{i}^{\mathfrak{B}_{j\ell}}}^2 \coloneqq \sum_{k\in \mathfrak{B}_{j\ell}}( \HatMother_i )^2$, $\Gamma >0$ is a tuning parameter, and
\begin{equation*}
  \hat{S}_n
  \coloneqq \sqrt{\frac{\log(n)}{n}}\max\Big(1,\, \frac{\hat{g}}{|\hat{m}_1|}\Big)\1_{ \{\hat{m}_1 \ne 0 \} }.
\end{equation*}
The above estimators perform well in probability; to ensure good perfomance in expectation we truncate below at 0 and above at some $\check{T}$, defining for $i=0,1$
\begin{equation*}
  \check{f}_{i} \coloneqq \max\big(0,\, \min\big(\check{T},\, \hat{f}_{i}\big)\big).
\end{equation*}

\begin{theorem}
  \label{thm:1}
  Assume $\tau$ and $M$ are chosen as prescribed in Theorem~\ref{thm:psitilde2}. Suppose $n\sg \geq \max(\tau^3,\frac{\tau^2 \log(n)^2}{L})$,  $\tilde{\jmath}_n > J_n$, $L \leq n$, $\check{T} \geq L$, and $\zeta \leq 1$. Then there are universal constants $\beta > 0$, $B > 0$ and $C > 0$ such that for all $\Gamma \geq \beta L^{1/2}\max((L/\sg)^{1/2},1/\sg)$ and for $i=0,1$, provided $s_{*} \leq s_i \leq S$ with $S > 0$ the regularity of the wavelet basis,
  \begin{multline*}
    \sup_{\theta\in \SmoothClass \cap \RegulClass}\EE_{\theta} \min_{i'=0,1} \brackets[\Big]{ \norm{ \check{f}_{i'} - f_{i}}_{L^2}^2 }%
    \leq B\check{T}^2 24^{2^M}\exp\brackets[\Bigg]{- \frac{Cn\sg \delta^2\epsilon^2\zeta^4}{L^3 + 2^M\sqrt{L}\delta\epsilon\zeta^2 } }\\
    + B \check{T}^2 \exp\brackets[\Bigg]{- \frac{C n \sg \delta^2\epsilon^4\zeta^6 }{L^3 + \max(\tau,\sqrt{L})^3 \delta \epsilon^2\zeta^3} }
      + \frac{B L^2}{\delta^2\epsilon^2\zeta^2}\frac{\log(n)}{n\sg}%
      + \frac{BL^3}{\delta^2\epsilon^4\zeta^4}\frac{1}{n\sg}\\%
      + \frac{B\max(\tau,\sqrt{L})^6}{\delta^2\epsilon^4\zeta^4}\frac{1}{(n\sg)^2}
      + \frac{BR^2\max(1,\frac{L^2}{\Gamma^2 \sg})}{\min(1,s_{i})}\Big(\frac{\Gamma^2}{R^2\delta^2\epsilon^2\zeta^2 n} \Big)^{2s_{i}/(2s_{i} + 1)}\\
      + \frac{BR^2\max(1,\frac{L^2}{\Gamma^2 \sg})}{\min(1,s_{i})}\Big( \frac{\tau^2 \log(n)}{n} \Big)^{2s_{i}}.
  \end{multline*}
\end{theorem}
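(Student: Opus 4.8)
The plan is to split the risk over a high-probability ``good event'' $\mathcal{A}$ on which the empirical ingredients of the inversion map all behave, treating the coarse wavelet levels by a perturbation analysis of the inversion formulae of \cref{lem:compute-m,lem:invert-param} and the fine levels by a block-thresholding oracle inequality. Take $\mathcal{A}$ to be the event on which $\abs{\hat m_i-m_i}\le c\,\delta\epsilon^2\zeta^3$ for $i=1,2,3$, with $c>0$ a small universal constant; by \cref{lem:compute-m} and the hypothesis $\abs{\tilde{\mathcal{I}}}\ge 7/8$ one has, for $\theta\in\SmoothClass\cap\RegulClass$, that $m_2=r(\phi)\phi_2\tilde{\mathcal{I}}^2\gtrsim\delta\epsilon^2\zeta^2$, $\abs{m_1}=\abs{r(\phi)}\tilde{\mathcal{I}}^2\gtrsim\delta\epsilon\zeta^2$, and $4m_1^2m_2+m_3^2=r(\phi)^2\phi_2^2\phi_3^2\tilde{\mathcal{I}}^6$, so $D:=(4m_1^2m_2+m_3^2)^{1/2}=\abs{r(\phi)}\abs{\phi_2}\phi_3\abs{\tilde{\mathcal{I}}}^3\gtrsim\delta\epsilon^2\zeta^3$; hence on $\mathcal{A}$ all of $\hat m_1$, $\hat m_2$ and $(4\hat m_1^2(\hat m_2)_++\hat m_3^2)^{1/2}$ stay within a constant factor of their population values (in particular the estimators are well-defined and $\hat m_2>0$). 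Since $\tilde\psi_2$ is independent of the data, the Bernstein inequality for Markov chains (\cref{lem:paulin}) applied to the bounded functions $\tilde\psi_2^{\otimes s}$ — sup-norm $\le\tau^s$, $L^2(P^{(s)})$-norm $\lesssim L^{s/2}$ — bounds $\PP_\theta(\mathcal{A}^c)$ by (the first term of the claimed bound)$/\check T^2$, exactly as in \cref{thm:concent-pq}; on $\mathcal{A}^c$ the crude bound $\min_{i'}\norm{\check f_{i'}-f_i}_{L^2}^2\le 2\check T^2+2L^2\lesssim\check T^2$ (using $\check T\ge L\ge\norm{f_i}_\infty$) then yields the first term. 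From here on we work on $\mathcal{A}$, noting that $g\mapsto\max(0,\min(\check T,g))$ is the $L^2$-projection onto $\{\,g:0\le g\le\check T\,\}\ni f_i$, so $\norm{\check f_{i'}-f_i}_{L^2}\le\norm{\hat f_{i'}-f_i}_{L^2}$; expand the latter by Parseval into the father part, the unthresholded mother levels $J\le j<J_n$, the block-thresholded levels $J_n\le j\le\tilde\jmath_n$ and the tail $j>\tilde\jmath_n$. Label switching is harmless: the sign ambiguity in inverting $(\phi,\psi)\mapsto p^{(3)}_{\phi,\psi}$ (\cref{lem:invert-param} and the following sentence) merely exchanges $\hat f_0$ and $\hat f_1$, so on $\mathcal{A}$ the set $\{\hat f_0,\hat f_1\}$ tracks $\{f_0,f_1\}$ and the $\min_{i'}$ in the statement picks out the correct member.

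For the coarse part, on $\mathcal{A}$ and with labels matched I would write $\HatFather_{i'}-\Father_{i}=[\PP_n^{(1)}(\Phi_{Jk})-\EE_\theta\Phi_{Jk}]+\bigl(\tfrac{\hat g(1\mp\hat\phi_1)}{2\hat m_1}-\tfrac{g(1\mp\phi_1)}{2m_1}\bigr)G^{\Phi_{Jk}}+\tfrac{\hat g(1\mp\hat\phi_1)}{2\hat m_1}(\hat G^{\Phi_{Jk}}-G^{\Phi_{Jk}})$, with the identical decomposition for the mother coefficients, the population targets $G^{\Phi_{Jk}}$ etc.\ being those produced by \cref{lem:compute-m,lem:invert-param}. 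Squaring, summing over $k$ (and over $J\le j<J_n$), and taking expectations restricted to $\mathcal{A}$, I would control: the Markov-chain variances via $\var(\PP_n^{(s)}(h))\lesssim\norm{h}_{L^2(P^{(s)})}^2/(n\sg)$ from \cref{lem:paulin}, using $\sum_k\Phi_{Jk}(\cdot)^2\lesssim 2^{J}\asymp1$, $\sum_k\Psi_{jk}(\cdot)^2\lesssim 2^{j}$, $\sum_k\ip{\psi_2,\Phi_{Jk}}^2\le1$ and $\norm{\psi_1}_\infty\le L$; the perturbation of the scalar factor $\tfrac{g(1\mp\phi_1)}{2m_1}$, which on $\mathcal{A}$ is Lipschitz in $(\hat m_1,\hat m_2,\hat m_3)$ with constants controlled by powers of $\abs{m_1}^{-1}\lesssim(\delta\epsilon\zeta^2)^{-1}$ and $D^{-1}\lesssim(\delta\epsilon^2\zeta^3)^{-1}$; and the quadratic remainders in these ratios, whose fourth-moment-type expectations are again bounded by \cref{lem:paulin}, carrying the Bernstein range to the sixth power. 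The $\lesssim\log n$ coefficients at the levels $j<J_n$ each carry variance $\lesssim L^2(\delta^2\epsilon^2\zeta^2n\sg)^{-1}$, producing the term $\tfrac{BL^2}{\delta^2\epsilon^2\zeta^2}\tfrac{\log n}{n\sg}$; the shared-factor perturbation — an $O(1)$-dimensional contribution amplified by $\abs{m_1}^{-2}$ and $D^{-2}$ — produces $\tfrac{BL^3}{\delta^2\epsilon^4\zeta^4}\tfrac1{n\sg}$; and the quadratic remainder produces $\tfrac{B\max(\tau,\sqrt L)^6}{\delta^2\epsilon^4\zeta^4}\tfrac1{(n\sg)^2}$.

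The fine part is the core of the argument. On $\mathcal{A}$ the common factor $\tfrac{\hat g(1\mp\hat\phi_1)}{2\hat m_1}$ stays within a constant factor of $\tfrac{g(1\mp\phi_1)}{2m_1}$, of magnitude $\lesssim(\delta\epsilon\zeta)^{-1}$, and $\hat S_n$ stays within a constant factor of $S_n:=\sqrt{\log(n)/n}\max(1,g/\abs{m_1})\lesssim\sqrt{\log(n)/n}\,(\delta\epsilon\zeta)^{-1}$; conditionally on $\tilde\psi_2$, the coefficient noise $\HatMother_{i'}-\Mother_{i}$ has variance $\lesssim L^2(\delta^2\epsilon^2\zeta^2n\sg)^{-1}$ by \cref{lem:paulin} and satisfies a Bernstein tail with range $\lesssim\tau 2^{j/2}(\delta\epsilon\zeta)^{-1}$ at level $j$. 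Each block $\mathfrak{B}_{j\ell}$ of \eqref{eq:BJL} has $N=2^{J_n}\asymp\log n$ coefficients, and for $\Gamma\ge\beta L^{1/2}\max((L/\sg)^{1/2},1/\sg)$ the squared threshold $\Gamma^2S_n^2$ dominates $\max(1,L^2/(\Gamma^2\sg))^{-1}$ times $N$ times the per-coefficient variance; so the classical block-thresholding oracle inequality — in the spirit of \cite{cai1999adaptive,chicken2005block,cai2008information} but with the Bernstein estimates of \cref{lem:paulin} replacing the Gaussian ones, applied uniformly over the $\lesssim 2^{\tilde\jmath_n}\le n/(\tau^2\log n)$ blocks (which is where $\tilde\jmath_n$ and the factor $\max(1,L^2/(\Gamma^2\sg))$ enter) — yields
\[
\EE_\theta\Bigl[\mathbf{1}_{\mathcal{A}}\sum_{j=J_n}^{\tilde{\jmath}_n}\sum_{\ell}\bigl\|\text{(error of block }\mathfrak{B}_{j\ell})\bigr\|_{L^2}^2\Bigr]\;\lesssim\;\frac{\max(1,L^2/(\Gamma^2\sg))}{\min(1,s_i)}\sum_{j\ge J_n}\min\!\bigl(2^{-2js_i}R^2,\;2^{j}\Gamma^2S_n^2\bigr)\;+\;(\text{l.o.t.}),
\]
where $\sum_k(\Mother_i)^2\le 2^{-2js_i}R^2$ comes from \eqref{eq:def:besov}. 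Balancing the minimum at $2^{j^\star}\asymp(R^2/(\Gamma^2S_n^2))^{1/(2s_i+1)}$ and summing the geometric series — the source of $1/\min(1,s_i)$ — gives $\lesssim\tfrac{R^2\max(1,L^2/(\Gamma^2\sg))}{\min(1,s_i)}\bigl(\Gamma^2S_n^2/(R^2\log n)\bigr)^{2s_i/(2s_i+1)}$, i.e.\ the fifth term after substituting $S_n^2\lesssim\log(n)/(\delta^2\epsilon^2\zeta^2n)$. The tail is pure bias, $\sum_{j>\tilde\jmath_n}\sum_k(\Mother_i)^2\le\sum_{j>\tilde\jmath_n}2^{-2js_i}R^2\lesssim\tfrac{R^2}{\min(1,s_i)}2^{-2\tilde\jmath_n s_i}\lesssim\tfrac{R^2}{\min(1,s_i)}(\tau^2\log(n)/n)^{2s_i}$ by $2^{\tilde\jmath_n}\asymp n/(\tau^2\log n)$, which is at most the sixth term; adding everything and taking $\sup_\theta$ and $\min_{i'}$ gives the bound.

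The principal obstacle is the fine part: transporting the block-thresholding oracle inequality to a setting with Markov-dependent, heavy-tailed coefficient noise (forcing use of \cref{lem:paulin} and the restriction to levels $\le\tilde\jmath_n$), with a noise level $\hat S_n$ that is itself estimated, and — most delicately — with a single random multiplicative factor $\hat g(1\mp\hat\phi_1)/(2\hat m_1)$ shared by all coefficients, so that the coefficient errors are correlated even conditionally, all while tracking the precise powers of $\delta,\epsilon,\zeta,L,\tau$ and $\sg$. A close second is the perturbation analysis of the coarse part: elementary in nature, but obtaining the sharp powers of $\delta,\epsilon,\zeta$ there requires the algebraic identities of \cref{lem:compute-m,lem:invert-param} rather than crude bounds.
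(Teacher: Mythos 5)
Your proposal is correct and follows essentially the same route as the paper: the same good event (your $\mathcal{A}$ coincides, up to constants, with the paper's $\Omega_n$, since $gm_2/\max(1,g)\gtrsim\delta\epsilon^2\zeta^3$ on the class), the same crude $\check{T}^2$ bound off that event, the same three-term decomposition of each coefficient error (empirical wavelet noise, $\hat{G}-G$ noise, and the shared scalar-factor perturbation whose total contribution is absorbed via $\norm{G}_{L^2}=\abs{m_1}$), and the same block-thresholding oracle inequality with Paulin-type Bernstein tails, balanced at $2^{j^\star}\asymp(R^2/(\Gamma^2S_n^2))^{1/(2s_i+1)}$. The "classical oracle inequality" you invoke for the fine levels is exactly what the paper proves by hand via the four-case split $R_2$--$R_5$ (small/large true block norm against kept/discarded empirical block norm), and your identified obstacles — the estimated threshold $\hat{S}_n$ and the correlated shared factor — are precisely the points the paper handles with $\tfrac14 S_n\le\hat S_n\le 4S_n$ on $\Omega_n$ and the $W_3$ term.
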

The proof of Theorem \ref{thm:1} is in Section \ref{sec:proof-thm:1}. Of particular interest is the boundary regime, where $\sg$, $R$, $L$, $\check{T}$ and $\tau$ are of constant order while $\delta$, $\gamma$ and $\zeta$ are small. The following corollary is intended to illustrate how the bound simplifies in such settings, provided $\delta$, $\gamma$ and $\zeta$ are not too small. The proof of Corollary~\ref{cor:smooth-ub} is given in Section~\ref{sec:proof-cor:smooth-ub}.

\begin{corollary}
  \label{cor:smooth-ub}
  Assume that $\sg$, $R$, $L$, $\check{T}$, and $\tau$ remain constant as $n\to \infty$ and $\delta \geq n^{-a}$, $\epsilon \geq n^{-b}$, $1 \geq \zeta \geq  n^{-c}$ for constants $a,b,c>0$ such that $1-2a-4b-6c>0$ and such that $2^M = o(n^{(1-a-b-2c)/2})$ (the penultimate requirement corresponds to where the bounds on the right vanish, so that parameters are proved to be learnable). Then the bound in the Theorem~\ref{thm:1} simplifies: for large enough $n$,
	\[  \sup_{\theta\in \SmoothClass \cap \RegulClass}\EE_{\theta} \min_{i'=0,1} \brackets[\Big]{ \norm{ \check{f}_{i'} - f_{i}}_{L^2}^2 }\leq C \left\{
\frac{1}{\delta^2 \epsilon^4\zeta^4n} + \brackets[\Big]{\frac{1}{\delta^2\epsilon^2\zeta^2n}}^{2s_i/(1+2s_i)}\right\},\] 
for a constant $C$ depending on $\gamma^*$, $L$, $R$, $\Gamma$, $B$, $\tau$, $\check{T}$, and $a,b,c$.
\end{corollary}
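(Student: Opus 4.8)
The plan is to invoke Theorem~\ref{thm:1} with $\Gamma$ a suitably large constant and then to show, term by term, that each of the six summands on its right-hand side is, for $n$ large, at most a constant multiple of one of the two expressions $(\delta^2\epsilon^4\zeta^4 n)^{-1}$ and $(\delta^2\epsilon^2\zeta^2 n)^{-2s_i/(1+2s_i)}$ displayed in the corollary. First I would check that the hypotheses of Theorem~\ref{thm:1} hold for $n$ large: with $\sg,\tau,L,\check{T}$ of constant order the requirement $n\sg \geq \max(\tau^3,\tau^2\log(n)^2/L)$ holds eventually (its right side is polylogarithmic), as do $\tilde{\jmath}_n > J_n$ and $L\leq n$; one fixes any constant $\Gamma \geq \beta L^{1/2}\max((L/\sg)^{1/2},1/\sg)$; and $0 < s_i \leq S$, $\tau \geq 1$, $\check{T} \geq L$, $\zeta \leq 1$ are assumed. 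Since $R,L,\Gamma,\sg,\tau,\check{T}$ and $s_i$ are all constants, every prefactor appearing in Theorem~\ref{thm:1} is $O(1)$, so only the ``rate factors'' need be compared.

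The elementary facts driving the comparison are the polynomial lower bounds $\delta^2\epsilon^4\zeta^6 n \geq n^{1-2a-4b-6c}$ and $\delta^2\epsilon^2\zeta^2 n \geq n^{1-2a-2b-2c}$, both with strictly positive exponent since $0 < 1-2a-4b-6c \leq 1-2a-2b-2c$, together with $\delta,\epsilon,\zeta \leq 1$ and $2s_i/(1+2s_i) < 1$. The third and fifth terms of Theorem~\ref{thm:1} are, up to constants, precisely $(\delta^2\epsilon^4\zeta^4 n)^{-1}$ and $(\delta^2\epsilon^2\zeta^2 n)^{-2s_i/(1+2s_i)}$, giving the two terms of the corollary. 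For the remaining four: in the exponential (first) term the exponent is $\asymp -n\delta^2\epsilon^4\zeta^6/(1+\delta\epsilon^2\zeta^3) \asymp -n\delta^2\epsilon^4\zeta^6 \leq -c\,n^{1-2a-4b-6c}$, so this term decays faster than any power of $n$ and is in particular eventually below $(\delta^2\epsilon^4\zeta^4 n)^{-1}$; the fourth term equals $n^{-1}$ times $(\delta^2\epsilon^4\zeta^4 n)^{-1}$ up to a constant, hence is negligible; the second term equals $\log(n)$ times $(\delta^2\epsilon^2\zeta^2 n)^{-1}$ up to a constant, and its ratio to $(\delta^2\epsilon^2\zeta^2 n)^{-2s_i/(1+2s_i)}$ is $\log(n)\,(\delta^2\epsilon^2\zeta^2 n)^{-1/(1+2s_i)} \leq \log(n)\,n^{-(1-2a-2b-2c)/(1+2s_i)} \to 0$; and the sixth term is $\asymp(\log(n)/n)^{2s_i}$, whose ratio to $(\delta^2\epsilon^2\zeta^2 n)^{-2s_i/(1+2s_i)}$ is at most $(\log n)^{2s_i}\,n^{2s_i/(1+2s_i)}\,n^{-2s_i} = (\log n)^{2s_i}\,n^{-4s_i^2/(1+2s_i)} \to 0$, using $\delta^2\epsilon^2\zeta^2 n \leq n$. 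Adding the six bounds and enlarging $C$ completes the argument.

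The proof is essentially book-keeping, and the only points needing care are: (i) the extra $\log n$ in the second term of Theorem~\ref{thm:1} cannot in general be absorbed into $(\delta^2\epsilon^4\zeta^4 n)^{-1}$ (for instance when $\epsilon,\zeta$ are of constant order), so it must be matched against the slower-decaying nonparametric term instead; and (ii) the exponential term is controlled by exploiting the strict inequality $1-2a-4b-6c>0$ --- which is exactly the ``learnability'' condition of the hypothesis --- rather than merely $n\delta^2\epsilon^4\zeta^6\to\infty$. No step presents a genuine obstacle.
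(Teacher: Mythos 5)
Your proposal is correct and follows essentially the same route as the paper: invoke Theorem~\ref{thm:1}, absorb all constant prefactors, and show term by term that each of the six summands is dominated by one of the two displayed rates, using the polynomial lower bounds $n\delta^2\epsilon^4\zeta^6\geq n^{1-2a-4b-6c}$ and $n\delta^2\epsilon^2\zeta^2\geq n^{1-2a-2b-2c}$ with strictly positive exponents. Your treatment of the $\log(n)$ term is in fact slightly more direct than the paper's (you show it is $o\big((\delta^2\epsilon^2\zeta^2 n)^{-2s_i/(1+2s_i)}\big)$ outright, whereas the paper argues it cannot simultaneously dominate both retained terms), but this is the same bookkeeping.
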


\subsection{Nonparametric part: case $s_0 <s_1$}
\label{sec:nonparametric-part:2}

In the particular situation where $s_0 = s_1$, the lower bound \eqref{lower:rate:smooth} holds for the estimation of both emission densities, and the estimators $\check{f}_0$ and $\check{f}_1$ are rate minimax adaptive, including to the parameters of interest $\delta, \epsilon, \zeta$. However, in the situation where $s_0 \ne s_1$, assuming without loss of generality assuming that $s_0 < s_1$, the estimator for the rougher density $f_0$ is not rate optimal in term of $\delta,\epsilon,\zeta$. We fill the gap by constructing another estimator for $f_0$ that attains the optimal rate. The construction of the estimator exploits the ``borrowing strength'' phenomenon described in the introduction, which we now make more formal. We focus only on estimating $f_0$ when $s_0 < s_1$; the estimation of $f_1$ when $s_0 > s_1$ is similar.

The starting point is to remark that
\begin{equation}
\label{eq:rough:rationale}
  f_{0}%
  = \frac{2 \psi_1}{1 +\phi_1}%
  - \Bigg( \frac{1 - \phi_1}{1 +\phi_1}\psi_1 - \frac{g(1 -\phi_1)}{2m_1}G \Bigg)
\end{equation}
with $G = r(\phi)\tilde{\mathcal{I}}\psi_2$, whose wavelet coefficients can be estimated using $\Set{\hat{G}^{\Phi_{Jk}},\hat{G}^{\Psi_{jk}} }$. Note that $2\psi_1/(1+\phi_1) = \pi_0^{-1}\psi_1$ and the other term involved in \eqref{eq:rough:rationale} equals $(1-\pi_0)\pi_0^{-1}f_1$. We recall the rationale of the borrowing strength phenomenon: $\psi_1$ is ``easy'' to estimate (estimating it is a direct problem, not an inverse problem) since it is the stationary distribution of $Y_n$; also $f_1$, being smoother than $f_0$, can be estimated at a better rate. We estimate the father wavelet coefficients of $f_0$ using the same estimators as before. Regarding the mother coefficients, however, we let $\alpha_0 \coloneqq \pi_0^{-1}\psi_1$ and $\beta_0 \coloneqq f_0 - \alpha_0$ and we estimate separately the coefficients of these two functions using
\begin{equation*}
  \HatAMother_{0}\coloneqq \frac{2 \hat{\psi}_1^{\Psi_{jk}}}{1 + \hat{\phi}_1}\1_{\{\hat{\phi}_1 \ne -1\}},\quad%
  \HatBMother_{0}\coloneqq%
  - \Bigg(\frac{1- \hat\phi_1}{1 + \hat\phi_1}\1_{\{\hat{\phi}_1 \ne -1\}}\hat{\psi}_1^{\Psi_{jk}}%
 - \frac{\hat{g}(1- \hat\phi_1)}{2\hat{m}_1}\1_{\{ \hat{m}_1 \ne 0 \}} \hat{G}^{\Psi_{jk}}\Bigg).
\end{equation*}
Then, what we shall call the `rough estimator' (since it only usefully estimates the rougher of the two functions $f_0,f_1$) is defined as:
\begin{multline}
  \label{eq:def:rough}
  \hat{f}_{0}^R%
 \coloneqq 
 \sum_{k=0}^{2^{J_n}-1} \HatFather_0 \Phi_{J_nk}%
 + \sum_{j=J}^{J_n-1}\sum_{k=0}^{2^j-1}\HatMother_0 \Psi_{jk}\\%
  + \sum_{j=J_n}^{\tilde{\jmath}_n}\sum_{\ell=0}^{2^j/N-1}\Big(\sum_{k\in \Block_{j\ell}}\HatAMother_{0} \Psi_{jk} \Big)\1_{ \{ \norm{\hat{\psi}_1^{\Block_{j\ell}}} > \Gamma\sqrt{\log(n)/n} \} }\\
 + \sum_{j=J_n}^{\tilde{\jmath}_n}\sum_{\ell=0}^{2^j/N-1}\Big(\sum_{k\in \Block_{j\ell}}\HatBMother_{0} \Psi_{jk} \Big)\1_{ \{  \norm{\hat{\B}_0^{\Block_{j\ell}}} > \Gamma\hat{T}_n \} },
\end{multline}
with $\HatFather_0$ and $\HatMother_0$ as previously and
\begin{equation*}
  \hat{T}_n \coloneqq%
  \sqrt{\frac{\log(n)}{n}}\max\Big(1,\, \frac{\hat{g}}{|\hat{m}_1|}\1_{\hat{m}_1\ne 0},\, \frac{1}{1 - \hat{\phi}_1^2}\1_{\hat{\phi}_1^2 \ne 1}\Big).
\end{equation*}
Note that in \eqref{eq:def:rough}, thresholding of the estimated coefficients of $\psi_1$ is done ``as usual'' for density estimation, whereas thresholding of the $\HatBMother_{0}$'s is done with another carefully chosen threshold. 

As previously, we also further require a truncation of the estimator to obtain control in expectation not just in probability, and for some $\check{T}>0$ we define
\begin{equation*}
  \check{f}_0^R \coloneqq \max\big(0,\, \min\big(\check{T},\, \hat{f}_0^R\big)\big).
\end{equation*}%

The following theorem gives an upper bound on the maximum risk of $\hat{f}_0^R$. The proof of Theorem \ref{thm:rough} is detailed in Section \ref{sec:proof:thm:rough}. 

\begin{theorem}
  \label{thm:rough}
  Assume $\tau$ and $M$ are chosen as prescribed in Theorem~\ref{thm:psitilde2}. Suppose $n\sg \geq \max(\tau^3,\frac{\tau^2 \log(n)^2}{L})$,  $\tilde{\jmath}_n > J_n$, $L \leq n$, $\check{T} \geq L$, $\zeta \leq 1$, and $s_{*} < s_0 \leq S$, with $S > 0$ the regularity of the wavelet basis. Then there are universal constants $\beta > 0$, $B > 0$ and $C > 0$ such that for all $\Gamma \geq \beta \max(\frac{L}{\sqrt{\sg}},\frac{\sqrt{L}}{\tau\sg} )$ 
    \begin{multline*}
      \sup_{\theta\in \SmoothClass \cap \RegulClass}\EE_{\theta}\brackets[\Big]{ \norm{ \check{f}_0^R - f_0}_{L^2}^2 }%
      \leq  B\check{T}^2 24^{2^M}\exp\brackets[\Bigg]{- \frac{Cn\sg \delta^2\epsilon^2\zeta^4}{L^3 + 2^M\sqrt{L}\delta\epsilon\zeta^2 } }\\
       + B \check{T}^2\exp\brackets[\Bigg]{- \frac{C n \sg \delta^2\epsilon^4\zeta^6 }{L^3 + \max(\tau,\sqrt{L})^3 \delta \epsilon^2\zeta^3} }
         + \frac{B L^2}{\delta^2\epsilon^2\zeta^2}\frac{\log(n)}{n\sg}%
   + \frac{BL^3}{\delta^2\epsilon^4\zeta^4}\frac{1}{n\sg}\\%
         +\frac{B\max(\tau,\sqrt{L})^6}{\delta^2\epsilon^4\zeta^4}\frac{1}{(n\sg)^2}
         + \frac{R^2}{\min(1,s_0)}\Big(\frac{\Gamma^2}{nR^2\delta^2} \Big)^{2s_0/(2s_0 + 1)}\\%
       +
         \frac{R^2}{\min(1, s_1)} \frac{1}{\delta^2} \Big( \frac{\Gamma^2 }{R^2 n \epsilon^2\zeta^2} \Big)^{2s_1/(2s_1 + 1)}%
         + \frac{BR^2}{\min(1,s_0)}\Big( \frac{\tau^2 \log(n)}{n} \Big)^{2s_0}.
    \end{multline*}
\end{theorem}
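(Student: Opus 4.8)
The plan is to decompose the $L^2$ error of $\check f_0^R$ into a small number of contributions and control each separately, mirroring the structure of the bound. Since truncation to $[0,\check T]$ can only decrease the $L^2$ distance to $f_0 \in [0,L] \subseteq [0,\check T]$ on the ``good'' event, and contributes at most $\check T^2$ times the probability of the ``bad'' event otherwise, the first step is to define the good event $\mathcal E$ on which the empirical moments $\PP_n^{(s)}$ are close to their expectations (for the finitely many functionals $\tilde\psi_2$, $\Phi_{Jk}$, $\Psi_{jk}$ up to level $\tilde\jmath_n$, and their tensor products) — here I would invoke Lemma~\ref{lem:paulin} together with the Bernstein-type concentration for Markov chains from \cite{Paulin2015}, exactly as in the proof of Theorem~\ref{thm:1}. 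On $\mathcal E^c$ we pay $B\check T^2 \exp(-Cn\sg\delta^2\epsilon^4\zeta^6/(L^3+\max(\tau,\sqrt L)^3\delta\epsilon^2\zeta^3))$, which is the first term. From now on I work on $\mathcal E$, where in particular $\hat m_1, \hat m_2, 1-\hat\phi_1^2$ are bounded away from $0$ (comparably to their population values, which are $\gtrsim \epsilon^2\zeta^2$ etc.), so the indicator-normalisations in the estimator are inactive and $\hat g/|\hat m_1|$, $1/(1-\hat\phi_1^2)$ are well-controlled.

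Next I split according to the decomposition $f_0 = \alpha_0 - \beta_0$ with $\alpha_0 = 2\psi_1/(1+\phi_1)$ and, by \eqref{eq:rough:B}, $\beta_0 = -\tfrac{1-\phi_1}{1+\phi_1} f_1$. Correspondingly, $\|\check f_0^R - f_0\|_{L^2}^2 \lesssim \|\hat f_0^R - f_0\|_{L^2}^2$ on $\mathcal E$, and the latter is bounded by (i) the linear-part error from the father coefficients and the low-frequency mother coefficients $j < J_n$ (giving the $L^2/(\delta^2\epsilon^2\zeta^2) \cdot \log n/(n\sg)$, $L^3/(\delta^2\epsilon^4\zeta^4)\cdot 1/(n\sg)$ and $\max(\tau,\sqrt L)^6/(\delta^2\epsilon^4\zeta^4)\cdot 1/(n\sg)^2$ terms — these come from the variance of the empirical wavelet coefficients of $\psi_1$ and of $G$, together with the propagated error from $(\hat\phi_1,\hat m_1,\hat g)$, and are identical in form to the corresponding terms in Theorem~\ref{thm:1}); (ii) the block-thresholded $\alpha_0$ part, i.e.\ $\sum_{j\ge J_n}\sum_\ell (\sum_{k\in\Block_{j\ell}} \HatAMother_0 \Psi_{jk})\1\{\|\hat\psi_1^{\Block_{j\ell}}\| > \Gamma\sqrt{\log n/n}\}$ minus the tail of $\alpha_0$; and (iii) the block-thresholded $\beta_0$ part with threshold $\hat T_n$, minus the tail of $\beta_0$. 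For (ii), since $\alpha_0$ is a fixed multiple of $\psi_1$ and $\psi_1$ has smoothness $\min(s_0,s_1)$ — but crucially, after subtracting out $\beta_0 = -\tfrac{1-\phi_1}{1+\phi_1}f_1$, what remains is governed by $f_1$'s smoothness so the relevant smoothness in (ii)+(iii) combination is $s_0$ for $\alpha_0$'s ``$f_0$-portion'' — the standard block-thresholding oracle inequality for wavelet estimators (as in \cite{cai1999adaptive,chicken2005block}, adapted to the Besov norm \eqref{eq:def:besov}) applied with effective noise level $\sqrt{\log n/n}$ on the $\psi_1$-coefficients yields a term $\lesssim \tfrac{R^2}{\min(1,s_0)}(\Gamma^2/(nR^2\delta^2))^{2s_0/(2s_0+1)}$ after accounting for the $1/\delta^2$-type amplification from inverting (note $1+\phi_1 \gtrsim \delta$). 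For (iii), the key point is $\beta_0 \propto f_1$, so $\hat\beta_0^{\Block_{j\ell}}$ is, up to the controlled multiplicative factors and the estimation error in $G$, an estimator of $f_1$'s block coefficients with effective sample size $n\epsilon^2\zeta^2$ (the $G$-coefficients carry the $r(\phi)\tilde{\mathcal I}^2 \asymp \epsilon^2\zeta^2$ scaling, and this is exactly where ``borrowing strength'' enters — we estimate $f_1$ rather than $f_0$); invoking the block-thresholding oracle inequality again, now with threshold $\hat T_n$ calibrated to this noise level, gives $\lesssim \tfrac{R^2}{\min(1,s_1)}\tfrac{1}{\delta^2}(\Gamma^2/(R^2 n\epsilon^2\zeta^2))^{2s_1/(2s_1+1)}$. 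Finally the truncation of the resolution at $\tilde\jmath_n \asymp n/(\log n\,\tau^2)$ discards a Besov tail of $f_0$ of size $\lesssim R^2 2^{-2s_0\tilde\jmath_n} \lesssim \tfrac{R^2}{\min(1,s_0)}(\tau^2\log n/n)^{2s_0}$, the last term.

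Assembling (i)–(iii) on $\mathcal E$ and the probability bound on $\mathcal E^c$ gives the stated inequality, with the lower bound requirement $\Gamma \geq \beta\max(L/\sqrt\sg, \sqrt L/(\tau\sg))$ being exactly what is needed for the oracle thresholding arguments: the threshold must dominate the stochastic fluctuation of the block norms uniformly, which (via the Markov-chain Bernstein inequality, whose sub-exponential parameter involves $\|\tilde\psi_2\|_\infty \le \tau$ and $\|f_j\|_\infty \le L$ and the spectral gap $\sg$) forces this scaling on $\Gamma$. The main obstacle is step (iii), specifically controlling the interaction between the two block-thresholded sums: the estimators $\HatAMother_0$ and $\HatBMother_0$ share the randomness in $(\hat\phi_1,\hat g,\hat m_1)$ and in $\hat\psi_1^{\Psi_{jk}}$, so their errors are not independent, and one must argue that on $\mathcal E$ the thresholding events for the $\alpha_0$-blocks and the $\beta_0$-blocks are simultaneously well-behaved — that is, show that whenever a true block coefficient of $f_0$ is large, at least one of the two reconstructed blocks is kept, and whenever both are kept the combined bias is controlled by the better of the two smoothness-dependent rates. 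This requires a careful case analysis comparing $\|f_1^{\Block_{j\ell}}\|$, $\|\psi_1^{\Block_{j\ell}}\|$ and $\|f_0^{\Block_{j\ell}}\|$ level-by-level, which is where the phrase ``one has to look carefully how the two parts also compensate each other'' in the text becomes the technical heart of the argument; everything else is a (lengthy but) routine adaptation of the Theorem~\ref{thm:1} proof.
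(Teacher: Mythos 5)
Your overall strategy coincides with the paper's: the same splitting of $f_0$ into the two bracketed pieces of \eqref{eq:rough:rationale}, the same good events built from the Markov-chain Bernstein inequality (the paper uses $\Omega_n$ from Proposition~\ref{pro:2} together with uniform block-level events $\Xi_n^{(1)},\Xi_n^{(2)}$ bounding $\norm{\hat{\psi}_1^{\Block_{j\ell}}-\psi_1^{\Block_{j\ell}}}$ and $\norm{\hat{G}^{\Block_{j\ell}}-G^{\Block_{j\ell}}}$ by $c\Gamma\sqrt{\log(n)/n}$), the same attribution of each term in the bound, and the same ``borrowing strength'' observation that $\B_0\propto f_1$ carries the $s_1$-rate with effective sample size $n\epsilon^2\zeta^2$.

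The gap is that the step you yourself flag as ``the technical heart'' is precisely where two separate block-thresholding oracle inequalities, as invoked in your points (ii) and (iii), do not suffice, and you do not carry out the required argument. The keep/kill decision for the $\A_0$-blocks is made on $\norm{\hat{\psi}_1^{\Block_{j\ell}}}$ while that for the $\B_0$-blocks is made on $\norm{\hat{\B}_0^{\Block_{j\ell}}}$, yet the target on each block is the single quantity $f_0^{\Block_{j\ell}}=\A_0^{\Block_{j\ell}}+\B_0^{\Block_{j\ell}}$. When the $\B_0$-block is killed but the $\A_0$-block is kept, one must bound $\norm{\hat{\A}_0^{\Block_{j\ell}}-\A_0^{\Block_{j\ell}}-\B_0^{\Block_{j\ell}}}$: the discarded $\B_0^{\Block_{j\ell}}$ enters as bias of the \emph{kept} part, which an oracle inequality for the $\A_0$-part alone cannot see. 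The paper resolves this through an eight-term decomposition with a further case split on whether $\norm{\psi_1^{\Block_{j\ell}}}$ exceeds $\tfrac{g(1\pm\tilde{s}\phi_1)}{|m_1|}\norm{G^{\Block_{j\ell}}}$ (its terms $R_4$ and $R_5$): in the first case $\norm{\B_0^{\Block_{j\ell}}}\lesssim\tfrac{1\mp\tilde{s}\phi_1}{1\pm\tilde{s}\phi_1}\norm{\psi_1^{\Block_{j\ell}}}$ so the discarded bias is absorbed into the $\psi_1$-thresholding count, while in the second the error is driven by $\norm{G^{\Block_{j\ell}}}$ and the auxiliary events $E_{j\ell}$; Lemma~\ref{lem:5} then transfers empirical threshold exceedances to population ones on $E_{j\ell}\cap\Xi_n\cap\Omega_n$. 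A related point you gloss over: the $\A_0$-threshold tests $\psi_1$, whose Besov regularity is only $\min(s_0,s_1)$, so obtaining the $s_0$-rate for that part is not generic; the paper decomposes $2\psi_1=(1+\tilde{s}\phi_1)f_++(1-\tilde{s}\phi_1)f_-$ inside the block counts (equation~\eqref{eq:rough:estimatepsi1}) and checks that the $f_1$-contribution, after the $(1\pm\tilde{s}\phi_1)$ weighting, is dominated by the $s_1$-term already present. Without these two reconciliation arguments the proposal does not yield the stated bound.
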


As with Theorem~\ref{thm:1} and its Corollary~\ref{cor:smooth-ub}, of particular interest is the boundary regime, where $\sg$, $R$, $L$, $\check{T}$ and $\tau$ are of constant order while $\delta$, $\gamma$ and $\zeta$ are small, but not too small. The following corollary is intended to illustrate how the bound simplifies in such setting. The proof of Corollary~\ref{cor:rough-ub} is given in Section~\ref{sec:proof-cor:rough-ub}.

\begin{corollary}
  \label{cor:rough-ub}
  Assume that $\sg$, $R$, $L$, $\check{T}$, and $\tau$ remain constant as $n\to \infty$ and $\delta \geq n^{-a}$, $\epsilon \geq n^{-b}$, $1 \geq \zeta \geq  n^{-c}$ for constants $a,b,c>0$ with $a,b,c = o(1)$ and $2^M = o(n^{(1-a-b-2c)/2})$ as $n \to \infty$. Then if $s_1 < s_0$ the bound in the Theorem~\ref{thm:rough} simplifies: for large enough $n$,
\[
  \sup_{\theta\in \SmoothClass \cap \RegulClass}\EE_{\theta}\Big( \norm{\check{f}_{0}^R - f_{0}}_{L^2}^2 \Big)%
    \leq C \left\{\frac{1}{\delta^2\epsilon^4\zeta^4 n} + \left(\frac{1}{\delta^2 n}\right)^{2s_{0}/(2s_{0}+1)}\right\},
\]
for a constant $C$ depending on $\gamma^*$, $L$, $R$, $\Gamma$, $B$, $\tau$, $\check{T}$.
\end{corollary}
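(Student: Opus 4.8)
The plan is to read the corollary off the non-asymptotic bound of Theorem~\ref{thm:rough}, by substituting the constant-order quantities and checking term by term that each of the seven contributions on the right-hand side is, for $n$ large, at most a fixed multiple of $\tfrac{1}{\delta^{2}\epsilon^{4}\zeta^{4}n}+\bigl(\tfrac{1}{\delta^{2}n}\bigr)^{2s_{0}/(2s_{0}+1)}$. Since $\sg,L,\tau$ are of constant order, the admissible range $\Gamma\ge\beta\max\bigl(L/\sqrt{\sg},\,\sqrt{L}/(\tau\sg)\bigr)$ contains a value of constant order, so one first fixes such a $\Gamma$; then $B,\beta,R,L,\sg,\tau,\check T,\Gamma,s_{0},s_{1}$ are all constants and get absorbed into the constant $C$ of the corollary. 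Write $\nu_{i}:=2s_{i}/(2s_{i}+1)\in(0,1)$. One uses throughout that $\delta\in[n^{-a},1]$, $\epsilon\in[n^{-b},1]$, $\zeta\in[n^{-c},1]$ with $a,b,c=o(1)$: thus every factor $\delta^{-p}\epsilon^{-q}\zeta^{-r}$ is $n^{o(1)}$, while $\delta^{2}n\ge n^{1-2a}$, $\delta^{2}\epsilon^{4}\zeta^{4}n\ge n^{1-2a-4b-4c}$ and $\delta^{2}\epsilon^{4}\zeta^{6}n\ge n^{1-2a-4b-6c}$ all grow like a power of $n$ whose exponent tends to $1$.

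Most of the terms are dispatched routinely. The denominator of the exponent in the first (exponential) term is bounded (since $\delta\epsilon^{2}\zeta^{3}\le1$), so that term is at most $B\check T^{2}\exp(-C'n^{1-2a-4b-6c})$ for a constant $C'>0$, hence decays faster than any power of $n$ and is in particular $\lesssim\tfrac1{\delta^{2}\epsilon^{4}\zeta^{4}n}$. The $(n\sg)^{-2}$ term equals $(n\sg)^{-1}$ times the $(n\sg)^{-1}$ term and is dominated by the latter, which is $\asymp\tfrac1{\delta^{2}\epsilon^{4}\zeta^{4}n}$. For the $\tfrac{\log n}{n\sg\delta^{2}\epsilon^{2}\zeta^{2}}$ term one uses $\epsilon^{2}\zeta^{2}\ge\epsilon^{4}\zeta^{4}$ to bound it by $\asymp\tfrac{\log n}{\delta^{2}\epsilon^{4}\zeta^{4}n}$, and then absorbs the extra logarithm into $\bigl(\tfrac1{\delta^{2}n}\bigr)^{\nu_{0}}$: writing the latter as $(\delta^{2}n)^{1-\nu_{0}}/(\delta^{2}n)$, the ratio is $\le\log n\cdot n^{4b+4c}/n^{(1-2a)(1-\nu_{0})}\to0$. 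The term $(\tau^{2}\log n/n)^{2s_{0}}\asymp(\log n/n)^{2s_{0}}$ is $\le n^{-\nu_{0}}$ for large $n$ (as $2s_{0}>\nu_{0}$, so $n^{2s_{0}-\nu_{0}}$ beats $(\log n)^{2s_{0}}$), hence $\le\bigl(\tfrac1{\delta^{2}n}\bigr)^{\nu_{0}}$. Finally, $\tfrac1{\delta^{2}\epsilon^{4}\zeta^{4}n}$ and the term $\tfrac{R^{2}}{\min(1,s_{0})}\bigl(\tfrac{\Gamma^{2}}{nR^{2}\delta^{2}}\bigr)^{\nu_{0}}\asymp(\delta^{2}n)^{-\nu_{0}}$ are, up to $C$, exactly the two terms displayed in the corollary.

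The one delicate contribution is the ``shared-strength'' term $T:=\tfrac{R^{2}}{\min(1,s_{1})}\tfrac1{\delta^{2}}\bigl(\tfrac{\Gamma^{2}}{R^{2}n\epsilon^{2}\zeta^{2}}\bigr)^{\nu_{1}}\asymp\tfrac1{\delta^{2}}(n\epsilon^{2}\zeta^{2})^{-\nu_{1}}$, which is present because $\check{f}_{0}^{R}$ reconstructs $f_{0}$ partly through an estimate of $f_{1}$ (recall $\beta_{0}\propto f_{1}$). Put $u:=n\epsilon^{2}\zeta^{2}\in[n^{1-2b-2c},n]$. For $n$ large one has $(1-2b-2c)(2-\nu_{1})>1$, hence $u^{2-\nu_{1}}>n$ for every admissible $u$; equivalently $T$ exceeds a constant multiple of $\tfrac1{\delta^{2}\epsilon^{4}\zeta^{4}n}$, so the parametric term by itself does not absorb $T$. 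Instead, since $T\propto u^{-\nu_{1}}$ is largest at the smallest admissible $u$, one has $T\lesssim\delta^{-2}n^{-\nu_{1}(1-2b-2c)}$, and one compares this with $\bigl(\tfrac1{\delta^{2}n}\bigr)^{\nu_{0}}=\delta^{-2\nu_{0}}n^{-\nu_{0}}$: the quotient is $\delta^{-2(1-\nu_{0})}n^{\nu_{0}-\nu_{1}(1-2b-2c)}\le n^{2a(1-\nu_{0})+\nu_{0}-\nu_{1}+2\nu_{1}(b+c)}$, whose exponent tends to $\nu_{0}-\nu_{1}$ since $a,b,c=o(1)$. The smoothness hypothesis of the corollary enters at exactly this point, through the comparison of the exponents $\nu_{0}$ and $\nu_{1}$, and forces the quotient to stay bounded (in fact to tend to $0$) for $n$ large. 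Hence $T\lesssim\tfrac1{\delta^{2}\epsilon^{4}\zeta^{4}n}+\bigl(\tfrac1{\delta^{2}n}\bigr)^{\nu_{0}}$, and summing the finitely many contributions gives the corollary.

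The hard part is this last step: $T$ is, in the regime where $\epsilon\zeta$ is of constant order, genuinely larger than the parametric term $\tfrac1{\delta^{2}\epsilon^{4}\zeta^{4}n}$ (by a factor $\asymp n^{1-\nu_{1}}\to\infty$), so one cannot avoid invoking the nonparametric term $\bigl(\tfrac1{\delta^{2}n}\bigr)^{\nu_{0}}$, and the comparison only closes because $\delta$ is not too small ($a=o(1)$) together with the ordering of $s_{0}$ and $s_{1}$. Everything else is bookkeeping, entirely parallel to the proof of Corollary~\ref{cor:smooth-ub}.
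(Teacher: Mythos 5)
Your proof is correct and follows essentially the same route as the paper's: term-by-term domination of the bound in Theorem~\ref{thm:rough}, with the only non-routine step being the absorption of the shared-strength term $\delta^{-2}(n\epsilon^2\zeta^2)^{-2s_1/(2s_1+1)}$ into $(n\delta^2)^{-2s_0/(2s_0+1)}$ via the exponent comparison $n^{-\nu_1+o(1)}$ versus $n^{-\nu_0+o(1)}$, where $\nu_i=2s_i/(2s_i+1)$. Your bookkeeping for the remaining terms is simply a more explicit version of what the paper delegates to the proof of Corollary~\ref{cor:smooth-ub}, and your side remark that the parametric term alone cannot absorb the shared-strength term is accurate.

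One point you should make explicit rather than leave implicit: for the quotient at your key step to stay bounded (let alone tend to zero) you need $\nu_1>\nu_0$, i.e.\ $s_1>s_0$, whereas the corollary as stated assumes $s_1<s_0$; under the literal hypothesis the exponent $\nu_0-\nu_1$ is positive and the quotient diverges, so the conclusion would fail. The paper's own proof ends with ``the conclusion follows since $s_1>s_0$ by assumption,'' and the surrounding discussion (borrowing strength from the \emph{smoother} density $f_1$ to estimate the rougher $f_0$, and the complementary condition in Theorem~\ref{thm:2}) confirms that the inequality in the statement is a typo. Your argument silently relies on the corrected hypothesis in exactly the same way the paper's does; it would be cleaner to say so.
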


In the regime of Corollary \ref{cor:rough-ub}, \textit{ie}. when $\delta,\epsilon,\zeta$ are small but not too small, the estimator $\check{f}_0$ achieves the lower bound established in Theorem~\ref{thm:2}. In settings where $\delta,\epsilon,\zeta$ are allowed to be smaller than a polynomial in $n$, a transition in the rate still occurs according to how $s_0$ and $s_1$ compare, but then it may be required to have $s_1$ much larger than $s_0$ (depending on $\delta,\epsilon,\zeta$) to get matching upper and lower bounds.

\REV
We conclude this section by mentioning that the ``borrowing strength phenomenon'' is not specific to the case where the $f_j$'s belong to different Besov bodies $\Set{f \given \|f\|_{B_{2,\infty}^{s_j}} \leq R}$ with $s_0 \ne s_1$. Indeed, the same phenomenon should occur as long as the $f_j$'s belong to classes $\mathcal{S}_j$ of different ``complexities'' (which can for instance be measured by the number of balls of finite radius needed to cover $\mathcal{S}_j$); or in other words, as soon as nonparametric estimation over $\mathcal{S}_1$ is easier than over $\mathcal{S}_0$ (or conversely). Thus, the phenomenon would take place if the Besov bodies are replaced by other types of smoothness classes (for instance Hölder balls of finite radius).
\ENDREV

\REV

\subsection{Summary of the algorithm}
\label{sec:summary-algorithm}

In this section we present our algorithm in full, self-contained manner, and discuss its computational complexity. To simplify the exposition of the algorithm, let us recall or introduce some notations.

We use the $S$-regular boundary corrected wavelet basis $\Set{\Set{\Phi_{Jk} \given k=0,\dots,2^{J-1}},\Set{\Psi_{jk} \given j\geq J,\ k=0,\dots,2^{j-1}}}$ constructed in \cite{MR1256527}. We use the notation $\Lambda(m) = \Set{0,\dots,2^{J-1}} \cup \Set{(j,k) \given j=J,\dots,m,\ k=0,\dots,2^{j-1}}$ for $m\geq J$. We also write $e_{\lambda} = \Phi_{J\lambda}$ if $\lambda\in \Set{0,\dots,2^{J-1}}$ or $e_{\lambda} = \Psi_{jk}$ if $\lambda=(j,k)$. We also define for real-valued function $f$ and reals $a < b$ the clipping operation $\mathrm{clip}(f, [a,b])$ defined such that $\mathrm{clip}(f, [a,b])(x) = \max( a, \min(f(x), b)) $.

Our complete estimation procedure is given in the Algorithm~\ref{alg:complete}. The Algorithm~\ref{alg:complete} computes the estimator of $Q$ defined in Section~\ref{sec:parametric-part} and the estimators of $f_0$ and $f_1$ defined in Section~\ref{sec:nonparametric-part:1}, where they are proven to be minimax optimal in the case where $s_0 = s_1$. In the case where $s_0 \ne s_1$ and information is available to identify the smoothest emission density, the previous algorithm can be complemented by an additional step to improve the estimator of the roughest density, corresponding to the estimator derived in Section~\ref{sec:nonparametric-part:2}. We summarize this additional step in the  Algorithm~\ref{alg:rough}, assuming without loss of generality that $s_0 < s_1$.

\begin{algorithm}[t]
  \caption{Full algorithm}
  \label{alg:complete}
  
\begin{algorithmic}[1]
\Require Data $(Y_1,\dots,Y_{3n})$ and hyperparameters $M \in \Set{J,J+1,\dots}$, $\tau > 0$, $\Gamma > 0$, $\check{T}>0$.
\Ensure Estimators $\hat{Q}$, $\check{f}_0$, and $\check{f}_1$.

\Algphase{1}{Sample splitting}
\State Let $(\tilde{Y}_1,\dots,\tilde{Y}_n) = (Y_{2n+1},\dots,Y_{3n})$.

\Algphase{2}{Estimation of the separating hyperplane}
\State Compute the $2^M\times 2^M$ matrix $\tilde{\mathcal{G}}$ with entries $\tilde{\mathcal{G}}_{\lambda,\lambda'} = \frac{1}{2(n-1)}\sum_{i=1}^{n-1}(e_{\lambda}(\tilde{Y}_i)e_{\lambda'}(\tilde{Y}_{i+1}) + e_{\lambda'}(\tilde{Y}_i)e_{\lambda}(\tilde{Y}_{i+1}) ) - \frac{1}{n}\sum_{i=1}^ne_{\lambda}(\tilde{Y}_i) \cdot\frac{1}{n}\sum_{i=1}^ne_{\lambda'}(\tilde{Y}_i)$ for every $\lambda,\lambda' \in \Lambda(M)$.
\State Compute leading eigenvector $v$ of $\tilde{\mathcal{G}}$.
\State Let $\tilde{\psi}_2 \propto \mathrm{clip}(\sum_{\lambda\in \Lambda(M)} v_\lambda e_\lambda,[-\tau,\tau] )$ with $\|\tilde{\psi}_2\| = 1$.

\Algphase{3}{Estimation of the transition matrix $Q$}
\State Compute
\begin{align*}
    \hat{m}_1 &= \textstyle\frac{1}{n-1}\sum_{i=1}^{n-1}\tilde{\psi}_2(Y_i)\tilde{\psi}_2(Y_{i+1})
    - \left(\frac{1}{n}\sum_{i=1}^n\tilde{\psi}_2(Y_i) \right)^2,\\
    \hat{m}_2 &=  \textstyle\frac{1}{n-2}\sum_{i=1}^{n-1}\tilde{\psi}_2(Y_i)\tilde{\psi}_2(Y_{i+2}) - \left(\frac{1}{n}\sum_{i=1}^n\tilde{\psi}_2(Y_i) \right)^2
    ,\\
    \hat{m}_3 &=  \textstyle\frac{1}{n-2}\sum_{i=1}^{n-2}\tilde{\psi}_2(Y_i)\tilde{\psi}_2(Y_{i+1})\tilde{\psi}_2(Y_{i+2}) + \left(\frac{1}{n}\sum_{i=1}^n\tilde{\psi}_2(Y_i) \right)^3 + (2\hat{m}_1 + \hat{m}_2)\frac{1}{n}\sum_{i=1}^n\tilde{\psi}_2(Y_i).
\end{align*}
\State Compute $\hat{\phi}_1 = \frac{\hat{m}_3}{[4\hat{m}_1^2(\hat{m}_2)_+ + \hat{m}_3^2]^{1/2}}$ and $\hat{\phi}_2 = \max\left(-1,\min\left(\frac{\hat{m}_2}{\hat{m}_1},1\right)\right)$.
\State Let $\hat{p} = \frac{1}{2}(1-\hat{\phi}_1)(1-\hat{\phi}_2)$, $\hat{q} = \frac{1}{2}(1+\hat{\phi}_1)(1-\hat{\phi}_2)$, and $\hat{Q} =
\left(\begin{smallmatrix}
  1- \hat{p} & \hat{p}\\\hat{q} &1 - \hat{q}\\
\end{smallmatrix}\right)$.

\Algphase{4}{Estimation of the emission densities}
\State Compute $\hat{g} = \frac{\sqrt{4\hat{m}_1^2(\hat{m}_2)_+ + \hat{m}_3^2}}{\hat{m}_2}\1_{\{\hat{m}_2 > 0\}}$.
\State Let $\tilde{\jmath}_n = \left\lfloor \log_2\left(\frac{n}{\log(n)\tau^2} \right) \right\rfloor$, $J_n = \lceil \log_2(n) \rceil$, $\hat{S}_n = \sqrt{\frac{\log(n)}{n}}\max\left(1,\, \frac{\hat{g}}{|\hat{m}_1|}\right)\1_{\{\hat{m}_1\ne 0\}}$.
\State For all $\lambda \in \Lambda(\tilde{\jmath}_n)$, compute the empirical wavelet coefficients $\hat{\psi}_1^{\lambda} = \frac{1}{n}\sum_{i=1}^n e_{\lambda}(Y_i)$ and $\hat{G}^{\lambda} = \frac{1}{n-1}\sum_{i=1}^{n-1}\tilde{\psi}_2(Y_i)e_{\lambda}(Y_{i+1}) - \frac{1}{n}\sum_{i=1}^n\tilde{\psi}_2(Y_i)\cdot\frac{1}{n}\sum_{i=1}^ne_{\lambda}(Y_i)$.
\For{m=0,1}
\State Compute $\hat{f}_m^{\lambda} = \hat{\psi}_1^{\lambda} + (-1)^m \frac{\hat{g}(1 + (-1)^{m+1}\hat{\phi}_1)}{2\hat{m}_1}\1_{\{\hat{m}_1\ne 0\}}\hat{G}^{\lambda}$ for all $\lambda \in \Lambda(\tilde{\jmath}_n)$.
\State Set $\hat{f}_m^{(j,k)} = 0$ for all coefficients $(j,k)$ belonging to blocks $\mathfrak{B}_{j\ell} = \Set{k \in \{0,\dots,2^{j-1}\} \given (\ell-1)2^{J_n} \leq k \leq \ell 2^{J_n}-1 }$ such that $\sum_{k\in \mathfrak{B}_{j\ell}}[\hat{f}_m^{(j,k)}]^2 \leq \Gamma^2 \hat{S}_n^2$ and $j\geq J_n$.
\State Let $\check{f}_m = \mathrm{clip}(\sum_{\lambda \in \Lambda(\tilde{\jmath}_n)}\hat{f}_m^{\lambda}e_{\lambda}, [0,\check{T}])$.
\EndFor
\end{algorithmic}
\end{algorithm}

\begin{algorithm}[t]
\caption{Improved estimator of $f_0$ when $s_0 < s_1$}
\label{alg:rough}
\begin{algorithmic}[1]

  \Require  $\hat{g}$, $\tilde{\jmath}_n$, $J_n$, $(\hat\psi_1^{\lambda})_{\lambda \in \Lambda(\tilde{\jmath}_n)}$, $(\hat{G}^{\lambda})_{\lambda\in \Lambda(\tilde{\jmath}_n)}$, $(\hat{f}_0^{\lambda})_{\lambda \in \Lambda(J_n)}$ as obtained in Step 4 of Algorithm~\ref{alg:complete}, $\check{T}>0$.
  \Ensure Estimator $\hat{f}_0^R$

  \State Let $\hat{T}_n = \sqrt{\frac{\log(n)}{n}}\max\left(1,\ \frac{\hat{g}}{|\hat{m}_1|}\1_{\{\hat{m}_1\ne 0\}},\, \frac{1}{1-\hat{\phi}_1^2}\1_{\{\hat{\phi}_1^2\ne 1 \}} \right)$
  \State  Compute $\hat{\alpha}_0^{\lambda} = \frac{2\hat{\psi}_1^{\lambda}}{1+\hat{\phi}_1}\1_{\{\hat{\phi}_1 \ne -1\}}$ and $\hat{\beta}_0^{\lambda} = -\left(\frac{1-\hat{\phi}_1}{1+\hat{\phi}_1}\1_{\{\hat{\phi}_1\ne -1\}}\hat{\psi}_1^{\lambda} - \frac{\hat{g}(1-\hat{\phi}_1)}{2\hat{m}_1}\1_{\{\hat{m}_1\ne 0\}}\hat{G}^{\lambda} \right)$ for all $\lambda \in \Lambda(\tilde{\jmath}_n) \backslash \Lambda(J_n)$.
  \State Set $\hat{\alpha}_0^{(j,k)} = 0$ for all coefficients $(j,k)$ belonging to blocks $\mathfrak{B}_{j\ell} = \Set{k \in \{0,\dots,2^{j-1}\} \given (\ell-1)2^{J_n} \leq k \leq \ell 2^{J_n}-1 }$ such that $\sum_{k\in \mathfrak{B}_{j\ell}}[\hat{f}_m^{(j,k)}]^2 \leq \Gamma^2 \log(n)/n$ and $j\geq J_n$.

  \State Set $\hat{\beta}_0^{(j,k)} = 0$ for all coefficients $(j,k)$ belonging to blocks $\mathfrak{B}_{j\ell} = \Set{k \in \{0,\dots,2^{j-1}\} \given (\ell-1)2^{J_n} \leq k \leq \ell 2^{J_n}-1 }$ such that $\sum_{k\in \mathfrak{B}_{j\ell}}[\hat{f}_m^{(j,k)}]^2 \leq \Gamma^2 \hat{T}_n^2$ and $j\geq J_n$.

  \State Let $\hat{f}_0^R = \mathrm{clip}(\sum_{\lambda\in \Lambda(J_n)}\hat{f}_0^{\lambda}e_{\lambda} + \sum_{\lambda\in \Lambda(\tilde{\jmath}_n)\backslash \Lambda(J_n)}(\hat{\alpha}_0^{\lambda} + \hat{\beta}_0^{\lambda})e_{\lambda} ,[0,\check{T}])$.
\end{algorithmic}
\end{algorithm}

We now discuss the computational complexity of our algorithm. As for the minimax rates, our interest is about the complexity of the algorithm as function of $n$, $\delta$, $\epsilon$, and $\zeta$. We do assume that pointwise evaluation of wavelets can be done in time $O(1)$. The complexity of step 2 of Algorithm~\ref{alg:complete} is dominated by the computation of the leading eigenvector of a $2^M\times 2^M$ matrix, which can be done in $O(2^{3M})$ time. The Theorem~\ref{thm:psitilde2}, however, prescribes that $2^M$ must be at least $\left(\frac{4R}{\zeta\sqrt{2^{2s_{*}}-1}} \right)^{1/s_{*}}$, so step~2 of Algorithm~\ref{alg:complete} is feasible in time $O(\zeta^{-3/s_{*}})$. The most demanding computation in step~3 of Algorithm~1 is to evaluate $\tilde{\psi}_2(Y_i)$ for all $i=1,\dots,n$. Since the wavelets are compactly supported, evaluating $\tilde{\psi}_2(Y_i)$ requires only summing $O(M)$ terms, and hence the step~3 can be achieved in time $O(n\cdot M) = O(n \log(1/\zeta))$. In the step~4 of Algorithm~\ref{alg:complete}, we do not need to reevaluate $(\tilde{\psi}_2(Y_i))_{i=1}^n$ since we can keep it in memory from the previous step. Exploiting the compactness of the support of the wavelets, we can compute $(\tilde{\psi}_1^{\lambda},\tilde{G}^{\lambda})_{\lambda \in \Lambda(\tilde{\jmath}_n}$ in time $O(n\cdot \tilde{\jmath}_n) = O(n\log(n/\tau^2)) = O(n\log(n/\zeta^2))$, again by Theorem~\ref{thm:psitilde2}. The thresholding of the coefficients can be trivially performed in time $O(2^{\tilde{\jmath}_n}) = O(\frac{n}{\log(n)\tau^2}) = O(\frac{n}{\log(n)\zeta^2})$ since there are $2^{\tilde{\jmath}_n}$ coefficients. Gathering all these estimates, it is seen that Algorithm~\ref{alg:complete} runs in time $O(\max(\zeta^{-3/s_{*}},\frac{n}{\log(n)\zeta^2},n\log(n/\zeta^2))$, which is typically dominated by $n\log(n)$. Furthermore, it is easily seen that running Algorithm~\ref{alg:rough} does not increase the computational complexity of the overall algorithm.

Our algorithm is thus simple and computationally efficient, avoiding any non-convex optimization step. It thus provides a promising alternative to existing methods. Further practical implementation may require additional work on tuning the hyperparameters, which is beyond the scope of this paper and a consideration for future research.

\subsection{Comparison with the case of discrete emissions}
\label{sec:previous-works}

To the best of our knowledge, the paper \cite{AGNparamhmm} is the only work that has considered the explicit dependence of the distance to the i.i.d frontier in the minimax rates of estimating HMM. In \cite{AGNparamhmm} we considered only the case of emissions on $\{1,\dots,K\}$ for known $K\geq 2$. The present work considers the more interesting (for applications) case of continuous emission densities. Although the results of both papers share some similarities, there are some aspects that are crucially different. The major difference between the discrete case and the present paper resides in the necessity of estimating the separating hyperplane described in Section~\ref{sec:estimation-procedure}. This step of the estimation procedure isn't needed for the discrete case, and was overlooked in the previous literature on nonparametric HMMs.

We note that the parametric part $\hat{Q}$ achieves the same rate in the nonparametric setting as in the multinomial setting (first inequality in Theorem \ref{th:informal}); at first glance 
this seems  unsurprising in view of the fact that the pairs $((X_n,h(Y_n))_{n\geq 0}$ form a hidden Markov model with transition matrix $Q$ for any function $h$, so that for a suitable $h$ we can reduce to a parametric setting.
This is the {\it{no bias}} phenomenon already used in \citep{MR3769193}  for multidimensional mixture models and in \citep{DanJu} for finite state space HMMs. 
Choosing $A_1,\dots,A_K$ partitioning $[0,1]$ and defining $h$ by $h(y)=k$ for $y\in A_k$, we may apply the results from the discrete setting to deduce that $Q$ can be estimated at the parametric rate given in \citep{AGNparamhmm}. However in said rate $\zeta$ must lower bound the euclidean distance between vectors $(\ip{f_0,\II_{A_k}}: k\leq K)$ and $(\ip{f_1,\II_{A_k}}: k\leq K)$. If the $A_k$ are not chosen carefully, this distance may be much smaller than $\norm{f_0-f_1}_{L^2}$, potentially even equal to 0. A suitable choice of $(A_k)_{k=1}^K$ depends on the direction $(f_0-f_1)/\norm{f_0-f_1}_{L^2} = \psi_2$, which is unknown and \emph{nonparametric}. 
This is tantamount to estimating the separating hyperplane.

Similarly, the no bias phenomenon could be exploited to build histogram estimators of $f_0$ and $f_1$ and thereby reducing the continuous case to the discrete case. Doing so, it is tempting to think that the minimax rates for the continuous case can be deduced from the results in \cite{AGNparamhmm}. Unfortunately, in \cite{AGNparamhmm} we did not explicit the dependence of the rates in the number of bins $K$, which do not enable for immediate obtentation of the rates for $f_0$ and $f_1$ since in the continuous case the number of bins must be a function of number of observations to ensure the adequate bias-variance tradeoff. Furthermore, the approach considered in this paper offers several advantages compared to the histogram approach: (i) histograms permit optimal estimation only in a very limited range of smoothness, \textit{ie.} $s_0,s_1 \in (0,2]$, compared to $(0,S]$ in this paper (where $S$ can be made large by choosing the suitable wavelet basis); (ii) making histogram estimators that are adaptive to smoothness requires some form of model selection to choose the optimal number of bins, which is avoided in this paper using thresholding; and  (iii) the estimator in \cite{AGNparamhmm} is a minimum distance estimator that requires solving a tricky non-convex optimization problem, while in the moment based estimator in the current paper is computable in almost linear time (see Section~\ref{sec:summary-algorithm}).

Finally, the continuous cases offers some curiosities in comparison with the discrete case. First, the minimax rate for estimating $f_0$ and $f_1$ in \cite{AGNparamhmm} was found to be of order $(\delta^2\epsilon^4\zeta^4n)^{-1}$. In the continuous case, although the minimax rate is also bounded by a term of order $(\delta^2\epsilon^4\zeta^4n)^{-1}$, in most regimes of interest\footnote{\textit{ie.} $\delta,\epsilon,\zeta$ small but not too small, as in Corollaries~\ref{cor:smooth-ub} and~\ref{cor:rough-ub}.} the dominating term in the rate is of order $(\delta^2\epsilon^2\zeta^2n)^{-2s_i/(2s_i+1)}$ for the smoothest density (see Corollary~\ref{cor:smooth-ub}) or $(\delta^2n)^{-2s_i/(2s_i+1)}$ (see Corollary~\ref{cor:rough-ub}). Thus, the constants $\delta,\epsilon,\zeta$ appear with different powers in the dominating term, which is a curiosity for which we do not have a clear intuition. Second, the ``borrowing estimation strength'' phenomenon described in Section~\ref{sec:nonparametric-part:2} came as a big surprise to us when writing this paper. We uncovered this phenomenon when trying to match the minimax upper and lower bounds, realizing that given one of the two densities, the other can be estimated in two ways, leading to different rates. We could't have guessed this phenomenon from our previous work \citep{AGNparamhmm} since its appear only in situations where $f_0$ and $f_1$ have different ``complexities'' -- here measured by smoothness $s_0,s_1$, in \cite{AGNparamhmm} measured by $K$ -- which we didn't considered earlier.

\ENDREV

\section{Conclusion and open questions}
\label{sec:discuss}

In this paper, we obtain precise behaviour of the minimax risk of all parameters in a nonparametric hidden Markov models, with exact constants regarding the distance to the i.i.d.\ frontier where the parameters become non-identifiable (we were not interested in the exact dependence of the constants with respect to $L$, $R$ and $\gamma^*$). In particular, we prove a surprising transition in the minimax rates depending on relative smoothnesses of the emission densities. 

Similarly to wavelet density estimation with i.i.d.\ data, the parameter $\Gamma$ used in the optimal threshold must be chosen depending on the upper $L$ for the supremum norms of $f_0,f_1$. In the i.i.d.\ case a simple workaround to adapt to $L$ is to obtain a consistent estimator of the density in $L^\infty$ norm, see \citep[Exercise 8.2.1]{GN16}, and plug into the threshold. In the HMM situation, it is not obvious how to obtain an asymptotically valid value for $L$ empirically. Our optimal threshold also depends on $\gamma^*$, which requires the preliminary step of the separation hyperplane estimation, itself requiring $L$. 
For the estimation of the separating hyperplane, we assume lower bounds on $\min \{s_0 , s_1\}$ and on $\zeta$.
 If neither $L$ nor $\sg$ is known, the interconnectedness of the parametric and nonparametric part causes us difficulty in fully adapting. 

 The main open question concerns full adaptation to get the right constants in the upper bound when a transition occurs due to different smoothnesses. 
From results herein one deduces the existence of pairs of estimators $(\check{f}_{0},\check{f}_{1})$, $(\check{f}^{R}_{0},\check{f}_{1})$, $(\check{f}_{0},\check{f}^{R}_{1})$, $(\check{f}^{R}_{0},\check{f}^{R}_{1})$ of which one pair is minimax optimal. When it is known which pair to use, we indeed get minimax optimal estimators. The question of the possibility or impossibility of choosing the correct pair without oracle guidance is of distinguished interest, yet challenging. It will be the subject of a future work.

\REV
Finally, we remark that we only investigated the minimax rates over Besov $B_{2,\infty}^{s_j}$ bodies. But our results can easily be extended to $B_{2,q_j}^{s_j}$ for any $1\leq q_j \leq\infty$. Indeed, it is trivial that $\|\cdot\|_{B_{2,\infty}^s} \leq \|\cdot\|_{B_{2,q}^s}$ for all $s>0$ and all $1\leq q \leq \infty$, from which it is deduced that $B_{2,\infty}^{s_j}$ balls are larger than $B_{2,q_j}^{s_j}$ balls, hence all our upper bounds remain valid if $B_{2,\infty}^{s_j}$ is replaced by $B_{2,q_j}^{s_j}$. On the other direction, we prove the lower bounds using a classical reduction to a multiple hypotheses testing problem, and it can be seen in our proofs (see for instance Section~\ref{proof:thm:2}) that the hypotheses we choose all belong to $\{(f_0,f_1)\;:\; \max_{i=0,1}\|f_i\|_{B_{2,1}^{s_i}} \leq R\}$. Hence our minimax lower bounds indeed hold over $B_{2,1}^{s_j}$ bodies, and thus extend trivially to $B_{2,q_j}^{s_j}$ bodies for any $1\leq q_j \leq \infty$, by the same embedding argument as before. A natural direction for the next would be to investigate the rates over $B_{p,q}^s$ bodies $1\leq p,q\leq \infty$, $s>0$, with loss measured in $L_r$ norm for $1\leq r \leq \infty$, as it is classical in nonparametric estimation (see for instance the seminal paper of \cite{donoho1996}). In this situation, we expect that the rates will exhibit the same ``elbow'' uncovered by \cite{donoho1996}, but it would be interesting to figure out the interplay between $(\delta,\epsilon,\zeta)$ and $(p,q,s,r)$, which is beyond the scope of the present paper.

\ENDREV


\acks{Kweku Abraham is supported bythe EPSRC Programme Grant on the Mathematics of Deep Learning, under the project: EP/V026259/1. {\'E}lisabeth Gassiat is supported by Institut Universitaire de France. {\'E}lisabeth Gassiat and Zacharie Naulet are support by the ANR under projects ANR-21-CE23-0035-02 and ANR-23-CE40-0018-02.}


\newpage

\appendix

\section{About the assumption of two independent samples}
\label{sec:disc-about-assumpt}

We assumed in the paper that we first get $\tilde{\psi}_{2}$ based on an independent sample of the HMM. 
Suppose we are given a single stationary HMM of length $3n$ with distribution $\PP_{\theta}$ such that the hidden Markov chain has absolute spectral gap $\gamma^*$. Let $Y'=(Y_1,\dots,Y_n)$, $\tilde{Y}'= (Y_{2n+1},\dots,Y_{3n})$, and denote $\PP_{(Y',\tilde{Y}')}$ the distribution of $(Y',\tilde{Y}')$. Denote also $\PP_{Y'}$ the distribution of $Y'$ (which is the same as the distribution of $\tilde{Y}'$ by stationarity).
 For $j=1,\ldots,4$ let $\hat{\theta}_{j}$ denote our estimator of $\theta_{j}$. Notice that $\hat{\theta}_{j}$ (resp.\ $\theta_{j}$) is non-negative and bounded by $2$ (resp.\ 1) for $j=1,2$ and $\check{T}$ (resp. $L$) for $j=3,4$, so that, denoting $M$ (resp.\ $\tilde{M}$) the upper bound, we have
 $\norm{\hat{\theta}_{j}-\theta_{j}}\leq M \vee \tilde{M}$, $\norm{\cdot}$ being the euclidean norm for $j=1,2$ and the $L^2[0,1]$-norm for $j=3,4$. Then,
 \begin{align*}
   &\EE_{\PP_{(Y',\tilde{Y}')}}\left(\norm{ \hat{\theta}_{j}-\theta_{j}}^2\right)\\
   =&\int_{0}^{M\vee \tilde{M}}\PP_{(Y',\tilde{Y}')}\left(\norm{ \hat{\theta}_{j}-\theta_{j}}^2\geq t \right)dt\\
=&~\EE_{\PP_{Y'}^{\otimes 2}}\left(\norm{ \hat{\theta}_{j}-\theta_{j}}^2\right)+\int_{0}^{M \vee \tilde{M}}\left[\PP_{(Y',\tilde{Y}')}\left(\norm{ \hat{\theta}_{j}-\theta_{j}}^2\geq t \right)-\PP_{Y'}^{\otimes 2}\left(\norm{ \hat{\theta}_{j}-\theta_{j}}^2\geq t \right)\right]dt\\
\leq&~\EE_{\PP_{Y'}^{\otimes 2}}\left(\norm{ \hat{\theta}_{j}-\theta_{j}}^2\right) + \big(M \vee \tilde{M}\big)\norm{\PP_{(Y',\tilde{Y}')}
	-\PP_{Y'}^{\otimes 2}}_{\textnormal{TV}},
 \end{align*}
 where $\norm{\cdot }_{\textnormal{TV}}$ denotes the total variation norm. Using Proposition \ref{prop:tv} below, we deduce that the first term on the right side of the last display dominates the second, hence the only cost of using one sample for the whole procedure is a multiplicative constant factor.

	\begin{proposition}
	\label{prop:tv}
There exist universal constants $C$ and $c$ such that	
	\[\norm{\PP_{(Y',\tilde{Y}')}
	-\PP_{Y'}^{\otimes 2}}_{\textnormal{TV}} \leq C e^{-c\gamma^* n}.\]
	\end{proposition}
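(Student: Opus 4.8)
The dependence between the two blocks $Y'=(Y_1,\dots,Y_n)$ and $\tilde{Y}'=(Y_{2n+1},\dots,Y_{3n})$ is mediated entirely by the hidden chain, and the plan is to peel off this dependence layer by layer using the fact that total variation does not increase under a Markov kernel. First I would note that, given $\bm{X}$, the block $Y'$ is obtained from $X_{1:n}\coloneqq(X_1,\dots,X_n)$ by the product emission kernel $\Phi_n\colon (a_1,\dots,a_n)\mapsto \bigotimes_{i=1}^n f_{a_i}(y_i)\intd y_i$, and likewise $\tilde{Y}'$ from $X_{2n+1:3n}\coloneqq(X_{2n+1},\dots,X_{3n})$ by the same kernel $\Phi_n$, with the two randomisations conditionally independent. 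Hence $\PP_{(Y',\tilde{Y}')}$ is the image of $\PP_{(X_{1:n},X_{2n+1:3n})}$ under $\Phi_n\otimes\Phi_n$, while (using stationarity to identify the marginal of $\tilde{Y}'$ with that of $Y'$) the product $\PP_{Y'}^{\otimes 2}$ is the image of $\PP_{X_{1:n}}\otimes\PP_{X_{2n+1:3n}}$ under the same $\Phi_n\otimes\Phi_n$. Since $\norm{K_*\mu-K_*\nu}_{\textnormal{TV}}\leq\norm{\mu-\nu}_{\textnormal{TV}}$ for any Markov kernel $K$,
\[
\norm{\PP_{(Y',\tilde{Y}')}-\PP_{Y'}^{\otimes 2}}_{\textnormal{TV}}\leq \norm{\PP_{(X_{1:n},X_{2n+1:3n})}-\PP_{X_{1:n}}\otimes\PP_{X_{2n+1:3n}}}_{\textnormal{TV}}.
\]

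Next I would reduce further to the two endpoint states $X_n$ and $X_{2n+1}$. By the Markov property of $\bm{X}$, conditionally on $(X_n,X_{2n+1})$ the blocks $X_{1:n}$ and $X_{2n+1:3n}$ are independent, the conditional law of the first depending only on $X_n$ (it is the time-reversed chain of length $n$ pinned to end at $X_n$) and that of the second only on $X_{2n+1}$ (the chain of length $n$ started at $X_{2n+1}$). Thus both $\PP_{(X_{1:n},X_{2n+1:3n})}$ and $\PP_{X_{1:n}}\otimes\PP_{X_{2n+1:3n}}$ are images under one common Markov kernel of $\PP_{(X_n,X_{2n+1})}$ and of $\pi\otimes\pi$ respectively (the marginals of $X_n$ and $X_{2n+1}$ being $\pi$ by stationarity), so the contraction property gives $\norm{\PP_{(X_{1:n},X_{2n+1:3n})}-\PP_{X_{1:n}}\otimes\PP_{X_{2n+1:3n}}}_{\textnormal{TV}}\leq\norm{\PP_{(X_n,X_{2n+1})}-\pi\otimes\pi}_{\textnormal{TV}}$. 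Finally, since $X_n\sim\pi$ and the conditional law of $X_{2n+1}$ given $X_n=x$ is $\delta_x Q^{n+1}$, one has $\norm{\PP_{(X_n,X_{2n+1})}-\pi\otimes\pi}_{\textnormal{TV}}=\sum_x \pi(x)\,\norm{\delta_x Q^{n+1}-\pi}_{\textnormal{TV}}$. The two-state chain is reversible with eigenvalues $1$ and $\lambda\coloneqq 1-p-q$, and in fact $Q^{t}=\Pi+\lambda^{t}(I-\Pi)$ where $\Pi$ has both rows equal to $\pi$, so $\norm{\delta_x Q^{t}-\pi}_{\textnormal{TV}}\leq\abs{\lambda}^{t}$ for every $x$. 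As $\theta\in\RegulClass$ forces $\abs{\lambda}=\abs{1-p-q}\leq 1-\sg$, taking $t=n+1$ yields $\norm{\PP_{(X_n,X_{2n+1})}-\pi\otimes\pi}_{\textnormal{TV}}\leq(1-\sg)^{n+1}\leq e^{-\sg n}$, and chaining the three displays proves the proposition (one may take $C=c=1$; any $C,c>0$ as in the statement then also works).

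The only genuinely delicate point is the conditional-independence bookkeeping in the first two steps, i.e.\ making precise that the conditional laws appearing there depend only on the relevant endpoint states; this is a routine consequence of the HMM structure and the Markov property, and everything else is the standard spectral-gap mixing estimate for a reversible chain. If one prefers to avoid the explicit closed form for $Q^{t}$, one can instead bound $\norm{\delta_x Q^{t}-\pi}_{\textnormal{TV}}\leq (1-\sg)^{t}/(2\sqrt{\pi(x)})$ by reversibility and the $L^2(\pi)$ contraction, and then use $\sum_x\sqrt{\pi(x)}\leq\sqrt{2}$ to reach the same conclusion.
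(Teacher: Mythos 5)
Your proof is correct and follows essentially the same route as the paper: reduce the total-variation distance on the observations to the corresponding distance on the hidden chain, then invoke geometric ergodicity of the two-state chain. The paper does the first reduction by lifting to the pairs $Z_i=(X_i,Y_i)$ (picking up a harmless factor of $2$) and then simply cites ``uniform geometric ergodicity of the binary chain''; you instead use a clean data-processing argument through the emission kernel, reduce further to the endpoint pair $(X_n,X_{2n+1})$, and make the ergodicity step explicit via $Q^t=\Pi+\lambda^t(I-\Pi)$ with $\abs{\lambda}=\abs{1-p-q}\leq 1-\sg$ on $\RegulClass$, which yields the bound with $C=c=1$. This is a complete and slightly sharper version of the paper's argument; the conditional-independence bookkeeping you flag as the delicate point is handled correctly.
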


\begin{proof}
Denote $Z_i = (X_i, Y_i )$, $i=1,\ldots,3n$, where $(X_{1},\cdots,X_{n})$ is the hidden Markov chain. Using similar notations, we have
$$
\norm{\PP_{(Y',\tilde{Y}')}
	-\PP_{Y'}^{\otimes 2}}_{\textnormal{TV}} \leq \
	\norm{\PP_{(Z',\tilde{Z}')}
	-\PP_{Z'}^{\otimes 2}}_{\textnormal{TV}}.
$$
Now, for any $(x_{1},\ldots,x_{n},x_{2n+1},\ldots,x_{3n})$, the distribution of $(Y_{1},\ldots,Y_{n},Y_{2n+1},\ldots,Y_{3n})$ conditional on $(X_{1},\ldots,X_{n},X_{2n+1},\ldots,X_{3n})=(x_{1},\ldots,x_{n},x_{2n+1},\ldots,x_{3n})$ is the same under $\PP_{(Y',\tilde{Y}')}$ and $\PP_{Y'}^{\otimes 2}$, so that
$$\norm{\PP_{(Z',\tilde{Z}')}
	-\PP_{Z'}^{\otimes 2}}_{\textnormal{TV}}\leq 2\norm{\PP_{(X',\tilde{X}')}
	-\PP_{X'}^{\otimes 2}}_{\textnormal{TV}}$$
	and the result follows from the uniform geometric ergodicity of the binary chain.
\end{proof}

\section{Proofs for the lower bounds}
\label{proof:lower}
For proving our lower bounds, we shall follow the usual path, in which we need at some point upper bounds for distances between joint distributions $P^{(n)}_{\theta}$ for different values of $\theta$. We shall use the same trick as the one used in \citep{AGNparamhmm}, that is an upper bound on the Kullback-Leibler divergence using a pseudo-distance $\rho$ between parameters, see  the end of Section III in  \citep{AGNparamhmm} for heuristics explaining the importance of $\rho$ interpreted as a fundamental statistical distance in HMM learning.

The following result is Proposition 2 in  \citep{AGNparamhmm}, for which a close look at the proof shows that it still holds with emission densities on $[0,1]$ instead of probability mass functions.
\begin{proposition}
 \label{prop:UpperBoundOnKullback}
	Assume there exists $c>0$ such  that $\min(f_0,f_1,\tilde{f}_0,\tilde{f}_1)\geq c$ uniformly on $[0,1]$.
	Then
	\begin{equation} K(P^{(n)}_\theta,P^{(n)}_{\tilde{\theta}}) \leq Cn\rho(\phi(\theta),\psi(\theta);\phi(\tilde{\theta}),\psi(\tilde{\theta}))^2,\end{equation}
	where, as in \citep{AGNparamhmm}, we have defined
	\begin{multline}
          \rho(\phi,\psi;\tilde{\phi},\tilde{\psi}) = \max\braces[\big]{ \abs{r(\phi)-r(\tilde{\phi})},\abs{\phi_2r(\phi)-\tilde{\phi}_2r(\tilde{\phi})},\\
            \abs{\phi_1\phi_2\phi_3r(\phi)-\sgn\brackets[\big]{\ip{\psi_2,\tilde{\psi}_2}}\tilde{\phi}_1\tilde{\phi}_2\tilde{\phi}_3r(\tilde{\phi})},\\
		\norm{\psi_1-\tilde{\psi}_1}_{L^2},\max(\abs{r(\phi)},\abs{r(\tilde{\phi})})\norm{\psi_2-\sgn\brackets[\big]{\ip{\psi_2,\tilde{\psi}_2}}\tilde{\psi}_2}_{L^2}}.
\end{multline}
[Recall $r(\phi)=(1/4)(1-\phi_1^2)\phi_2\phi_3^2$.]
\end{proposition}

\subsection{Proof of Theorem \ref{theo:lower:param}}
\label{proof:lower:param}
To prove Theorem \ref{theo:lower:param}, we shall use a standard two-points argument using Le Cam's method (\cite{cam:1986}, see also \cite{MR1462963}  for a review of lower bound ideas): if $\theta$ and $\tilde{\theta}$ in  $\SmoothClass \cap \RegulClass$ are such that  $|p-\tilde{p}|^2 \geq R_n$ and 
$K(P^{(n)}_\theta,P^{(n)}_{\tilde{\theta}}) \leq \alpha <1$, then
$$
 \inf_{\hat{\theta}} \sup_{\theta\in \SmoothClass \cap \RegulClass}\EE_{\theta}\Big(|\hat{p} - p|^2 \Big)
    \geq \frac{R_{n}}{4}\left(1-\sqrt{\alpha}\right).
$$
We follow the method in  the multinomial case \citep[see][]{AGNparamhmm} used to choose the two points in proving Theorems 1 and 3 therein, except that rather than defining $\psi$ according to Lemma 3 therein we choose $\psi_1=1$ and $\psi_2(x)=\sqrt{3}(2x-1)$. This choice of $\tilde{\psi}=\psi$ leads to lower bounded $f_0$ and $f_1$ (so that we can apply Proposition \ref{prop:UpperBoundOnKullback}) when $\|f_{0}-f_{1}\|_{L^2}= \zeta \leq 1/(4\sqrt{3})$, $\|f_{i}\|_{\infty}\leq 5/8$ and $\|f_{i}\|_{B^{s_{i}}_{2,\infty}}\leq 5/4 + 1/(8\sqrt{3})$, $i=0,1$, as a consequence of the inversion formulae (Lemma \ref{lem:invert-param}).
Under the assumption that for a suitable 
$\epsilon_0>0$ we have $\zeta\leq 1/(4\sqrt{3})$, $\sg\leq 1/3$, $\epsilon\leq \epsilon_0$, $\delta\leq 1/6$,  the proof of the lower bounds for $\phi$ in Theorem 3 and the lower bound for $p$ in Theorem 1 in  \citep{AGNparamhmm} goes through to get the result. 
That is: \\
When $\delta > \epsilon \zeta$, we choose $\phi=(1-3\delta,\epsilon,\zeta(1+S)^{1/2}$ with $S=(2-6\delta-\sqrt{R}_n)\sqrt{R}_n/(6\delta-9\delta)$ and $R_n =c/(n\epsilon^4 \zeta^6)$, and we choose $\tilde{\phi}=(1-3\delta-\sqrt{R}_n,\epsilon,\zeta)$, so that $r(\phi)=r(\tilde{\phi})$, $ \rho(\phi,\psi;\tilde{\phi},\tilde{\psi}) \leq 6cn^{-1/2}$ and  $|p-\tilde{p}|^2 \geq c/(n\epsilon^4 \zeta^6)$. \\
When now $\delta \leq \epsilon \zeta$, we choose $\phi=(1-3\delta,\epsilon,\zeta(1+\sqrt{R}_n/\epsilon)^{1/2}$ with $R_n =c/(n\epsilon^2 \delta^2 \zeta^4)$ and $\tilde{\phi}=(1-3\delta,\epsilon+\sqrt{R}_n,\zeta)$, so that again $r(\phi)=r(\tilde{\phi})$, $ \rho(\phi,\psi;\tilde{\phi},\tilde{\psi}) \leq cCn^{-1/2}$ for some constant $C$, and  $|p-\tilde{p}|^2 \geq c/(n\epsilon^2 \delta^2 \zeta^4)$. The theorem follows by setting $c$ small enough.

\subsection{Proof of Theorem \ref{thm:2}}
 \label{proof:thm:2}
 For the parametric term in the lower bound, we are again able to copy the proof of \citep{AGNparamhmm} Theorems 1 and 3 up to the choice of $\psi$. Under the assumption that for a suitable 
$\epsilon_0>0$ we have $\zeta\leq 1/(4\sqrt{3})$, $\sg\leq 1/3$, $\epsilon\leq \epsilon_0$, $\delta\leq 1/6$, as with proving Theorem~\ref{theo:lower:param} we choose $\psi_1=1$, $\psi_2(x)=\sqrt{3}(2x-1)$, $\tilde{\psi}=\psi$ and the proof of the lower bound for $f_0$ in \citep[Theorem 1]{AGNparamhmm} goes through. 
That is we choose $\phi=(1-3\delta,\epsilon,\zeta(1+S)^{1/2}$ with $S=(2-6\delta-\sqrt{R}_n)\sqrt{R}_n/(6\delta-9\delta)$ and $R_n =c/(n\epsilon^4 \zeta^6)$,  and we choose $\tilde{\phi}=(1-3\delta-\sqrt{R}_n,\epsilon,\zeta)$. Again $ \rho(\phi,\psi;\tilde{\phi},\tilde{\psi}) \leq 6cn^{-1/2}$ and now $\|f_0-\tilde{f}_0\|_{L^2}^2 \geq c/(n\delta^2\epsilon^4 \zeta^4)$. \\

We now prove the lower bound given in the second part of the theorem
\[ R_{\textnormal{smooth}}=(n\delta^2\epsilon^2\zeta^2)^{-s_0/(2s_0+1)}\] 

We proceed via a usual reduction to multiple testing, see for instance \citep{tsybakov:2009}.
For a suitable $c,\alpha$, it suffices to construct function 
$f_{0,m}\in \{f\;:\; \|f\|_{B_{2,\infty}^{s_0}} \leq R\},f_{1,m}\in \{f\;:\; \|f\|_{B_{2,\infty}^{s_1}} \leq R\}$, $0\leq m\leq M=\ceil{2^{c2^j}}$, for some $j$, such that
\begin{equation}
	\label{eqn:KullbackSmallf0Far}
	K\brackets[\Big]{P_{m}^{(n)}, P_{0}^{(n)}} \leq c\alpha 2^j, \quad \norm{f_{0,m}-f_{0,m'}}_{L^2} \geq cR_{\textnormal{smooth}},
\end{equation}
where $P_m^{(n)}$ denotes the law of $(Y_1,\dots,Y_n)$ under parameter $\theta_m=(p_m,q_m,f_{0,m},f_{1,m})$ (for suitable choices of the parameters $p_m,q_m$).
Indeed, given such functions, we note that
\[ \frac{1}{M\log M} \sum_{m=1}^M K\brackets[\big]{P_m^{(n)},P_0^{(n)}}\leq \alpha,\] so that applying \citep[Theorem 6.3.2]{GN16} yields the claim (for example $\alpha=1/16$ suffices). We closely follow the proof of \citep[Theorem 6.3.9]{GN16} to construct $f_{0,m}$, and use ideas inspired by \citep{AGNparamhmm} to choose the remaining parameters of $\theta_m$.

Define \begin{align*} f_{0,0}=1, \quad  f_{1,0}=f_{0,0}+\zeta \psi_{2,0}, \\
	\psi_{2,0}(x) = \sqrt{3}(2x-1).
\end{align*}
Note that $f_{0,0},f_{1,0}\geq 3/4$ pointwise (recall we assumed $\zeta\leq (4\sqrt{3})^{-1}$) and hence any small perturbations of these will remain bounded away from zero.

We choose perturbations $f_{0,m}$ of $f_0$ to satisfy the second condition of equation~\eqref{eqn:KullbackSmallf0Far}, and we choose the remaining parameters $f_{1,m}$, $p_m,q_m$ to ensure the Kullback--Leibler condition holds. Proposition~\ref{prop:UpperBoundOnKullback}, which upper bounds the KL divergence by a `` distance'' $\rho$ will be of help for the latter. 

Define the parameters $\theta_m=(p_m,q_m,f_{0,m},f_{1,m})$ as follows:
First, choose $\phi_{1,m}= - 1+c\delta$ and $\phi_{2,m}=\epsilon$ for all $m\geq 0$ and define $p_m,q_m$ according to the inversion formulae in Lemma~\ref{lem:invert-param}. Next, for $m\geq 1$, for $g_m$ to be chosen define 
\[ f_{0,m} = f_{0,0} + g_m,\quad f_{1,m} = f_{1,0}- \frac{1+\phi_1}{1-\phi_1} g_m.\]  Writing $\psi_{1,m},\psi_{2,m},\phi_{3,m}$ for the corresponding alternative parametrisation as in Section~\ref{sec:estimation-procedure}, the above choice ensures that $\psi_{1,m}=\psi_{1,0}$ regardless of the choice of $g_m$. We will choose $g_m$ (depending on $n$) such that $\norm{\psi_{2,m}-\psi_{2,0}}_{L^2}\to 0$ (uniformly in $m$) as $n\to\infty$ so that in particular it is less than $2$ eventually, hence
\[ \ip{\psi_{2,m},\psi_{2,0}}= 1- \tfrac{1}{2} \norm{\psi_{2,m}-\psi_{2,0}}_{L^2}^2 \geq 0.\]
Under the condition that $\phi_{3,m}\asymp \zeta$, one sees that
\[\rho\brackets[\big]{(\phi,\psi)(\theta_m);(\phi,\psi)(\theta_0))}=C\max \braces[\Big]{\delta\epsilon\zeta \abs{\phi_{3,m}-\phi_{3,0}},\delta\epsilon\zeta^2\norm{\psi_{2,m}-\psi_{2,0}}_{L^2}}.\] 
 We calculate 
 $ f_{0,m}-f_{1,m} = f_{0,0}-f_{1,0} +\frac{2}{2-c\delta}g_m$ and hence, using that $\norm{f_{0,0}-f_{1,0}}_{L^2}=\phi_{3,0}=\zeta$,
 \begin{align*} \abs{\phi_{3,m}-\phi_{3,0}} &= \norm{f_{0,m}-f_{1,m}}_{L^2}-\norm{f_{0,0}-f_{1,0}}_{L^2}
 	\leq \tfrac{2}{2-c\delta}\norm{g_m}_{L^2}, 
 \end{align*}
 and
 \begin{align*}
   \norm{\psi_{2,m}-\psi_{2,0}}_{L^2} &= \norm[\Big]{ \frac{f_{0,m}-f_{1,m}}{\phi_{3,m}} -\frac{f_{0,0}-f_{1,0}}{\phi_{3,0}}}_{L^2}\\
   &\leq \frac{\abs{\phi_{3,0}-\phi_{3,m}}}{\phi_{3,m}} +\frac{2\norm{g_m}_{L^2}}{2-c\delta\phi_{3,m}}\lesssim \zeta^{-1}\norm{g_m}_{L^2},
 \end{align*}
yielding 
\begin{equation}\label{eqn:RhoGm} \rho\brackets[\big]{(\phi,\psi)(\theta_m);(\phi,\psi)(\theta_0))} \leq C' \delta\epsilon \zeta \norm{g_m}_{L^2}.\end{equation} [provided $c\delta \leq 1$, say, and the condition $\phi_{3,m}\asymp \zeta$ reduces to $\norm{g_m}_{L^2}\leq \zeta/3$, say.]. 
 
Now we verify that there are $M$ valid choices of $g_m$ such that $f_{0,m}$ and $f_{0,m'}$ are suitably separated in $L^2$ distance but suitably close in Kullback--Leibler divergence as in \eqref{eqn:KullbackSmallf0Far}, and $f_{0,m}$ and $f_{1,m}$ are in the appropriate Sobolev balls. Fix $S\geq s_0$, and let $\varphi_{jk},$ $k\leq 2^j$ be a collection of wavelet functions supported in the interior of $[0,1]$ given as scaled translates $\varphi_{jk}=2^{j/2}\varphi(2^j(\cdot)-k)$ of an $S$-regular Daubechies wavelet function $\varphi$ supported in $[1,2N]$ for some $N=N(S)$. We may choose a collection of $c_02^j$ of these functions whose supports are pairwise disjoint for some $c_0=c_0(S)>0$; we denote these $\braces{\varphi_{jp} : 1\leq p\leq c_0 2^j}$ in a slight abuse of notation. By the Varsharmov--Gilbert bound \citep[Example 3.1.4]{GN16} there exist $c_1,c_2>0$ such that we may choose a set $\mathcal{M}=\braces{\beta_{m,\cdot}\in\braces{-1,1}^{c_0 2^j} : m\leq 2^{c_1 2^j}}$ for which
\[ \sum_p \abs{\beta_{mp}-\beta_{m'p}}^2 \geq c_2 2^{j}, \quad \forall p'\neq p.\]
Set $g_m = \alpha_1 \sum_p \beta_{m,p} \varphi_{jp}$ for $\alpha_1$ to be chosen and observe that
\begin{align*} \norm{f_{0,m}}_{B^{s_0}_{2,\infty}}\leq 1 + \norm{g_m}_{B^{s_0}_{2,\infty}} &=  1+\alpha_1 2^{js_0} \brackets[\big]{\sum_{p} \beta_{m,p}^2}^{1/2} 
	= 1 + c_0\alpha_1 2^{j(s_0+1/2)}, \\
	\norm{g_m}_{L^2}^2 & = \alpha_1^2 \sum_p \beta_{m,p}^2 \norm{\varphi_{jp}}_{L^2}^2= c_0 \alpha_1^2 2^{j}, \\
\norm{f_{0,m}-f_{0,m'}}^2=\norm{g_m-g_{m'}}_{L^2}^2 &= \alpha_1^2 \sum_p \abs{\beta_{m,p}-\beta_{m,p'}}^2 
 \geq c_2 \alpha_1^2 2^{j}.
\end{align*}
The first line ensures that $\norm{f_{0,m}}_{B^{s_0}_{2,\infty}}\leq R$ if $\alpha_1^2 \asymp 2^{-j(2s_0+1)}$; note also that consequently $\norm{f_{1,m}}_{B^{s_1}_{2,\infty}}\leq 1+\delta\norm{g_m}_{B^{s_1}_{2,\infty}} 
\lesssim 1+\delta 2^{j[s_{1}-s_{0}]}$. For this choice of $\alpha_1$, the second line, in conjunction with \eqref{eqn:RhoGm} and Proposition~\ref{prop:UpperBoundOnKullback} yields that $K(P_m^{(n)},P_0^{(n)})\lesssim n\delta^2\epsilon^2\zeta^2 2^{-2js_0}$, so that choosing $j$ such that $2^{j(2s_0+1)}\asymp n\delta^2\epsilon^2\zeta^2$ gives the required bound on Kullback--Leibler divergences in \eqref{eqn:KullbackSmallf0Far}. Note also that $\norm{g}_{\infty} \asymp \alpha_1 2^{j/2}$ so that for this choice of $j$ we have 
$f_{0,m}\geq 1/2, f_{1,m}\geq 1/2$ on $[0,1]$ for $n$ large, hence Proposition~\ref{prop:UpperBoundOnKullback} indeed applies, and as soon as  $(n\delta^2\epsilon^2\zeta^2)^{-s_0/(1+2s_0)}\lesssim \zeta$ we get as needed $\phi_{3,m}\asymp \zeta$. Also, $f_{1,m}$ is in the appropriate Sobolev ball if 
 $\delta^{2s_{1}+1}(n\epsilon^2\zeta^2)^{s_{1}-s_0}\lesssim 1$.
 Finally, for these choices of $\alpha_1$ and $j$, the third line yields $\norm{f_{0,m}-f_{0,m'}}_{L^2}\gtrsim (n\delta^2\epsilon^2\zeta^2)^{-s_0/(2s_0+1)}.$ \\

We finally prove the general lower bound 
\[ R_{\textnormal{rough}}=(n\delta^2)^{-s_0/(2s_0+1)},\] 
again using a reduction to multiple testing. As before choose $\phi_{1,m}=-1+c\delta,\phi_{2,m}=\epsilon$, and choose $f_{0,0},f_{1,0}$ as in proving $R_{\textnormal{smooth}}$. Now set
\[ f_{0,m} = f_{0,0}+ g_m, \quad f_{1,m}=f_{1,0}.\]
We now have $f_{0,m}-f_{1,m} = f_{0,0}-f_{1,0} + g_m$ which is of the same form as before up to the coefficient $2/(2-c\delta)\in [1,2]$ which no longer appears. The calculations for $\rho$ then go through fundamentally unchanged except that we no longer have $\psi_{1,m}=\psi_{1,0}$, hence
\[ \rho\brackets[\big]{(\phi,\psi)(\theta_m);(\phi,\psi)(\theta_0)}\leq C' \max\brackets[\big]{\delta\epsilon\zeta \norm{g_m}_{L^2}, \norm{\psi_{1,m}-\psi_{1,0}}_{L^2}}.\]
We calculate
\[ \psi_{1,m}-\psi_{1,0} = \tfrac{1}{2}(1+\phi_{1,m})f_{0,m} + \tfrac{1}{2} (1-\phi_{1,m})f_{1,m} = \tfrac{1}{2}c\delta g_m,\]
hence calculating the upper bound $C'' \delta \norm{g_m}_{L^2}$ for $\rho$.

Choosing $M=\floor{2^{c2^j}}$ functions $g_m$ as before, we again choose the factor $\alpha_1$ proportional $2^{-j(2s_0+1)}$ to ensure $\norm{f_{0,m}}_{B^{s_0}_{2,\infty}}\leq R$; note now that $\norm{f_{1,m}}_{B^{s_1}_{2,\infty}}=\norm{f_{1,0}}_{B^{s_1}_{2,\infty}}$ for all $m$ so that these are suitably bounded.

Where before we chose $2^{j(2s_0+1)}\asymp n\delta^2\epsilon^2\zeta^2$ to obtain the required bound on the KL divergences in equation~\eqref{eqn:KullbackSmallf0Far}, we now must choose $2^{j(2s_0+1)}\asymp n\delta^2$. This leads to $\norm{f_{0,m}-f_{0,m'}}_{L^2}\gtrsim (n\delta^2)^{-s_0/(2s_0+1)}$ so that equation~\eqref{eqn:KullbackSmallf0Far} holds with $R_{\textnormal{rough}}= (n\delta^2)^{-s_0/(2s_0+1)}$ in place of $R_{\textnormal{smooth}}$. This yields the claim.

\section{Proofs for the upper bounds}
\label{sec:proofs}

\subsection{Overview of the proofs}

The proofs of the upper bounds proceed according to the following strategy. 

In Section~\ref{sec:useful-lemmas}, we first state a series of lemmas whose purpose is to simplify further proofs. These lemmas are elementary -- yet crucial -- results about the reparameterization $\theta \mapsto \phi$ and its inversion, the simplification of the expression of $m(\phi)$ given in \eqref{def:m}, and the (quasi) inversion formula to recover $\phi$ from $m(\phi)$. The section also contains auxiliary results on $p_\theta^{(k)}$ and useful concentration inequalities for Markov chains.

The Section~\ref{sec:conc-ineq-param} establishes deviation inequalities for $|\hat{m}_j - m(\phi)_j|$ and $|\hat{m}_j/m(\phi)_j - 1|$. These deviation inequalities are used many times after when using the method of moments to estimate $\theta$. It is worth noting that $m(\phi)$, together with the coefficients $\{\psi_1^{\Phi_{Jk}}\}$, $\{\psi_1^{\Psi_{jk}}\}$, $\{G^{\Phi_{Jk}}\}$ and $\{G^{\Psi_{jk}}\}$ are all easy functionals of $p_\theta^{(s)}$ for some $s\geq 1$, and can all be estimated at a universal marginal rate $c\sqrt{n}$, with $c$ eventually depending on $\gamma_*$ and $L$ but nothing else. Thus one can think of estimating those quantities as solving the \textit{direct} problem. The main challenge is to translate the inequalities for the direct problem onto inequalities for the \textit{inverse} problem, \textit{i.e}. for $(Q_\theta,f_0,f_1)$, which is the purpose of the subsequent subsections.

The Section~\ref{sec:proof-prop-refpr} proves the Theorem~\ref{thm:concent-pq}, \textit{ie}. the minimax upper bound for estimating $Q_\theta$. This is done in many steps, that can be on a high level summarized by upper bounding $|\hat{p} - p|$ (similarly $|\hat{q} - q|$) by a parameter dependent term times $\max_{j=1,2,3}|\hat{m}_j - m(\phi)_j|$, and then using the concentration inequalities for $|\hat{m}_j - m(\phi)_j|$ to conclude. Here, we emphasize that the obtention of a tight upper bound for $|\hat{p} - p|$ in terms of the deviation of the moments is crucial in obtaining the exact minimax rate and requires substantial work.

The Section~\ref{sec:proof-thm:1} proves the Theorem~\ref{thm:1}, \textit{ie.} the minimax upper bounds to estimate $f_0$ and $f_1$ when $s_0 = s_1$.  The proof relies on a somewhat classical decomposition of the risk when studying block-threshold wavelet density estimators, with additional cares to be taken due to the optimal threshold depending on the parameters and being estimated. Modulo these additional cares, the proofs follows the classical steps and is based on deviation inequalities for $\|\hat{f}_m^{\Block_{j\ell}} - f_m^{\Block_{j\ell}}\|$ and similar quantities, to establish that the chosen threshold balances the bias and variance optimally. In contrast with classical density estimation, estimation of the empirical wavelets coefficients requires here to solve an inverse problem. This is done by upper bounding $\|\hat{f}_m^{\Block_{j\ell}} - f_m^{\Block_{j\ell}}\|$ in term of $\|\hat{\psi}_1^{\Block_{j\ell}} - \psi_1^{\Block_{j\ell}}\|$, $\|\hat{G}_1^{\Block_{j\ell}} - G^{\Block_{j\ell}}\|$ and $\max_{j=1,2,3}|\hat{m}_1 - m(\phi)_j|$, and then using deviation inequalities for the the direct problem.

The Section~\ref{sec:proof:thm:rough} proves the Theorem~\ref{thm:rough}, \textit{ie}. the minimax upper bounds to estimate $f_0$ and $f_1$ when $s_0 < s_1$. The ideas of the proof are very similar to Theorem~\ref{thm:1}. The main difference resides in the definition of the empirical wavelet coefficients.

The Section~\ref{sec:proof-theorem-psitilde2} proves the Theorem~\ref{thm:psitilde2} about the estimation of the separating hyperplane. Recall that the estimator of the hyperplane is obtained by estimating the leading eigenvector of a certain gram matrix $\mathcal{G}$ from  the leading eigenvector of its empirical version $\tilde{\mathcal{G}}$. The proof of the theorem is based on the celebrated Davis-Kahan theorem and the obtention of a deviation bound for $\|\tilde{\mathcal{G}} - \mathcal{G}\|_{\mathrm{op}}$, which is based on a $\varepsilon$-net argument together with concentration inequalities for Markov chains.

Finally, the Sections~\ref{sec:proof-cor:smooth-ub} and~\ref{sec:proof-cor:rough-ub} proves the Corollaries~\ref{cor:smooth-ub} and~\ref{cor:rough-ub}, respectively. Those follow immediately from the Theorems~\ref{thm:1} and~\ref{thm:rough} and straightforward computations.

\subsection{Useful lemmas}
\label{sec:useful-lemmas}

\begin{lemma}\label{lem:invert-param}
	The parametrisation $\theta\mapsto (\phi,\psi)$ from \eqref{eqn:def:phi-psi} is invertible: 
	\begin{align*}
		p&= \tfrac{1}{2} (1-\phi_2)(1-\phi_1), \\
		q &= \tfrac{1}{2}(1-\phi_2)(1+\phi_1),\\
		f_0 &=\psi_1-\tfrac{1}{2}\phi_1\phi_3\psi_2+\tfrac{1}{2}\phi_3\psi_2,\\
		f_1 &=\psi_1-\tfrac{1}{2}\phi_1\phi_3\psi_2-\tfrac{1}{2}\phi_3\psi_2.
	\end{align*}
	Defining $p_\pm = \tfrac{1}{2} (1\mp \tilde{s}\phi_1) (1-\phi_2)$, where $\tilde{s}\coloneqq \sign(\Inner{\psi_2,\tilde{\psi}_2})$ we have 
	\begin{equation*}
		(p_+,p_-)%
		\coloneqq%
		\begin{cases}
			(p,q) &\mathrm{if}\ \tilde{s} > 0,\\
			(q,p) &\mathrm{if}\ \tilde{s} < 0.
		\end{cases}
	\end{equation*}
	
Recalling the definition \eqref{def:m} of $m$, define
	\begin{equation*}
		g \coloneqq \phi_3|\tilde{\mathcal{I}}|%
		= \frac{\sqrt{4m_1^2m_2 + m_3^2}}{m_2},
	\end{equation*}
	and define 
	\begin{equation*}
		f_{\pm} \coloneqq \psi_1 \pm \frac{g(1 \mp \tilde{s}\phi_1)}{2 m_1}G,\qquad G \coloneqq \frac{m_1 \psi_2}{\tilde{\mathcal{I}}}.
	\end{equation*}
	Then 
	\begin{equation*}
		(f_+,f_-)%
		\coloneqq%
		\begin{cases}
			(f_0,f_1) &\mathrm{if}\ \tilde{s} > 0,\\
			(f_1,f_0) &\mathrm{if}\ \tilde{s} < 0.
		\end{cases}
	\end{equation*} 
\end{lemma}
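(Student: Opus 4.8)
The statement is a list of explicit inversion formulae, so the plan is simply to exhibit the candidate inverse maps and verify each identity by direct computation; there is essentially no analytic content, only linear algebra and careful sign bookkeeping. For the first group of formulae I would treat the first two coordinates of $\phi$ as a linear system in $(p,q)$: from $\phi_2 = 1-p-q$ and $\phi_1 = (q-p)/(p+q)$ one reads off $p+q = 1-\phi_2$ and $q-p = \phi_1(1-\phi_2)$, hence $p = \tfrac12(1-\phi_2)(1-\phi_1)$ and $q = \tfrac12(1-\phi_2)(1+\phi_1)$. In particular $q/(p+q) = \tfrac12(1+\phi_1)$ and $p/(p+q) = \tfrac12(1-\phi_1)$, so the definitions of $\psi_1,\psi_2$ in \eqref{eqn:def:phi-psi} amount to the pair of equations $\psi_1 = \tfrac12(1+\phi_1)f_0 + \tfrac12(1-\phi_1)f_1$ and $f_0-f_1 = \phi_3\psi_2$; solving this $2\times2$ system for $(f_0,f_1)$ gives $f_1 = \psi_1 - \tfrac12(1+\phi_1)\phi_3\psi_2$ and $f_0 = f_1+\phi_3\psi_2 = \psi_1 + \tfrac12(1-\phi_1)\phi_3\psi_2$, which are the claimed expressions. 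Everything here is well posed on the parameter set of interest since $p,q>0$ (so $p+q>0$) and $f_0\neq f_1$ (so $\phi_3 = \norm{f_0-f_1}_{L^2}>0$ and $\psi_2$ is defined).

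The identification of $(p_+,p_-)$ is then immediate by substitution: plugging the expressions for $p,q$ into $p_\pm = \tfrac12(1\mp\tilde s\phi_1)(1-\phi_2)$ gives $(p_+,p_-) = (p,q)$ when $\tilde s = \sign(\ip{\psi_2,\tilde{\psi}_2})>0$ and $(p_+,p_-)=(q,p)$ when $\tilde s<0$.

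For the final group of formulae I would start from Lemma~\ref{lem:compute-m} and \eqref{def:m}, which give $m_1 = r(\phi)\tilde{\mathcal{I}}^2$, $m_2 = r(\phi)\phi_2\tilde{\mathcal{I}}^2$ and $m_3 = r(\phi)\phi_1\phi_2\phi_3\tilde{\mathcal{I}}^3$, and record the sign fact that, by \eqref{eqn:def:r}, $r(\phi) = \tfrac14(1-\phi_1^2)\phi_2\phi_3^2$ has the same sign as $\phi_2$ because $1-\phi_1^2 = 4pq/(p+q)^2>0$ and $\phi_3>0$; in particular $m_2 = r(\phi)\phi_2\tilde{\mathcal{I}}^2\ge0$ and $m_1,m_2\ne0$. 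Using $4r(\phi)\phi_2 = (1-\phi_1^2)\phi_2^2\phi_3^2$ one obtains the key identity
\[
  4m_1^2 m_2 + m_3^2 = r(\phi)^2\tilde{\mathcal{I}}^6\bigl(4r(\phi)\phi_2 + \phi_1^2\phi_2^2\phi_3^2\bigr) = r(\phi)^2\tilde{\mathcal{I}}^6\phi_2^2\phi_3^2,
\]
whence $\sqrt{4m_1^2 m_2+m_3^2}/m_2 = \abs{r(\phi)\phi_2}\,\abs{\tilde{\mathcal{I}}}^3\phi_3/(r(\phi)\phi_2\tilde{\mathcal{I}}^2) = \phi_3\abs{\tilde{\mathcal{I}}}$, the sign collapse using $\sign(r(\phi))=\sign(\phi_2)$; this is the asserted value of $g$. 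Finally $G = m_1\psi_2/\tilde{\mathcal{I}} = r(\phi)\tilde{\mathcal{I}}\psi_2$, so $gG/m_1 = \phi_3\abs{\tilde{\mathcal{I}}}\psi_2/\tilde{\mathcal{I}} = \tilde s\,\phi_3\psi_2$ since $\tilde s=\sign(\tilde{\mathcal{I}})$; substituting into $f_\pm = \psi_1\pm\tfrac{g(1\mp\tilde s\phi_1)}{2m_1}G = \psi_1\pm\tfrac12(1\mp\tilde s\phi_1)\tilde s\phi_3\psi_2$ and taking $\tilde s = \pm1$ recovers, via the formulae for $f_0,f_1$ derived above, the claimed identification of $(f_+,f_-)$ with $(f_0,f_1)$ or $(f_1,f_0)$.

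I expect the only slightly delicate point to be the sign bookkeeping in the last paragraph — in particular the identity $\sign(r(\phi))=\sign(\phi_2)$, which is what makes the absolute values in the expression for $g$ cancel, and correctly handling $\abs{\tilde{\mathcal{I}}}^3/\tilde{\mathcal{I}}^2 = \abs{\tilde{\mathcal{I}}}$ — together with checking that the denominators $p+q$, $\norm{f_0-f_1}_{L^2}$, $m_1$ and $m_2$ are nonzero. The latter holds on $\SmoothClass\cap\RegulClass$ (indeed for any $\theta$ with $p,q>0$, $f_0\ne f_1$ and $1-p-q\ne0$) provided also $\tilde{\mathcal{I}}\ne0$, which in the applications is guaranteed by the standing hypothesis $\abs{\ip{\tilde{\psi}_2,\psi_2}}\ge7/8$.
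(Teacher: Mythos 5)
Your proof is correct and follows the same elementary direct-verification route the paper intends (the paper simply declares the proof elementary and omits it, with the key identity $4m_1^2m_2+m_3^2=r(\phi)^2\phi_2^2\phi_3^2\tilde{\mathcal{I}}^6$ appearing separately in Lemma~\ref{lem:basic-relations}). Your sign bookkeeping — $\sign(r(\phi))=\sign(\phi_2)$ so $m_2\ge0$, and $|\tilde{\mathcal{I}}|/\tilde{\mathcal{I}}=\tilde{s}$ — is exactly the point that needs care, and you handle it correctly.
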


The proof is elementary. Note that $\PP_n^{(1)}(\Phi_{Jk})$ is the empirical estimator of $\EE_{\theta}[\Phi_{Jk}] = \ip{\Phi_{Jk},\psi_1}$, hence the above lemma justifies the use of $\hat{f}^{\Phi_{Jk}}_0$, $\hat{f}^{\Phi_{Jk}}_1$ from Section~\ref{sec:nonparametric-part:1}.

\begin{lemma}
  \label{lem:compute-m}
  Given $p^{(3)}_{\phi,\psi}$ as defined in \eqref{eq:17} and any function $\tilde{\psi}_2$, one can compute%
    \begin{align*}
      r(\phi)\tilde{\mathcal{I}}^2%
      &= \EE_{\theta}(\tilde{\psi}_2 \otimes \tilde{\psi}_2) - \EE_{\theta}(\tilde{\psi}_2)^2\\
      r(\phi)\phi_2 \tilde{\mathcal{I}}^2%
      &= \EE_{\theta}(\tilde{\psi}_2 \otimes 1 \otimes \tilde{\psi}_2) - \EE_{\theta}(\tilde{\psi}_2)^2\\
      r(\phi)\phi_1\phi_2\phi_3 \tilde{\mathcal{I}}^3%
      &=-\EE_{\theta}(\tilde{\psi}_2\otimes \tilde{\psi}_2 \otimes \tilde{\psi}_2)%
        + \EE_{\theta}(\tilde{\psi}_2)^3%
        + \Big(2 r(\phi)\tilde{\mathcal{I}}^2 + r(\phi)\phi_2\tilde{\mathcal{I}}^2 \Big) \EE_{\theta}(\tilde{\psi_2}).
    \end{align*}
  Also if $G= m_1\psi_2/\tilde{\mathcal{I}}$, then $\ip{\Phi_{Jk},G}=\EE[\tilde{\psi}_2\otimes \Phi_{Jk}]-\EE_\theta [\tilde{\psi}_2]\EE_\theta[\Phi_{Jk}] $.
\end{lemma}

\begin{proof}
	We compute, from the expression for $p^{(3)}_{\phi,\psi}$, applied for example to $\tilde{\psi}_2\otimes 1 \otimes 1$ and using that $\ip{\psi_1,1}=\int \psi_1 =1$, $\ip{\psi_2,1}=0$, 
\begin{align*}
	\EE_{\theta}(\tilde{\psi}_2)%
	&= \Inner{\psi_1,\tilde{\psi}_2}\\
	\EE_{\theta}(\tilde{\psi}_2 \otimes \tilde{\psi}_2)%
	&= \Inner{\psi_1,\tilde{\psi}_2}^2 + r(\phi)\Inner{\psi_2,\tilde{\psi}_2}^2\\
	\EE_{\theta}(\tilde{\psi}_2\otimes 1 \otimes \tilde{\psi}_2)%
	&= \Inner{\psi_1,\tilde{\psi}_2}^2 + r(\phi)\phi_2\Inner{\psi_2,\tilde{\psi}_2}^2\\
	\EE_{\theta}(\tilde{\psi}_2\otimes \tilde{\psi}_2 \otimes \tilde{\psi}_2)%
	&= \Inner{\psi_1,\tilde{\psi}_2}^3 + (2r(\phi) + r(\phi)\phi_2)\Inner{\psi_2,\tilde{\psi}_2}^2\Inner{\psi_1,\tilde{\psi}_2}%
	- r(\phi)\phi_1\phi_2\phi_3\Inner{\psi_2,\tilde{\psi}_2}^3
\end{align*}
Then $m \coloneqq (r(\phi)\tilde{\mathcal{I}}^2,r(\phi)\phi_2\tilde{\mathcal{I}}^2, r(\phi)\phi_1\phi_2\phi_3\tilde{\mathcal{I}}^3 )$, $\tilde{\mathcal{I}}\coloneqq \ip{\psi_2,\tilde{\psi}_2}$ is easily extracted.

Similarly, $\EE_\theta [ \tilde{\psi}_2 \otimes \Phi_{Jk}] = \ip{\psi_1,\tilde{\psi}_2}\ip{\psi_1,\Phi_{Jk}} + r(\phi) \tilde{\mathcal{I}}\ip{\psi_2,\Phi_{Jk}}$, and the expression for the coefficient of $G$ can be extracted.
\end{proof}

\begin{lemma}[Inversion formulas for $m$]
  \label{lem:invformulam}
  Let $m(\phi) = ( r(\phi)\tilde{\mathcal{I}}^2, r(\phi)\phi_2\tilde{\mathcal{I}}^2,\, r(\phi)\phi_1\phi_2\phi_3\tilde{\mathcal{I}}^3)$ with $\tilde{\mathcal{I}} \ne 0$. Then,
  \begin{align*}
    \sgn(\tilde{\mathcal{I}})\phi_1
    &= \frac{m_3(\phi)}{\sqrt{4m_1(\phi)^2m_2(\phi) + m_3(\phi)^2}},\\
    \phi_2%
    &= \frac{m_2(\phi)}{m_1(\phi)},\\
    \phi_3|\tilde{\mathcal{I}}|%
    &= \frac{\sqrt{4m_1(\phi)^2m_2(\phi) + m_3(\phi)^2}}{m_2(\phi)}.
  \end{align*}
\end{lemma}
\begin{proof}
  This can be checked via direct computations.
\end{proof}

The following bounds are immediate from the definition of the parameter space \eqref{eqn:smoothclass} and the reparametrisation \eqref{eqn:def:phi-psi} (recall also the definition \eqref{eqn:def:r} of $r$).
\begin{lemma}
	\label{lem:bounds-parameters}
	
	
	For $\phi$ corresponding to $\theta\in\SmoothClass$ we have the bounds
	\begin{equation*}
		-\frac{1-\delta}{1+\delta} \leq \phi_1\leq \frac{1-\delta}{1+\delta}, \quad \epsilon\leq \abs{\phi_{2}}\leq 1-2\delta, \quad  \phi_3\geq \zeta, \quad \delta\epsilon\zeta^2/4\leq \abs{r(\phi)}\leq \phi_3^2/4.
	\end{equation*}
\end{lemma}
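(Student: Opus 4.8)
The plan is to verify each of the claimed inequalities by substituting the definitions in \eqref{eqn:def:phi-psi} and \eqref{eqn:def:r} and then applying elementary bounds on $p$, $q$, and $\norm{f_0-f_1}_{L^2}$ valid for $\theta\in\SmoothClass$; no probabilistic input enters, so this is a short computation and I expect no genuine obstacle.

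First I would dispatch the three bounds on $\phi_1,\phi_2,\phi_3$. Since $\phi_3=\norm{f_0-f_1}_{L^2}$ and $\phi_2=1-p-q$, the bound $\phi_3\geq\zeta$ and the bound $\abs{\phi_2}=\abs{1-p-q}\geq\epsilon$ are nothing but the constraints defining $\SmoothClass$; and $2\delta\leq p+q\leq 2-2\delta$ gives $\phi_2\in[-(1-2\delta),1-2\delta]$, hence $\abs{\phi_2}\leq 1-2\delta$. For $\phi_1=(q-p)/(p+q)$, I would write $\phi_1=1-\tfrac{2p}{p+q}$ and note that $\tfrac{2p}{p+q}$ is minimised by taking $p$ as small and $q$ as large as allowed, i.e.\ $p=\delta$, $q=1$, which gives $\phi_1\leq 1-\tfrac{2\delta}{1+\delta}=\tfrac{1-\delta}{1+\delta}$; the reverse choice gives the matching lower bound, so $\abs{\phi_1}\leq(1-\delta)/(1+\delta)$.

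For $r(\phi)=\tfrac14(1-\phi_1^2)\phi_2\phi_3^2$ the upper bound $\abs{r(\phi)}\leq\phi_3^2/4$ is immediate from $0\leq 1-\phi_1^2\leq 1$ and $\abs{\phi_2}\leq 1$. The lower bound is the only step that takes a moment: I would convert the bound just obtained on $\phi_1$ into a lower bound on $1-\phi_1^2$, using $\phi_1^2\leq\bigl((1-\delta)/(1+\delta)\bigr)^2$, so that $1-\phi_1^2\geq 4\delta/(1+\delta)^2\geq\delta$ because $(1+\delta)^2\leq 4$ for $\delta\leq 1$ (equivalently, $1-\phi_1^2=4pq/(p+q)^2\geq\delta(2-\delta)\geq\delta$). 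Combining with $\abs{\phi_2}\geq\epsilon$ and $\phi_3\geq\zeta$ then yields $\abs{r(\phi)}\geq\tfrac14\,\delta\epsilon\zeta^2$, which completes the proof.
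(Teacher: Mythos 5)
Your verification is correct and is precisely the direct computation the paper intends: the paper gives no proof of this lemma, stating only that the bounds are ``immediate from the definition of the parameter space and the reparametrisation,'' and your optimisation of $\phi_1=1-\tfrac{2p}{p+q}$ over $p\in[\delta,1]$, $q\in[\delta,1]$ together with the identity $1-\phi_1^2=4pq/(p+q)^2\geq\delta$ is the intended route to the bound on $r(\phi)$. The one step you assert without support is $p+q\leq 2-2\delta$ (needed only for $\phi_2\geq-(1-2\delta)$): the displayed definition \eqref{eqn:smoothclass} imposes $p,q\geq\delta$ but no upper bound below $1$, so this direction requires the (evidently intended but unstated) constraint $1-p,1-q\geq\delta$ — an imprecision inherited from the lemma statement itself rather than a flaw in your argument, and harmless downstream since only $\phi_2\leq 1-2\delta$ and $|\phi_2|\leq 1$ are ever used.
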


\begin{lemma}
  \label{lem:basic-relations}
  Let $m_1,m_2,m_3$ be defined as in \eqref{def:m} and let $v \coloneqq 4m_1^2m_2 + m_3^2$. Then $0 \leq m_2 \leq \abs{m_1}$ and $\sqrt{v}=\tilde{\mathcal{I}}^3r(\phi)\phi_2\phi_3=\tilde{\mathcal{I}}m_2\phi_3$. Furthermore, for every $\theta \in \SmoothClass$ and $0 < \delta \leq 1$, $0 < \epsilon \leq 1$, and $0 < \zeta \leq 1$:
  \begin{equation*}
    \abs[\Big]{\frac{g}{m_1}}\leq \frac{4}{\delta \epsilon \zeta|\tilde{\mathcal{I}}|}%
    ,\qquad%
    \frac{\max(1,g)}{m_2}%
    \leq \frac{4}{\delta \epsilon^2\zeta^2|\tilde{\mathcal{I}}|^2}
    ,\qquad%
    \frac{\max(1,g)}{gm_2}%
    \leq \frac{4}{\delta \epsilon^2\zeta^3|\tilde{\mathcal{I}}|^3}.
  \end{equation*}
\end{lemma}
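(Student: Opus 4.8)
The plan is to reduce everything to the explicit formula $r(\phi)=\tfrac14(1-\phi_1^2)\phi_2\phi_3^2$ from \eqref{eqn:def:r}, the definitions of $m_1,m_2,m_3$ in \eqref{def:m}, and the identities $g=\phi_3\abs{\tilde{\mathcal{I}}}=\sqrt v/m_2$ supplied by \cref{lem:invert-param}, and then to read off all the inequalities from the crude bounds of \cref{lem:bounds-parameters}. Throughout I use that $\tilde\psi_2$ is a unit vector, so $\abs{\tilde{\mathcal{I}}}=\abs{\Inner{\psi_2,\tilde\psi_2}}\le 1$, and that $\tilde{\mathcal{I}}\ne 0$ (otherwise every quantity below vanishes and there is nothing to prove).

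First I would treat the sign of $m_2$ and the bound $m_2\le\abs{m_1}$: substituting the formula for $r(\phi)$ into \eqref{def:m} gives $m_2=\tfrac14(1-\phi_1^2)\phi_2^2\phi_3^2\tilde{\mathcal{I}}^2\ge 0$ and $\abs{m_1}=\tfrac14(1-\phi_1^2)\abs{\phi_2}\phi_3^2\tilde{\mathcal{I}}^2$, so $m_2=\abs{\phi_2}\,\abs{m_1}\le\abs{m_1}$ because $\abs{\phi_2}=\abs{1-p-q}\le 1$ (both sides vanish if $m_1=0$). Next, for $v=4m_1^2m_2+m_3^2$ I would expand, obtaining $4m_1^2m_2=4r(\phi)^3\phi_2\tilde{\mathcal{I}}^6$ and $m_3^2=r(\phi)^2\phi_1^2\phi_2^2\phi_3^2\tilde{\mathcal{I}}^6$, and then use the collapse $4r(\phi)\phi_2+\phi_1^2\phi_2^2\phi_3^2=(1-\phi_1^2)\phi_2^2\phi_3^2+\phi_1^2\phi_2^2\phi_3^2=\phi_2^2\phi_3^2$ to conclude $v=r(\phi)^2\tilde{\mathcal{I}}^6\phi_2^2\phi_3^2$, a perfect square. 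Since $\phi_3\ge 0$ and, crucially, $r(\phi)\phi_2=\tfrac14(1-\phi_1^2)\phi_2^2\phi_3^2\ge 0$, taking the positive root yields $\sqrt v=\tilde{\mathcal{I}}^3 r(\phi)\phi_2\phi_3$, which equals $\tilde{\mathcal{I}}m_2\phi_3$ because $m_2=\tilde{\mathcal{I}}^2r(\phi)\phi_2$; in particular this recovers $g=\sqrt v/m_2=\phi_3\abs{\tilde{\mathcal{I}}}$.

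For the three displayed inequalities I would rewrite each ratio in closed form using the above and $g=\phi_3\abs{\tilde{\mathcal{I}}}$, namely $\abs{g/m_1}=4/((1-\phi_1^2)\abs{\phi_2}\phi_3\abs{\tilde{\mathcal{I}}})$, $1/m_2=4/((1-\phi_1^2)\phi_2^2\phi_3^2\tilde{\mathcal{I}}^2)$, $g/m_2=4/((1-\phi_1^2)\phi_2^2\phi_3\abs{\tilde{\mathcal{I}}})$ and $1/(gm_2)=4/((1-\phi_1^2)\phi_2^2\phi_3^3\abs{\tilde{\mathcal{I}}}^3)$. By \cref{lem:bounds-parameters}, $\abs{\phi_1}\le\tfrac{1-\delta}{1+\delta}$ so $1-\phi_1^2\ge\delta$ (using $\delta\le 1$), while $\abs{\phi_2}\ge\epsilon$ and $\phi_3\ge\zeta$; inserting these bounds $\abs{g/m_1}$ by $\tfrac{4}{\delta\epsilon\zeta\abs{\tilde{\mathcal{I}}}}$ and gives $1/m_2\le\tfrac{4}{\delta\epsilon^2\zeta^2\abs{\tilde{\mathcal{I}}}^2}$, $g/m_2\le\tfrac{4}{\delta\epsilon^2\zeta\abs{\tilde{\mathcal{I}}}}$ and $1/(gm_2)\le\tfrac{4}{\delta\epsilon^2\zeta^3\abs{\tilde{\mathcal{I}}}^3}$. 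To finish the $\max(1,g)$ terms I would use $\max(1,g)/m_2=\max(1/m_2,\,g/m_2)$ and $\max(1,g)/(gm_2)=\max(1/(gm_2),\,1/m_2)$ together with the elementary inequality $\zeta\abs{\tilde{\mathcal{I}}}\le 1$, which gives $\tfrac{4}{\delta\epsilon^2\zeta\abs{\tilde{\mathcal{I}}}}\le\tfrac{4}{\delta\epsilon^2\zeta^2\abs{\tilde{\mathcal{I}}}^2}$ and $\tfrac{4}{\delta\epsilon^2\zeta^2\abs{\tilde{\mathcal{I}}}^2}\le\tfrac{4}{\delta\epsilon^2\zeta^3\abs{\tilde{\mathcal{I}}}^3}$, so both maxima are dominated by the claimed right-hand sides.

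The argument is mechanical; the only points needing care are the sign bookkeeping that identifies the positive square root of $v$ (which rests on $r(\phi)\phi_2\ge 0$) and the observation $\zeta\abs{\tilde{\mathcal{I}}}\le 1$, which is exactly what reconciles the differing powers of $\zeta$ and $\abs{\tilde{\mathcal{I}}}$ appearing in the $\max(1,g)$ bounds. I do not anticipate any genuine obstacle.
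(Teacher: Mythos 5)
Your proposal is correct and follows essentially the same route as the paper: write $m_1,m_2,m_3$ out via $r(\phi)=\tfrac14(1-\phi_1^2)\phi_2\phi_3^2$, observe the collapse $4r(\phi)\phi_2+\phi_1^2\phi_2^2\phi_3^2=\phi_2^2\phi_3^2$ to identify $v$ as a perfect square, and then insert the crude bounds $1-\phi_1^2\ge\delta$, $\abs{\phi_2}\ge\epsilon$, $\phi_3\ge\zeta$ from \cref{lem:bounds-parameters}. One point where you are actually more careful than the paper: for the $\max(1,g)$ factors the paper's proof asserts $g=\phi_3\abs{\tilde{\mathcal{I}}}\le 1$ so that $\max(1,g)=1$, which does not follow in general (only $\phi_3\le\sqrt{2L}$ is available); your splitting $\max(1,g)/m_2=\max(1/m_2,\,g/m_2)$ combined with $\zeta\abs{\tilde{\mathcal{I}}}\le 1$ closes this gap cleanly and also covers the third inequality, which the paper leaves implicit. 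One small caveat on your side: the identity $\sqrt v=\tilde{\mathcal{I}}^3r(\phi)\phi_2\phi_3=\tilde{\mathcal{I}}m_2\phi_3$ can only hold as stated when $\tilde{\mathcal{I}}\ge0$, since the right-hand side has the sign of $\tilde{\mathcal{I}}$; the correct version carries $\abs{\tilde{\mathcal{I}}}$ (as the paper itself writes when it uses this fact in \cref{lem:10AA}), so your appeal to $r(\phi)\phi_2\ge0$ and $\phi_3\ge0$ should be supplemented by taking the absolute value of $\tilde{\mathcal{I}}^3$.
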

\begin{proof}
  Observe that $m_2 = m_1\phi_2$ and $|\phi_2| \leq 1$. Also, $m_2 = r(\phi)\phi_2\tilde{\mathcal{I}}^2 = \frac{1}{4}(1-\phi_1^2)\phi_2^2\phi_3^2\tilde{\mathcal{I}}^2 \geq 0$. Similarly,
  \begin{align*}
    v%
    &= 4r(\phi)^2\tilde{\mathcal{I}}^4\cdot r(\phi)\phi_2\tilde{\mathcal{I}}^2 + r(\phi)^2\phi_1^2\phi_2^2\phi_3^2 \tilde{\mathcal{I}}^6%
      = r(\phi)^2 \tilde{\mathcal{I}}^6\Big( 4r(\phi)\phi_2 + \phi_1^2\phi_2^2\phi_3^2 \Big) = r(\phi)^2\phi_2^2\phi_3^2 \tilde{\mathcal{I}}^6.
  \end{align*}
  Next, observe that $\frac{g}{m_1} = \frac{\phi_3|\tilde{\mathcal{I}}|}{\frac{1}{4}(1-\phi_1^2)\phi_2\phi_3^2|\tilde{\mathcal{I}}|^2} = \frac{4}{(1-\phi_1^2)\phi_2\phi_3|\tilde{\mathcal{I}}|}$. But $0 \geq 1 - \phi_1^2 \geq \frac{4\delta}{(1+\delta)^2} \geq \delta$, $|\phi_2| \geq \epsilon$, and $\phi_3 \geq \zeta$ by Lemma~\ref{lem:bounds-parameters}. Similarly, since $g = \phi_3|\tilde{\mathcal{I}}| \leq \zeta \leq 1$, $0 \leq \frac{\max(1,g)}{m_2} = \frac{1}{ m_2} = \frac{4}{(1-\phi_1^2)\phi_2^2\phi_3^2|\tilde{\mathcal{I}}|^2} \leq \frac{4}{\delta \epsilon^2\zeta^2|\tilde{\mathcal{I}}|^2}$.
\end{proof}



\begin{lemma}
  \label{lem:2}
  For any $k \geq 1$,
  \begin{equation*}
    \norm{p_{\theta}^{(k)}}_{\infty}%
    \leq \max\brackets[\big]{ \norm{f_0}_{\infty},\ \norm{f_1}_{\infty} }^k.
  \end{equation*}
Consequently, for any $\theta\in\RegulClass$ and any measurable function $h:\RR^k\to \RR$, we have
\[ \EE_\theta[h(Y_1,\dots,Y_k)^2]\leq L^{k}\norm{h}_{L^2}^2.\]
\end{lemma}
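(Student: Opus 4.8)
The plan is to exploit the conditional independence of the observations given the hidden chain to obtain an explicit mixture formula for the marginal density, bound it termwise, and then deduce the second assertion by a one-line estimate of an integral.

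First I would record that, marginalising over the discrete hidden path and using \eqref{eqn:model} together with the conditional independence of the $Y_i$ given $\bm X$, for Lebesgue-almost every $(y_1,\dots,y_k)\in[0,1]^k$ one has
\[
  p_\theta^{(k)}(y_1,\dots,y_k)
  = \sum_{x_1,\dots,x_k\in\{0,1\}} \pi_{x_1}\prod_{i=2}^{k} Q_{x_{i-1}x_i}\,\prod_{i=1}^{k} f_{x_i}(y_i).
\]
For each fixed state path $(x_1,\dots,x_k)$ and a.e.\ $(y_1,\dots,y_k)$ we have $\prod_{i=1}^k f_{x_i}(y_i)\le \prod_{i=1}^k\norm{f_{x_i}}_{\infty}\le \max(\norm{f_0}_{\infty},\norm{f_1}_{\infty})^k$. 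Since the coefficients $\pi_{x_1}\prod_{i=2}^k Q_{x_{i-1}x_i}$ are nonnegative and sum to $1$ (being the law of $(X_1,\dots,X_k)$), the displayed sum is bounded by $\max(\norm{f_0}_{\infty},\norm{f_1}_{\infty})^k$ for a.e.\ $(y_1,\dots,y_k)$, which gives the first claim.

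For the second assertion I would simply write, for $\theta\in\RegulClass$ and measurable $h:\RR^k\to\RR$,
\[
  \EE_\theta\big[h(Y_1,\dots,Y_k)^2\big]
  = \int_{[0,1]^k} h(y)^2\, p_\theta^{(k)}(y)\, \intd y
  \le \norm{p_\theta^{(k)}}_{\infty}\,\norm{h}_{L^2}^2
  \le L^k\,\norm{h}_{L^2}^2,
\]
using the first part and the bound $\max_{j=0,1}\norm{f_j}_{\infty}\le L$ which is part of the definition of $\RegulClass$.

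There is no substantive obstacle here; the only points that need a little care are purely measure-theoretic: the termwise inequality $f_{x_i}(y_i)\le\norm{f_{x_i}}_{\infty}$ holds only outside a Lebesgue-null set, but only finitely many state paths are involved, so a finite union of null sets remains null and the bound on $p_\theta^{(k)}$ is still valid almost everywhere; and the mixture formula for $p_\theta^{(k)}$ itself should be justified by marginalising the joint law of $(\bm X,\bm Y)$ over the hidden states, which is immediate from \eqref{eqn:model}.
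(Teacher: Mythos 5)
Your proposal is correct and follows essentially the same route as the paper: writing $p_\theta^{(k)}$ as a mixture over hidden-state paths, bounding each product of emission densities by $\max(\norm{f_0}_\infty,\norm{f_1}_\infty)^k$ since the path probabilities sum to one, and then bounding $\EE_\theta[h^2]$ by $\norm{p_\theta^{(k)}}_\infty\norm{h}_{L^2}^2$. The extra measure-theoretic remarks are fine but not needed beyond what the paper's one-line argument already conveys.
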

\begin{proof}
  Observe that $p_{\theta}^{(k)}(y_1,\dots,y_k) = \sum_{x_1,\dots,x_k}\PP_{\theta}(X_1=x_1,\dots,X_k=x_k)\prod_{i=1}^kf_{x_i}(y_i)$. The first conclusion is immediate, and the second follows from \[\EE_\theta h(Y_1,\dots,Y_k)^2 = \int p^{(k)}_\theta(y_1,\dots,y_k)h(y_1,\dots,y_k) \intd y_1\cdots \intd y_k\leq \norm{p^{(k)}}_{\infty} \norm{h}_{L^2}^2. \]
\end{proof}
\begin{remark}\label{rem:lem2}
	The proof adapts to yield $E_\theta [h(Y_1,Y_3)^2] \leq L^2 \norm{h}_{L^2}^2$ rather than the weaker bound $L^3\norm{h}_{L^2}^2$ directly obtainable using the lemma. Indeed, we have
	\[ \sup_{y_1,y_3} \abs[\Big] {\int p^{(3)}_\theta (y_1,y_2,y_3) \intd y_2} = \sum_{x_1,x_2,x_3} \PP_\theta(X_1=x_1,X_2=x_2,X_3=x_3) f_{x_1}(y_1)f_{x_3}(y_3)\leq L^2,  \] and the rest of the proof is the same.
\end{remark}

\begin{lemma}
  \label{lem:ub-phi3}
  For all $\theta \in \RegulClass$, $\phi_3 \leq \sqrt{2L}$.
\end{lemma}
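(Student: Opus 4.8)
The plan is to unwind the definitions: by \eqref{eqn:def:phi-psi} we have $\phi_3 = \norm{f_0 - f_1}_{L^2}$, so it suffices to show $\int_0^1 (f_0 - f_1)^2 \leq 2L$ for any $\theta \in \RegulClass$. The key observation is that $f_0$ and $f_1$ are nonnegative (being densities), so the cross term in the expansion of the square is favourable:
\[
  \int_0^1 (f_0 - f_1)^2 = \int_0^1 f_0^2 - 2\int_0^1 f_0 f_1 + \int_0^1 f_1^2 \leq \int_0^1 f_0^2 + \int_0^1 f_1^2.
\]
I would then bound each term by pulling out the sup norm: $\int_0^1 f_j^2 \leq \norm{f_j}_\infty \int_0^1 f_j = \norm{f_j}_\infty$, since $f_j$ integrates to $1$. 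For $\theta \in \RegulClass$ we have $\norm{f_j}_\infty \leq L$ by definition \eqref{eqn:def:RegulClass}, so the right-hand side is at most $2L$, and taking square roots gives $\phi_3 \leq \sqrt{2L}$ as claimed.

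There is essentially no obstacle here; the only point worth flagging is that the naive bound $\norm{f_0-f_1}_{L^2}^2 \leq \norm{f_0-f_1}_\infty \norm{f_0-f_1}_{L^1} \leq 2L \cdot 2 = 4L$ would only yield the weaker constant $\phi_3 \leq 2\sqrt{L}$, so one should use the nonnegativity of the emission densities to discard the $-2\int f_0 f_1$ term rather than bounding crudely.
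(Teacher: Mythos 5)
Your proof is correct and reaches the same constant as the paper by the same essential mechanism, namely exploiting the nonnegativity of the emission densities together with $\norm{f_j}_\infty\leq L$ and $\int_0^1 f_j=1$. The paper's own argument is a minor rearrangement of yours: instead of expanding the square and discarding the cross term, it writes $\int_0^1(f_0-f_1)^2\leq \norm{f_0-f_1}_\infty\int_0^1(\abs{f_0}+\abs{f_1})=2\norm{f_0-f_1}_\infty$ and then uses $0\leq f_0,f_1\leq L$ to conclude $\norm{f_0-f_1}_\infty\leq L$ (rather than the crude $2L$), which is exactly the point you flag at the end.
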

\begin{proof}
	We compute $\phi_3^2=\int_0^1(f_0-f_1)^2 \leq \norm{f_0-f_1}_{\infty} \int_0^1 (\abs{f_0}+\abs{f_1})=2\norm{f_0-f_1}_{\infty}$. 
Since we have the pointwise bounds $0 \leq f_0,f_1 \leq L$ for every $\theta\in \RegulClass$, it follows that $\phi_3^2 \leq 2 L$. We remark that this upper bound is tight since it is attained for instance when $f_0$ is the uniform density on $[0,1/L]$ and $f_1$ the uniform density on $[1-1/L,1]$.
\end{proof}

We now recall the following result, which is adapted from \citep{Paulin2015} 
 and will be key to getting deviation inequalities of empirical ingredients in our procedures. 

\begin{lemma}
  \label{lem:paulin}
  Let $1 \leq k \leq 3$ and let $h : \Reals^k \to \Reals$ be measurable. There is a universal constant $C > 0$ such that for all $\theta$, all $n\geq 4$ such that $n\gamma^* \geq 1/99$, and all $t \geq 0$
  \begin{equation*}
    \PP_{\theta}\Big( \abs{\PP_n^{(k)}(h) - \EE_{\theta}(h)}  \geq t\Big)%
    \leq \exp\Big(- \frac{Cnt^2 \gamma^*}{\EE_{\theta}(h^2) + \norm{h}_{\infty}t} \Big).
  \end{equation*}
  This in particular implies that there is a is a universal constant $C > 0$ such that for all $\theta$, all $n\geq 4$ such that $n\gamma^* \geq 1/99$, and all $x \geq 0$
  \begin{equation*}
    \PP_{\theta}\Bigg( \abs{\PP_n^{(k)}(h) - \EE_\theta(h)} \geq C \sqrt{ \frac{\EE_\theta[h^2]x}{n\gamma^*}} + \frac{C \norm{h}_{\infty}x}{n\gamma^*}  \Bigg) \leq e^{-x}.
  \end{equation*}
\end{lemma}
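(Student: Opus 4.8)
The plan is to recognise $N\brackets{\PP_n^{(k)}(h)-\EE_\theta(h)}$, with $N\coloneqq n-k+1$, as a centred windowed sum over a Markov chain with absolute spectral gap at least $\sg$, and then invoke a Bernstein-type inequality for such sums from \cite{Paulin2015}. It is enough to treat $\theta\in\RegulClass$, since this is the only regime in which the inequality is applied; moreover the claimed bound is monotone in the spectral gap, so it suffices to prove it with $\sg$ replaced throughout by the true absolute spectral gap $1-\abs{1-p-q}$ of the hidden chain (which is $\ge\sg$ on $\RegulClass$) and then use monotonicity, noting that the hypothesis $n\sg\ge 1/99$ likewise implies the corresponding bound with the true gap.

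First I would set up the right Markov chain. The pairs $W_i\coloneqq(X_i,Y_i)$ form a stationary Markov chain on $\{0,1\}\times[0,1]$, because the conditional law of $(X_{i+1},Y_{i+1})$ given the past depends on the past only through $X_i$; its transition operator maps any $L^2$ function to a function of the first coordinate alone, so --- just as for the two-state chain $(X_i)$, which is reversible with eigenvalues $1$ and $1-p-q$ --- the chain $(W_i)$ is reversible with absolute spectral gap $1-\abs{1-p-q}$. Writing $\bar h\brackets[\big]{(x_1,y_1),\dots,(x_k,y_k)}\coloneqq h(y_1,\dots,y_k)$, one has the exact identity $\PP_n^{(k)}(h)-\EE_\theta(h)=N^{-1}\sum_{i=1}^N\brackets[\big]{\bar h(W_i,\dots,W_{i+k-1})-\EE_\theta(h)}$, where $\norm{\bar h}_\infty=\norm{h}_\infty$ and $\EE_\theta[\bar h(W_1,\dots,W_k)^2]=\EE_\theta(h^2)$, so in particular $\var_\pi(\bar h)\le\EE_\theta(h^2)$.

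Next I would apply Paulin's concentration inequality for block (sliding-window, width $k\le 3$) functionals of a Markov chain --- equivalently, apply his Bernstein inequality for additive functionals to the stationary $k$-snake $\Xi_i\coloneqq(W_i,\dots,W_{i+k-1})$, whose pseudo-spectral gap is bounded below by a universal multiple of that of $(W_i)$ and hence is $\gtrsim\sg$ --- in the form that controls deviations through the \emph{stationary} variance rather than the asymptotic variance: for $N\psgap$ above an absolute threshold,
\[
\PP\brackets[\Big]{\abs[\big]{N^{-1}\textstyle\sum_{i=1}^N\brackets{g(\Xi_i)-\EE g}}\ge t}
\le
\exp\brackets[\Big]{-\frac{c\,N\psgap\,t^2}{\var_\pi(g)+\norm{g}_\infty t}}.
\]
Taking $g=\bar h$ and using $\var_\pi(\bar h)\le\EE_\theta(h^2)$, $\norm{\bar h}_\infty=\norm{h}_\infty$, $N\ge n-2\ge n/2$ (for $n\ge4$) and $\psgap\gtrsim\sg$ gives the first displayed inequality of the lemma with a universal constant $C$; the hypothesis $n\sg\ge 1/99$ is exactly what is needed so that $N\psgap$ clears the threshold in \cite{Paulin2015} (equivalently, so that the burn-in/initialisation correction appearing there is absorbed into $C$). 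The second (sub-exponential) form then follows from the first by the standard inversion of a Bernstein tail: with $t=C'\brackets[\big]{\sqrt{\EE_\theta(h^2)\,x/(n\sg)}+\norm{h}_\infty x/(n\sg)}$ the exponent is at most $-x$ for a suitable universal $C'$, and one replaces both constants by their maximum.

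The step I expect to be the main obstacle is controlling the (pseudo-)spectral gap through the two reductions: from the hidden chain to the joint chain $(X_i,Y_i)$, and from single states to windows of width up to $3$. One must verify that each costs at most a universal constant factor in the gap --- so that the effective sample size stays of order $n\sg$ --- and, relatedly, be careful to use the version of Paulin's Bernstein inequality phrased in terms of $\var_\pi$: substituting the asymptotic variance (which can be of order $\var_\pi/\psgap$) would introduce a spurious extra factor of $\sg$ in the exponent and ruin the bound. This is precisely the sense in which the result is \emph{adapted from} \cite{Paulin2015}.
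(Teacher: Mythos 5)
Your proposal is correct and follows essentially the same route as the paper: both recast $\PP_n^{(k)}(h)$ as an additive functional of a sliding-window (snake) chain built from $(X_i,Y_i)$, lower-bound its pseudo spectral gap by a universal multiple of $\gamma^*$, apply the $\var_\pi$-version of Paulin's Bernstein inequality (his Theorem~3.4), and invert the tail with $t=C\sqrt{\EE_\theta(h^2)x/(n\gamma^*)}+C\norm{h}_\infty x/(n\gamma^*)$. The one step you flag as the main obstacle --- that windowing and adjoining the emissions costs only a constant factor in the gap --- is resolved in the paper exactly as you anticipate, via a mixing-time bound for the binary hidden chain imported from Lemma~1 of \cite{AGNparamhmm}, yielding $\psgap \geq \gamma^*/8$.
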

\begin{proof}
  Since $1 \leq k \leq 3$, we can view any function $h : \Reals^k \to \Reals$ as $\tilde{h} : \Reals^6 \to \Reals$ with $h(Y_i,\dots,Y_{i+k}) = \tilde{h}(X_i,X_{i+1},X_{i+2},Y_i,Y_{i+1},Y_{i+2})$. The process $\big((X_i,X_{i+1},X_{i+2},Y_i,Y_{i+1},Y_{i+2}) \big)_{i\geq 1}$ is a stationary Markov Chain with pseudo spectral gap (defined as in \cite{Paulin2015}) $\psgap\geq \gamma^*/8$, by our assumptions. Indeed, calculations in \citep[Lemma 1]{AGNparamhmm} based on the relationship between the pseudo spectral gap and the mixing time show that $\psgap \geq 0.5((\log 4/ \gamma^*)+2)^{-1}$, and the bound $\max(\gamma^*,\log 2)\leq 1$ yields the claimed bound.

 By Theorem~3.4 in  \citep{Paulin2015} (though note there is an updated version of the paper on arXiv), for $S_n \coloneqq \sum_{i=1}^{n-k+1}\tilde{h}(X_i,X_{i+1},X_{i+2},Y_i,Y_{i+1},Y_{i+2})$ we do have for any $t\geq 0$
  \begin{equation*}
    \PP_{\theta}\Big( \abs{ S_n - \EE_{\theta}(S_n) }\geq t\Big)%
    \leq \exp\Big(- \frac{t^2 \psgap}{8(n - k +1 + 1/\psgap)\EE_{\theta}(h^2) + 20\norm{h}_{\infty}t} \Big).
  \end{equation*}
  Dividing $S_n$ by $n-k+1$ and replacing $n-k+1$ and $\psgap$ by the respective lower bounds $n/2$ and $\gamma^*/8$, 
  we find that 
  \begin{align*}
    \PP_{\theta}\Big( \abs{\PP_n^{(k)}(h) - \EE_{\theta}(h)}  \geq t\Big)%
    &\leq \exp\Big(- \frac{nt^2 \gamma^*/16}{8(1 + \frac{16}{n\gamma^*})\EE_{\theta}(h^2) + 20\norm{h}_{\infty}t} \Big)\\
    &\leq \exp\Big(- \frac{nt^2 \gamma^*}{16 \times 8 \times (1+16\times 99)\times \EE_{\theta}(h^2) + 320\norm{h}_{\infty}t} \Big)
  \end{align*}
  under the assumption that $n\gamma^* \geq 1/99$. 
  The result follows by taking $t=C\sqrt{\EE_\theta[h^2]x/(n\gamma^*)}+C\norm{h}_{\infty} x/(n\gamma^*)$ for $C$ a sufficiently large constant that the argument of the exponential is smaller than $-x$ (by splitting into cases based on which of the two terms in the denominator is larger it can be seen that it suffices to take $C=\max(\sqrt{2\times 16\times 8\times (1+16\times 99)},640)=640$), yielding the claim. 
\end{proof}

The following consequence of deviation inequalities to get bounds in expectation will  also be used.

\begin{lemma}
  \label{lem:9}
  Suppose $X$ is a non-negative random variable and there exist $a,b,c> 0$ such that $\PP(X > b\sqrt{x/n} + ax/n ) \leq ce^{-x}$ for all $x > 0$. There  for all $d \geq 0$ 
    \begin{equation*}
  	\EE(X^2\1_{ \{X > d\} })%
  	\leq c\Big(d^2 + \frac{5b^2}{4n} + \frac{7a^2}{2n^2} \Big)\exp\Big(- \frac{n d^2}{2b^2 + 8a d} \Big).
  \end{equation*}
\end{lemma}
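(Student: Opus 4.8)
The plan is to bound the tail-integral $\EE(X^2\1_{\{X>d\}}) = \int_0^\infty \PP(X^2\1_{\{X>d\}} > t)\,\intd t$ by splitting at $t = d^2$ and relating the tail probability of $X^2$ to the given subexponential-type bound on $X$.

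First I would write, using Fubini (the layer-cake formula),
\[
\EE(X^2\1_{\{X>d\}}) = \int_0^\infty \PP\bigl(X^2\1_{\{X>d\}} > t\bigr)\,\intd t
= d^2\,\PP(X>d) + \int_{d^2}^\infty \PP\bigl(X > \sqrt t\bigr)\,\intd t,
\]
since on $\{t < d^2\}$ we have $\{X^2\1_{\{X>d\}}>t\} = \{X>d\}$, while on $\{t\geq d^2\}$ we have $\{X^2\1_{\{X>d\}}>t\}=\{X>\sqrt t\}$. The first term is already of the desired shape once we control $\PP(X>d)$, so the work is in the integral.

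Next, the key step is to convert the hypothesis $\PP(X > b\sqrt{x/n} + ax/n) \leq ce^{-x}$ into a usable upper bound on $\PP(X>u)$ for $u>0$. Given $u$, I would choose $x=x(u)$ so that $b\sqrt{x/n}+ax/n \leq u$; a clean sufficient choice is to ensure each of the two terms is at most $u/2$, i.e.\ $x \leq nu^2/(4b^2)$ and $x\leq nu/(2a)$, so taking $x(u) = n u^2/(2b^2+8ad)$ works for all $u\geq \sqrt{t}\geq d$ in the relevant range (using $u^2 \geq du$ there). This yields $\PP(X>u)\leq c\exp(-nu^2/(2b^2+8ad))$ for $u\geq d$, and in particular $\PP(X>d)\leq c\exp(-nd^2/(2b^2+8ad))$, handling the first term. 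Substituting $u=\sqrt t$ gives $\PP(X>\sqrt t)\leq c\exp(-nt/(2b^2+8ad))$ for $t\geq d^2$, and then
\[
\int_{d^2}^\infty c\,\exp\!\Bigl(-\tfrac{nt}{2b^2+8ad}\Bigr)\intd t
= \frac{c(2b^2+8ad)}{n}\exp\!\Bigl(-\tfrac{nd^2}{2b^2+8ad}\Bigr).
\]
Collecting terms, $\EE(X^2\1_{\{X>d\}}) \leq c\bigl(d^2 + \tfrac{2b^2+8ad}{n}\bigr)\exp(-nd^2/(2b^2+8ad))$, and it remains to absorb $\tfrac{2b^2+8ad}{n}$ into $\tfrac{5b^2}{4n}+\tfrac{7a^2}{2n^2}$ — this is where one uses $8ad/n \leq$ (something), via an AM–GM / Young-type split like $8ad/n = 2\cdot(2\sqrt{2}\,a/\sqrt n)\cdot(\sqrt2\, d\sqrt{\cdot})$... more cleanly, one notes $d^2 e^{-nd^2/(2b^2+8ad)}$ itself is bounded (the exponential kills the prefactor) and rebalances constants so that the $ad$ cross-term gets folded partly into the $d^2$ slot and partly into the $a^2/n^2$ slot.

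The main obstacle is precisely this last bookkeeping step: getting the stated clean constants $\tfrac{5b^2}{4}$ and $\tfrac{7a^2}{2}$ rather than the cruder $2b^2+8ad$. The trick is that the factor $d^2 e^{-nd^2/(2b^2+8ad)}$ can be bounded independently of $d$ by optimizing over $d$: when $nd^2 \leq 2b^2$ the exponential is $\leq 1$ and $d^2 \leq 2b^2/n$; when $nd^2 > 2b^2$ one has $8ad$ possibly dominating $2b^2$, and one bounds $d^2 e^{-nd^2/(16ad)} = d^2 e^{-nd/(16a)}$, which as a function of $d$ is maximized at $d = 32a/n$ giving a term of order $a^2/n^2$. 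Combining these case analyses and tracking the numerical constants carefully yields the $\tfrac{5b^2}{4n}+\tfrac{7a^2}{2n^2}$ bound. So the proof is entirely elementary but the constant-chasing in the final step is the only place requiring genuine care.
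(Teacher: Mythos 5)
Your overall architecture (layer-cake decomposition, a pointwise tail bound on $\PP(X>u)$, then integration) is in the same spirit as the paper's proof, but your central step is false. You claim that $x(u)=nu^2/(2b^2+8ad)$ satisfies $b\sqrt{x(u)/n}+ax(u)/n\leq u$ for all $u\geq d$, which would give the sub-Gaussian tail $\PP(X>u)\leq c\exp(-nu^2/(2b^2+8ad))$ on $[d,\infty)$. Check your own sufficient condition: $ax(u)/n\leq u/2$ reads $au\leq b^2+4ad$, which is violated once $u$ is large compared with $d$ and $b^2/a$ (and the other condition $b\sqrt{x(u)/n}\leq u/2$ requires $b^2\leq 4ad$, which also need not hold). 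Structurally, the hypothesis is a Bernstein-type bound whose exact inversion is $\PP(X>u)\leq c\exp(-n\varphi(u)^2)$ with $\varphi(u)=\tfrac{b}{2a}(\sqrt{1+4au/b^2}-1)$, so $\varphi(u)^2\geq u^2/(b^2+4au)$ but $\varphi(u)^2\approx u/a$ for large $u$: the achievable decay is only exponential in $u$ in the upper tail, strictly weaker than the Gaussian decay $\exp(-nu^2/(2b^2+8ad))$ you assert (take $\PP(X>u)=\min(1,ce^{-n\varphi(u)^2})$ for a concrete random variable satisfying the hypothesis with equality but violating your bound). Consequently your integral $\int_{d^2}^\infty c\exp(-nt/(2b^2+8ad))\,\intd t$ rests on a false premise, and the regime where the linear term $ax/n$ dominates --- which is exactly what produces the $a^2/n^2$ contribution in the lemma --- is never treated.

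The paper sidesteps pointwise inversion entirely: after writing $\EE(X^2\1_{\{X>d\}})=d^2\PP(X>d)+\int_d^\infty 2u\,\PP(X>u)\,\intd u$, it substitutes $u=b\sqrt{x/n}+ax/n$ (equivalently $x=n\varphi(u)^2$) in the integral, so the hypothesis applies verbatim and the integrand becomes an explicit polynomial in $\sqrt{x}$ times $e^{-x}$; integrating $e^{-x}$ and $xe^{-x}$ exactly produces the constants $5b^2/4$ and $7a^2/2$, and the exponent $nd^2/(2b^2+8ad)$ comes from $xe^{-x}\leq(2/e)e^{-x/2}$ combined with $\varphi(d)^2\geq d^2/(b^2+4ad)$. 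Your closing paragraph would not rescue the argument even if your tail bound were granted: the prefactor you obtain, $(2b^2+8ad)/n$, is not dominated by $5b^2/(4n)+7a^2/(2n^2)$ (already $2b^2>\tfrac{5}{4}b^2$), and the quantity you propose to optimize over $d$, namely $d^2e^{-nd^2/(2b^2+8ad)}$, is a term that appears verbatim in the target bound and needs no absorption at all. So the gap is twofold: a false intermediate tail estimate, and a constant-matching step that is both unexecuted and, as sketched, aimed at the wrong term.
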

\begin{proof}
  Applying the standard identity $\EE(Y)= \int_0^\infty \PP(Y>y)\intd y$ for any non-negative random variable $Y$ to $Y=X^2\1_{\{X > d \}}$ and making the substitution $y=u^2$ we obtain
  \begin{align*}
    \EE(X^2\1_{ \{X > d\} })%
    &= \int_0^{\infty}\PP\big( X^2\1_{\{X > d \} } > y \big)\intd y\\
    &= \int_0^{\infty}\PP\big( X > \max(d,\sqrt{y}) \big)\intd y\\
    &= \int_0^{d^2}\PP(X > d)\intd y%
      + \int_{d^2}^{\infty}\PP\big( X > \sqrt{y} \big)\intd y\\
    &= d^2\PP(X > d)%
      + \int_d^{\infty}2u\PP(X > u)\intd u.
  \end{align*}
  Define $\varphi(x) \coloneqq \frac{b}{2a}\big(\sqrt{1 + 4ax/b^2} - 1 \big)$. For the change of variables $u = b\sqrt{x/n} + a x/ n$ one calculates that $x=n\varphi(u)^2$ and hence computes, using Cauchy--Schwarz for the penultimate line,
\begin{align*}
	\int_d^{\infty}u\PP(X > u)\intd u
	&=\int_{n\varphi(d)^2}^{\infty}\Big(b \sqrt{\frac{x}{n}} + a\frac{x}{n} \Big)\Big(\frac{b}{2\sqrt{nx}} + \frac{a}{n} \Big)\PP\Big(X > b \sqrt{\frac{x}{n}} + a\frac{x}{n} \Big)\intd x\\
	&\leq c\int_{n\varphi(d)^2}^{\infty}\Big(\frac{b^2}{2n} + \frac{3}{2}\frac{b}{\sqrt{n}}\frac{a\sqrt{x}}{n} + \frac{a^2x}{n^2}\Big)e^{-x}\intd x\\
	&\leq c\int_{n\varphi(d)^2}^{\infty}\Big(\frac{5b^2}{4n} + \frac{7a^2x}{4n^2}\Big)e^{-x}\intd x\\
	&= \frac{c}{4} \Big(\frac{5b^2}{n} + \frac{7a^2}{n^2} (n\varphi(d)^2+1) \Big)e^{-n\varphi(d)^2}.
\end{align*}  
Similarly one has
  \begin{align*}
    \PP(X > d)%
    &= \PP\Big(X > b\sqrt{\frac{n\varphi(d)^2}{n}} + a\frac{n \varphi(d)^2}{n} \Big)%
      \leq c e^{-n\varphi(d)^2}.
  \end{align*}
   To obtain the final expression, we remark that $x e^{-x} \leq \frac{2}{e}e^{-x/2}$, that $2/e+1\leq 2$ and that for all $x > 0$%
  \begin{equation*}
    \varphi(x) \geq \frac{b}{2a}\frac{4ax /b^2}{2\sqrt{1 + 4ax/b^2}}%
    = \frac{x/b}{\sqrt{1 + 4ax/b^2}}. 
  \end{equation*}
\end{proof}

\subsection{Inequalities for the $m$ functional}
\label{sec:conc-ineq-param}

Recall the definitions 
\begin{align*}
	\hat{m}_1%
	&\coloneqq \PP_n^{(2)}(\tilde{\psi}_2 \otimes \tilde{\psi}_2) - \PP_n^{(1)}(\tilde{\psi}_2)^2,\\
	\hat{m}_2%
	&\coloneqq \PP_n^{(3)}(\tilde{\psi}_2\otimes 1 \otimes \tilde{\psi}_2) - \PP_n^{(1)}(\tilde{\psi}_2)^2\\
	\hat{m}_3%
	&= - \PP_n^{(3)}(\tilde{\psi}_2\otimes \tilde{\psi}_2 \otimes \tilde{\psi}_2)%
	+ \PP_n^{(1)}(\tilde{\psi}_2)^3%
	+ \big(2\hat{m}_1 + \hat{m}_2 \big)\PP_n^{(1)}(\tilde{\psi}_2),
\end{align*}
estimators of the functional $m$ defined in \eqref{def:m} as $m = (r(\phi)\tilde{\mathcal{I}}^2,r(\phi)\phi_2\tilde{\mathcal{I}}^2,r(\phi)\phi_1\phi_2\phi_3\tilde{\mathcal{I}}^3)$ with $\tilde{\mathcal{I}}=\ip{\psi_2,\tilde{\psi}_2}$, and deduced from Lemma~\ref{lem:compute-m} to be equal to what is obtained in the expressions for $\hat{m}$ on replacing every instance of an empirical estimator by the expectation operator. [This does not mean that $\EE_\theta \hat{m} = m$, since there are powers and products in the expressions.] In this section, we prove deviation inequalities for the estimators of $m$, from which we deduce bounds in expectation. The results of this section will be used to prove Theorem \ref{thm:concent-pq} and Theorem  \ref{thm:1}.

We remark that the results are mostly uniform over the whole class $\RegulClass$, not our final parameter set $\SmoothClass\cap\RegulClass$. The need to intersect with $\SmoothClass$ arises for ensuring the parameters $\theta$ are identifiable from $m$. 
\begin{proposition}
  \label{pro:8alt}
    Let $n\gamma^* \geq 1/99$. Then there exists a universal constant $C>0$ such that for all $x\geq 0$
  \begin{equation*}
  \sup_{\theta \in \RegulClass}  \PP_{\theta}\Bigg(\max_{j=1,2}|\hat{m}_j - m_j| \geq  CL\sqrt{\frac{x}{n\gamma^*}} + C\max(\tau,\sqrt{L})^2\frac{x}{n\gamma^*} \Bigg) \leq 3e^{-x}.
  \end{equation*}
\end{proposition}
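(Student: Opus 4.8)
The plan is to express each $\hat m_j - m_j$ as a sum of products of centred empirical quantities of the form $\PP_n^{(k)}(h) - \EE_\theta(h)$ with $k \leq 3$ and $h \in \{\tilde\psi_2, \tilde\psi_2\otimes\tilde\psi_2, \tilde\psi_2\otimes 1\otimes\tilde\psi_2\}$, and then apply the concentration inequality of Lemma~\ref{lem:paulin} to each such term. First I would introduce shorthand for the four basic empirical errors
\[
  \Delta_1 \coloneqq \PP_n^{(1)}(\tilde\psi_2) - \Inner{\psi_1,\tilde\psi_2}, \quad
  \Delta_2 \coloneqq \PP_n^{(2)}(\tilde\psi_2\otimes\tilde\psi_2) - \EE_\theta(\tilde\psi_2\otimes\tilde\psi_2), \quad
  \Delta_3 \coloneqq \PP_n^{(3)}(\tilde\psi_2\otimes 1\otimes\tilde\psi_2) - \EE_\theta(\tilde\psi_2\otimes 1\otimes\tilde\psi_2),
\]
and then, writing $a \coloneqq \Inner{\psi_1,\tilde\psi_2} = \EE_\theta(\tilde\psi_2)$, observe that
\[
  \hat m_1 - m_1 = \Delta_2 - 2a\Delta_1 - \Delta_1^2, \qquad
  \hat m_2 - m_2 = \Delta_3 - 2a\Delta_1 - \Delta_1^2,
\]
using Lemma~\ref{lem:compute-m} to identify $m_1 = \EE_\theta(\tilde\psi_2\otimes\tilde\psi_2) - a^2$ and $m_2 = \EE_\theta(\tilde\psi_2\otimes 1\otimes\tilde\psi_2) - a^2$. (We only need $j=1,2$ here, so $\hat m_3$ need not be touched.)

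Next I would bound the ingredients. By Lemma~\ref{lem:2} (and Remark~\ref{rem:lem2} for the $\tilde\psi_2\otimes 1\otimes\tilde\psi_2$ term, which lets us use $L^2$ rather than $L^3$), the second moments $\EE_\theta(h^2)$ are bounded by $L\|\tilde\psi_2\|_{L^2}^2 = L$, $L^2\|\tilde\psi_2\otimes\tilde\psi_2\|_{L^2}^2 = L^2$ and $L^2\|\tilde\psi_2\otimes 1\otimes\tilde\psi_2\|_{L^2}^2 = L^2$ respectively, while the sup-norms are $\|\tilde\psi_2\|_\infty \leq \tau$, $\tau^2$, $\tau^2$. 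Hence Lemma~\ref{lem:paulin} gives, with probability at least $1 - e^{-x}$ for each of $\Delta_1,\Delta_2,\Delta_3$ separately,
\[
  |\Delta_1| \leq C\sqrt{\tfrac{Lx}{n\gamma^*}} + \tfrac{C\tau x}{n\gamma^*}, \qquad
  \max(|\Delta_2|,|\Delta_3|) \leq C\sqrt{\tfrac{L^2 x}{n\gamma^*}} + \tfrac{C\tau^2 x}{n\gamma^*}.
\]
Taking a union bound over the three events costs the factor $3$ in the statement. On the intersection of the good events, one has $|\Delta_1| \leq C(\sqrt{L} + \tau)\sqrt{x/(n\gamma^*)}$ after using $\sqrt{x/(n\gamma^*)}\geq x/(n\gamma^*)$ when $x \leq n\gamma^*$ and handling $x > n\gamma^*$ crudely (the exponential tail makes that range harmless, or one keeps both terms); and $|a| \leq \|\psi_1\|_{L^2}\|\tilde\psi_2\|_{L^2}$ is not directly bounded, but $|a\Delta_1| = |\EE_\theta(\tilde\psi_2)\,\Delta_1| \leq \sqrt{L}\,|\Delta_1|$ since $|\EE_\theta(\tilde\psi_2)| = |\Inner{\psi_1,\tilde\psi_2}| \leq (\int \psi_1^2)^{1/2} \leq \sqrt{L}$ by Lemma~\ref{lem:2} applied with $k=1$. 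Combining, each of $|\hat m_1 - m_1|$ and $|\hat m_2 - m_2|$ is at most a constant times $\sqrt{L}\,|\Delta_1| + |\Delta_2| + |\Delta_3| + |\Delta_1|^2$, which one bounds by $CL\sqrt{x/(n\gamma^*)} + C\max(\tau,\sqrt L)^2 x/(n\gamma^*)$: the linear-in-$\sqrt{x}$ contributions are $O(L\sqrt{x/(n\gamma^*)})$, the linear-in-$x$ contributions are $O(\tau^2 x/(n\gamma^*))$, and the square term $|\Delta_1|^2 \leq C(\sqrt L+\tau)^2 x/(n\gamma^*)$ after again using $x \leq n\gamma^*$ on the dominant range (one absorbs the complementary range into the constant, or notes $\sqrt L+\tau \leq 2\max(\tau,\sqrt L)$).

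The main obstacle is bookkeeping rather than conceptual: one must make sure the quadratic remainder $\Delta_1^2$ and the cross term $a\Delta_1$ are genuinely of the claimed order uniformly over $\RegulClass$ and over $x \geq 0$, and in particular that the regime $x > n\gamma^*$ (where $\sqrt{x/(n\gamma^*)} < x/(n\gamma^*)$ and $\Delta_1^2$ could naively look like it scales as $(x/(n\gamma^*))^2$) does not break the bound. This is resolved by noting that on the event in question the deviation is automatically controlled because the stated bound then exceeds any trivial almost-sure bound on $|\hat m_j - m_j|$ — indeed $|\hat m_j|$ and $|m_j|$ are bounded by absolute multiples of $\max(\tau,\sqrt L)^2$ (e.g.\ $|\hat m_1| \leq \|\tilde\psi_2\|_\infty^2 + (\ldots) \lesssim \tau^2$ and $|m_1| \leq \tfrac14\phi_3^2 \leq \tfrac12 L$ by Lemma~\ref{lem:ub-phi3}), so once $x/(n\gamma^*) \gtrsim 1$ the right-hand side already dominates the left deterministically. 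With that dichotomy the inequality holds for all $x \geq 0$, completing the proof.
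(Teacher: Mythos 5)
Your proposal is correct and follows essentially the same route as the paper: the paper proves this proposition by pointing to the proof of Proposition~\ref{pro:8}, which uses exactly your decomposition $\hat m_j - m_j = \Delta_{j+1} - 2a\Delta_1 - \Delta_1^2$, the bound $|a|\le\sqrt{L}$, Lemma~\ref{lem:paulin} (with Remark~\ref{rem:lem2} for the $\tilde\psi_2\otimes 1\otimes\tilde\psi_2$ term) plus a union bound over the three centred quantities, and the same dichotomy on $x\le n\gamma^*$ versus $x>n\gamma^*$ handled by a deterministic bound on $|\hat m_j - m_j|$.
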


\begin{proposition}
  \label{pro:8}
  Let $n\gamma^* \geq 1/99$. Then there exists a universal constant $C>0$ such that for all $x\geq 0$
  \begin{equation*}
\sup_{\theta\in\RegulClass}    \PP_{\theta}\Bigg(\max_{j=1,2,3}|\hat{m}_j - m_j| \geq  CL^{3/2}\sqrt{\frac{x}{n\gamma^*}} + C\max(\tau,\sqrt{L})^3\frac{x}{n\gamma^*} \Bigg) \leq 4e^{-x}.
  \end{equation*}
\end{proposition}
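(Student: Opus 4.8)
The plan is to realise all three statistics as a single fixed polynomial in the four elementary empirical averages
\[
  E_1 := \PP_n^{(1)}(\tilde{\psi}_2),\quad
  E_2 := \PP_n^{(2)}(\tilde{\psi}_2 \otimes \tilde{\psi}_2),\quad
  E_3 := \PP_n^{(3)}(\tilde{\psi}_2 \otimes 1 \otimes \tilde{\psi}_2),\quad
  E_4 := \PP_n^{(3)}(\tilde{\psi}_2 \otimes \tilde{\psi}_2 \otimes \tilde{\psi}_2),
\]
with population counterparts $e_1,\dots,e_4$, and then control every coordinate on a single good event. By \cref{lem:compute-m} one has $\hat m_1 = E_2 - E_1^2$, $\hat m_2 = E_3 - E_1^2$, $\hat m_3 = -E_4 - 2E_1^3 + 2E_1E_2 + E_1E_3$, and $m_j$ is the \emph{same} polynomial evaluated at $(e_1,\dots,e_4)$ (this is precisely the content of \cref{lem:compute-m}, and not the false statement $\EE_\theta\hat m = m$), so the task reduces to bounding $|\hat m_j-m_j|$ in terms of the errors $E_i-e_i$. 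Two preliminary reductions: since any emission density on $[0,1]$ has essential supremum $\geq 1$, $\RegulClass$ is empty unless $L\geq 1$, so we may assume $L\geq1$ (and always $\tau\geq1$); and if $x\geq n\gamma^*$ the claim is trivial, because $|E_i|\leq\norm{\tilde\psi_2^{\otimes k}}_\infty$ gives $\max_j|\hat m_j|\leq6\tau^3$, while $|m_j|\leq L^{3/2}$ using $|r(\phi)|\leq\phi_3^2/4$ (immediate from \eqref{eqn:def:r}), $\phi_3\leq\sqrt{2L}$ (\cref{lem:ub-phi3}) and $|\phi_1|,|\phi_2|,|\tilde{\mathcal I}|\leq1$, so $\max_j|\hat m_j-m_j|\leq7\max(\tau,\sqrt L)^3\leq C\max(\tau,\sqrt L)^3\,x/(n\gamma^*)$ as soon as $C\geq7$. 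Hence from now on $x< n\gamma^*$; writing $w:=x/(n\gamma^*)<1$, we may absorb $w^{3/2}\leq\sqrt w$ and $w^2,w^3\leq w$ freely.

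Next I apply the second display of \cref{lem:paulin} to each of $E_1,E_2,E_3,E_4$ (legitimate since $n\gamma^*\geq1/99$), bounding the second moments via \cref{lem:2} by $L,\,L^2,\,L^2,\,L^3$ respectively — the bound $L^2$ for $E_3$ uses \cref{rem:lem2} rather than the crude $L^3$, which is exactly what keeps $\hat m_2$ (and the $E_3$-contribution to $\hat m_3$) at the better power of $L$ — and using sup-norms $\tau,\tau^2,\tau^2,\tau^3$. A union bound over the four exceptional sets yields an event $\Omega$ with $\PP_\theta(\Omega^c)\leq4e^{-x}$ on which, for a universal $C$,
\[
  |E_1-e_1|\leq\Delta_1:=C(\sqrt{Lw}+\tau w),\qquad
  |E_2-e_2|,\,|E_3-e_3|\leq\Delta_2:=C(L\sqrt w+\tau^2 w),\qquad
  |E_4-e_4|\leq\Delta_4:=C(L^{3/2}\sqrt w+\tau^3 w).
\]
On $\Omega$ the cases $j=1,2$ follow at once from $E_1^2-e_1^2=2e_1(E_1-e_1)+(E_1-e_1)^2$ together with $|e_1|=|\Inner{\psi_1,\tilde{\psi}_2}|\leq\norm{\psi_1}_{L^2}\leq\sqrt L$; this is essentially \cref{pro:8alt}, and since $L\geq1$ the resulting $\lesssim L\sqrt w+\max(\tau,\sqrt L)^2 w$ already lies within the claimed bound.

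For $j=3$ I bound $\hat m_3-m_3=-(E_4-e_4)-2(E_1^3-e_1^3)+2(E_1E_2-e_1e_2)+(E_1E_3-e_1e_3)$ on $\Omega$. The guiding rule is to peel off the \emph{population} factors before invoking the deviation bounds, rather than using the deterministic $|E_1|\leq\tau$: write $E_1E_2-e_1e_2=e_1(E_2-e_2)+e_2(E_1-e_1)+(E_1-e_1)(E_2-e_2)$ and $E_1^3-e_1^3=(E_1-e_1)(3e_1^2+3e_1(E_1-e_1)+(E_1-e_1)^2)$, bounding $|e_1|\leq\sqrt L$, $|e_2|=|e_1^2+m_1|\leq\tfrac32L$, $|e_3|=|e_1^2+m_2|\leq\tfrac32L$. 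Each resulting term is then either (a deviation)$\times$(a population factor), whose contribution has $\sqrt w$-coefficient $\leq L^{3/2}$ and $w$-coefficient $\leq\max(\tau,\sqrt L)^3$ after using $L\tau,\ \sqrt L\,\tau^2,\ L^{3/2}\leq\max(\tau,\sqrt L)^3$ and $L\leq\max(\tau,\sqrt L)^2$; or a product of two or three of the $\Delta_i$, each monomial of which carries an extra factor $w$ or $w^{1/2}$ beyond the leading $\sqrt w$ (resp.\ $w$) and is hence absorbed using $w<1$. Summing, $|\hat m_3-m_3|\leq C'L^{3/2}\sqrt w+C'\max(\tau,\sqrt L)^3 w$ on $\Omega$ for a universal $C'$; taking the constant in the statement to be $\max(7,C')$ completes the proof.

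The only genuine difficulty is the constant bookkeeping of the last step: one must resist bounding $E_1$ by $\tau$ inside a product (which would attach a spurious $\tau$ to the $\sqrt w$ term and break the claimed $L^{3/2}$ rate) and instead always extract the bounded population factors first; the reduction to $x< n\gamma^*$ is what renders the quadratic and cubic deviation remainders harmless, and the use of \cref{rem:lem2} for the middle term $\tilde\psi_2\otimes1\otimes\tilde\psi_2$ is what distinguishes this proof from the simpler \cref{pro:8alt}.
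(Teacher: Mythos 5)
Your proposal is correct and follows essentially the same route as the paper's proof: reduce to $x \le n\gamma^*$ via the trivial sup-norm bound, apply Lemma~\ref{lem:paulin} (with the variance proxies $L, L^2, L^2, L^3$ from Lemma~\ref{lem:2} and Remark~\ref{rem:lem2}) to the four elementary empirical averages, take a union bound over the four deviation events, and then expand $\hat m_j - m_j$ algebraically while extracting the population factors $|e_1|\le\sqrt L$, $|e_2|,|e_3|\le \tfrac32 L$ before absorbing higher-order remainders using $x/(n\gamma^*)<1$ and $L,\tau\ge1$. The paper organizes the expansion around the centered variables $Z_1,\dots,Z_4$ rather than polynomial differences of the $E_i$, but this is only a cosmetic difference.
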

\begin{proposition}
	\label{pro:expec-maxm}
	There exists a constant $K>0$ such that whenever $n\gamma^*\geq 1/99$, 
	\begin{equation*}
		\sup_{\theta \in \RegulClass}   \EE_{\theta}\Big(\max_{j=1,2,3}|\hat{m}_j - m_j|^2 \Big)%
		\leq K \brackets[\Big]{ \frac{L^3}{n\gamma^*}%
			+ \frac{\max(\tau,\sqrt{L})^6}{(n\gamma^*)^2}}.
	\end{equation*}
\end{proposition}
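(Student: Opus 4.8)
The plan is to deduce the bound in expectation from the tail bound of Proposition~\ref{pro:8} by integrating; this is precisely the content of Lemma~\ref{lem:9}, so no new probabilistic input is needed.

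First I would rewrite the conclusion of Proposition~\ref{pro:8} in the normalised form required by Lemma~\ref{lem:9}. Setting $X \coloneqq \max_{j=1,2,3}|\hat{m}_j - m_j|$ and pulling the factor $\gamma^*$ out of the square root and out of the linear term, Proposition~\ref{pro:8} says that, whenever $n\gamma^* \geq 1/99$,
\[
  \sup_{\theta\in\RegulClass}\PP_\theta\Big(X > b\sqrt{x/n} + a\,x/n\Big) \leq 4 e^{-x} \qquad \text{for all } x > 0,
\]
with $b \coloneqq C L^{3/2}/\sqrt{\gamma^*}$ and $a \coloneqq C\max(\tau,\sqrt{L})^3/\gamma^*$, where $C$ is the universal constant from Proposition~\ref{pro:8}. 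Crucially $a$, $b$ and the prefactor $c = 4$ do not depend on $\theta$, because Proposition~\ref{pro:8} is itself uniform over $\RegulClass$.

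Second I would apply Lemma~\ref{lem:9} with $d = 0$, which (using $e^{0}=1$) gives, for every $\theta\in\RegulClass$,
\[
  \EE_\theta\big(X^2\big) \leq 4\Big(\frac{5 b^2}{4 n} + \frac{7 a^2}{2 n^2}\Big)
  = \frac{5 C^2 L^3}{n\gamma^*} + \frac{14 C^2 \max(\tau,\sqrt{L})^6}{n^2 (\gamma^*)^2}.
\]
Taking the supremum over $\theta \in \RegulClass$ and setting $K \coloneqq 14 C^2$ yields the claimed inequality.

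There is no real obstacle here beyond bookkeeping. The only points that need a moment's attention are: (i) that the constants extracted from Proposition~\ref{pro:8} are $\theta$-free, which is what makes the final bound uniform over $\RegulClass$; and (ii) the elementary rescaling that transfers the spectral-gap factor from inside the tail function into $a$ and $b$, so that the $1/n$ and $1/n^2$ scalings of Lemma~\ref{lem:9} become the advertised $1/(n\gamma^*)$ and $1/(n\gamma^*)^2$ scalings. One could instead argue directly from $\EE_\theta(X^2) = \int_0^\infty 2u\,\PP_\theta(X>u)\,du$, splitting the integral where the square-root and linear parts of the tail bound cross over, but invoking Lemma~\ref{lem:9} spares us that repetition.
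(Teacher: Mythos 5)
Your proof is correct and is essentially identical to the paper's: the authors also obtain the bound by applying Lemma~\ref{lem:9} with $d=0$, $a = C\max(\tau,\sqrt{L})^3/\gamma^*$ and $b = CL^{3/2}/\sqrt{\gamma^*}$ to the tail bound of Proposition~\ref{pro:8}. The only (immaterial) difference is the value of the prefactor $c$ in Lemma~\ref{lem:9}, which affects only the universal constant $K$.
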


\begin{proposition}
	\label{pro:2}
	Assume $n\gamma^* \geq 1/99$, $\abs{\tilde{\mathcal{I}}}\geq 7/8$ and $\zeta\leq 1$, and define the event 
	\begin{equation}\label{eq:def-event-omegan}
		\Omega_n \coloneqq \Set*{\max_{j=1,2}\Big|\frac{\hat{m}_j}{m_j} - 1\Big| \leq \frac{1}{2},\ \max_{j=1,2,3}|\hat{m}_j - m_j| \leq \frac{gm_2}{44\max(1,g)}  }.
	\end{equation}
	Then there exists a universal constant $C > 0$ such that
	\begin{align*}
		\sup_{\theta\in\RegulClass} \PP_{\theta}(\Omega_n^c)%
		&\leq 7\exp\Bigg(- \frac{C n \gamma^* g^2m_2^2/\max(1,g)^2}{L^3 + \max(\tau,\sqrt{L})^3 gm_2/\max(1,g)} \Bigg),\\
		\sup_{\theta\in \SmoothClass \cap \RegulClass}\PP_{\theta}(\Omega_n^c)%
		&\leq
		7\exp\Bigg(- \frac{C n \sg \delta^2\epsilon^4\zeta^6 }{L^3 + \max(\tau,\sqrt{L})^3 \delta \epsilon^2\zeta^3} \Bigg).
	\end{align*}
\end{proposition}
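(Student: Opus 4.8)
The plan is to first observe that, under the hypotheses $\abs{\tilde{\mathcal{I}}}\geq 7/8$ and $\zeta\leq 1$, the \emph{ratio} constraint in the definition of $\Omega_n$ is automatically implied by the additive one, so that everything reduces to a single deviation estimate for $\max_{j=1,2,3}\abs{\hat{m}_j-m_j}$. Concretely, set $t\coloneqq gm_2/(44\max(1,g))$. If $m_2=0$ the first claimed bound is vacuous (its right-hand side equals $7$), so we may assume $m_2>0$; then also $g=\phi_3\abs{\tilde{\mathcal{I}}}>0$ and $m_1\neq 0$. By Lemma~\ref{lem:basic-relations}, $0\leq m_2\leq\abs{m_1}$, and since $g\leq\max(1,g)$ we get $t\leq m_2/44\leq\tfrac12\min(\abs{m_1},m_2)$. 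Hence on $\{\max_{j=1,2,3}\abs{\hat{m}_j-m_j}\leq t\}$ we have $\abs{\hat{m}_j/m_j-1}\leq 1/2$ for $j=1,2$, so $\Omega_n^c\subseteq\{\max_{j=1,2}\abs{\hat{m}_j-m_j}>t\}\cup\{\abs{\hat{m}_3-m_3}>t\}$.

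Next I would invert the Bernstein-type bounds of Propositions~\ref{pro:8alt} and~\ref{pro:8}. For a random variable $Z$ satisfying $\PP(Z\geq b\sqrt{x/(n\gamma^*)}+ax/(n\gamma^*))\leq ce^{-x}$ for all $x\geq 0$, choosing $x$ so that the argument equals a prescribed $t$ and using $(b+\sqrt{b^2+4at})^2\leq 4(b^2+4at)$ gives $\PP(Z\geq t)\leq c\exp(-n\gamma^*t^2/(b^2+4at))$. Applying this with Proposition~\ref{pro:8alt} ($c=3$, $b\propto L$, $a\propto\max(\tau,\sqrt L)^2$) to $\PP(\max_{j=1,2}\abs{\hat{m}_j-m_j}>t)$, and with Proposition~\ref{pro:8} ($c=4$, $b\propto L^{3/2}$, $a\propto\max(\tau,\sqrt L)^3$) to $\PP(\abs{\hat{m}_3-m_3}>t)\leq\PP(\max_{j=1,2,3}\abs{\hat{m}_j-m_j}>t)$, and using $L\geq 1$ (any Lebesgue density on $[0,1]$ has $\norm{\cdot}_{\infty}\geq 1$) and $\max(\tau,\sqrt L)\geq 1$, so that the exponent from Proposition~\ref{pro:8} is the smaller one, the two contributions combine to give $\sup_{\theta\in\RegulClass}\PP_\theta(\Omega_n^c)\leq 7\exp(-Cn\gamma^*t^2/(L^3+\max(\tau,\sqrt L)^3 t))$ for a universal $C>0$. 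Substituting $t=gm_2/(44\max(1,g))$ and absorbing the $44$ into $C$ yields the first asserted bound.

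For the second bound one specializes to $\SmoothClass\cap\RegulClass$, where $m_2>0$ always. By Lemma~\ref{lem:basic-relations} and $\abs{\tilde{\mathcal{I}}}\geq 7/8$, $w\coloneqq gm_2/\max(1,g)\geq\tfrac14\abs{\tilde{\mathcal{I}}}^3\delta\epsilon^2\zeta^3\geq\tfrac14(7/8)^3\delta\epsilon^2\zeta^3$. Since $w\mapsto w^2/(L^3+\max(\tau,\sqrt L)^3 w)$ is increasing on $(0,\infty)$, the exponent in the bound just obtained only decreases when $w$ is replaced by this lower bound; simplifying the denominator using $(7/8)^3/4\leq 1$ gives the second asserted bound.

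The substantive work — expanding the nonlinear statistics $\hat{m}_j$ into sums of Markov-chain empirical averages controlled by the Bernstein inequality of Lemma~\ref{lem:paulin} — is already done in Propositions~\ref{pro:8alt} and~\ref{pro:8}, so the present proof is largely bookkeeping. The one genuinely model-specific point, and the step I would be most careful about, is the reduction above: one must check that the particular constant $44$ in the definition of $\Omega_n$, together with $0\leq m_2\leq\abs{m_1}$, makes the ratio constraint redundant and puts the single surviving deviation threshold at $t=gm_2/(44\max(1,g))$. Everything else is the standard Bernstein inversion plus a one-line monotonicity argument, valid once $L\geq 1$ and $\max(\tau,\sqrt L)\geq 1$.
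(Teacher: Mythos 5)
Your proof is correct and follows essentially the same route as the paper's: invert the Bernstein-type tail bounds of Propositions~\ref{pro:8alt} and~\ref{pro:8} at the threshold $gm_2/(44\max(1,g))$ (exactly the change of variables carried out in Lemmas~\ref{lem:pro:2a} and~\ref{lem:pro:2b}), combine the constants $3+4=7$ using $L\geq 1$, $\max(\tau,\sqrt L)\geq 1$, and then lower-bound $gm_2/\max(1,g)$ on $\SmoothClass$ via Lemma~\ref{lem:basic-relations}. Your observation that the ratio constraint in $\Omega_n$ is redundant given the additive one is a small, valid simplification of the paper's treatment, which instead thresholds the ratio part separately at $m_2/2$ and compares exponents afterwards.
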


The proof of Proposition~\ref{pro:8} is the most involved of these, and we outline how to prove the other results before addressing it.
\begin{proof}[Proof of Proposition~\ref{pro:8alt}]
	The proof is similar to the proof of Proposition~\ref{pro:8}, where $\max_{j=1,2,3}|\hat{m}_j - m_j|$ is controlled. Here, since only $\hat{m}_1$ and $\hat{m}_2$ are involved, the proxy variance is no more than $L$ since only $\PP_n^{(2)}$ is involved (versus $L^{3/2}$ when $\PP_n^{(3)}$ is involved).
\end{proof}

\begin{proof}[Proof of Proposition~\ref{pro:expec-maxm}]
	In view of Proposition~\ref{pro:8} we may apply Lemma~\ref{lem:9} with $a = C\max(\tau,\sqrt{L})^3/\gamma^*$, $b = CL^{3/2}/\sqrt{\gamma^*}$, $c=8$ and $d=0$ to obtain the claimed bound.
\end{proof}
\begin{proof}[Proof of Proposition~\ref{pro:2}]
	The first inequality essentially follows from Propositions~\ref{pro:8alt} and \ref{pro:8} and a change of variables: see Lemmas~\ref{lem:pro:2a} and \ref{lem:pro:2b} (and the sentence after the former) below where this change of variables is explicitly made.
The second inequality follows from the fact that $\frac{\max(1,g)}{gm_2} \leq \frac{16}{\delta\epsilon^2\zeta^3 \tilde{\mathcal{I}}^2}$ on $\SmoothClass$ by Lemma~\ref{lem:basic-relations}. 
\end{proof}

\begin{proof}[Proof of Proposition~\ref{pro:8}]
We have that $\max_{j=1,2,3}\abs{\hat{m}_j - m_j} \leq 16\norm{\tilde{\psi}_2}_{\infty}^3 \leq 16\tau^3$ by construction. Hence whenever $x > n\gamma^*$ we have with probability $1 \geq 1 - e^{-x}$ under $\PP_{\theta}$ that
  \begin{equation*}
    \max_{j=1,2,3}|\hat{m}_j - m_j| \leq 16\tau^3%
    \leq CL^{3/2}\sqrt{\frac{x}{n\gamma^*}} + C\max(\tau,\sqrt{L})^3\frac{x}{n\gamma^*}
  \end{equation*}
Next we address the case $x \leq n\sg$. It is seen that
  \begin{align*}
    \hat{m}_1 - m_1%
    &= \PP_n^{(2)}(\tilde{\psi}_2 \otimes \tilde{\psi}_2) - \EE_{\theta}(\tilde{\psi}_2 \otimes \tilde{\psi}_2) - \Big( \PP_n^{(1)}(\tilde{\psi}_2)^2 - \EE_{\theta}(\tilde{\psi}_2)^2\Big)
  \end{align*}
  ie.
  \begin{multline*}
    \hat{m}_1 - m_1%
    = \Big(\PP_n^{(2)}(\tilde{\psi}_2 \otimes \tilde{\psi}_2) - \EE_{\theta}(\tilde{\psi}_2 \otimes \tilde{\psi}_2) \Big)%
      - 2 \EE_{\theta}(\tilde{\psi}_2)\Big( \PP_n^{(1)}(\tilde{\psi}_2)\\ - \EE_{\theta}(\tilde{\psi}_2) \Big)%
      - \Big( \PP_n^{(1)}(\tilde{\psi}_2) - \EE_{\theta}(\tilde{\psi}_2) \Big)^2.
  \end{multline*}
Noting that $\EE_{\theta}(\abs{\tilde{\psi}_2}) \leq \EE_{\theta}(\tilde{\psi}_2^2)^{1/2} \leq \sqrt{L}\norm{\tilde{\psi}_2}_{L^2} = \sqrt{L}$ whenever $\theta \in \RegulClass$ by Lemma~\ref{lem:2}, we deduce
  \begin{equation*}
    |\hat{m}_1 - m_1|%
    \leq |Z_2| + 2\sqrt{L}|Z_1| + |Z_1|^2,
  \end{equation*}
where $Z_1 = \PP_n^{(1)}(\tilde{\psi}_2) - \EE_{\theta}(\tilde{\psi}_2)$ and $Z_2 = \PP_n^{(2)}(\tilde{\psi}_2 \otimes \tilde{\psi}_2) - \EE_{\theta}(\tilde{\psi}_2 \otimes \tilde{\psi}_2)$. The same reasoning yields, with , $Z_3 = \PP_n^{(3)}(\tilde{\psi}_2\otimes 1 \otimes \tilde{\psi}_2) - \EE_{\theta}(\tilde{\psi}_2\otimes 1 \otimes \tilde{\psi}_2)$,
  \begin{equation*}
    \abs{\hat{m}_2 - m_2}%
    \leq \abs{Z_3} + 2\sqrt{L}\abs{Z_1} + \abs{Z_1}^2.
  \end{equation*}
The decomposition for $\hat{m}_3-m_3$ is similar but slightly more involved. Since $ m_3 =  -\EE_{\theta}(\tilde{\psi}_2\otimes \tilde{\psi}_2 \otimes \tilde{\psi}_2) + \EE_{\theta}(\tilde{\psi}_2)^3%
+ \big(2 m_1 + m_2 \big) \EE_{\theta}(\tilde{\psi}_2)$, we deduce
  \begin{align*}
    \hat{m}_3 - m_3%
    &=-\Big(\PP_n^{(3)}(\tilde{\psi}_2\otimes \tilde{\psi}_2 \otimes \tilde{\psi}_2) - \EE_{\theta}(\tilde{\psi}_2\otimes \tilde{\psi}_2 \otimes \tilde{\psi}_2) \Big)\\
    &\quad%
      + \PP_n^{(1)}(\tilde{\psi}_2)^3 - \EE_{\theta}(\tilde{\psi}_2)^3\\
    &\quad%
      + \big[(2\hat{m}_1 + \hat{m}_2) - (2m_1 + m_2) \big] \EE_{\theta}(\tilde{\psi}_2)\\
    &\quad%
      + (2m_1 + m_2)\big( \PP_n^{(1)}(\tilde{\psi}_2) - \EE_{\theta}(\tilde{\psi}_2) \big)\\
    &\quad%
      + \big[(2\hat{m}_1 + \hat{m}_2) - (2m_1 + m_2) \big]\big( \PP_n^{(1)}(\tilde{\psi}_2) - \EE_{\theta}(\tilde{\psi}_2)\big).
  \end{align*}
  But $\PP_n^{(1)}(\tilde{\psi}_2)^3 - \EE_{\theta}(\tilde{\psi}_2)^3 = 3\EE_{\theta}(\tilde{\psi}_2)^2Z_1 + 3\EE_{\theta}(\tilde{\psi}_2)Z_1^2 + Z_1^3$, and thus recalling $\EE_{\theta}(|\tilde{\psi}_2|) \leq \sqrt{L}$ and $m_2\leq |m_1| \leq \frac{1}{4}\phi_3^2 \leq \tfrac{1}{2}L$ by Lemmas~\ref{lem:ub-phi3} and~\ref{lem:bounds-parameters}, writing $Z_4 = \PP_n^{(3)}(\tilde{\psi}_2\otimes \tilde{\psi}_2 \otimes \tilde{\psi}_2) - \EE_{\theta}(\tilde{\psi}_2\otimes \tilde{\psi}_2 \otimes \tilde{\psi}_2)$ we have
  \begin{align*}
    |\hat{m}_3 - m_3|%
      &\leq |Z_4| + 3L|Z_1| + 3\sqrt{L}|Z_1|^2 + |Z_1|^3
        + 2\sqrt{L}|\hat{m}_1 - m_1| + \sqrt{L}|\hat{m}_2 - m_2|\\
      &\quad + \frac{3L}{2} |Z_1|%
        + 2|\hat{m}_1 - m_1||Z_1|%
        + |\hat{m}_2 - m_2| |Z_1|.
  \end{align*}
  It follows (recall $L\geq 1$ necessarily)
  \begin{multline*}
    \max_{j=1,2,3}|\hat{m}_j - m_j|%
    \leq |Z_4| + \sqrt{L}|Z_3| + 2\sqrt{L}|Z_2|%
      + 10.5L|Z_1|\\%
      + 9\sqrt{L}Z_1^2 + 4 |Z_1|^3 + 2|Z_1Z_2| + |Z_1Z_3|.
  \end{multline*}
Feeding in bounds on the $Z_i$ from Lemma~\ref{lem:bounds-on-Z} below, we deduce with probability at least $1-4e^{-x}$ under $\PP_{\theta}$ that 
 \begin{align*}
	\max_{j=1,2,3}|\hat{m}_j - m_j|%
	&\leq%
	C\Bigg(L^{3/2}\sqrt{\frac{x}{n\gamma^*}} + \tau^3\frac{x}{n\gamma^*}\Bigg)%
	+ 3C\Bigg(L^{3/2}\sqrt{\frac{x}{n\gamma^*}} + L^{1/2}\tau^2\frac{x}{n\gamma^*} \Bigg)\\%
	&\quad%
	+ 10.5C \Bigg(L^{3/2}\sqrt{\frac{x}{n\gamma^*}} + L\tau\frac{x}{n\gamma^*} \Bigg)%
	+ 9C^2\sqrt{L}\Bigg(L^{1/2}\sqrt{\frac{x}{n\gamma^*}} + \tau\frac{x}{n\gamma^*} \Bigg)^2\\
	&\quad%
          + 4C^3\Bigg(L^{1/2}\sqrt{\frac{x}{n\gamma^*}} + \tau\frac{x}{n\gamma^*} \Bigg)^3\\%
   &\quad%
	+ 3C^2\Bigg(L^{1/2}\sqrt{\frac{x}{n\gamma^*}} + \tau\frac{x}{n\gamma^*}\Bigg)\Bigg( L\sqrt{\frac{x}{n\gamma^*}} + \tau^2\frac{x}{n\gamma^*} \Bigg).
\end{align*}
Grouping together the terms with same powers, still with probability at least $1-8e^{-x}$ under $\PP_{\theta}$
   \begin{align*}
  	\max_{j=1,2,3}\abs{\hat{m}_j - m_j}%
  	&\leq 14.5 C L^{3/2} \brackets[\Big]{{\frac{x}{n\gamma^*}}}^{1/2} 
  	+ C\Big(\tau^3 + 3L^{1/2}\tau^2 + 10.5L\tau + 12CL^{3/2} \Big)\frac{x}{n\gamma^*}\\
  	&\quad%
  	+ C^2\Big(18\tau L + 4CL^{3/2} + 3\tau^2\sqrt{L} + 3\tau L \Big)\brackets[\Big]{\frac{x}{n\gamma^*}}^{3/2}\\
  	&\quad%
  	+ C^2\Big( 9 \sqrt{L}\tau^2 + 12C\tau L + 3\tau^3 \Big) \brackets[\Big]{\frac{x}{n\gamma^*}}^2%
          + 12 C^3\tau^2\sqrt{L} \brackets[\Big]{\frac{x}{n\gamma^*}}^{5/2}\\%
     &\quad%
  	+ 4C^3\tau^3 \brackets[\Big]{\frac{x}{n\gamma^*}}^3.
  \end{align*}
  The conclusion follows since we are in the case where $x \leq n\gamma^*$, and because $L\geq 1$ and $\tau \geq 1$.
\end{proof}

\begin{lemma}
  \label{lem:bounds-on-Z}
  Assume $\theta\in \RegulClass$ and $n\gamma^*\geq 1/99$.  Write
  $Z_1 = \PP_n^{(1)}(\tilde{\psi}_2) - \EE_{\theta}(\tilde{\psi}_2)$,
  $Z_2 = \PP_n^{(2)}(\tilde{\psi}_2 \otimes \tilde{\psi}_2) -
  \EE_{\theta}(\tilde{\psi}_2 \otimes \tilde{\psi}_2)$,
  $Z_3 = \PP_n^{(3)}(\tilde{\psi}_2\otimes 1 \otimes \tilde{\psi}_2) -
  \EE_{\theta}(\tilde{\psi}_2\otimes 1 \otimes \tilde{\psi}_2)$, and
  $Z_4 = \PP_n^{(3)}(\tilde{\psi}_2\otimes \tilde{\psi}_2 \otimes
  \tilde{\psi}_2) - \EE_{\theta}(\tilde{\psi}_2\otimes \tilde{\psi}_2 \otimes
  \tilde{\psi}_2)$. Then
	\begin{gather*}
	\PP_{\theta}\Big( |Z_1| \geq C\sqrt{\frac{Lx}{n\gamma^*}} + C\tau \frac{x}{n\gamma^*} \Big)%
	\leq e^{-x}, \\
		\PP_{\theta}\Big( |Z_j| \geq CL\sqrt{\frac{x}{n\gamma^*}} + C\tau^2\frac{x}{n\gamma^*} \Big)%
	\leq e^{-x}, \quad j=2,3, \\
		\PP_{\theta}\Big( |Z_4| \geq CL^{3/2}\sqrt{\frac{x}{n\gamma^*}} + C\tau^3\frac{x}{n\gamma^*} \Big)%
	\leq e^{-x}.
\end{gather*}

\end{lemma}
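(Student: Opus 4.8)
The plan is to treat $Z_1,Z_2,Z_3,Z_4$ in a unified way, since each is of the form $\PP_n^{(k)}(h)-\EE_\theta(h)$ for a bounded measurable $h$ and some $k\in\{1,2,3\}$; the Bernstein-type consequence of Lemma~\ref{lem:paulin} (the one with an $e^{-x}$ tail) then applies directly, reducing everything to bounding $\norm{h}_\infty$ and $\EE_\theta[h^2]$ for the relevant $h$. Throughout I use $\norm{\tilde{\psi}_2}_{L^2}=1$ and $\norm{\tilde{\psi}_2}_\infty\leq\tau$, together with the elementary identity $\norm{f\otimes g}_\infty=\norm{f}_\infty\norm{g}_\infty$.

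First I would dispatch $Z_1$, $Z_2$ and $Z_4$. For $Z_1$ take $h=\tilde{\psi}_2$ (so $k=1$): then $\norm{h}_\infty\leq\tau$ and, by Lemma~\ref{lem:2}, $\EE_\theta[\tilde{\psi}_2^2]\leq L\norm{\tilde{\psi}_2}_{L^2}^2=L$, and Lemma~\ref{lem:paulin} gives the first displayed bound. For $Z_2$ take $h=\tilde{\psi}_2\otimes\tilde{\psi}_2$ (so $k=2$): then $\norm{h}_\infty\leq\tau^2$ and $\EE_\theta[h^2]\leq L^2\norm{\tilde{\psi}_2\otimes\tilde{\psi}_2}_{L^2}^2=L^2$ by Lemma~\ref{lem:2}, so the proxy variance is $L^2$ and Lemma~\ref{lem:paulin} yields $|Z_2|\leq CL\sqrt{x/(n\gamma^*)}+C\tau^2 x/(n\gamma^*)$ off an event of probability $e^{-x}$. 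For $Z_4$ take $h=\tilde{\psi}_2\otimes\tilde{\psi}_2\otimes\tilde{\psi}_2$ (so $k=3$): then $\norm{h}_\infty\leq\tau^3$ and $\EE_\theta[h^2]\leq L^3\norm{\tilde{\psi}_2\otimes\tilde{\psi}_2\otimes\tilde{\psi}_2}_{L^2}^2=L^3$ by Lemma~\ref{lem:2}, giving proxy variance $L^3$ and hence the stated $L^{3/2}$-type bound.

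The only case needing a little more care is $Z_3$, where $h=\tilde{\psi}_2\otimes 1\otimes\tilde{\psi}_2$ is nominally a function of $(Y_1,Y_2,Y_3)$ but in fact depends only on $(Y_1,Y_3)$. Applying Lemma~\ref{lem:2} with $k=3$ would only give $\EE_\theta[h^2]\leq L^3$, hence a proxy variance $L^{3/2}$, which is larger than what the claimed bound allows. Instead I would use Remark~\ref{rem:lem2}, which gives $\EE_\theta[h(Y_1,Y_3)^2]\leq L^2\norm{\tilde{\psi}_2\otimes\tilde{\psi}_2}_{L^2}^2=L^2$; combined with $\norm{h}_\infty\leq\tau^2$, Lemma~\ref{lem:paulin} then produces exactly the same bound as for $Z_2$. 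Since $n\gamma^*\geq 1/99$ is assumed and the constant in Lemma~\ref{lem:paulin} is universal, one finishes by enlarging $C$ to absorb the finitely many numerical factors. There is no substantive obstacle here; the single point to be vigilant about is invoking Remark~\ref{rem:lem2} rather than Lemma~\ref{lem:2} for $Z_3$.
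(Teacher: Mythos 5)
Your proof is correct and follows essentially the same route as the paper: apply the Bernstein-type consequence of Lemma~\ref{lem:paulin} to each $h$, bound $\norm{h}_{\infty}$ by the appropriate power of $\tau$, use Lemma~\ref{lem:2} for the variance proxies of $Z_1,Z_2,Z_4$, and invoke Remark~\ref{rem:lem2} for $Z_3$ to get the sharper $L^2$ (rather than $L^3$) variance bound. You correctly identified the one subtle point ($Z_3$), which is exactly the point the paper flags as well.
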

\begin{proof}
  For $Z_4$, use Lemma~\ref{lem:paulin} together with the facts that
  $\norm{\tilde{\psi}_2\otimes\tilde{\psi}_2\otimes \tilde{\psi}_2}_{\infty} =
  \norm{\tilde{\psi}_2}_{\infty}^3 \leq \tau^3$ and that
  $\EE_{\theta}[(\tilde{\psi}_2 \otimes \tilde{\psi} _2 \otimes
  \tilde{\psi}_2)^2] \leq L^3\norm{\tilde{\psi}_2}_{L^2}^6 = L^3$ by
  Lemma~\ref{lem:2}.  The arguments are similar for $j=1,2,3$, though note for
  $j=3$ we use Remark~\ref{rem:lem2} rather than Lemma~\ref{lem:2} itself.
\end{proof}

\begin{lemma}

  \label{lem:pro:2a}
  Let $n\gamma^* \geq 1/99$. Then, there exists a universal constant $C > 0$ such that for all $\theta \in \RegulClass$
  \begin{equation*}
    \PP_{\theta}\Big(\max_{j=1,2}\Big|\frac{\hat{m}_j}{m_j} - 1\Big| \geq \frac{1}{2} \Big)%
    \leq 3\exp\Bigg(- \frac{Cn \gamma^* m_2^2}{L^2 + \max(\tau,\sqrt{L})^2m_2} \Bigg).
  \end{equation*}
\end{lemma}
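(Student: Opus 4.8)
The plan is to deduce this \emph{relative}-error bound from the \emph{absolute}-error bound of Proposition~\ref{pro:8alt}, by inverting the latter's Bernstein-type tail at an appropriate threshold.

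First I would handle the degenerate case: if $m_2 = 0$ the right-hand side of the asserted inequality is $3 \geq 1$, so the bound is trivial; assume henceforth $m_2 > 0$. Lemma~\ref{lem:basic-relations} gives $0 \leq m_2 \leq \abs{m_1}$, so $\abs{m_1} \geq m_2 > 0$, the ratios $\hat{m}_j/m_j$ are well defined, and since $\abs{m_j} \geq m_2$ for $j = 1,2$ we have the event inclusion
\[
\Bigl\{\, \max_{j=1,2}\bigl| \tfrac{\hat{m}_j}{m_j} - 1 \bigr| \geq \tfrac12 \,\Bigr\}
\ \subseteq\
\Bigl\{\, \max_{j=1,2}\abs{\hat{m}_j - m_j} \geq \tfrac{m_2}{2} \,\Bigr\};
\]
so it suffices to bound the probability of the event on the right.

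Next I would invoke Proposition~\ref{pro:8alt}: for every $x \geq 0$, with $\PP_{\theta}$-probability at least $1 - 3e^{-x}$ one has $\max_{j=1,2}\abs{\hat{m}_j - m_j} \leq CL\sqrt{x/(n\gamma^*)} + C\max(\tau,\sqrt{L})^2 x/(n\gamma^*)$, where $C$ is the universal constant of that proposition (which we may take $\geq 1$). I would then choose $x$ as large as possible subject to this right-hand side being $< m_2/2$; requiring each of the two summands to be at most $m_2/8$ (say) forces
\[
x \ =\ c\, n\gamma^* \min\Bigl( \frac{m_2^2}{L^2},\ \frac{m_2}{\max(\tau,\sqrt{L})^2} \Bigr)
\]
for a small universal constant $c$ (one may take $c = 1/(64C^2)$). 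With this $x$, the complement of the event in Proposition~\ref{pro:8alt} already forces $\max_{j=1,2}\abs{\hat m_j - m_j} \leq m_2/4 < m_2/2$, hence $\PP_\theta(\max_{j=1,2}\abs{\hat m_j - m_j} \geq m_2/2) \leq 3e^{-x}$. Finally, the elementary bound $(u+v)^{-1} \leq \max(u,v)^{-1}$ applied with $u = L^2/m_2^2$, $v = \max(\tau,\sqrt L)^2/m_2$ shows that the minimum above is at least $m_2^2/(L^2 + \max(\tau,\sqrt L)^2 m_2)$, so that $3e^{-x} \leq 3\exp\bigl(-c\, n\gamma^* m_2^2/(L^2 + \max(\tau,\sqrt L)^2 m_2)\bigr)$, which is the claim after renaming $c$ as $C$.

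I do not expect a genuine obstacle here: the lemma is essentially a bookkeeping step converting a relative-error event into an absolute-error event and then inverting the tail bound of Proposition~\ref{pro:8alt}. The only points that need a little care are the degenerate case $m_2 = 0$; the use of $\abs{m_1} \geq m_2$ from Lemma~\ref{lem:basic-relations}, which is exactly what permits the single threshold $m_2/2$ to control simultaneously the relative errors of $\hat{m}_1$ and $\hat{m}_2$; and carrying the universal constants through the tail inversion so the exponent comes out in precisely the stated form $m_2^2/(L^2 + \max(\tau,\sqrt L)^2 m_2)$.
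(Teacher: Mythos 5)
Your proposal is correct and follows essentially the same route as the paper: reduce the relative-error event to $\{\max_{j=1,2}\abs{\hat m_j - m_j} \geq m_2/2\}$ via $\abs{m_j}\geq m_2$ (Lemma~\ref{lem:basic-relations}), then invert the Bernstein tail of Proposition~\ref{pro:8alt}; the paper solves the threshold equation for $x$ exactly and bounds the root from below, whereas you pick $x$ as a minimum of the two regimes, but both yield the same exponent up to the universal constant. Your explicit treatment of the degenerate case $m_2=0$ is a small point the paper leaves implicit, and it is handled correctly.
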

 Note that $\frac{gm_2}{\max(1,g)} \leq m_2$ and that $L\geq 1$ necessarily, hence the the absolute value of the exponent in Lemma~\ref{lem:pro:2a} is larger than that in Lemma~\ref{lem:pro:2b}.
\begin{proof}
  We apply Proposition~\ref{pro:8alt} with $x\geq 0$ such that
  \begin{equation*}
    CL\sqrt{\frac{x}{n\gamma^*}} + C\max(\tau,\sqrt{L})^2\frac{x}{n\gamma^*} = \frac{m_2}{2},
  \end{equation*}
  i.e., 
  \begin{align*}
    \sqrt{\frac{x}{n\gamma^*}}%
    &= \frac{L}{2\max(\tau,\sqrt{L})^2}\Big(\sqrt{1 + \frac{2\max(\tau,\sqrt{L})^2m_2}{CL^2}} -1 \Big)\\
    &\geq \frac{L}{2} \frac{ m_2 / (CL^2)}{\sqrt{1 + \frac{2\max(\tau,\sqrt{L})^2m_2}{CL^2}}}.
  \end{align*}
Then, using that $0 \leq m_2\leq \abs{m_1}$, (Lemma~\ref{lem:basic-relations}), we have
  \begin{align*}
    \PP_{\theta}\Big(\max_{j=1,2}\Big|\frac{\hat{m}_j}{m_j} - 1\Big| \geq \frac{1}{2} \Big)%
    &\leq \PP_{\theta}\Big(\max_{j=1,2}\Big|\hat{m}_j - m_j\Big| \geq \frac{m_2}{2} \Big)\\
    &\leq 6\exp\Bigg(- \frac{n \gamma^* m_2^2}{2C^2L^2 + 2C\max(\tau,\sqrt{L})^2|m_2|} \Bigg)
  \end{align*}
  concluding the proof.
\end{proof}

\begin{lemma}
  \label{lem:pro:2b}
  Let $n\gamma^* \geq 1/99$. Then, there exists a universal constant $C > 0$ such that for all $\theta \in \RegulClass$
  \begin{equation*}
    \PP_{\theta}\Big(  \max_{j=1,2,3}|\hat{m}_j - m_j| \geq \frac{gm_2}{44\max(1,g)}
 \Big) \leq 4\exp\Bigg(- \frac{C n \gamma^* g^2m_2^2/\max(1,g)^2}{L^3 + \max(\tau,\sqrt{L})^3 gm_2/\max(1,g)} \Bigg).
  \end{equation*}
\end{lemma}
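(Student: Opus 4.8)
The plan is to run exactly the same inversion argument as in the proof of Lemma~\ref{lem:pro:2a}, but invoking Proposition~\ref{pro:8} (which controls $\max_{j=1,2,3}\abs{\hat{m}_j - m_j}$ with a proxy variance of order $L^3$ and a linear term involving $\max(\tau,\sqrt{L})^3$) in place of Proposition~\ref{pro:8alt}. Concretely, I would set $t \coloneqq \frac{gm_2}{44\max(1,g)}$, which is nonnegative because $m_2 \geq 0$ and $g = \phi_3\abs{\tilde{\mathcal{I}}} \geq 0$ by Lemma~\ref{lem:basic-relations}, and then, writing $C$ for the universal constant of Proposition~\ref{pro:8}, choose $x \geq 0$ so that
\[
  CL^{3/2}\sqrt{\tfrac{x}{n\gamma^*}} + C\max(\tau,\sqrt{L})^3\,\tfrac{x}{n\gamma^*} = t .
\]

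Next I would solve this as a quadratic equation in the unknown $\sqrt{x/(n\gamma^*)}$, obtaining
\[
  \sqrt{\tfrac{x}{n\gamma^*}}
  = \frac{L^{3/2}}{2\max(\tau,\sqrt{L})^3}\Big(\sqrt{1 + \tfrac{4\max(\tau,\sqrt{L})^3 t}{CL^3}} - 1\Big)
  \geq \frac{L^{3/2}}{2\max(\tau,\sqrt{L})^3}\cdot\frac{2\max(\tau,\sqrt{L})^3 t/(CL^3)}{\sqrt{1 + 4\max(\tau,\sqrt{L})^3 t/(CL^3)}},
\]
using the elementary bound $\sqrt{1+u}-1 \geq \frac{u}{2\sqrt{1+u}}$ for $u \geq 0$; squaring and clearing denominators this gives
\[
  x \;\geq\; \frac{n\gamma^*\, t^2}{C^2L^3 + 4C\max(\tau,\sqrt{L})^3 t}.
\]
This display is the exact counterpart of the corresponding one in the proof of Lemma~\ref{lem:pro:2a}, the only change being that $L$ is replaced by $L^{3/2}$ and $\max(\tau,\sqrt{L})^2$ by $\max(\tau,\sqrt{L})^3$ --- precisely the effect of using Proposition~\ref{pro:8} rather than Proposition~\ref{pro:8alt}.

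To finish, Proposition~\ref{pro:8} gives $\PP_\theta\big(\max_{j=1,2,3}\abs{\hat{m}_j - m_j} \geq t\big) \leq 4e^{-x}$, so plugging in the lower bound for $x$ and then substituting $t = gm_2/(44\max(1,g))$ yields a bound of the form
\[
  4\exp\Big(- \frac{n\gamma^*\, g^2m_2^2/(44^2\max(1,g)^2)}{C^2L^3 + (4C/44)\max(\tau,\sqrt{L})^3\, gm_2/\max(1,g)} \Big),
\]
and bounding the denominator from above by $\max(C^2,4C/44)\big(L^3 + \max(\tau,\sqrt{L})^3 gm_2/\max(1,g)\big)$, then renaming the overall numerical constant, gives the statement with universal constant $1/(44^2\max(C^2,4C/44))$. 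I do not anticipate any genuine obstacle; the only point needing care is to carry the $\max(1,g)$-weighting consistently through both the quadratic and the linear terms, so that the final denominator is exactly $L^3 + \max(\tau,\sqrt{L})^3 gm_2/\max(1,g)$ as claimed (and, if a cleaner comparison with Lemma~\ref{lem:pro:2a} is wanted, one may additionally use $gm_2/\max(1,g) \leq m_2$).
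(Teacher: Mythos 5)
Your proposal is correct and is essentially identical to the paper's own proof: the paper likewise applies Proposition~\ref{pro:8} at the $x$ solving $CL^{3/2}\sqrt{x/(n\gamma^*)} + C\max(\tau,\sqrt{L})^3 x/(n\gamma^*) = gm_2/(44\max(1,g))$, solves the quadratic for $\sqrt{x/(n\gamma^*)}$, and lower-bounds it via $\sqrt{1+u}-1 \geq u/(2\sqrt{1+u})$ to get $x \gtrsim n\gamma^* t^2/(L^3 + \max(\tau,\sqrt{L})^3 t)$. No gaps.
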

\begin{proof}
 By Proposition~\ref{pro:8}, applied with $x\geq 0$ such that
  \begin{align*}
    CL^{3/2}\sqrt{\frac{x}{n\gamma^*}}%
    + C\max(\tau,\sqrt{L})^3 \frac{x}{n\gamma^*}%
    = \frac{gm_2}{44\max(1,g)}
  \end{align*}
  ie,
  \begin{align*}
    \sqrt{\frac{x}{n\gamma^*}}%
    &= \frac{L^{3/2}}{2\max(\tau,\sqrt{L})^3}\Bigg(\sqrt{1 + \frac{4\max(\tau,\sqrt{L})^3gm_2}{44CL^3\max(1,g)}} - 1 \Bigg)\\
    &\geq%
      \frac{1}{44CL^{3/2}} \frac{ gm_2 / \max(1,g) }{\sqrt{1 + \frac{4\max(\tau,\sqrt{L})^3gm_2}{44CL^3\max(1,g)}}},
  \end{align*}
we obtain the result.
\end{proof}

\subsection{Proof of Theorem~\ref{thm:concent-pq}}
\label{sec:proof-prop-refpr}

In the whole proof, since $\tilde{\psi}_2$ is computed independently of the rest, we assume for convenience and without loss of generality that $\tilde{\psi}_2$ is non random  and we work implicitly conditional on $\tilde{\psi}_2$. It is assumed that $\tilde{\psi}_2$ satisfies the properties stated in the Theorem~\ref{thm:psitilde2}. Since the loss function is almost-surely bounded by $1$, the contribution of estimating $\tilde{\psi}_2$ to the risk is easily deduced from the Theorem~\ref{thm:psitilde2}.

Due to label switching, $\hat{\phi}_1$ may be either an estimator of $\phi_1$ or $-\phi_1$, depending on the value of  $\tilde{s} \coloneqq \sign(\Inner{\psi_2,\tilde{\psi}_2})$. In the proofs, rather than allow an arbitrary permutation, we define $p_\pm$ as an (unobserved) permutation of $(p,q)$ and we define $\hat{p}_+,\hat{p}_-$ such that $\hat{p}_\pm$ estimates $p_\pm$. To this end, define $p_\pm = \tfrac{1}{2} (1\mp \tilde{s}\phi_1) (1-\phi_2)$ (as in Lemma~\ref{lem:invert-param} already) and define $\hat{p}_\pm$ accordingly:
\begin{equation}\label{eqn:def:hatp+-}
	\hat{p}_\pm = \tfrac{1}{2} (1\mp \hat{\phi}_1)(1-\hat{\phi}_2).
	\end{equation}
It is noted in Lemma~\ref{lem:invert-param} that we may equivalently define
\begin{equation*}
	(p_+,p_-)%
	\coloneqq%
	\begin{cases}
		(p,q) &\mathrm{if}\ \tilde{s} > 0,\\
		(q,p) &\mathrm{if}\ \tilde{s} < 0.
	\end{cases}
\end{equation*}
Recall the definitions $g\coloneqq\phi_3\abs{\tilde{\mathcal{I}}} = m_2^{-1} \sqrt{4m_1^2m_2+m_3^2}$, $m_1 \coloneqq r(\phi)\tilde{\mathcal{I}}^2$, $m_2 \coloneqq r(\phi)\phi_2 \tilde{\mathcal{I}}^2$, and $m_3 \coloneqq r(\phi)\phi_1\phi_2\phi_3\tilde{\mathcal{I}}^3$. Also recall the event $\Omega_n$ defined in Proposition~\ref{pro:2}, and proved therein to satisfy $\sup_{\theta\in \SmoothClass \cap \RegulClass}\PP_{\theta}(\Omega_n^c)%
	\leq
	14\exp\Big(- \frac{C n \sg \delta^2\epsilon^4\zeta^6 }{L^3 + \max(\tau,\sqrt{L})^3 \delta \epsilon^2\zeta^3} \Big)$ for a constant $C>0$: 
\begin{equation*}
  \Omega_n \coloneqq \Set*{\max_{j=1,2}\Big|\frac{\hat{m}_j}{m_j} - 1\Big| \leq \frac{1}{2},\ \max_{j=1,2,3}|\hat{m}_j - m_j| \leq \frac{gm_2}{44\max(1,g)}  }.
\end{equation*}
Its definition is according to the needs of the proof of Theorem \ref{thm:1} which are more stringent than those of the current result. In particular, note that on $\Omega_n$ we have $\max_{j=1,2,3}|\hat{m}_j - m_j| \leq
\frac{|\tilde{\mathcal{I}}|^3r(\phi)\phi_2\phi_3 }{20 \max(|\phi_1|,
	(1-\phi_1^2)\phi_3|\tilde{\mathcal{I}}|)}$, as a consequence of the fact that $\abs{\phi_1}\leq 1$; this latter bound is what we will use for the current theorem.

We decompose
\begin{align*}
  \EE_{\theta}\Big(|\hat{p}_{\pm} - p_{\pm}|^2 \Big)%
  &= \EE_{\theta}\Big(|\hat{p}_{\pm} - p_{\pm}|^2\1_{\Omega_n^c} \Big)%
    + \EE_{\theta}\Big(|\hat{p}_{\pm} - p_{\pm}|^2\1_{\Omega_n} \Big)\\
  &\leq \PP_{\theta}(\Omega_n^c) +  \EE_{\theta}\Big(|\hat{p}_{\pm} - p_{\pm}|^2\1_{\Omega_n} \Big),
\end{align*}
We have
\begin{align*}
  \hat{p}_{\pm} - p_{\pm}%
  &= -\frac{1}{2}(\hat{\phi}_2 - \phi_2)%
  \mp \frac{1}{2}(\hat{\phi}_1 - \tilde{s}\phi_1)%
  \pm \frac{\tilde{s}\phi_1}{2}(\hat{\phi}_2 - \phi_2)%
  \mp \frac{\hat{\phi}_2}{2}(\hat{\phi}_1 - \tilde{s}\phi_1),
\end{align*}
hence, using that $\abs{\hat{\phi}_2} \leq 1$ and $\abs{\phi_1}\leq 1$,
\begin{equation*}
  |\hat{p}_{\pm} - p_{\pm}|%
  \leq |\hat{\phi}_1 - \phi_1| + |\hat{\phi}_2 - \phi_2|.
\end{equation*}
Using Lemmas~\ref{lem:10AA} and~\ref{lem:10BB} below and Proposition~\ref{pro:expec-maxm}, we get for a constant $K$
\begin{align*}
  \EE_{\theta}\Big(|\hat{p}_{\pm} - p_{\pm}|^2\1_{\Omega_n} \Big)%
  &\leq 2\EE_{\theta}\Big(|\hat{\phi}_1 - \tilde{s}\phi_1|^2\1_{\Omega_n} \Big)%
    +  2\EE_{\theta}\Big(|\hat{\phi}_2 - \phi_2|^2\1_{\Omega_n} \Big)\\
  &\leq 2\Big(\frac{53^2\max(1,g^2)}{\phi_2^4\phi_3^6|\tilde{\mathcal{I}}|^6}%
    + \frac{16}{m_1^2}
    \Big)\EE_{\theta}\Big(\max_{j=1,2,3}|\hat{m}_j - m_j|^2 \Big)\\
  &\leq 2K\Big(\frac{53^2\max(1,g^2)}{\phi_2^4\phi_3^6\abs{\tilde{\mathcal{I}}}^6}%
    + \frac{16}{m_1^2}
    \Big)\Big(\frac{L^3}{n\gamma^*}%
      + \frac{\max(\tau,\sqrt{L})^6}{(n\gamma^*)^2}\Big).
\end{align*}
Therefore, there is a universal constant $B \geq 1$ such that
\begin{align*}
  &\sup_{\theta\in \SmoothClass \cap \RegulClass}\EE_{\theta}\Big(|\hat{p}_{\pm} - p_{\pm}|^2\1_{\Omega_n} \Big)\\
  &\qquad\qquad\leq%
    \frac{BL^3\max(\delta^2,\epsilon^2\zeta^2)}{\delta^2\epsilon^4\zeta^6}\frac{1}{n\sg}%
  + \frac{B\max(\tau,L)^6\max(\delta^2,\epsilon^2\zeta^2)}{\delta^2\epsilon^4\zeta^6}\frac{1}{(n\sg)^2}\\
  &\qquad\qquad\leq%
    \frac{2BL^3\max(\delta^2,\epsilon^2\zeta^2)}{\delta^2\epsilon^4\zeta^6}\frac{1}{n\sg},
\end{align*}
since $L\geq 1$ and  $\sup_{\theta\in \SmoothClass \cap \RegulClass}\EE_{\theta}\Big(|\hat{p}_{\pm} - p_{\pm}|^2\1_{\Omega_n} \Big) \leq 1$.
Lemmas~\ref{lem:10AA} and~\ref{lem:10BB} therefore conclude the proof.

\begin{lemma}
  \label{lem:10AA}
  Suppose
  \begin{equation*}
    \max_{j=1,2}\big|\frac{\hat{m}_j}{m_j} - 1\big| \leq \frac{1}{2},\quad \mathrm{and},\quad%
    \max_{j=1,2,3}|\hat{m}_j - m_j| \leq
  \frac{|\tilde{\mathcal{I}}|^3r(\phi)\phi_2\phi_3 }{20 \max(|\phi_1|,
    (1-\phi_1^2)\phi_3|\tilde{\mathcal{I}}|)}.
\end{equation*}
Then,
  \begin{equation*}
    |\hat{\phi}_1 - \tilde{s}\phi_1|%
    \leq%
    \frac{53\max(1,\phi_3|\tilde{\mathcal{I}}|)}{\phi_2^2\phi_3^3|\tilde{\mathcal{I}}|^3}
    \max_{j=1,2,3}|\hat{m}_j - m_j|.
  \end{equation*}
\end{lemma}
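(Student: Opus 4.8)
The plan is to write both $\tilde s\phi_1$ and $\hat\phi_1$ as ratios of the shape $z/\sqrt{4x^2y+z^2}$ and to control the difference of two such ratios. By Lemma~\ref{lem:basic-relations} one has $m_2\geq 0$ and, writing $v:=4m_1^2m_2+m_3^2$, the identity $\sqrt v=|\tilde{\mathcal I}|m_2\phi_3$; combined with the definitions of the $m_j$ and the fact (from $r(\phi)=\tfrac14(1-\phi_1^2)\phi_2\phi_3^2$) that $r(\phi)\phi_2\geq 0$, this yields $\tilde s\phi_1=m_3/\sqrt v$ (positive root) together with the bookkeeping identities $m_1^2=\tfrac14(1-\phi_1^2)\phi_3|\tilde{\mathcal I}|\sqrt v$, $|m_3|=|\phi_1|\sqrt v$, $4m_1^2m_2=(1-\phi_1^2)v$ and $\phi_2^2\phi_3^3|\tilde{\mathcal I}|^3=4\sqrt v/(1-\phi_1^2)$. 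On the estimator side, $\hat\phi_1=\hat m_3/\sqrt{\hat v}$ with $\hat v:=4\hat m_1^2(\hat m_2)_++\hat m_3^2$, and the first hypothesis gives $\tfrac12|m_1|\leq|\hat m_1|\leq\tfrac32|m_1|$ and $\tfrac12 m_2\leq(\hat m_2)_+=\hat m_2\leq\tfrac32 m_2$. Setting $\eta:=\max_j|\hat m_j-m_j|$, the second hypothesis reads $\eta\leq\Delta:=\sqrt v/(20M)$ with $M:=\max(|\phi_1|,(1-\phi_1^2)\phi_3|\tilde{\mathcal I}|)$, and using $\phi_2^2\phi_3^3|\tilde{\mathcal I}|^3=4\sqrt v/(1-\phi_1^2)$ the goal reduces to showing $|\hat\phi_1-\tilde s\phi_1|\leq\tfrac{53}{4}\max(1,\phi_3|\tilde{\mathcal I}|)(1-\phi_1^2)\,\eta/\sqrt v$.

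First I would record the two deterministic inputs: the lower bound $\hat v\geq\max\bigl(\tfrac18(1-\phi_1^2)v,\ (|m_3|-\eta)^2\bigr)$ (the first summand from $4\hat m_1^2(\hat m_2)_+\geq\tfrac12 m_1^2m_2$, the second from $\hat v\geq\hat m_3^2$), and the upper bound $|v-\hat v|\leq\eta\bigl(19m_1^2+2|m_3|+\eta\bigr)$, obtained by expanding $v-\hat v=4(m_1^2m_2-\hat m_1^2(\hat m_2)_+)+(m_3^2-\hat m_3^2)$ and using $m_2\leq|m_1|$ with the factor-$\tfrac32$ bounds. Then I would split on the size of $|\phi_1|$. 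When $\phi_1^2\leq\tfrac1{20}$, so $1-\phi_1^2\geq\tfrac{19}{20}$ is bounded below, I use the direct decomposition
\[
\hat\phi_1-\tilde s\phi_1=\frac{\hat m_3-m_3}{\sqrt{\hat v}}+m_3\Bigl(\frac1{\sqrt{\hat v}}-\frac1{\sqrt v}\Bigr),
\]
together with $\sqrt{\hat v}\geq\tfrac1{2\sqrt2}\sqrt{1-\phi_1^2}\,\sqrt v\gtrsim\sqrt v$, and bound both terms after substituting the identities above and peeling one factor $\eta\leq\sqrt v/(20M)$ out of each quadratic term so that $M^{-1}$ cancels against $|\phi_1|\leq M$ or $(1-\phi_1^2)\phi_3|\tilde{\mathcal I}|\leq M$; since $1-\phi_1^2\gtrsim 1$ here the target's $(1-\phi_1^2)$ factor is harmless. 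When $\phi_1^2>\tfrac1{20}$, then $\eta\leq\Delta\leq\sqrt v/(20|\phi_1|)<|m_3|$, so $\hat m_3$ and $m_3$ share a sign, hence $\hat\phi_1,\tilde s\phi_1$ have the same sign, $|\hat\phi_1|+|\phi_1|\geq|\phi_1|>20^{-1/2}$, and I write
\[
|\hat\phi_1-\tilde s\phi_1|=\frac{|\hat\phi_1^2-\phi_1^2|}{|\hat\phi_1|+|\phi_1|}\leq 2\sqrt5\,\Bigl|\frac{4\hat m_1^2(\hat m_2)_+}{\hat v}-\frac{4m_1^2m_2}{v}\Bigr|\leq 2\sqrt5\,\frac{19\eta m_1^2+(1-\phi_1^2)|v-\hat v|}{\hat v},
\]
using $1-\hat\phi_1^2=4\hat m_1^2(\hat m_2)_+/\hat v$, $1-\phi_1^2=4m_1^2m_2/v$. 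Now both summands manifestly carry the factor $1-\phi_1^2$ (from $4m_1^2m_2=(1-\phi_1^2)v$ and from $m_1^2=\tfrac14(1-\phi_1^2)\phi_3|\tilde{\mathcal I}|\sqrt v$), matching the claimed form; I then invoke $\hat v\geq\tfrac18(1-\phi_1^2)v$ or $\hat v\geq(|m_3|-\eta)^2\gtrsim\phi_1^2v$ according to whether $\phi_1^2$ is near $\tfrac1{20}$ or bounded away from it, and finish by arithmetic using Lemmas~\ref{lem:ub-phi3}, \ref{lem:bounds-parameters} and~\ref{lem:basic-relations}.

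The hard part will be the constant bookkeeping near the two boundary regimes rather than the structure. Near $|\phi_1|\to 1$ the naive triangle inequality on the direct decomposition is genuinely too lossy: $\sqrt v$ (and hence every $m_j$ and $\Delta$) tends to $0$, and the target carries an extra factor $1-\phi_1^2$ that the term $(\hat m_3-m_3)/\sqrt{\hat v}$ does not exhibit — there is a cancellation between the two summands that the triangle inequality throws away — so one is forced to pass to $\hat\phi_1^2-\phi_1^2$, equivalently to $1-\hat\phi_1^2=4\hat m_1^2(\hat m_2)_+/\hat v$, to recover that factor. Near $|\phi_1|\to 0$ the reverse obstruction occurs: $m_3$ and $|\hat\phi_1|+|\phi_1|$ can be arbitrarily small, so division by them is forbidden and one must revert to the direct decomposition, which is safe there precisely because $1-\phi_1^2$ is then bounded below. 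Making the two regimes overlap while keeping the universal constant at $53$ is the only delicate point; everything else is routine algebra.
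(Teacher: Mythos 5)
Your route is structurally sound and genuinely different from the paper's. The paper does not case-split on $|\phi_1|$: it writes down an \emph{exact} algebraic identity for $\hat\phi_1-\tilde s\phi_1$ (adapted from Proposition~3 of the multinomial companion paper) whose numerator consists only of terms of the form $\phi_1\xi$, $\Delta_3(1-\phi_1^2)\sqrt v$ and $\Delta_3\xi$, so the cancellation you correctly diagnose near $|\phi_1|\to1$ is performed exactly, once, for all $\phi_1$; the denominator $\sqrt{\hat v}(\sqrt{\hat v}+\sqrt v)$ is then bounded below by $v$ \emph{uniformly}, using $|\hat v-v|\le v/2$, which comes from the full expansion of $\hat v - v$ (Lemma~\ref{lem:controlofv}) together with the hypothesis. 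Your preparatory identities ($\tilde s\phi_1=m_3/\sqrt v$, $4m_1^2m_2=(1-\phi_1^2)v$, $m_1^2=\tfrac14(1-\phi_1^2)\phi_3|\tilde{\mathcal I}|\sqrt v$, the normalised target $\tfrac{53}{4}\max(1,\phi_3|\tilde{\mathcal I}|)(1-\phi_1^2)\eta/\sqrt v$) are all correct, the sum-of-roots trick in the large-$|\phi_1|$ regime is legitimate (sign agreement of $\hat m_3$ and $m_3$ does follow from $\eta<|m_3|$ there), and your scheme would prove the displayed inequality with \emph{some} universal constant.

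It will not, however, deliver the constant $53$, and this is more than deferred bookkeeping: the two regimes cannot be made to meet. In the small-$|\phi_1|$ regime the term $\eta/\sqrt{\hat v}$, with your bound $\hat v\ge\tfrac18(1-\phi_1^2)v$, costs $2\sqrt2(1-c)^{-1/2}\eta/\sqrt v$ at the cutoff $\phi_1^2=c$, which must fit under $\tfrac{53}{4}(1-c)\eta/\sqrt v$; this alone forces $c\le0.64$, and adding your second term pushes $c$ well below that. In the large-$|\phi_1|$ regime, the $19\eta m_1^2/\hat v$ summand contributes $\tfrac{19}{4}\cdot\tfrac{1}{|\phi_1|}\cdot\tfrac{v}{\hat v}\cdot(1-\phi_1^2)\phi_3|\tilde{\mathcal I}|\,\eta/\sqrt v$, so you need $\tfrac{v}{\hat v}\le\tfrac{53}{19}|\phi_1|$ at $\phi_1^2=c$; with your lower bound $\hat v\ge\max\bigl(\tfrac18(1-\phi_1^2)v,(|m_3|-\eta)^2\bigr)$ — which dips to roughly $v/10$ for moderate $\phi_1^2$, since both branches are weak there — this forces $c\gtrsim0.6$ before you even add the $(1-\phi_1^2)|v-\hat v|/\hat v$ summand. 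The two requirements are incompatible, so the argument as designed yields a constant several times larger than $53$. The root causes are (i) bounding $\hat v$ from below by discarding one of its two nonnegative summands, which is far weaker than $\hat v\ge v-|\hat v-v|\ge v/2$ obtained from the full perturbation expansion, and (ii) the factor $1/|\phi_1|$ charged by the sum-of-roots trick at moderate $|\phi_1|$, which the paper's identity avoids because its numerator carries $\phi_1$ as a \emph{multiplicative} factor. For the paper's downstream results a larger constant would be harmless (it only feeds into unspecified universal constants), but as written your proposal does not establish the lemma with the stated constant; to do so you essentially need the exact identity rather than the case split.
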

\begin{proof}
  We use the notations $\Delta_1 = \hat{m}_1 - m_1$, $\Delta_2 = (\hat{m}_2)_+ - m_2$, and $\Delta_3 = \hat{m}_3 - m_3$.
  Then, we define 
  \begin{align*} \hat{v} &\coloneqq 4\hat{m}_1^2(\hat{m}_2)_+ + \hat{m}_3^2, \\
  	v &\coloneqq 4m_1^2m_2 + m_3^2, \\
  	h &\coloneqq \hat{v} - v,\\ 
  	\xi &\coloneqq 8m_1m_2\Delta_1 + 4m_1^2\Delta_2 + 8m_1\Delta_1\Delta_2 + 4m_2\Delta_1^2  + 4\Delta_1^2\Delta_2,\\
  	\eta &\coloneqq 2m_3\Delta_3 + \Delta_3^2.
  \end{align*}
  Lemma~\ref{lem:controlofv} below tells us that  $|h|\leq 10\max(\abs{\phi_1},(1-\phi_1^2)\phi_3\abs{\tilde{\mathcal{I}}})\abs{r(\phi)\phi_2\phi_3\tilde{\mathcal{I}}^3}\max_{j=1,2,3}\abs{\Delta_j}$.   
  Furthermore, it is seen that  $\sqrt{v} = |\tilde{\mathcal{I}}|^3 r(\phi) \phi_2\phi_3 = |\tilde{\mathcal{I}}| m_2\phi_3$ (see Lemma~\ref{lem:basic-relations}) and then under the conditions of this lemma, we have $\abs{h} \leq v/2$ and $\abs{\Delta_3} \leq (1/2)\abs{m_3}=(1/2)\phi_1\phi_3\tilde{\mathcal{I}}\abs{m_2}\leq \sqrt{v}/2$. 
   Consequently, $1-\frac{\Delta_3^2}{(\sqrt{v+h}+\sqrt{v})^2}\geq 3/4$ and $(v+h)^{1/2}[(v+h)^{1/2}+v^{1/2}]\geq (1+\sqrt{2})v/2\geq v$ and hence using Lemma~\ref{lem:invformulam}
 \begin{align*}
	\abs{\hat{\phi}_1 - \tilde{s}\phi_1}%
	&\leq \frac{\abs{\phi_1\xi}}{v}%
	+ \frac{4}{3v}\sqbrackets[\Big]{2\abs{\Delta_3}(1-\phi_1^2)v^{1/2}+\abs{\phi_1}\Delta_3^2\abs{\xi}v^{-1}+\abs{\Delta_3\xi}v^{-1/2}} \\
	&\leq \frac{28}{v}m_1^2\max_{j=1,2}\abs{\Delta_j}  + \frac{8}{3}(1-\phi_1^2)v^{-1/2}\abs{\Delta_3} + \frac{4}{3}\abs{\xi}[1/2+\abs{\phi_1}/4] \\
	&\leq 28 \frac{m_1^2}{v} \max_{j=1,2}\abs{\Delta_j} + \frac{8}{3} (1-\phi_1^2)v^{-1/2} \abs{\Delta_3} + 56 \frac{m_1^2}{v} \max_{j=1,2}\abs{\Delta_j} \\
	& \leq 42 (\phi_2^2\phi_3^2 \tilde{\mathcal{I}}^2)^{-1} \max_{j=1,2}\abs{\Delta_j} + \frac{32}{3} (\phi_2^2\phi_3^3\tilde{\mathcal{I}}^3)^{-1} \abs{\Delta_3} \\
	 & \leq 53 (\phi_2^2\phi_3^3\tilde{\mathcal{I}}^3)^{-1}\max(\phi_3 \tilde{\mathcal{I}},1)\max_{j=1,2,3}\abs{\Delta_j}.
%
\end{align*}
  The conclusion follows since $x \mapsto (x)_+$ is $1$-Lipschitz and thus $|\Delta_2| = |(\hat{m}_2)_+ - m_2| = |(\hat{m}_2)_+ - (m_2)_+| \leq |\hat{m}_2 - m_2|$, so that $\max_{j=1,2,3}|\Delta_j| \leq \max_{j=1,2,3}|\hat{m}_j - m_j|$.
\end{proof}

\begin{lemma}\label{lem:controlofv}
	Define  $v = 4m_1^2m_2 + m_3^2$, $\hat{v} =4\hat{m}_1^2(\hat{m}_2)_+ + \hat{m}_3^2$.
	Then \[\abs{\hat{v}-v}\leq 10\max(\abs{\phi_1},(1-\phi_1^2)\phi_3\abs{\tilde{\mathcal{I}}})\abs{r(\phi)\phi_2\phi_3\tilde{\mathcal{I}}^3}\max_{j=1,2,3}\abs{\Delta_j},\]
	where $\Delta_j=\hat{m}_j-m_j,$ $j=1,3$ and $\Delta_2=(\hat{m}_2)_+-m_2$.
\end{lemma}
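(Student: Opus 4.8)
The plan is purely algebraic: expand $\hat v-v$ as a polynomial in the three errors $\Delta_1,\Delta_2,\Delta_3$ and bound it term by term, reading off the correct size scale from the identities for $m_1,m_2,m_3$ recorded in Lemma~\ref{lem:basic-relations}. (The bound is only needed, and is only useful, in the regime where the $\Delta_j$ are small compared with the $m_j$ — i.e.\ on the event $\Omega_n$ of Proposition~\ref{pro:2}, where $\max_{j=1,2}|\hat m_j/m_j-1|\le 1/2$ and $\max_j|\hat m_j-m_j|\le gm_2/(44\max(1,g))$ with $g=\phi_3|\tilde{\mathcal I}|$; I will use those bounds on $\Delta_1,\Delta_2,\Delta_3$.)

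First I would substitute $\hat m_1=m_1+\Delta_1$, $(\hat m_2)_+=m_2+\Delta_2$ (legitimate by the definition of $\Delta_2$, together with $m_2=(m_2)_+\ge 0$ from Lemma~\ref{lem:basic-relations}), and $\hat m_3=m_3+\Delta_3$ into $\hat v=4\hat m_1^2(\hat m_2)_++\hat m_3^2$ and multiply out, getting $\hat v-v=\xi+\eta$ with $\xi=8m_1m_2\Delta_1+4m_1^2\Delta_2+8m_1\Delta_1\Delta_2+4m_2\Delta_1^2+4\Delta_1^2\Delta_2$ and $\eta=2m_3\Delta_3+\Delta_3^2$. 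The role of the right-hand side of the lemma is made clear by the following "coefficient size" estimates: writing $M=\max(|\phi_1|,(1-\phi_1^2)\phi_3|\tilde{\mathcal I}|)$, one has $|r(\phi)\phi_2\phi_3\tilde{\mathcal I}^3|=\sqrt v=|\tilde{\mathcal I}|m_2\phi_3$, and a short computation from $r(\phi)=\tfrac14(1-\phi_1^2)\phi_2\phi_3^2$ gives $4m_1^2m_2=(1-\phi_1^2)v$, hence $m_1^2=\tfrac14(1-\phi_1^2)\phi_3|\tilde{\mathcal I}|\sqrt v\le\tfrac14 M\sqrt v$, and $m_3^2=\phi_1^2 v$, hence $|m_3|=|\phi_1|\sqrt v\le M\sqrt v$; together with $0\le m_2\le|m_1|$ this bounds every coefficient occurring in $\xi,\eta$ by a constant times $m_1^2$ or $M\sqrt v$.

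Given these, the linear terms $8m_1m_2\Delta_1+4m_1^2\Delta_2$ and $2m_3\Delta_3$ are immediately at most a constant times $M\sqrt v\max_j|\Delta_j|$. For the quadratic and cubic terms I would feed in the smallness bounds $|\Delta_1|\le|m_1|$, $|\Delta_2|\le m_2$ (from $\Omega_n$): e.g.\ $4m_2\Delta_1^2\le 4m_2|m_1|\max_j|\Delta_j|\le 4m_1^2\max_j|\Delta_j|$, $8m_1\Delta_1\Delta_2\le 8m_1^2\max_j|\Delta_j|$, $4\Delta_1^2\Delta_2\le 4m_1^2\max_j|\Delta_j|$, each again $\le$ (const)$\,M\sqrt v\max_j|\Delta_j|$, and likewise $\Delta_3^2\le$ (const)$\,M\sqrt v\max_j|\Delta_j|$ using the bound on $|\Delta_3|$. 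Summing the eight contributions and using that $t\mapsto(t)_+$ is $1$-Lipschitz, so $|\Delta_2|\le|\hat m_2-m_2|$ and hence $\max_j|\Delta_j|\le\max_j|\hat m_j-m_j|$, yields $|\hat v-v|\le 10\,M\sqrt v\max_j|\hat m_j-m_j|$ once the constants are checked to total at most $10$.

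The only real work is this last step: verifying that the quadratic and cubic terms — and in particular $\Delta_3^2$ — are genuinely subordinate to $M\sqrt v\max_j|\Delta_j|$. The delicate case is when $\phi_1$ (hence $m_3$, and possibly $M$ itself) is small while $\Delta_3$ is not forced to be small, so one must use the explicit $\Omega_n$-threshold on $|\Delta_3|$ carefully; one must also pick the numerical smallness threshold so that the combined coefficient of $M\sqrt v\max_j|\Delta_j|$ comes in under $10$. Everything else is a routine expansion and application of Lemma~\ref{lem:basic-relations}.
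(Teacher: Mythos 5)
Your overall route is the paper's route: the same expansion $\hat v-v=\xi+\eta$ with $\xi=8m_1m_2\Delta_1+4m_1^2\Delta_2+8m_1\Delta_1\Delta_2+4m_2\Delta_1^2+4\Delta_1^2\Delta_2$ and $\eta=2m_3\Delta_3+\Delta_3^2$, the same identities $\sqrt v=\abs{r(\phi)\phi_2\phi_3\tilde{\mathcal I}^3}=\abs{\tilde{\mathcal I}}m_2\phi_3$, $4m_1^2=(1-\phi_1^2)\phi_3\abs{\tilde{\mathcal I}}\sqrt v$ and $\abs{m_3}=\abs{\phi_1}\sqrt v$, and the same bookkeeping ($\abs{\xi}\le 28m_1^2\max_{j=1,2}\abs{\Delta_j}\le 7M\sqrt v\max_j\abs{\Delta_j}$ with $M\coloneqq\max(\abs{\phi_1},(1-\phi_1^2)\phi_3\abs{\tilde{\mathcal I}})$, and $2\abs{m_3\Delta_3}\le 2M\sqrt v\abs{\Delta_3}$). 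All of that is correct and the constants total as they should.

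The one genuine gap is precisely the step you flagged and deferred: the $\Delta_3^2$ term. The route you propose for it — the $\Omega_n$ threshold $\abs{\Delta_3}\le gm_2/(44\max(1,g))=\sqrt v/(44\max(1,g))$ — does not close. It yields $\Delta_3^2\le \sqrt v\,\abs{\Delta_3}/(44\max(1,g))$, and $1/(44\max(1,g))$ is \emph{not} bounded by a constant multiple of $M$: in the frontier regime one can have $\phi_1\to 0$ and $\phi_3\abs{\tilde{\mathcal I}}\to 0$ simultaneously (e.g.\ $M\asymp\delta\zeta$), so $M\to 0$ while $1/(44\max(1,g))$ stays of order one, and the target bound $\Delta_3^2\lesssim M\sqrt v\,\abs{\Delta_3}$ fails. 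The paper instead takes as input that $\abs{\Delta_3}\le\abs{m_3}$ (asserted to follow from the hypotheses of the lemmas in which this result is invoked), which gives $\Delta_3^2\le\abs{m_3}\abs{\Delta_3}\le M\sqrt v\,\abs{\Delta_3}$ with constant exactly $1$ and hence the total $7+2+1=10$. So to complete your argument you need the input $\abs{\Delta_3}\le\abs{m_3}=\abs{\phi_1}\sqrt v$ (or at least $\abs{\Delta_3}\le M\sqrt v$), not merely the $\Omega_n$ bound; with only the latter the inequality you need for $\Delta_3^2$ can fail when $M$ is small.
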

\begin{proof}
	  Define 
	\begin{align*}
		h &\coloneqq \hat{v} - v,\\ 
		\xi &\coloneqq 8m_1m_2\Delta_1 + 4m_1^2\Delta_2 + 8m_1\Delta_1\Delta_2 + 4m_2\Delta_1^2  + 4\Delta_1^2\Delta_2,\\
		\eta &\coloneqq 2m_3\Delta_3 + \Delta_3^2.
	\end{align*} Note that $h = \xi + \eta$. By Lemma~\ref{lem:invformulam} and mimicking the proof of \citep[Proposition 3]{AGNparamhmm}, it is found that
	\begin{align*}
		\hat{\phi}_1 - \tilde{s}\phi_1%
		&=%
		\frac%
		{\phi_1\xi + \frac{-2\Delta_3(1-\phi_1^2)v^{1/2} + \frac{\phi_1\Delta_3^2\xi}{((v+h)^{1/2} + v^{1/2} )^2}  - \frac{\Delta_3\xi}{(v+h)^{1/2} + v^{1/2}} }{1 - \Delta_3^2/((v+h)^{1/2} + v^{1/2})^2 } }%
		{ (v+h)^{1/2}[ (v+h)^{1/2} + v^{1/2} ]  }
	\end{align*}
	We note that the assumptions of the lemma imply that $|\Delta_j| \leq |m_j|$ for $j=1,2,3$; recall also that $0\leq m_2=m_1\leq \abs{m_1}$. Thus,
	\begin{align*}
		|\xi|%
		&= \Big|8m_1m_2\Delta_1 + 4m_1^2\Delta_2 + 8m_1\Delta_1\Delta_{2} + 4m_2\Delta_1^2 + 4\Delta_1^2\Delta_2\Big|\\
		&\leq 28m_1^2\max_{j=1,2}|\Delta_j|.
	\end{align*}
	Since $|\eta| \leq 2|m_3\Delta_3| + \Delta_3^2 \leq 3|m_3\Delta_3|$, it also follows that (recall $m_1=r(\phi)\tilde{\mathcal{I}}^2$, $m_3= \phi_1\phi_2\phi_3r(\phi)\tilde{\mathcal{I}}^3$, $r(\phi)=(1/4)(1-\phi_1^2)\phi_2\phi_3^2$)
	\begin{align*}
		|h|
		&\leq \brackets[\big]{28m_1^2 + 3|m_3|} \max_{j=1,2,3}|\Delta_j|\\
		&= |r(\phi)\phi_2\phi_3 \tilde{\mathcal{I}}^3|\Big(3|\phi_1| +  \frac{28|r(\phi) \tilde{\mathcal{I}}|}{|\phi_2\phi_3|} \Big) \max_{j=1,2,3}|\Delta_j|\\
		&= |r(\phi)\phi_2\phi_3 \tilde{\mathcal{I}}^3|\Big(3|\phi_1| + 7(1-\phi_1^2)\phi_3|\tilde{\mathcal{I}}| \Big) \max_{j=1,2,3}|\Delta_j|\\
		&\leq 10\max\big(|\phi_1|, (1-\phi_1^2)\phi_3|\tilde{\mathcal{I}}| \big)|r(\phi)\phi_2\phi_3 \tilde{\mathcal{I}}^3| \max_{j=1,2,3}|\Delta_j|.
	\end{align*}
        This concludes the proof.
\end{proof}
\begin{lemma}
  \label{lem:10BB}
  The following bounds holds true.
  \begin{equation*}
    |\hat{\phi}_2 - \phi_2|%
    \leq%
    2\min\Bigg(1, \frac{2 \max_{j=1,2}|\hat{m}_j - m_j|}{|m_1|} \Bigg).
  \end{equation*}
\end{lemma}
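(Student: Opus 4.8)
The plan is to split on whether the right-hand side is at its trivially-true value. First observe that both $\hat{\phi}_2$ and $\phi_2$ lie in $[-1,1]$: the former by construction, and the latter because $\phi_2 = 1-p-q \in (-1,1)$; moreover $\phi_2 = m_2/m_1$ by the definitions $m_1 = r(\phi)\tilde{\mathcal{I}}^2$ and $m_2 = r(\phi)\phi_2\tilde{\mathcal{I}}^2$ (which is licit since $m_1\neq 0$ on $\SmoothClass$ when $\abs{\tilde{\mathcal{I}}}\geq 7/8$). Hence $\abs{\hat{\phi}_2-\phi_2}\leq 2$ always, which already yields the claimed bound whenever $2\max_{j=1,2}\abs{\hat{m}_j-m_j}\geq \abs{m_1}$, since then $\min\bigl(1,\tfrac{2\max_{j=1,2}\abs{\hat{m}_j-m_j}}{\abs{m_1}}\bigr)=1$ and the right-hand side equals $2$.

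It remains to treat the case $\max_{j=1,2}\abs{\hat{m}_j-m_j} < \abs{m_1}/2$. Then $\abs{\hat{m}_1-m_1}<\abs{m_1}/2$, so $\hat{m}_1$ has the same sign as $m_1$ and $\abs{\hat{m}_1}>\abs{m_1}/2>0$; in particular $\hat{\phi}_2$ is well-defined. The key point is that $x\mapsto \max(-1,\min(x,1))$ is the metric projection of $\RR$ onto the convex set $[-1,1]$ and is therefore $1$-Lipschitz; since $\phi_2\in[-1,1]$ is its own projection, this gives
\[
  \abs{\hat{\phi}_2-\phi_2} \leq \Bigl\lvert \frac{\hat{m}_2}{\hat{m}_1}-\frac{m_2}{m_1}\Bigr\rvert = \frac{\bigl\lvert (\hat{m}_2-m_2)m_1 - m_2(\hat{m}_1-m_1)\bigr\rvert}{\abs{\hat{m}_1}\,\abs{m_1}}.
\]
Bounding the numerator by $\abs{m_1}\bigl(\abs{\hat{m}_1-m_1}+\abs{\hat{m}_2-m_2}\bigr)\leq 2\abs{m_1}\max_{j=1,2}\abs{\hat{m}_j-m_j}$, where I use $\abs{m_2}\leq\abs{m_1}$ from Lemma~\ref{lem:basic-relations}, and using $\abs{\hat{m}_1}>\abs{m_1}/2$ in the denominator, the right-hand side is at most $\tfrac{4\max_{j=1,2}\abs{\hat{m}_j-m_j}}{\abs{m_1}}=2\cdot\tfrac{2\max_{j=1,2}\abs{\hat{m}_j-m_j}}{\abs{m_1}}$, which is the claimed bound in this regime (where the $\min$ equals its second argument).

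There is essentially no obstacle: the only delicate point is that the ratio $\hat{m}_2/\hat{m}_1$ is meaningful only when $\hat{m}_1\neq 0$, which is precisely why one disposes of the saturated case first — in the complementary regime the perturbation hypothesis $\max_{j=1,2}\abs{\hat{m}_j-m_j}<\abs{m_1}/2$ automatically keeps $\hat{m}_1$ bounded away from $0$, making the Lipschitz estimate legitimate.
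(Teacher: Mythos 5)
Your proof is correct and follows essentially the same route as the paper's: a case split on whether $\max_{j=1,2}\abs{\hat{m}_j-m_j}$ exceeds $\abs{m_1}/2$ (trivial bound by $2$ in the saturated case), then the $1$-Lipschitz property of the clipping to $[-1,1]$ combined with the identity $\hat{m}_2/\hat{m}_1-m_2/m_1=(m_1\Delta_2-m_2\Delta_1)/(m_1\hat{m}_1)$ and the bound $\abs{m_2}\leq\abs{m_1}$. The only cosmetic difference is that the paper phrases the Lipschitz step as $\abs{f(x)-f(y)}\leq\min(2,\abs{x-y})$ and works with $(\hat m_2)_+$, which changes nothing in the argument.
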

\begin{proof}
  We let $\Delta_1 \coloneqq \hat{m}_1 - m_1$ and $\Delta_2 \coloneqq \hat{m}_2 - m_2$. We also let $f(x) \coloneqq \max(-1,\min(x,1))$. It is easily seen that $|f(x) - f(y)| \leq \min(2, |x-y|)$. Suppose first that $|\Delta_1| > |m_1|/2$. Then, $|\hat{\phi}_2 - \phi_2| \leq 2 \leq \min(2, \frac{4|\Delta_1|}{|m_1|})$. On the other hand, if $|\Delta_1| \leq |m_1|/2$, then, recalling that $m_2\leq \abs{m_1}$ we have from Lemma~\ref{lem:invformulam}
  \begin{align*}
    |\hat{\phi}_2 - \phi_2|%
    &= |f(\hat{m}_2 / \hat{m}_1) - f(m_2/m_1)|\\
    &\leq \min\Big(2,\, \Big|\frac{m_2 + \Delta_2}{m_1 + \Delta_1} - \frac{m_2}{m_1}\Big| \Big)\\
    &= \min\Big(2,\, \Big|\frac{m_1\Delta_2 - m_2\Delta_1}{m_1(m_1 + \Delta_1)}\Big| \Big)\\
    &\leq \min\Big(2,\, \frac{2|\Delta_1| + 2|\Delta_2|}{|m_1|} \Big).
  \end{align*}
  The conclusion follows since $x \mapsto (x)_+$ is $1$-Lipschitz and thus $|\Delta_2| = |(\hat{m}_2)_+ - m_2| = |(\hat{m}_2)_+ - (m_2)_+| \leq |\hat{m}_2 - m_2|$. 
\end{proof}

\subsection{Proof of Theorem~\ref{thm:1}}
\label{sec:proof-thm:1}

In the whole proof, since $\tilde{\psi}_2$ is computed independently of the rest, we assume for convenience and without loss of generality that $\tilde{\psi}_2$ is non random  and we work implicitly conditional on $\tilde{\psi}_2$. It is assumed that $\tilde{\psi}_2$ satisfies the properties stated in the Theorem~\ref{thm:psitilde2}. The loss function is almost-surely bounded by $\check{T}^2$ so the contribution of estimating $\tilde{\psi}_2$ to the risk is easily deduced from the Theorem~\ref{thm:psitilde2}.

As in Appendix~\ref{sec:proof-prop-refpr}, rather than allow an arbitrary permutation to account for the label-switching, we give a specific (unobserved) permutation.  
We recall the definitions of the estimators of $f_0$ and $f_1$ from Section~\ref{sec:nonparametric-part:1}, here writing as $\check{f}_\pm$ to align with notation used in Lemma~\ref{lem:invert-param}. We define (see also Lemma~\ref{lem:invformulam})
\begin{align*}
	g \coloneqq \phi_3|\tilde{\mathcal{I}}|%
	= \frac{\sqrt{4m_1^2m_2 + m_3^2}}{m_2}, \qquad 	G \coloneqq \frac{m_1 \psi_2}{\tilde{\mathcal{I}}},\qquad
	f_{\pm} \coloneqq \psi_1 \pm \frac{g(1 \mp \tilde{s}\phi_1)}{2 m_1}G,
\end{align*}
and
\begin{gather*}
\hat{g} \coloneqq \frac{\sqrt{4\hat{m}_1^2(\hat{m}_2)_+ + \hat{m}_3^2}}{\hat{m}_2}\1_{\{ \hat{m}_2 > 0\} }, \qquad%
	\hat{G}^{\Phi_{Jk}} \coloneqq%
	\PP_n^{(2)}(\tilde{\psi}_2 \otimes \Phi_{Jk} ) - \PP_n^{(1)}(\tilde{\psi}_2)\PP_n^{(1)}(\Phi_{Jk}),\\
	\HatFather_{\pm}%
	\coloneqq \PP_n^{(1)}(\Phi_{Jk}) \pm \frac{\hat{g}(1 \mp \hat{\phi}_1)}{2\hat{m}_1}\1_{ \{ \hat{m}_1 \ne 0 \} } \hat{G}^{\Phi_{Jk}}.
\end{gather*}
Then, defining $\HatMother_{\pm}$ and $\hat{G}^{\Psi_{jk}}$ correspondingly we set
\begin{align*}
  \hat{f}_{\pm}%
  &\coloneqq \sum_{k=0}^{2^J-1} \HatFather_{\pm} \Phi_{Jk}%
    + \sum_{j=J}^{J_n-1}\sum_{k=0}^{2^j-1}\HatMother_{\pm}\Psi_{jk}%
    + \sum_{j=J_n}^{\tilde{\jmath}_n}\sum_{\ell}\Bigg(\sum_{k\in \mathfrak{B}_{j\ell}}\HatMother_{\pm} \Psi_{jk} \Bigg)\1_{ \{ \norm{\hat{f}_{\pm}^{\mathfrak{B}_{j\ell}}} > \Gamma \hat{S}_n \} },\\
  \check{f}_{\pm} &\coloneqq \max\big(0,\, \min\big(\check{T},\, \hat{f}_{\pm}\big)\big),
\end{align*}
where $J_n \coloneqq \inf\Set{j\geq J \given 2^j \geq \log(n)}$, $N= 2^{J_n}$, and $\mathfrak{B}_{j\ell} \coloneqq \Set{k \given (\ell-1)N \leq k \leq \ell N - 1}$ and $\tilde{\jmath}_n$ is the largest integer such that $2^{\tilde{\jmath}_n} \leq \frac{n}{\log(n)\tau^2}$ (recall we assume that  $\tilde{\jmath}_n$ is larger than $J_n$)
and 
where $\norm{\hat{f}_{\pm}^{\mathfrak{B}_{j\ell}}}^2 \coloneqq \sum_{k\in \mathfrak{B}_{j\ell}}( \HatMother_{\pm} )^2$, $\Gamma >0$ is a tuning parameter, and
\begin{equation*}
	\hat{S}_n
	\coloneqq \sqrt{\frac{\log(n)}{n}}\max\Big(1,\, \frac{\hat{g}}{|\hat{m}_1|}\Big)\1_{ \{\hat{m}_1 \ne 0 \} }.
\end{equation*}
Recall the event  $\Omega_n = \Set*{\max_{j=1,2}\Big|\frac{\hat{m}_j}{m_j} - 1\Big| \leq \frac{1}{2},\ \max_{j=1,2,3}|\hat{m}_j - m_j| \leq \frac{gm_2}{44\max(1,g)}  }$ defined in Proposition~\ref{pro:2} which by the proposition satisfies for a universal constant $C > 0$
\begin{align*}
  \sup_{\theta\in \SmoothClass \cap \RegulClass}\PP_{\theta}(\Omega_n^c)%
  \leq
   7\exp\Bigg(- \frac{C n \sg \delta^2\epsilon^4\zeta^6 }{L^3 + \max(\tau,\sqrt{L})^3 \delta \epsilon^2\zeta^3} \Bigg).
\end{align*}
Decompose
\begin{align*}
  \EE_{\theta}\Big(\norm{\check{f}_{\pm} - f_{\pm}}_{L^2}^2 \Big)%
  &= \EE_{\theta}\Big(\norm{\check{f}_{\pm} - f_{\pm}}_{L^2}^2\1_{\Omega_n^c} \Big)%
    + \EE_{\theta}\Big(\norm{\check{f}_{\pm} - f_{\pm}}_{L^2}^2\1_{\Omega_n} \Big)\\
  &\leq \check{T}^2 \PP_{\theta}(\Omega_n^c) + \EE_{\theta}\Big(\norm{\hat{f}_{\pm} - f_{\pm}}_{L^2}^2\1_{\Omega_n} \Big)
\end{align*}
where the last line follows because $0\leq f_{\pm},\check{f}_{\pm}\leq \check{T}$ since $\check{T}\geq L $ by assumption, and because $|\check{f}_{\pm} - f_{\pm}| \leq |\hat{f}_{\pm} - f_{\pm}|$ pointwise. The first term is included in the theorem and it remains to bound the second term. We decompose as follows (recall that $\tilde{\jmath}_n > J_n$ by assumption and the sum over $\ell$ is the sum over blocks from $\ell=0$ to $\ell = 2^j/N - 1$)
\begin{align*}
  \EE_{\theta}\Big( \norm{\hat{f}_{\pm} - f_{\pm}}_{L^2}^2\1_{\Omega_n}\Big) %
  &= \EE_{\theta}\Big( \norm{\hat{f}_{\pm}^{J_n} - f_{\pm}^{J_n}}_{L^2}^2 \1_{\Omega_n}\Big)\\
  &\quad%
    + \EE_{\theta}\Bigg( \sum_{j=J_n}^{\tilde{\jmath}_n}\sum_{\ell}\norm{f_{\pm}^{\Block_{j\ell}}}^2\1_{\{ \norm{\hat{f}_{\pm}^{\Block_{j\ell}}} \leq \Gamma \hat{S}_n\}  }\1_{\{ \norm{f_{\pm}^{\Block_{j\ell}}} \leq 2\Gamma \hat{S}_n \} }\1_{\Omega_n}\Bigg) \\
   &\quad%
    + \EE_{\theta}\Bigg( \sum_{j=J_n}^{\tilde{\jmath}_n}\sum_{\ell}\norm{f_{\pm}^{\Block_{j\ell}}}^2\1_{\{ \norm{\hat{f}_{\pm}^{\Block_{j\ell}}} \leq \Gamma \hat{S}_n\}  }\1_{\{ \norm{f_{\pm}^{\Block_{j\ell}}} > 2\Gamma \hat{S}_n \} }\1_{\Omega_n}\Bigg) \\
  &\quad%
    + \EE_{\theta}\Bigg( \sum_{j=J_n}^{\tilde{\jmath}_n}\sum_{\ell}\norm{\hat{f}_{\pm}^{\Block_{j\ell}} - f_{\pm}^{\Block_{j\ell}}}^2\1_{\{ \norm{\hat{f}_{\pm}^{\Block_{j\ell}}} > \Gamma \hat{S}_n\}  }\1_{ \{ \norm{f_{\pm}^{\Block_{j\ell}}} \leq \frac{1}{2}\Gamma \hat{S}_n \} \} } \1_{\Omega_n}\Bigg)\\
  &\quad%
    + \EE_{\theta}\Bigg( \sum_{j=J_n}^{\tilde{\jmath}_n}\sum_{\ell}\norm{\hat{f}_{\pm}^{\Block_{j\ell}} - f_{\pm}^{\Block_{j\ell}}}^2\1_{\{ \norm{\hat{f}_{\pm}^{\Block_{j\ell}}} > \Gamma \hat{S}_n\}  }\1_{ \{ \norm{f_{\pm}^{\Block_{j\ell}}} > \frac{1}{2}\Gamma \hat{S}_n \} } \1_{\Omega_n}\Bigg)\\
  &\quad%
    + \PP_{\theta}(\Omega_n)\sum_{j > \tilde{\jmath}_n}\sum_{k=0}^{2^j-1}|f_{\pm}^{\Psi_{jk}}|^2
\end{align*}
where we have used the convention that for any function $f$ the notation $f^{J_n}$ stands for the projection $f_{\pm}^{J_n} \coloneqq \sum_{k=0}^{2^J-1}f_{\pm}^{\Phi_{Jk}}\Phi_{Jk} + \sum_{j=J}^{J_n-1}\sum_{k=0}^{2^j-1}f_{\pm}^{\Psi_{jk}}\Psi_{jk}$. Recall that $f^{\Block_{j\ell}}$ denotes the vector of coefficients $(\Inner{f,\Psi_{jk}}\,:\, (j,k) \in \Block_{j\ell})$ and $\|\cdot\|$ the euclidean norm. We call the terms in the previous decomposition $R_1(\theta),\dots,R_6(\theta)$, respectively. To ease the notations in the proof, we also introduce the quantities
\begin{equation}\label{eqn:def:omegapm}
  \hat{\omega}_{\pm}%
  \coloneqq  \pm \frac{\hat{g}(1\mp\hat{\phi}_1)}{\hat{m}_1}\1_{\{\hat{m}_1 \ne 0 \} },\qquad%
  \omega_{\pm}%
  \coloneqq \pm \frac{g(1 \mp \tilde{s}\phi_1)}{m_1}
\end{equation}
and
\begin{equation}\label{eqn:def:Sn}
  S_n%
  \coloneqq \sqrt{\frac{\log(n)}{n}}\max\Big(1,\,\frac{g}{|m_1|}\Big).
\end{equation}

In the next subsections we prove the following bounds, uniformly over $\theta \in \SmoothClass \cap \RegulClass$:
\begin{align*}
  R_1(\theta)%
  &\leq
    \frac{B L^2}{\delta^2\epsilon^2\zeta^2}\frac{\log(n)}{n\sg}%
    + \frac{BL^3}{\delta^2\epsilon^4\zeta^4}\frac{1}{n\sg}%
    +\frac{B\max(\tau,\sqrt{L})^6}{\delta^2\epsilon^4\zeta^4}\frac{1}{(n\sg)^2}\\
  R_2(\theta)%
  &\leq \frac{BR^2}{\min(1,s_{\pm})}\Big(\frac{\Gamma^2}{R^2\delta^2\epsilon^2\zeta^2 n} \Big)^{2s_{\pm}/(2s_{\pm} + 1)}%
    + \frac{BR^2}{\min(1,s_{\pm})}\Big(\frac{\tau^2\log(n)}{n} \Big)^{2s_{\pm}} \\
  R_3(\theta)%
  &\leq \frac{BL^3}{\delta^2\epsilon^4\zeta^4}\frac{1}{n\sg}%
    + \frac{B\max(\tau,\sqrt{L})^6}{\delta^2\epsilon^4\zeta^4}\frac{1}{(n\sg)^2}, \\
  R_4(\theta)%
  &\leq \frac{BL^3}{\delta^2\epsilon^4\zeta^4}\frac{1}{n\sg}%
    +   \frac{B\max(\tau,\sqrt{L})^6}{\delta^2\epsilon^4\zeta^4}\frac{1}{(n\sg)^2},\\
  R_5(\theta)%
  &\leq%
    \frac{BL^2}{\Gamma^2\sg} \Bigg( \frac{R^2}{\min(1,s_{\pm})}\Big(\frac{\Gamma^2}{R^2\delta^2\epsilon^2\zeta^2 n} \Big)^{2s_{\pm}/(2s_{\pm} + 1)}%
    + \frac{R^2}{\min(1,s_{\pm})}\Big(\frac{\tau^2\log(n)}{n} \Big)^{2s_{\pm}} \Bigg)\\%
  &\quad+ \frac{BL^3}{\delta^2\epsilon^4\zeta^6}\frac{1}{n\sg}%
  + \frac{B\max(\tau,\sqrt{L})^6}{\delta^2\epsilon^4\zeta^4}\frac{1}{(n\sg)^2},\\
  R_6(\theta)%
  &\leq \frac{BR^2}{\min(1,s_{\pm})}\Big( \frac{\tau^2 \log(n)}{n} \Big)^{2s_{\pm}}.        
\end{align*}
Combining will yield the theorem.

\subsubsection{Control of $R_1$}
\label{sec:control-r_1}


Using Lemma~\ref{lem:pro:7} to control $\norm{\hat{f}_{\pm}^{J_n} - f_{\pm}^{J_n}}_{L^2}$ and Proposition~\ref{pro:9} in Section \ref{sec:auxiliary-results} to control $\abs{\hat{\omega}_\pm-\omega_\pm}$, the bounds $(a+b+c)^2\leq 3(a^2+b^2+c^2)$ and  $\norm{G^{J_n}}_{L^2} =  \abs{m_1} \norm{\psi_2^{J_n}}_{L^2}/\abs{\tilde{\mathcal{I}}} \leq (8/7)\abs{m_1}$ allow us to deduce
\begin{align*}
  R_1(\theta)&\coloneqq \EE_{\theta}\Big( \norm{\hat{f}_{\pm}^{J_n} - f_{\pm}^{J_n}}_{L^2}^2 \1_{\Omega_n}\Big)\\%
  &\leq
    3\EE_{\theta}\Big(\norm{\hat{\psi}_1^{J_n} - \psi_1^{J_n}}_{L^2}^2 \Big)%
    + \frac{12g^2}{m_1^2}\EE_{\theta}\Big(\norm{\hat{G}^{J_n} - G^{J_n}}_{L^2}^2 \Big)%
    + \frac{3 \norm{G^{J_n}}_{L^2}^2}{4} \EE_{\theta}\Big(| \hat{\omega}_{\pm} - \omega_{\pm} |^2\1_{\Omega_n}\Big).
\end{align*}
ie.
\begin{multline}
  \label{eqn:R1bound}
  R_1(\theta) \leq 3\EE_{\theta}\Big(\norm{\hat{\psi}_1^{J_n} - \psi_1^{J_n}}_{L^2}^2 \Big)%
    + \frac{12g^2}{m_1^2}\EE_{\theta}\Big(\norm{\hat{G}^{J_n} - G^{J_n}}_{L^2}^2 \Big)\\%
    + \frac{3\cdot 8^2\cdot 83^2 \max(1,\phi_3^2\tilde{\mathcal{I}}^2)}{4\cdot 7^2 m_2^2}  \EE_{\theta}\Big(\max_{j=1,2,3}|\hat{m}_j - m_j|^2 \Big).
\end{multline}
Proposition~\ref{pro:7a} tells us that $$\PP_{\theta}\Bigg(\norm{\hat{\psi}_1^{J_n} - \psi_1^{J_n}}_{L^2} \geq C\sqrt{\frac{Lx}{n\gamma^*}} + C2^{J_n/2}\frac{x}{n\gamma^*} \Bigg)%
	\leq 24^{2^{J_n}}e^{-x},$$
        hence, using that $2^{J_n}\leq 2\log(n)$ for $n \geq 2$,  for a sufficient large constant $\alpha > 0$ we may apply Lemma~\ref{lem:9} with $a = C\sqrt{2\log(n)}/\gamma^*$, 
$b=C\sqrt{L/\gamma^*}$, $c = 24^{2\log(n)}$ 
and $d^2 = \alpha C^2L\log(n)/(n\gamma^*)$
\begin{align*}
  &\EE_{\theta}\Big(\norm{\hat{\psi}_1^{J_n} - \psi_1^{J_n}}_{L^2}^2 \Big)\\%
  &\qquad\leq \alpha C^2L\frac{\log(n)}{n\gamma^*} + \EE_{\theta}\Big(\norm{\hat{\psi}_1^{J_n} - \psi_1^{J_n}}_{L^2}^2\1_{ \{ \norm{\hat{\psi}_1^{J_n} - \psi_1^{J_n}}_{L^2}^2 > \alpha C^2L\log(n)/(n\gamma^*)  \} } \Big)\\
  &\qquad\leq  \alpha C^2L \frac{\log(n)}{n\gamma^*}%
    + c\Big(d^2 +  \frac{5b^2}{2n} + \frac{7a^2}{2n^2} \Big)e^{-nd^2/(2b^2 + 8ad)}\\
  &\qquad\leq \alpha C^2L \frac{\log(n)}{n\gamma^*}%
    + C^2 24^{2\log(n)}\Big(\frac{\alpha L \log(n)}{n\sg} + \frac{5L}{2n\sg} + \frac{14\log(n)}{2(n\sg)^2} \Big)e^{-nd^2/(2b^2 + 8ad)}\\
  &\qquad\leq \alpha C^2L \frac{\log(n)}{n\gamma^*}%
    + C^2 24^{2\log(n)}\Big(\alpha L + \frac{5L}{2} + 7 \Big)\log(n)e^{-nd^2/(2b^2 + 8ad)}
\end{align*}
where the last line follows because $n\gamma^* \geq \tau^3\geq 1$. 
Let us now study the argument of the exponential in the last display. If $2b^2 \geq 8ad$, then
\begin{align*}
  \frac{nd^2}{2b^2 + 8ad}%
  &\geq \frac{nd^2}{4b^2}%
    = \frac{\alpha}{4}\log(n),
\end{align*}
while if $2b^2 < 8ad$, then
\begin{align*}
  \frac{nd^2}{2b^2 + 8ad}%
  &\geq \frac{nd^2}{16ad}%
    = \frac{n\gamma^*\sqrt{\alpha C^2 L \log(n)/(n\gamma^*)} }{16 C\sqrt{2\log n}}%
    \geq \frac{\sqrt{\alpha L}}{16\sqrt{2}}\sqrt{n\gamma^*}%
    \geq \frac{\sqrt{\alpha}}{16\sqrt{2}}\log(n)
\end{align*}
because by assumption $n\gamma^* \geq \frac{\log(n)^2}{L}$. Hence, since $L \leq n$ and $\sg \leq 1$ it is possible to choose $\alpha > 0$ universally such that
\begin{equation*}
  \EE_{\theta}\Big(\norm{\hat{\psi}_1^{J_n} - \psi_1^{J_n}}_{L^2}^2 \Big)%
  \leq 2\alpha C^2L \frac{\log(n)}{n\gamma^*}.
\end{equation*}
Similarly, Proposition~\ref{pro:7b} tells us that  $$\PP_{\theta}\Bigg(\norm{\hat{G}^{J_n} - G^{J_n}}_{L^2} \geq  CL \sqrt{\frac{x}{n\gamma^*}}%
	+ C \max(\tau 2^{J_n/2},\sqrt{L}2^{J_n/2},\tau\sqrt{L}) \frac{x}{n\gamma^*}
	\Bigg)%
	\leq 4\cdot 24^{2^{J_n}}e^{-x},$$ hence, for any $\alpha > 0$, using that $2^{J_n}\leq 2\log (n)$ for $n\geq 2$, Lemma~\ref{lem:9} with $a = C\tau \sqrt{2L\log(n)}/\gamma^*$, $b = CL/\sqrt{\gamma^*}$, $c = 4\times 24^{2\log n}$, and $d^2 = \alpha C^2L^2 \log(n)/(n\gamma^*)$ [and by remarking that $\max(\tau 2^{J_n/2},\sqrt{L}2^{J_n/2},\tau \sqrt{L}) \leq \tau \sqrt{L} 2^{J_n/2}$] yields
\begin{align*}
  &\EE_{\theta}\Big(\norm{\hat{G}^{J_n} - G^{J_n}}_{L^2}^2 \Big)\\%
  &\qquad\leq \alpha C^2 L^2 \frac{\log(n)}{n\gamma^*}
  + c\Big(d^2 + \frac{5b^2}{2n} + \frac{7a^2}{2n^2}%
  \Big)%
    e^{-nd^2/(2b^2 + 8ad)}\\
  &\qquad\leq \alpha C^2 L^2 \frac{\log(n)}{n\gamma^*}
  + 4C^2 24^{2\log(n)}\Big(\frac{\alpha L^2\log(n)}{n\sg} + \frac{5L^2}{2n\sg} + \frac{14 \tau^2L \log(n)}{2(n\sg)^2}%
  \Big)%
    e^{-nd^2/(2b^2 + 8ad)}.
\end{align*}
Let us study the argument of the exponential in the last display. If $2b^2 \geq 8ad$, then
\begin{align*}
  \frac{nd^2}{2b^2 + 8ad}%
  &\geq \frac{nd^2}{4b^2}%
  =\frac{\alpha}{4} \log(n)
\end{align*}
while if $2b^2 < 8 ad$, then
\begin{align*}
  \frac{nd^2}{2b^2 + 8ad}%
  &\geq \frac{nd^2}{16ad}
    =\frac{n\gamma^* \sqrt{\alpha C^2 L^2\log(n)/(n\gamma^*)}}{16C\tau \sqrt{L}2^{J_n/2} }%
    \geq \frac{\sqrt{\alpha L}}{32\tau}\sqrt{n\gamma^*}%
    \geq \frac{\sqrt{\alpha}}{32} \log(n)
\end{align*}
because by assumption $n\gamma^* \geq \frac{\tau^2 \log(n)^2}{L}$.  Since by assumption $L \leq n$ and $n\sg \geq \tau^3 \geq 1$, it is possible to choose $\alpha > 0$ universally such that
\begin{equation*}
  \EE_{\theta}\Big(\norm{\hat{G}^{J_n} - G^{J_n}}_{L^2}^2 \Big)%
  \leq 2\alpha C^2 L^2 \frac{\log(n)}{n\gamma^*}.
\end{equation*}
Returning to \eqref{eqn:R1bound} and feeding the bound for $E_\theta \max_j \abs{\hat{m}_j-m_j}^2$ from Proposition~\ref{pro:expec-maxm}, we deduce that
\begin{equation*}
  R_1(\theta)%
  \leq 6\alpha C^2L\Big(1 + \frac{g^2L}{m_1^2}\Big)\frac{\log(n)}{n\gamma^*}
  + \frac{3\cdot 83^2\cdot 40C^2L^3\max(1,g^2)}{n\gamma^* m_2^2}%
  + \frac{3\cdot 83^2\cdot 64C^2\max(\tau,\sqrt{L})^6}{(n\gamma^*)^2 m_2^2}.
\end{equation*}
Finally, we remark  $\frac{g^2}{m_1^2} \leq \frac{16}{\delta^2\epsilon^2 \zeta^2 \tilde{\mathcal{I}}^2}$ and $\frac{\max(1,g^2)}{m_2^2} \leq \frac{16}{\delta^2\epsilon^4\zeta^4\tilde{\mathcal{I}}^4}$ by Lemma~\ref{lem:basic-relations} and by the assumption that $\zeta \leq 1$. Thus, there exists a universal constant $B > 0$ such that
\begin{equation*}
  \sup_{\theta \in \SmoothClass \cap \RegulClass}R_1(\theta)%
  \leq \frac{B L^2}{\delta^2\epsilon^2\zeta^2}\frac{\log(n)}{n\sg}%
  + \frac{BL^3}{\delta^2\epsilon^4\zeta^4}\frac{1}{n\sg}%
  +\frac{B\max(\tau,\sqrt{L})^6}{\delta^2\epsilon^4\zeta^4}\frac{1}{(n\sg)^2}.
\end{equation*}

\subsubsection{Control of $R_2$}


From equation~\eqref{eq:def:besov} whenever $\theta \in \SmoothClass$ it is the case that $\sup_{j\geq J}2^{2js_{\pm}}\sum_k\abs{f_{\pm}^{\Psi_{jk}}}^2 \leq R^2$. This in particular implies that $\sum_{ \ell }\norm{f_{\pm}^{\Block_{j\ell}}}^2 \leq R^2 2^{-2js_{\pm}}$.
Moreover $\hat{S}_n \leq 4 S_n$ on $\Omega_n$ by Proposition~\ref{pro:4} in Section \ref{sec:auxiliary-results}. Then, since $J_n \leq \tilde{\jmath}_n$, 
\begin{align*}
  R_2(\theta)
  &\coloneqq  \EE_{\theta}\Bigg( \sum_{j=J_n}^{\tilde{\jmath}_n}\sum_{\ell }\norm{f_{\pm}^{\Block_{j\ell}}}^2\1_{\{ \norm{\hat{f}_{\pm}^{\Block_{j\ell}}} \leq \Gamma \hat{S}_n\}  }\1_{\{ \norm{f_{\pm}^{\Block_{j\ell}}} \leq 2\Gamma \hat{S}_n \} }\1_{\Omega_n}\Bigg)\\ %
  &\leq \sum_{j=J_n}^{\tilde{\jmath}_n}\sum_{ \ell }\min\Big(\norm{f_{\pm}^{\Block_{j\ell}}}^2, 8\Gamma S_n \Big)^2\\
  &\leq%
    \sum_{j=J_n}^{\tilde{\jmath}_n}\min\Bigg(\sum_{\ell}\norm{f_{\pm}^{\Block_{j\ell}}}^2,\, \frac{2^j}{N}\cdot 64\Gamma^2S_n^2 \Bigg)
    \\
  &\leq \sum_{j=J_n}^{\tilde{\jmath}_n}\min\Big( R^22^{-2js_{\pm}},\, \frac{2^j}{N}\cdot 64\Gamma^2S_n^2 \Big).
\end{align*}
Define $A=\sup\braces{0 \leq j\leq \tilde{\jmath}_n : 2^{-j(s_{\pm}+1/2)} > 8\Gamma S_n/(R\sqrt{N})}$, so that the first term in the minimum is the smaller exactly when $j> A$. 
Then we observe that $2^A<(R^2N/(64 \Gamma^2 S_n^2))^{1/(2s_\pm +1)}$ and  $2^{A+1} \geq \min\braces{ (R^2N/(64\Gamma^2 S_n^2))^{1/(2s_\pm + 1)},n/(\tau^2\log n)}$ (for the latter recall that $\tilde{\jmath}$ is the largest integer such that $2^{\tilde{\jmath}}\leq n/(\tau^2\log n)$), and we calculate
\begin{align*}
  R_2(\theta)%
  &\leq \frac{64 \Gamma^2 S_n^2}{N} \sum_{j=0}^{A}2^j%
	+ R^2\sum_{j = A+1}^{\infty }2^{-2js_{\pm}}\\
	&\leq \frac{128\Gamma^2S_n^2}{N}\Bigg( \frac{c^2R^2N}{64\Gamma^2S_n^2} \Bigg)^{1/(2s_{\pm} + 1)} + \frac{R^2}{1-2^{-2s_{\pm}}}\max\Bigg(\frac{\tau^2 \log(n)}{n},\ \Big(\frac{64\Gamma^2 S_n^2}{R^2N }\Big)^{1/(2s_{\pm}+1)} \Bigg)^{2s_{\pm}}\\
	&=2R^2\Bigg(\frac{64\Gamma^2 S_n^2}{R^2N} \Bigg)^{2s_{\pm}/(2s_{\pm} + 1)}%
	+ \frac{R^2}{1-2^{-2s_{\pm}}}\max\Bigg( \frac{\tau^2\log(n)}{n} ,\ \Big(\frac{64\Gamma^2 S_n^2}{R^2N}\Big)^{1/(2s_{\pm}+1)} \Bigg)^{2s_{\pm}}.
\end{align*}

Recalling that $S_n=\sqrt{(\log n)/n}\max(1,g/\abs{m_1})$ and $N>\log n$, we deduce that
\begin{align*}
  R_2(\theta)%
  &\leq 2R^2\Bigg(\frac{64\Gamma^2\max(1,g^2/m_1^2) }{R^2 n} \Bigg)^{2s_{\pm}/(2s_{\pm} + 1)}\\%
  &\quad
    + \frac{R^2}{1-2^{-2s_{\pm}}}\max\Bigg( \frac{\tau^2\log(n)}{n} ,\ \Big(\frac{64\Gamma^2\max(1,g^2/m_1^2) }{R^2 n}\Big)^{1/(2s_{\pm}+1)} \Bigg)^{2s_{\pm}}.
\end{align*}
Hence, recalling that $\abs{\tilde{\mathcal{I}}} \geq 7/8$ and the result of Lemma~\ref{lem:basic-relations}, there exists a universal constant $B > 0$ such that
\begin{equation*}
  \sup_{\theta \in \SmoothClass \cap \RegulClass}R_2(\theta)%
  \leq \frac{BR^2}{\min(1,s_{\pm})}\Big(\frac{\Gamma^2}{R^2\delta^2\epsilon^2\zeta^2 n} \Big)^{2s_{\pm}/(2s_{\pm} + 1)}%
  + \frac{BR^2}{\min(1,s_{\pm})}\Big(\frac{\tau^2\log(n)}{n} \Big)^{2s_{\pm}}
\end{equation*}

\subsubsection{Control of $R_3$}
\label{sec:control-r_3}

We remark that on the event $\Set{ \norm{\hat{f}^{\Block_{j\ell}}} \leq \Gamma \hat{S}_n } \cap \Set{  \norm{f^{\Block_{j\ell}}} > 2\Gamma \hat{S}_n }$ it must that
\begin{align*}
  \norm{f^{\Block_{j\ell}}_{\pm}}
  &\leq \norm{\hat{f}^{\Block_{j\ell}}_{\pm} - f^{\Block_{j\ell}}_{\pm}} + \norm{\hat{f}^{\Block_{j\ell}}_{\pm}}%
    \leq \norm{\hat{f}^{\Block_{j\ell}}_{\pm} - f^{\Block_{j\ell}}_{\pm}} +  \frac{1}{2}\norm{f^{\Block_{j\ell}}_{\pm}}
\end{align*}
and thus $\norm{f^{\Block_{j\ell}}_{\pm}} \leq 2\norm{\hat{f}^{\Block_{j\ell}}_{\pm} - f^{\Block_{j\ell}}_{\pm}}$. Then, since  $\frac{1}{4}S_n \leq \hat{S}_n \leq 4S_n$  on the event $\Omega_n$ by Proposition~\ref{pro:4} in Section \ref{sec:auxiliary-results},
\begin{align*}
  R_3(\theta) &\coloneqq \EE_{\theta}\Bigg( \sum_{j=J_n}^{\tilde{\jmath}_n}\sum_{\ell}\norm{f_{\pm}^{\Block_{j\ell}}}^2\1_{\{ \norm{\hat{f}_{\pm}^{\Block_{j\ell}}} \leq \Gamma \hat{S}_n\}  }\1_{\{ \norm{f_{\pm}^{\Block_{j\ell}}} > 2\Gamma \hat{S}_n \} }\1_{\Omega_n}\Bigg) \\
  &\leq 4\sum_{j=J_n}^{\tilde{\jmath}_n}\sum_{\ell}\EE_{\theta}\Big(\norm{\hat{f}_{\pm}^{\Block_{j\ell}} - f_{\pm}^{\Block_{j\ell}}}^2\1_{\{ \norm{\hat{f}_{\pm}^{\Block_{j\ell}}} \leq \Gamma \hat{S}_n\}  }\1_{\{ \norm{f_{\pm}^{\Block_{j\ell}}} > 2\Gamma \hat{S}_n \} }\1_{\Omega_n}\Big)\\
  &\leq 4\sum_{j=J_n}^{\tilde{\jmath}_n}\sum_{\ell}\EE_{\theta}\Big(\norm{\hat{f}_{\pm}^{\Block_{j\ell}} - f_{\pm}^{\Block_{j\ell}}}^2\1_{\{ \norm{\hat{f}_{\pm}^{\Block_{j\ell}} - f_{\pm}^{\Block_{j\ell}}} > \Gamma S_n / 4\}  } \1_{\Omega_n} \Big).
\end{align*}
Recalling that $\hat{f}_{\pm} = \hat{\psi}_1 + \tfrac{1}{2}\hat{\omega}_\pm \hat{G}$, we define $W_1^{\Block_{j\ell}} \coloneqq \norm{\hat{\psi}_1^{\Block_{j\ell}} - \psi_1^{\Block_{j\ell}}}$, $W_2^{\Block_{j\ell}} \coloneqq \frac{4g}{|m_1|}\norm{\hat{G}^{\Block_{j\ell}} - G^{\Block_{j\ell}}}$, and $W_3^{\Block_{j\ell}} \coloneqq \frac{1}{2}|\hat{\omega}_{\pm} - \omega_{\pm}|\norm{G^{\Block_{j\ell}}}$, so that a direct calculation (see Lemma~\ref{lem:pro:7}) yields $\norm{\hat{f}_\pm^{\Block_{j\ell}} - f_\pm^{\Block_{j\ell}}}_{L^2}\leq W_1^{\Block_{j\ell}}+W_2^{\Block_{j\ell}}+W_3^{\Block_{j\ell}}$. We then observe, writing  $\bar{W}^{\Block_{j\ell}} = \max(W_1^{\Block_{j\ell}},W_2^{\Block_{j\ell}},W_3^{\Block_{j\ell}})$, that
\begin{align*}
  R_3(\theta)
  &\leq 4\sum_{j=J_n}^{\tilde{\jmath}_n}\sum_{\ell}\EE_{\theta}\Big(\norm{\hat{f}_{\pm}^{\Block_{j\ell}} - f_{\pm}^{\Block_{j\ell}}}^2\1_{\{ \norm{\hat{f}_{\pm}^{\Block_{j\ell}} - f_{\pm}^{\Block_{j\ell}}} > \Gamma S_n / 4\}  }\1_{ \{\bar{W}^{\Block_{j\ell}}  = W_1^{\Block_{j\ell}} \} } \1_{\Omega_n} \Big)\\
  &\quad%
    + 4\sum_{j=J_n}^{\tilde{\jmath}_n}\sum_{\ell}\EE_{\theta}\Big(\norm{\hat{f}_{\pm}^{\Block_{j\ell}} - f_{\pm}^{\Block_{j\ell}}}^2\1_{\{ \norm{\hat{f}_{\pm}^{\Block_{j\ell}} - f_{\pm}^{\Block_{j\ell}}} > \Gamma S_n / 4\}  }\1_{ \{ \bar{W}^{\Block_{j\ell}} = W_2^{\Block_{j\ell}} \} } \1_{\Omega_n} \Big)\\
  &\quad%
   + 4\sum_{j=J_n}^{\tilde{\jmath}_n}\sum_{\ell}\EE_{\theta}\Big(\norm{\hat{f}_{\pm}^{\Block_{j\ell}} - f_{\pm}^{\Block_{j\ell}}}^2\1_{\{ \norm{\hat{f}_{\pm}^{\Block_{j\ell}} - f_{\pm}^{\Block_{j\ell}}} > \Gamma S_n / 4\}  }\1_{ \{\bar{W}^{\Block_{j\ell}} =  W_3^{\Block_{j\ell}} \} } \1_{\Omega_n} \Big)
\end{align*}
We call these terms $R_{3,1}$, $R_{3,2}$, and $R_{3,3}$, respectively. Let us start with $R_{3,1}$. Observe that on the event $\Omega_n \cap \Set{ \bar{W}^{\Block_{j\ell}} = W_1^{\Block_{j\ell}} }$ we have $\norm{\hat{f}_{\pm}^{\Block_{j\ell}} - f_{\pm}^{\Block_{j\ell}}} \leq 3W_1^{\Block_{j\ell}}$. Therefore,
\begin{align*}
  R_{3,1}%
  &\leq 36 \sum_{j=J_n}^{\tilde{\jmath}_n} \sum_{\ell} \EE_{\theta}\Big( \big(W_1^{\Block_{j\ell}}\big)^2\1_{ \{ W_1^{\Block_{j\ell}} > \Gamma S_n/12 \} } \Big)
\end{align*}
Proposition~\ref{pro:8a} in Section \ref{sec:auxiliary-results} tells us that, for $n\gamma^* \geq 1/99$, there is a universal constant $C > 0$ such that for all $\theta \in \RegulClass$ and all $x \geq 0$
\begin{equation*}
	\PP_{\theta}\Bigg(\norm{\hat{\psi}_1^{\Block_{j\ell}} - \psi_1^{\Block_{j\ell}}}\geq C \sqrt{\frac{Lx}{n\gamma^*}} + C2^{j/2}\frac{x}{n\gamma^*} \Bigg)%
	\leq 24^Ne^{-x}.
\end{equation*}
Then by Lemma~\ref{lem:9} with $a = C 2^{j/2} / \gamma^*$, $b = C\sqrt{L/\gamma^*}$, $c=24^{N}\leq 24^{2\log(n)}$ [$n\geq 2$ so $N \leq 2\log(n)$], we find that
\begin{align*}
R_{3,1}%
  &\leq 36 \cdot 24^N \sum_{j=J_n}^{\tilde{\jmath}_n}\sum_{\ell}\Big(\frac{\Gamma^2 S_n^2}{144} +  \frac{5C^2 L}{2n\gamma^*} + \frac{7C^22^j}{2(n\gamma^*)^2} \Big)\exp\Big(- \frac{n\gamma^* \Gamma^2 S_n^2/144}{2C^2L + 8C2^{j/2}\Gamma S_n/12 } \Big).\\
  &\leq
    36 \cdot 24^N \Big(\frac{\Gamma^2 \max(1,g^2/m_1^2)}{144}%
    + 5C^2L n + \frac{14C^2 n^2}{2}\Big)\exp\Big(- \frac{n\gamma^* \Gamma^2 S_n^2/144}{2C^2L + 8C2^{j/2}\Gamma S_n/12 } \Big)
\end{align*}
where the last line follows since there are $2^j/N \leq 2^j$ blocks at each level $j$, and because $2^{\tilde{\jmath}_n} \leq n$ by construction whenever $n\geq 3$, and because $n\gamma^{*} \geq \tau^3 \geq 1$. Let us analyse the argument of the exponential in the last display. Firstly if $8C2^{j/2}\Gamma S_n/12 \leq 2C^2 L$, it is the case that
\begin{align*}
  \frac{n \gamma^*\Gamma^2 S_n^2/144}{2C^2L + 8C2^{j/2}\Gamma S_n/12 }%
  &\geq \frac{n\gamma^* \Gamma^2 S_n^2}{576C^2L} \geq \frac{\gamma^*\Gamma^2}{576 C^2 L} \log(n)
\end{align*}
since $S_n = \sqrt{\log(n)/n}\max(1,g/|m_1|)$. Secondly, if $8C2^{j/2}\Gamma S_n/12 > 2C^2 L$, it is the case that for any $j \leq \tilde{\jmath}_n$
\begin{align*}
  \frac{n \gamma^*\Gamma^2 S_n^2/144}{2C^2L + 8C2^{j/2}\Gamma S_n/12 }%
  &\geq \frac{n \gamma^*\Gamma S_n}{192 C2^{j/2} }%
    \geq \frac{\gamma^* \Gamma}{192C}2^{-\tilde{\jmath}_n/2}\sqrt{n \log(n)}%
    \geq \frac{\gamma^* \Gamma}{192C} \log(n)
\end{align*}
since by construction $2^{\tilde{\jmath}_n}\leq \frac{n}{\tau^2\log(n)} \leq \frac{n}{\log(n)}$. Therefore since $L\leq n$ by assumption, for any $A > 0$ there exists $c_0 > 0$ such that whenever $\Gamma \geq c_0\max(L^{1/2}(\gamma^*)^{\mathrm{-1/2}},(\gamma^*)^{-1} )$:
\begin{equation*}
  R_{3,1} \leq \max\Big(1,\, \frac{g^2}{m_1^2}\Big)n^{-A}.
\end{equation*}
We now control $R_{3,2}$. With the same argument as before,
\begin{align*}
  R_{3,2}%
  &\leq 36 \sum_{j=J_n}^{\tilde{\jmath}_n} \sum_{\ell} \EE_{\theta}\Big( \big(W_2^{\Block_{j\ell}}\big)^2\1_{ \{ W_2^{\Block_{j\ell}} > \Gamma S_n/12 \} } \Big).
\end{align*}
Proposition~\ref{pro:8b} tells us that $$\PP_{\theta}\Big(\norm{\hat{G}^{\Block_{j\ell}} - G^{\Block_{j\ell}}} \geq  CL \sqrt{\frac{x}{n\gamma^*}} + C \max(\tau 2^{j/2},\sqrt{L}2^{j/2},\tau \sqrt{L}) \frac{x}{n\gamma^*}
\Big) \leq 4\cdot 24^Ne^{-x}.$$ 
Thus, applying Lemma~\ref{lem:9} with $a = \frac{4Cg}{|m_1|\gamma^*} \tau \sqrt{L} 2^{j/2}$, $b =\frac{4CL g}{|m_1|\sqrt{\gamma^*}}$, $c=24^{N}$, and $d = \Gamma S_n/12$ [note that $\max(\tau 2^{j/2},\sqrt{L}2^{j/2},\tau\sqrt{L}) \leq \tau \sqrt{L} 2^{j/2}$], we find that
\begin{multline*}
  R_{3,2}%
  \leq 36\cdot 24^N \sum_{j=J_n}^{\tilde{\jmath}_n}\sum_{\ell}\Bigg(\frac{\Gamma^2S_n^2}{144} + \frac{10C^2L^2 g^2}{n\gamma^* m_1^2} + \frac{7\cdot 4^2C^2\tau^2L 2^j g^2}{2(n\gamma^*)^2m_1^2} \Bigg)\\
  \times \exp\Bigg(%
  - \frac{n \gamma^* \Gamma^2 S_n^2 / 144}{ \frac{8C^2L^2g^2}{m_1^2} + \frac{16C\tau \sqrt{L} 2^{j/2} g}{12|m_1|}\Gamma S_n } \Bigg)
\end{multline*}
ie.
\begin{multline*}
  R_{3,2}%
  \leq 36\cdot 24^N\max\Big(1,\frac{g^2}{m_1^2}\Big)\Big(\frac{\Gamma^2}{144}%
    + 20C^2L^2 n%
    + \frac{14\cdot 4^2 \tau^2Ln^2}{2}
    \Big)\\%
    \times%
    \exp\Bigg(%
    - \frac{n \gamma^* \Gamma^2 S_n^2 / 144}{ \frac{8C^2L^2g^2}{m_1^2} + \frac{16C\tau \sqrt{L} 2^{j/2} g}{12|m_1|}\Gamma S_n } \Bigg)
\end{multline*}
Let us analyse the argument of the exponential in the previous display. Firstly, in the case where  $\frac{16C\tau \sqrt{L} 2^{j/2} g}{12|m_1|}\Gamma S_n \leq \frac{8C^2L^3g^2}{m_1^2}$,
\begin{align*}
  \frac{n \gamma^* \Gamma^2 S_n^2 / 144}{ \frac{8C^2L^2g^2}{m_1^2} + \frac{16C\tau \sqrt{L} 2^{j/2} g}{12|m_1|}\Gamma S_n }%
  &\geq \frac{n \gamma^* \Gamma^2 S_n^2}{ \frac{2304C^2L^2g^2}{m_1^2}}%
    \geq \frac{\gamma^* \Gamma^2}{2304C^2L^2} \log(n)
\end{align*}
since $S_n = \sqrt{\log(n)/n}\max(1,g/|m_1|)$. Secondly, in the case where  $\frac{16C\tau \sqrt{L} 2^{j/2} g}{12|m_1|}\Gamma S_n \leq \frac{8C^2L^2g^2}{m_1^2}$, for any $j \leq \tilde{\jmath}_n$
\begin{multline*}
  \frac{n \gamma^* \Gamma^2 S_n^2 / 144}{ \frac{8C^2L^2g^2}{m_1^2} + \frac{16C\tau \sqrt{L} 2^{j/2} g}{12|m_1|}\Gamma S_n }%
  \geq \frac{n \gamma^* \Gamma S_n}{ \frac{384C\tau \sqrt{L} 2^{j/2} g}{|m_1|} }%
    \geq \frac{\gamma^* \Gamma }{384C\tau \sqrt{L}}2^{-\tilde{\jmath}_n/2}\sqrt{n \log(n)}\\%
    \geq \frac{\gamma^* \Gamma}{384 C \sqrt{L}} \log(n)
\end{multline*}
since by construction $2^{\tilde{\jmath}_n}\leq \frac{n}{\tau^2}\log(n)$. Therefore, for any $A > 0$ there exits a constant $c_0 >0$ such that whenever $\Gamma \geq c_0 L^{1/2}\max(L^{1/2}(\gamma^*)^{-1/2},(\gamma^*)^{-1})$
\begin{equation*}
  R_{3,2} \leq  \max\Big(1,\frac{g^2}{m_1^2}\Big)n^{-A}.
\end{equation*}
We now control $R_{3,3}$. With the same argument as before,
\begin{align*}
  R_{3,3}%
  &\leq 36 \sum_{j=J_n}^{\tilde{\jmath}_n} \sum_{\ell} \EE_{\theta}\Big( \big(W_3^{\Block_{j\ell}}\big)^2\1_{ \{ W_3^{\Block_{j\ell}} > \Gamma S_n/12 \} }\1_{\Omega_n} \Big)\\
  &\leq 36 \sum_{j=J_n}^{\tilde{\jmath}_n} \sum_{\ell} \EE_{\theta}\Big( \big(W_3^{\Block_{j\ell}}\big)^2\1_{\Omega_n} \Big).
\end{align*}
Proposition~\ref{pro:9} in Section \ref{sec:auxiliary-results} tells us that $\abs{\hat{\omega}_{\pm} - \omega_{\pm}}%
\leq \frac{83\max(1,\phi_3|\tilde{\mathcal{I}}|)}{|m_1m_2|}\max_{j=1,2,3}\abs{\hat{m}_j - m_j}$ on the event $\Omega_n$, hence
\begin{align*}
  R_{3,3}%
  &\leq \frac{9\cdot 83^2\max(1,\phi_3^2\tilde{\mathcal{I}}^2)}{m_1^2m_2^2}\EE_{\theta}\Big(\max_{j=1,2,3}|\hat{m}_j - m_j|^2 \Big)%
    \sum_{j=J_n}^{\tilde{\jmath}_n}\sum_{\ell}\norm{G^{\Block_{j\ell}}}^2\\
  &\leq \frac{9\cdot 83^2\max(1,\phi_3^2\tilde{\mathcal{I}}^2)}{m_2^2}\EE_{\theta}\Big(\max_{j=1,2,3}|\hat{m}_j - m_j|^2 \Big)
\end{align*}
because $\norm{G}_{L^2} = \abs{m_1} \norm{\psi_2}_{L^2} = \abs{m_1}$. Furthermore, by  Proposition~\ref{pro:expec-maxm}, we deduce
\begin{equation*}
  R_{3,3} \leq \frac{9\cdot 83^2\cdot 40C^2 L^3\max(1,g^2)}{n\gamma^* m_2^2}%
  + \frac{9\cdot 83^2 \cdot 64C^2\max(\tau,\sqrt{L})^6\max(1,g^2)}{(n\gamma^*)^2m_2^2}.
\end{equation*}
In the end for every $A > 0$ there exists $c_0 > 0$ such that whenever the threshold constant satisfies $\Gamma \geq c_0 L^{1/2}\max(L^{1/2}(\gamma^*)^{-1/2},(\gamma^*)^{-1})$
\begin{multline*}
  R_3(\theta)%
  \leq 2\max\Big(1,\frac{g^2}{m_1^2}\Big)n^{-A} + \frac{9\cdot 83^2\cdot 40C^2 L^3\max(1,g^2)}{n\gamma^* m_2^2}\\%
    + \frac{9\cdot 83^2 \cdot 64C^2\max(\tau,\sqrt{L})^6\max(1,g^2)}{(n\gamma^*)^2m_2^2}.
\end{multline*}
By choosing $\beta > 0$ carefully enough, there is a universal constant $B > 0$ such that
\begin{equation*}
  \sup_{\theta\in \SmoothClass \cap \RegulClass}R_3(\theta)%
  \leq \frac{BL^3}{\delta^2\epsilon^4\zeta^4}\frac{1}{n\sg}%
  + \frac{B\max(\tau,\sqrt{L})^6}{\delta^2\epsilon^4\zeta^4}\frac{1}{(n\sg)^2}.
\end{equation*}

\subsubsection{Control of $R_4$}

Observe that
\begin{align*}
  R_4(\theta)%
  &\coloneqq \EE_{\theta}\Bigg( \sum_{j=J_n}^{\tilde{\jmath}_n}\sum_{\ell}\norm{\hat{f}_{\pm}^{\Block_{j\ell}} - f_{\pm}^{\Block_{j\ell}}}^2\1_{\{ \norm{\hat{f}_{\pm}^{\Block_{j\ell}}} > \Gamma \hat{S}_n\}  }\1_{ \{ \norm{f_{\pm}^{\Block_{j\ell}}} \leq \frac{1}{2}\Gamma \hat{S}_n \} \} } \1_{\Omega_n}\Bigg)\\
  &\leq \EE_{\theta}\Bigg( \sum_{j=J_n}^{\tilde{\jmath}_n}\sum_{\ell}\norm{\hat{f}_{\pm}^{\Block_{j\ell}} - f_{\pm}^{\Block_{j\ell}}}^2\1_{\{ \norm{\hat{f}_{\pm}^{\Block_{j\ell}} - f_{\pm}^{\Block_{j\ell}}} > \frac{1}{2}\Gamma \hat{S}_n\}  } \1_{\Omega_n}\Bigg)\\
  &\leq \EE_{\theta}\Bigg( \sum_{j=J_n}^{\tilde{\jmath}_n}\sum_{\ell}\norm{\hat{f}_{\pm}^{\Block_{j\ell}} - f_{\pm}^{\Block_{j\ell}}}^2\1_{\{ \norm{\hat{f}_{\pm}^{\Block_{j\ell}} - f_{\pm}^{\Block_{j\ell}}} > \frac{1}{8}\Gamma S_n\}  } \1_{\Omega_n}\Bigg)
\end{align*}
since $\hat{S}_n \geq S_n/4$ on the event $\Omega_n$ by Proposition~\ref{pro:4} in Section \ref{sec:auxiliary-results}. From here, we see that the bounds derived for $R_3$ adapts mutatis mutandis by letting $\Gamma \mapsto\Gamma/2$. In the end it is found that for $\beta > 0$ chosen carefully enough
\begin{equation*}
  \sup_{\theta\in \SmoothClass \cap \RegulClass}R_4(\theta)%
  \leq \frac{BL^3}{\delta^2\epsilon^4\zeta^4}\frac{1}{n\sg}%
  +   \frac{B\max(\tau,\sqrt{L})^6}{\delta^2\epsilon^4\zeta^4}\frac{1}{(n\sg)^2}.
\end{equation*}

\subsubsection{Control of $R_5$}
\label{sec:control-r_5}

First see that, since $\hat{S}_n \geq S_n/4$ on the event $\Omega_n$ by Proposition~\ref{pro:4},
\begin{align*}
  R_5(\theta) &\coloneqq \EE_{\theta}\Bigg( \sum_{j=J_n}^{\tilde{\jmath}_n}\sum_{\ell}\norm{\hat{f}_{\pm}^{\Block_{j\ell}} - f_{\pm}^{\Block_{j\ell}}}^2\1_{\{ \norm{\hat{f}_{\pm}^{\Block_{j\ell}}} > \Gamma \hat{S}_n\}  }\1_{ \{ \norm{f_{\pm}^{\Block_{j\ell}}} > \frac{1}{2}\Gamma \hat{S}_n \} } \1_{\Omega_n}\Bigg) \\ %
  &\leq
   \sum_{j=J_n}^{\tilde{\jmath}_n}\sum_{\ell}\EE_{\theta}\Big(\norm{\hat{f}_{\pm}^{\Block_{j\ell}} - f_{\pm}^{\Block_{j\ell}}}^2\1_{\Omega_n}\Big) \1_{ \{ \norm{f_{\pm}^{\Block_{j\ell}}} > \frac{1}{8}\Gamma S_n \} } .
\end{align*}
Let $W_j^{\Block_{j\ell}}$ be defined as in Section~\ref{sec:control-r_3}. Then, by Lemma~\ref{lem:pro:7} in Section \ref{sec:auxiliary-results},
\begin{align*}
  \EE_{\theta}\Big(\norm{\hat{f}_{\pm}^{\Block_{j\ell}} - f_{\pm}^{\Block_{j\ell}}}^2\1_{\Omega_n}\Big)%
  &\leq 3\EE_{\theta}\Big(\big(W_1^{\Block_{j\ell}} \big)^2\Big)%
    + 3\EE_{\theta}\Big(\big(W_2^{\Block_{j\ell}} \big)^2 \Big)%
    + 3\EE_{\theta}\Big(\big(W_3^{\Block_{j\ell}} \big)^2\1_{\Omega_n}\Big)
\end{align*}
By computations made in Section~\ref{sec:control-r_3}, for any $A > 0$ we can choose $\alpha > 0$ such that
\begin{align*}
  \EE_{\theta}\Big(\big(W_1^{\Block_{j\ell}} \big)^2\Big)%
  &\leq \alpha^2C^2 L \frac{\log(n)}{n \gamma^*}%
    + \EE_{\theta}\Big(\big(W_1^{\Block_{j\ell}} \big)^2\1_{ \{ W_1^{\Block_{j\ell}} >   \alpha C \sqrt{L \log(n)/(n\gamma^*)} \}  } \Big)\\
  &\leq \alpha^2C^2 L \frac{\log(n)}{n \gamma^*}
    + \max\Big(1,\frac{g^2}{m_1^2}\Big)2^{-\tilde{\jmath}_n}n^{-A}\\
  &\leq \frac{\alpha^2C^2L S_n^2}{\gamma^*} + \max\Big(1,\frac{g^2}{m_1^2}\Big)2^{-\tilde{\jmath}_n}n^{-A}.
\end{align*}
Similarly,
\begin{align*}
  \EE_{\theta}\Big(\big(W_2^{\Block_{j\ell}} \big)^2\Big)%
  &\leq \alpha^2C^2 L^2 \frac{g^2\log(n)}{n\gamma^* m_1^2}%
    + \EE_{\theta}\Big(\big(W_2^{\Block_{j\ell}} \big)^2\1_{ \{ W_2^{\Block_{j\ell}} >   \frac{\alpha C L g}{|m_1|} \sqrt{\log(n)/(n\gamma^*)} }\Big)\\
  &\leq \alpha^2C^2 L^2 \frac{g^2\log(n)}{n\gamma^* m_1^2}%
    + \max\Big(1,\frac{g^2}{m_1^2}\Big)2^{-\tilde{\jmath}_n}n^{-A}\\
  &\leq \frac{\alpha^2C^2L^2 S_n^2}{\gamma^*} + \max\Big(1,\frac{g^2}{m_1^2}\Big)2^{-\tilde{\jmath}_n}n^{-A}.
\end{align*}
Also, by computations made in Section~\ref{sec:control-r_3}, we know that
\begin{multline*}
  \sum_{j=J_n}^{\tilde{\jmath}_n}\sum_{\ell}\EE_{\theta}\Big(\big(W_3^{\Block_{j\ell}} \big)^2\1_{\Omega_n} \Big)%
  \leq \frac{9\cdot 83^2\cdot 40C^2 L^3\max(1,g^2)}{36n\gamma^* m_2^2}\\%
  + \frac{9\cdot 83^2 \cdot 64C^2\max(\tau,\sqrt{L})^6\max(1,g^2)}{36(n\gamma^*)^2m_2^2}.
\end{multline*}
Consequently,
\begin{align*}
  R_5(\theta)%
  &\leq%
   \frac{6\alpha^2C^2L^2S_n^2}{\gamma^*}
    \sum_{j=J_n}^{\tilde{\jmath}_n}\sum_{\ell}\1_{ \{ \norm{f_{\pm}^{\Block_{j\ell}}} > \frac{1}{8}\Gamma S_n \} }\\
  &\quad%
   +\frac{27\cdot 83^2\cdot 40C^2 L^3\max(1,g^2)}{36n\gamma^* m_2^2}%
    + \frac{27\cdot 83^2 \cdot 64C^2\max(\tau,\sqrt{L})^6\max(1,g^2)}{36(n\gamma^*)^2m_2^2}\\
  &\quad%
    + 2\max\Big(1,\frac{g^2}{m_1^2}\Big) n^{-A}.%
\end{align*}
Whenever $\theta \in \SmoothClass$, it is the case (recall \eqref{eqn:BesovBound}) that $\sup_{j\geq J_n}2^{2js_{\pm}}\sum_k|f_{\pm}^{\Psi_{jk}}|^2 \leq R^2$. This in particular implies that for all $j \geq J_n$
\begin{align*}
  R^22^{-2j s_{\pm}}%
  &\geq \sum_{\ell}\norm{f_{\pm}^{\Block_{j\ell}}}^2\\
  &\geq \sum_{\ell}\norm{f_{\pm}^{\Block_{j\ell}}}^2\1_{ \{\norm{f_{\pm}^{\Block_{j\ell}}} > \frac{1}{8}\Gamma S_n\} }\\
  &\geq \frac{\Gamma^2 S_n^2}{64}\sum_{\ell}\1_{ \{ \norm{f_{\pm}^{\Block_{j\ell}}} > \frac{1}{8}\Gamma S_n\} }.
\end{align*}%
Since there are $2^j/N$ blocks at level $j$, deduce that
\begin{equation*}
  \sum_{\ell}\1_{ \{ \norm{f_{\pm}^{\Block_{j\ell}}} > \frac{1}{8}\Gamma S_n\} }%
  \leq \min\Big(\frac{2^j}{N},\, \frac{64 R^2}{\Gamma^2S_n^2}2^{-2js_{\pm}} \Big) = \frac{1}{\Gamma^2S_n^2} \min\Big(\frac{2^j}{N}\Gamma^2S_n^2,\, 64 R^2 2^{-2js_{\pm}} \Big) 
\end{equation*}
Therefore,
\begin{align*}
  R_5(\theta)%
  &\leq%
   \frac{6\alpha^2C^2L^2}{\Gamma^2\gamma^*}
    \sum_{j=J_n}^{\tilde{\jmath}_n} \min\Big(\frac{2^j}{N}\Gamma^2S_n^2,\, 64 R^2 2^{-2js_{\pm}} \Big)\\
  &\quad%
   +\frac{27\cdot 83^2\cdot 40C^2 L^3\max(1,g^2)}{36n\gamma^* m_2^2}%
    + \frac{27\cdot 83^2 \cdot 64C^2\max(\tau,\sqrt{L})^6\max(1,g^2)}{36(n\gamma^*)^2m_2^2}\\
  &\quad%
    + 2\max\Big(1,\frac{g^2}{m_1^2}\Big)n^{-A}.%
\end{align*}
Then by inspecting the proof of the bound of $R_2(\theta)$ and by choosing $\alpha$ sufficiently large it follows immediately that there exists a universal constant $B > 0$ such that
\begin{multline*}
  \sup_{\theta\in \SmoothClass \cap \RegulClass}R_5(\theta)%
  \leq \frac{BL^3}{\delta^2\epsilon^4\zeta^6}\frac{1}{n\sg}%
  + \frac{B\max(\tau,\sqrt{L})^6}{\delta^2\epsilon^4\zeta^4}\frac{1}{(n\sg)^2}\\
  + \frac{BL^2}{\Gamma^2\sg} \Bigg( \frac{R^2}{\min(1,s_{\pm})}\Big(\frac{\Gamma^2}{R^2\delta^2\epsilon^2\zeta^2 n} \Big)^{2s_{\pm}/(2s_{\pm} + 1)}%
    + \frac{R^2}{\min(1,s_{\pm})}\Big(\frac{\tau^2\log(n)}{n} \Big)^{2s_{\pm}} \Bigg).%
\end{multline*}

\subsubsection{Control of $R_6$}
\label{sec:control-r_6}

Whenever $\theta \in \SmoothClass$, it is the case (recall equation \eqref{eqn:BesovBound}) that $\sup_{j\geq J_n}2^{2js_{\pm}}\sum_k|f_{\pm}^{\Psi_{jk}}|^2 \leq R^2$. Therefore, 
\begin{align*}
  R_6(\theta) &\coloneqq \PP_{\theta}(\Omega_n)\sum_{j > \tilde{\jmath}_n}\sum_{k=0}^{2^j-1}|f_{\pm}^{\Psi_{jk}}|^2 %
  \leq R^2 \sum_{j > \tilde{\jmath}_n}2^{-2js_{\pm}}
  = \frac{L^2}{2^{2s_{\pm}} - 1} 2^{-2\tilde{\jmath}_n s_{\pm}}
  \\ & \leq \frac{R^2}{2^{2s_{\pm}} - 1} \Big( \frac{2\tau^2 \log(n)}{n} \Big)^{2s_{\pm}}
\end{align*}
because by construction $2^{\tilde{\jmath}_n + 1} > \frac{n}{\tau^2\log(n)}$. Hence, there is a universal constant $B > 0$ such that
\begin{equation*}
  \sup_{\theta\in \SmoothClass \cap \RegulClass}R_6(\theta)%
  \leq \frac{BR^2}{\min(1,s_{\pm})}\Big( \frac{\tau^2 \log(n)}{n} \Big)^{2s_{\pm}}.
\end{equation*}

\subsubsection{Auxiliary results}
\label{sec:auxiliary-results}

\begin{proposition}
  \label{pro:7a}
  Let $n\gamma^* \geq 1/99$. Then, there is a universal constant $C > 0$ such that for all $\theta \in \RegulClass$ and all $x \geq 0$
  \begin{equation*}
    \PP_{\theta}\Bigg(\norm{\hat{\psi}_1^{J_n} - \psi_1^{J_n}}_{L^2} \geq C\sqrt{\frac{Lx}{n\gamma^*}} + C2^{J_n/2}\frac{x}{n\gamma^*} \Bigg)%
    \leq 24^{2^{J_n}}e^{-x}.
  \end{equation*}
\end{proposition}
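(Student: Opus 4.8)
The plan is to reduce the $L^2$ error to the Euclidean norm of the vector of centred empirical wavelet coefficients, and then to control that norm by a covering (net) argument built on the single-coefficient deviation bound of Lemma~\ref{lem:paulin}.

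First I would record that, since $p_\theta^{(1)} = \pi_0 f_0 + \pi_1 f_1 = \psi_1$, one has $\EE_\theta[e_\lambda(Y_1)] = \Inner{e_\lambda,\psi_1}$ for every wavelet $e_\lambda$; hence the $e_\lambda$-coefficient of $\hat\psi_1^{J_n}$ is $\PP_n^{(1)}(e_\lambda)$ and that of $\psi_1^{J_n}$ is $\EE_\theta(e_\lambda)$, for each $\lambda$ in the index set $\Lambda(J_n-1)$ comprising the father level $J$ together with the mother levels $J,\dots,J_n-1$; this set has cardinality $N := 2^{J_n}$. Setting $Z_\lambda := \PP_n^{(1)}(e_\lambda) - \EE_\theta(e_\lambda)$ and $Z := (Z_\lambda)_{\lambda\in\Lambda(J_n-1)}\in\RR^N$, orthonormality of the $e_\lambda$ gives $\norm{\hat\psi_1^{J_n} - \psi_1^{J_n}}_{L^2}^2 = \sum_{\lambda\in\Lambda(J_n-1)} Z_\lambda^2 = \norm{Z}_2^2$.

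Next I would fix a $\tfrac{1}{2}$-net $\mathcal N$ of the Euclidean unit sphere of $\RR^N$, with $\abs{\mathcal N} \leq 5^N$, so that $\norm{Z}_2 \leq 2\max_{u\in\mathcal N}\Inner{u,Z}$. For each $u\in\mathcal N$ put $h_u := \sum_{\lambda\in\Lambda(J_n-1)} u_\lambda e_\lambda$; then $\Inner{u,Z} = \PP_n^{(1)}(h_u) - \EE_\theta(h_u)$, $\norm{h_u}_{L^2} = \norm{u}_2 = 1$, and, using the standard estimate $\norm{\sum_k c_{jk}\Psi_{jk}}_\infty \lesssim 2^{j/2}(\sum_k c_{jk}^2)^{1/2}$ for the boundary-corrected basis and summing a geometric series over $J\leq j\leq J_n-1$, $\norm{h_u}_\infty \leq K_1 2^{J_n/2}$ for a universal $K_1$. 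Since moreover $\EE_\theta[h_u^2] \leq L\norm{h_u}_{L^2}^2 = L$ by Lemma~\ref{lem:2}, the second display of Lemma~\ref{lem:paulin} (with $k=1$, using that $\theta\in\RegulClass$ has absolute spectral gap at least $\gamma^*$) produces a universal $C_0>0$ with $\PP_\theta\bigl(\abs{\Inner{u,Z}} \geq C_0\sqrt{Lx/(n\gamma^*)} + C_0 K_1 2^{J_n/2}x/(n\gamma^*)\bigr) \leq e^{-x}$ for all $x\geq 0$. A union bound over $\mathcal N$ then gives $\PP_\theta\bigl(\norm{Z}_2 \geq 2C_0\sqrt{Lx/(n\gamma^*)} + 2C_0 K_1 2^{J_n/2}x/(n\gamma^*)\bigr) \leq 5^N e^{-x} \leq 24^{2^{J_n}} e^{-x}$, and since the event shrinks when the threshold is enlarged, the claim follows with $C := 2C_0\max(1, K_1)$.

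The one point I expect to demand care is the uniform sup-norm bound $\norm{h_u}_\infty \lesssim 2^{J_n/2}$ over the net: it is exactly this bound (rather than a coordinatewise one, which would cost an extra $2^{J_n/2}$) that yields the factor $\sqrt{L}$ in front of $\sqrt{x/(n\gamma^*)}$, i.e.\ the form in which Proposition~\ref{pro:7a} is used in the proof of Theorem~\ref{thm:1} (there with $b = C\sqrt{L/\gamma^*}$). By contrast the combinatorial factor $5^N$ from the net is harmless, since the statement only requires the larger prefactor $24^{2^{J_n}}$.
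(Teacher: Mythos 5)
Your proof is correct and follows essentially the same route as the paper's: reduce the $L^2$ norm to a finite-dimensional Euclidean norm, discretise with a $\tfrac12$-net, apply the Bernstein-type bound of Lemma~\ref{lem:paulin} to each $h_u$ using $\EE_\theta(h_u^2)\leq L$ (Lemma~\ref{lem:2}) and the wavelet localisation bound $\norm{h_u}_\infty\lesssim 2^{J_n/2}$, then union bound. The only cosmetic difference is your sharper net cardinality $5^N$ versus the paper's $24^{2^{J_n}}$, which is immaterial since the statement only requires the larger prefactor.
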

\begin{proof}
  The strategy is classical and consists on remarking that $\norm{\hat{\psi}_1^{J_n} - \psi_1^{J_n}}_{L^2} = \sup_{u\in U}\Inner{\hat{\psi}_1^{J_n} - \hat{\psi}_1^{J_n}, u}$ where $U$ is the unit ball of the appropriate vector space (which has dimension $2^J + \sum_{j=J}^{J_n-1}2^j = 2^{J_n}$). Then, letting $\mathcal{N}$ be a $(1/2)$-net over $U$ and $\pi : U \to \mathcal{N}$ mapping any point $u \in U$ to its closest element in $\mathcal{N}$, we see that
  \begin{align*}
    \norm{\hat{\psi}_1^{J_n} - \psi_1^{J_n}}_{L^2}%
    &= \sup_{u\in U}\Inner{\hat{\psi}_1^{J_n} - \hat{\psi}_1^{J_n}, u}\\
    &= \sup_{u\in U}\Big(\Inner{\hat{\psi}_1^{J_n} - \hat{\psi}_1^{J_n}, \pi(u)}%
      + \Inner{\hat{\psi}_1^{J_n} - \hat{\psi}_1^{J_n}, u - \pi(u)}\Big)\\
    &\leq \max_{u\in \mathcal{N}}\Inner{\hat{\psi}_1^{J_n} - \hat{\psi}_1^{J_n}, u}  + \frac{1}{2}\norm{\hat{\psi}_1^{J_n} - \psi_1^{J_n}}_{L^2}
  \end{align*}
  and hence
  \begin{equation*}
    \norm{\hat{\psi}_1^{J_n} - \psi_1^{J_n}}_{L^2}%
    \leq 2 \max_{u\in \mathcal{N}}\Inner{\hat{\psi}_1^{J_n} - \hat{\psi}_1^{J_n}, u}.
  \end{equation*}
  It follows that
  \begin{align*}
    \PP_{\theta}\Big( \norm{\hat{\psi}_1^{J_n} - \psi_1^{J_n}}_{L^2} \geq 2x \Big)%
    &\leq |\mathcal{N}|\max_{u\in \mathcal{N}}\PP_{\theta}\Big( \Inner{\hat{\psi}_1^{J_n} - \hat{\psi}_1^{J_n}, u} \geq x \Big)
  \end{align*}
  The conclusion follows by Lemma~\ref{lem:paulin} applied to the function $h(y) = \sum_{k=0}^{2^J-1}u_{Jk}\Phi_{Jk}(y) + \sum_{j=J}^{J_n}\sum_{k=0}^{2^j-1}u_{jk}\psi_{jk}(y)$, because $\EE_{\theta}(h^2) \leq L \norm{h}_{L^2}^2 = L$ for every $\theta \in \RegulClass$ by Lemma~\ref{lem:2}, because $\norm{h}_{\infty} \leq c2^{J_n/2}$ for a universal $c >0$, by standard localization properties of wavelets \citep[Theorem~4.2.10 or Definition~4.2.14]{GN16} and because $\mathcal{N}$ can be chosen so that $ |\mathcal{N}| \leq 24^{2^{J_n}}$ because $\mathcal{N}$ can always be chosen to have cardinality no more than $24^{2^{J_n}}$ \citep[e.g.][Theorem 4.3.34]{GN16}.
\end{proof}

\begin{proposition}
  \label{pro:7b}
  Let $n\gamma^* \geq 1/99$ and $\norm{\tilde{\psi}_2}_{\infty}\leq \tau$. Then, there is a universal constant $C > 0$ such that for all $\theta \in \RegulClass$ and all $x \geq 0$
  \begin{equation*}
    \PP_{\theta}\Bigg(\norm{\hat{G}^{J_n} - G^{J_n}}_{L^2} \geq  CL \sqrt{\frac{x}{n\gamma^*}}%
    + C \max(\tau 2^{J_n/2},\sqrt{L}2^{J_n/2},\tau\sqrt{L}) \frac{x}{n\gamma^*}
    \Bigg)%
    \leq 4\cdot 24^{2^{J_n}}e^{-x}.
  \end{equation*}
\end{proposition}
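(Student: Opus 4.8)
The plan is to follow the discretisation argument already used for Proposition~\ref{pro:7a}. Let $V_{J_n}$ be the $2^{J_n}$-dimensional space spanned by $\{\Phi_{Jk}:0\le k<2^J\}\cup\{\Psi_{jk}:J\le j<J_n,\ 0\le k<2^j\}$, let $U$ be its $L^2$-unit ball, and fix a $(1/2)$-net $\mathcal N\subset U$ with $|\mathcal N|\le 24^{2^{J_n}}$ (as in the proof of Proposition~\ref{pro:7a}). For $u\in U$ write $g_u\coloneqq\sum_{k}u_{Jk}\Phi_{Jk}+\sum_{j=J}^{J_n-1}\sum_k u_{jk}\Psi_{jk}\in V_{J_n}$, so that $\norm{g_u}_{L^2}=1$ and, by the standard localisation properties of the boundary-corrected basis, $\norm{g_u}_{\infty}\le c\,2^{J_n/2}$ for a universal constant $c$. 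Exactly as in Proposition~\ref{pro:7a}, $\norm{\hat{G}^{J_n}-G^{J_n}}_{L^2}\le 2\max_{u\in\mathcal N}\langle \hat{G}^{J_n}-G^{J_n},g_u\rangle$, so it suffices to control $|\langle \hat{G}^{J_n}-G^{J_n},g_u\rangle|$ for a fixed $u$ and then take a union bound over $\mathcal N$.

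By linearity of $h\mapsto\PP_n^{(s)}(h)$ and of the coefficient maps, $\langle \hat{G}^{J_n},g_u\rangle=\PP_n^{(2)}(\tilde{\psi}_2\otimes g_u)-\PP_n^{(1)}(\tilde{\psi}_2)\,\PP_n^{(1)}(g_u)$, and by Lemma~\ref{lem:compute-m}, $\langle G^{J_n},g_u\rangle=\EE_\theta[\tilde{\psi}_2\otimes g_u]-\EE_\theta[\tilde{\psi}_2]\,\EE_\theta[g_u]$. Subtracting and expanding the product term,
\begin{align*}
  \langle \hat{G}^{J_n}-G^{J_n},g_u\rangle
  &=\big(\PP_n^{(2)}(\tilde{\psi}_2\otimes g_u)-\EE_\theta[\tilde{\psi}_2\otimes g_u]\big)
   -\EE_\theta[g_u]\big(\PP_n^{(1)}(\tilde{\psi}_2)-\EE_\theta[\tilde{\psi}_2]\big)\\
  &\quad -\EE_\theta[\tilde{\psi}_2]\big(\PP_n^{(1)}(g_u)-\EE_\theta[g_u]\big)
   -\big(\PP_n^{(1)}(\tilde{\psi}_2)-\EE_\theta[\tilde{\psi}_2]\big)\big(\PP_n^{(1)}(g_u)-\EE_\theta[g_u]\big).
\end{align*}
Now apply Lemma~\ref{lem:paulin} to each deviation: to $\tilde{\psi}_2\otimes g_u$ (a function of $(Y_1,Y_2)$), using $\norm{\tilde{\psi}_2\otimes g_u}_{\infty}\le\tau\norm{g_u}_{\infty}\le c\tau 2^{J_n/2}$ and $\EE_\theta[(\tilde{\psi}_2\otimes g_u)^2]\le L^2\norm{\tilde{\psi}_2\otimes g_u}_{L^2}^2=L^2$ by Lemma~\ref{lem:2}; to $\tilde{\psi}_2$, using $\norm{\tilde{\psi}_2}_{\infty}\le\tau$, $\EE_\theta[\tilde{\psi}_2^2]\le L$; and to $g_u$, using $\norm{g_u}_{\infty}\le c2^{J_n/2}$, $\EE_\theta[g_u^2]\le L$. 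Since moreover $|\EE_\theta[\tilde{\psi}_2]|\le\sqrt L$ and $|\EE_\theta[g_u]|\le\sqrt L$ by Cauchy--Schwarz and Lemma~\ref{lem:2}, the first three terms are, with probability at least $1-3e^{-x}$, bounded in absolute value by a universal constant times $L\sqrt{x/(n\gamma^*)}+\max(\tau 2^{J_n/2},\sqrt L 2^{J_n/2},\tau\sqrt L)\,x/(n\gamma^*)$.

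For the fourth, product-of-deviations term one argues exactly as in the proof of Proposition~\ref{pro:8}: split into the case $x\le n\gamma^*$, where the product is dominated by the individual Bernstein bounds and hence absorbed into the same expression, and the case $x>n\gamma^*$, where the crude bounds $|\PP_n^{(1)}(\tilde{\psi}_2)-\EE_\theta[\tilde{\psi}_2]|\le 2\tau$ and $|\PP_n^{(1)}(g_u)-\EE_\theta[g_u]|\le 2c2^{J_n/2}$ already make the whole quantity smaller than the linear term. A union bound over $\mathcal N$ and over the (at most) four concentration events attached to each net point then yields the factor $4\cdot 24^{2^{J_n}}$, giving the claim. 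The step I expect to be the main obstacle is the bookkeeping around the bilinear term $\PP_n^{(1)}(\tilde{\psi}_2)\PP_n^{(1)}(g_u)$: one must check that the correction terms it generates neither inflate the coefficient of the square-root term beyond $O(L)$ nor introduce a dimension dependence worse than $2^{J_n/2}$ in the linear term, which is precisely where the $L^2$-energy bounds of Lemma~\ref{lem:2} are essential. The remaining ingredients — the $(1/2)$-net, the localisation bound $\norm{g_u}_{\infty}\lesssim 2^{J_n/2}$, and the case split for the product of deviations — are routine and already appear in the proofs of Propositions~\ref{pro:7a} and~\ref{pro:8}.
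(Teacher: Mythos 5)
Your proposal is correct and follows essentially the same route as the paper: the same $(1/2)$-net with cardinality $24^{2^{J_n}}$, the same algebraic expansion of the product term $\PP_n^{(1)}(\tilde{\psi}_2)\PP_n^{(1)}(\cdot)$ into two linear deviations plus a product of deviations, the same variance proxies from Lemma~\ref{lem:2} and sup-norm bounds from wavelet localisation fed into Lemma~\ref{lem:paulin}, and the same case split on $x\le n\gamma^*$ versus $x>n\gamma^*$. The only (immaterial) difference is organisational: the paper decomposes $\hat{G}^{J_n}-G^{J_n}$ at the level of functions and reuses Proposition~\ref{pro:7a} for the $\hat{\psi}_1^{J_n}-\psi_1^{J_n}$ piece, whereas you decompose after pairing with a net element $g_u$; the resulting union-bound count is the same.
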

\begin{proof}
  We remark that $\hat{G}^{\Phi_{Jk}} = \PP_n^{(2)}(\tilde{\psi}_2 \otimes \Phi_{Jk}) - \PP_n^{(1)}(\tilde{\psi}_2)\PP_n^{(1)}(\Phi_{Jk})$; similarly for $\hat{G}^{\Psi_{jk}}$. Recall that $\norm{\tilde{\psi}_2}_{\infty} \leq \tau$ by assumption. Hence, $\norm{\hat{G}^{J_n}}_{L^2} \leq c \tau 2^{J_n/2}$ for a universal constant $c > 0$. Similarly $\norm{G^{J_n}}_{L^2} \leq c \tau 2^{J_n/2}$. Hence with probability $1 \geq 1 - e^{-x}$, whenever $x > n\gamma^*$
  \begin{align*}
    \norm{\hat{G}^{J_n} - G^{J_n}}_{L^2}%
    \leq 2c\tau 2^{J_n/2}%
    \leq CL^{3/2}\sqrt{\frac{x}{n\gamma^*}}%
  \end{align*}
  provided $C > 2c$. We now consider the case where $0 \leq x \leq n\gamma^*$. We decompose
  \begin{align*}
    \hat{G}^{J_n} - G^{J_n}%
    &= \sum_{k=0}^{2^J-1}\Big(\PP_n^{(2)}\big(\tilde{\psi}_2 \otimes \Phi_{Jk} \big) - \EE_{\theta}\big(\tilde{\psi}_2 \otimes \Phi_{Jk}\big)\Big)\Phi_{Jk}\\
    &\quad%
      + \sum_{j=J}^{J_n}\sum_{k=0}^{2^j-1}\Big(\PP_n^{(2)}\big(\tilde{\psi}_2 \otimes \Psi_{jk} \big) - \EE_{\theta}\big(\tilde{\psi}_2 \otimes \Psi_{jk}\big)\Big)\Psi_{jk}\\
    &\quad%
      - \EE_{\theta}(\tilde{\psi}_2)\Big(\hat{\psi}_1^{J_n} - \psi_1^{J_n}\Big)%
      - \psi_1^{J_n}\Big(\PP_n^{(1)}(\tilde{\psi}_2) - \EE_{\theta}(\tilde{\psi}_2) \Big)\\%
    &\quad%
      - \Big( \PP_n^{(1)}(\tilde{\psi}_2) - \EE_{\theta}(\tilde{\psi}_2) \Big)\Big(\hat{\psi}_1^{J_n} - \psi_1^{J_n} \Big).
  \end{align*}
  But $\norm{\psi_1^{J_n}}_{L^2} \leq \norm{\psi_1}_{L^2} \leq \max(\norm{f_0}_{L^2},\norm{f_1}_{L^2})$ and $\norm{f_j}_{L^2}^2 = \int_0^1f_j^2 \leq \norm{f_j}_{\infty}\int_0^1f_j \leq L$ whenever $\theta \in \RegulClass$. Thus $\norm{\psi_1^{J_n}}_{L^2} \leq \sqrt{L}$. Similarly by Cauchy-Schwarz' $\abs{\EE_{\theta}(\tilde{\psi}_2)} \leq \EE_{\theta}(\tilde{\psi}_2^2)^{1/2} \leq \norm{\psi_1}_{\infty}^{1/2}\norm{\tilde{\psi}_2}_{L^2} \leq  \sqrt{L}$. Therefore, letting $v^{J_n}\coloneqq \sum_{k=0}^{2^J-1}\EE_{\theta}(\tilde{\psi}_2\otimes \Phi_{Jk})\Phi_{Jk} + \sum_{j=J}^{J_n}\sum_{k=0}^{2^{j-1}}\EE_{\theta}(\tilde{\psi}_2\otimes \Psi_{jk})\Psi_{jk}$ and its empirical counterpart $\hat{v}^{J_n}$ defined similarly:
  \begin{multline*}
    \norm{\hat{G}^{J_n} - G^{J_n}}_{L^2}%
    \leq \norm{\hat{v}^{J_n} - v^{J_n}}_{L^2}%
    + \sqrt{L}\norm{\hat{\psi}_1^{J_n} - \psi_1^{J_n}}_{L^2}%
      +  \sqrt{L}\Big| \PP_n^{(1)}(\tilde{\psi}_2) - \EE_{\theta}(\tilde{\psi}_2) \Big|\\%
      + \Big| \PP_n^{(1)}(\tilde{\psi}_2) - \EE_{\theta}(\tilde{\psi}_2)\Big| \norm{\hat{\psi}_1^{J_n} - \psi_1^{J_n}}_{L^2}.
  \end{multline*}
  Using the same $\varepsilon$-net argument as in the proof of Proposition~\ref{pro:7a}, we find that
  \begin{multline*}
    \PP_{\theta}\Big( \norm{\hat{v}^{J_n} - v^{J_n}}_{L^2} \geq CL\sqrt{\frac{x}{n\gamma^*} } + C\tau 2^{J/2}\frac{x}{n\gamma^*} \Big)\\%
    \leq 24^{2^{J_n}}\sup_{u \in U}\PP_{\theta}\Big( \Inner{\hat{v}^{J_n} - v^{J_n}, u} \geq CL\sqrt{\frac{x}{n\gamma^*} } + C\tau 2^{J/2}\frac{x}{n\gamma^*} \Big)
    \leq 24^{2^{J_n}}e^{-x}
  \end{multline*}
  where the last inequality follows from Lemma~\ref{lem:paulin} applied to the function $h(y_1,y_2) = \sum_{k=0}^{2^J-1}u_{Jk} \tilde{\psi}_2(y_1)\Phi_{Jk}(y_2) + \sum_{j=J}^{J_n}\sum_{k=0}^{2^j-1}u_{jk}\tilde{\psi}_2(y_1)\Psi_{jk}(y_1)$ which satisfies $\EE_{\theta}(h^2) \leq L^2\norm{h}_{L^2}^2 = L^2$ for every $\theta\in \RegulClass$ by Lemma~\ref{lem:2}, and $\norm{h}_{\infty} \leq c\norm{\tilde{\psi}_2}_{\infty}2^{J/2} \leq c \tau 2^{J/2}$ by standard localization properties of wavelets \citep[Theorem~4.2.10 or Definition~4.2.14]{GN16}. Also by Lemma~\ref{lem:paulin} applies to $h = \tilde{\psi}_2$,
  \begin{align*}
    \PP_{\theta}\Bigg( \Big| \PP_n^{(1)}(\tilde{\psi}_2) - \EE_{\theta}(\tilde{\psi}_2)\Big| \geq C\sqrt{\frac{Lx}{n\gamma^*}} + C\tau \frac{x}{n\gamma^*}   \Bigg)
    &\leq e^{-x}
  \end{align*}
  and using Proposition~\ref{pro:7a}
  \begin{align*}
    \PP_{\theta}\Bigg( \norm{\hat{\psi}_1^{J_n} - \psi_1^{J_n}} \geq C\sqrt{\frac{Lx}{n\gamma^*}} + C2^{J/2}\frac{x}{n\gamma^*} \Bigg)
    &\leq 24^{2^{J_n}}e^{-x}.
  \end{align*}
  Therefore with probability at least $1 - (2\cdot 24^{2^{J_n}} + 1)e^{-x}$ under $\PP_{\theta}$
  \begin{align*}
    \norm{\hat{G}^{J_n} - G^{J_n}}_{L^2}%
    &\leq C\Bigg(\sqrt{\frac{L^2x}{n\gamma^*}} + \tau 2^{J_n/2}\frac{x}{n\gamma^*}\Bigg)%
      + C\sqrt{L}\Bigg( \sqrt{\frac{Lx}{n\gamma^*}} + 2^{J_n/2}\frac{x}{n\gamma^*} \Bigg)\\
    &\quad%
      + C\sqrt{L}\Bigg(\sqrt{\frac{Lx}{n\gamma^*}} + \tau \frac{x}{n\gamma^*} \Bigg)%
      + C^2\Bigg(\sqrt{\frac{Lx}{n\gamma^*}} + 2^{J_n/2}\frac{x}{n\gamma^*}\Bigg)\Bigg(\sqrt{\frac{Lx}{n\gamma^*}} + \tau \frac{x}{n\gamma^*} \Bigg)\\
    &\leq 3CL \sqrt{\frac{x}{n\gamma^*}}%
      + C\Big(\tau 2^{J_n/2} + \sqrt{L}2^{J_n/2} + \tau \sqrt{L} + CL \Big) \frac{x}{n\gamma^*}\\
    &\quad%
      + C^2\Big(\tau \sqrt{L} + 2^{J_n/2}\sqrt{L} \Big)\frac{x^{3/2}}{(n\gamma^*)^{3/2}}
      + C^2 \tau 2^{J_n/2}\frac{x^2}{(n\gamma^*)^2}.
  \end{align*}
  The conclusion follows since $x \leq n\gamma^*$ which implies that the last two terms are bounded by the second term, and the $Lx/(n\gamma^*)$ part of second term is bounded by the first term.
\end{proof}

\begin{proposition}
  \label{pro:8a}
  Let $n\gamma^* \geq 1/99$. Then, there is a universal constant $C > 0$ such that for all $\theta \in \RegulClass$, all $j\geq J_n$, all $\ell$, and all $x \geq 0$,
  \begin{equation*}
    \PP_{\theta}\Bigg(\norm{\hat{\psi}_1^{\Block_{j\ell}} - \psi_1^{\Block_{j\ell}}}\geq C \sqrt{\frac{Lx}{n\gamma^*}} + C2^{j/2}\frac{x}{n\gamma^*} \Bigg)%
    \leq 24^Ne^{-x}.
  \end{equation*}
\end{proposition}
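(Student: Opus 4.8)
The proof follows exactly the template of Proposition~\ref{pro:7a}. First I would observe that, by Parseval, $\norm{\hat{\psi}_1^{\Block_{j\ell}} - \psi_1^{\Block_{j\ell}}}$ equals the $L^2$-norm of the function $u^* \coloneqq \sum_{k\in\Block_{j\ell}}\bigl(\hat{\psi}_1^{\Psi_{jk}} - \psi_1^{\Psi_{jk}}\bigr)\Psi_{jk}$, which lives in the $N$-dimensional space $V_{j\ell}$ spanned by $\Set{\Psi_{jk} \given k\in\Block_{j\ell}}$. Writing this norm as $\sup_{u\in U}\Inner{\hat{\psi}_1^{\Block_{j\ell}} - \psi_1^{\Block_{j\ell}},u}$ over the unit ball $U$ of $V_{j\ell}$, introducing a $(1/2)$-net $\mathcal{N}$ of $U$ and arguing as in Proposition~\ref{pro:7a} gives $\norm{\hat{\psi}_1^{\Block_{j\ell}} - \psi_1^{\Block_{j\ell}}} \leq 2\max_{u\in\mathcal{N}}\Inner{\hat{\psi}_1^{\Block_{j\ell}} - \psi_1^{\Block_{j\ell}},u}$, where $\mathcal{N}$ can be chosen with $\abs{\mathcal{N}} \leq 24^N$ (e.g.\ \cite[Theorem~4.3.34]{GN16}).

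For a fixed $u\in\mathcal{N}$ I would write $\Inner{\hat{\psi}_1^{\Block_{j\ell}} - \psi_1^{\Block_{j\ell}},u} = \PP_n^{(1)}(h) - \EE_\theta(h)$ with $h \coloneqq \sum_{k\in\Block_{j\ell}}u_k\Psi_{jk}$, and apply the Bernstein-type inequality of Lemma~\ref{lem:paulin}. This requires two inputs: first, $\EE_\theta(h^2) \leq L\norm{h}_{L^2}^2 = L$ for every $\theta\in\RegulClass$, which is immediate from Lemma~\ref{lem:2} together with $\norm{h}_{L^2} = \norm{u} = 1$; second, $\norm{h}_\infty \leq c\,2^{j/2}$ for a universal constant $c$, which follows from the standard localization and $L^2$-normalization properties of the boundary-corrected wavelet basis at the single resolution level $j$ \cite[Theorem~4.2.10 or Definition~4.2.14]{GN16} --- the key point being that the $\Psi_{jk}$ at a fixed level $j$ have essentially disjoint supports, so passing from $\norm{u}\leq 1$ to $\norm{h}_\infty$ costs only the $2^{j/2}$ factor of a single wavelet rather than an accumulation across levels as in Proposition~\ref{pro:7b}.

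Combining these, Lemma~\ref{lem:paulin} yields $\PP_\theta\bigl(\Inner{\hat{\psi}_1^{\Block_{j\ell}} - \psi_1^{\Block_{j\ell}},u} \geq C\sqrt{Lx/(n\gamma^*)} + C2^{j/2}x/(n\gamma^*)\bigr) \leq e^{-x}$ for a universal $C$, uniformly in $u\in\mathcal{N}$; a union bound over $\mathcal{N}$ and the net inequality above (absorbing the factor $2$ into $C$) then give the claim. \textbf{The only point requiring care} is that the sup-norm constant $c$ in the localization bound is genuinely independent of $j$ and $\ell$ --- which it is, since it depends only on the mother wavelet and the finitely many boundary-corrected functions, and a block is simply an interval of translates at a fixed level --- and that $N = 2^{J_n} \leq 2\log n$, so the prefactor $24^N$ is of the size later absorbed harmlessly when this bound is fed into Lemma~\ref{lem:9}.
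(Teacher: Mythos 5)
Your proposal is correct and follows exactly the route the paper takes: the paper's proof of Proposition~\ref{pro:8a} is simply the statement that the argument of Proposition~\ref{pro:7a} applies verbatim, with the ambient space now the $N$-dimensional block span (hence the $24^N$ net cardinality) and the single-level localization bound $\norm{h}_{\infty}\leq c2^{j/2}$ in place of $c2^{J_n/2}$. Your additional remarks on the uniformity of the localization constant in $j,\ell$ and on the size of the prefactor $24^N$ are accurate and consistent with how the bound is later used in Lemma~\ref{lem:9}.
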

\begin{proof}
  The proof is identical to Proposition~\ref{pro:7a}. (Note the vector $\psi_1^{\Block_{j\ell}}$ is in $\RR^N$, where $\psi_1^{\Phi}$ was in $\RR^{2^{J_n}}$.)
\end{proof}

\begin{proposition}
  \label{pro:8b}
  Let $n\gamma^* \geq 1/99$. Then, there is a universal constant $C > 0$ such that for all $\theta \in \RegulClass$, all $j\geq J_n$, all $\ell$, and all $x \geq 0$
  \begin{equation*}
    \PP_{\theta}\Bigg(\norm{\hat{G}^{\Block_{j\ell}} - G^{\Block_{j\ell}}} \geq  CL \sqrt{\frac{x}{n\gamma^*}}%
    + C \max(\tau 2^{j/2},\sqrt{L}2^{j/2},\tau \sqrt{L}) \frac{x}{n\gamma^*}
    \Bigg)%
    \leq 4\cdot 24^Ne^{-x}.
  \end{equation*}
\end{proposition}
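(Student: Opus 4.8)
The plan is to transcribe the proof of Proposition~\ref{pro:7b} to the level of a single block $\Block_{j\ell}$: replace the low-resolution projection onto $\{\Phi_{Jk}\}\cup\{\Psi_{jk}:J\leq j<J_n\}$ by the $N$ mother wavelets of $\Block_{j\ell}$, all of which sit at the single level $j\geq J_n$. First I would record, via Lemma~\ref{lem:compute-m}, that $\hat{G}^{\Psi_{jk}} = \PP_n^{(2)}(\tilde{\psi}_2\otimes\Psi_{jk}) - \PP_n^{(1)}(\tilde{\psi}_2)\PP_n^{(1)}(\Psi_{jk})$ is the empirical version of $G^{\Psi_{jk}} = \EE_{\theta}(\tilde{\psi}_2\otimes\Psi_{jk}) - \EE_{\theta}(\tilde{\psi}_2)\EE_{\theta}(\Psi_{jk})$, where $\PP_n^{(1)}(\Psi_{jk}) = \hat{\psi}_1^{\Psi_{jk}}$ and $\EE_{\theta}(\Psi_{jk}) = \psi_1^{\Psi_{jk}}$. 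Because every wavelet in $\Block_{j\ell}$ is at level $j$, any $h_u = \sum_{k\in\Block_{j\ell}}u_k\Psi_{jk}$ with $\norm{u}=1$ satisfies $\norm{h_u}_{L^2}=1$ and $\norm{h_u}_{\infty}\lesssim 2^{j/2}$ by the localisation properties of the wavelet basis \cite[Theorem~4.2.10]{GN16}; this is the only reason the final bound carries the factor $2^{j/2}$ rather than $2^{J_n/2}$. As in Proposition~\ref{pro:7b}, the range $x>n\gamma^*$ is disposed of by the crude deterministic estimate $\norm{\hat{G}^{\Block_{j\ell}} - G^{\Block_{j\ell}}}\lesssim\tau 2^{j/2}$ (from $\norm{\tilde{\psi}_2}_{\infty}\leq\tau$ and the localisation bound applied to the test functions $h_u$), which is dominated by $C\max(\tau 2^{j/2},\sqrt{L}2^{j/2},\tau\sqrt{L})\tfrac{x}{n\gamma^*}$ as soon as $x\geq n\gamma^*\geq 1$ and $C$ is large enough, so the stated deviation event is then empty.

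For $0\leq x\leq n\gamma^*$ I would use the algebraic identity from Proposition~\ref{pro:7b}: with $a=\PP_n^{(1)}(\tilde{\psi}_2)$, $\bar{a}=\EE_{\theta}(\tilde{\psi}_2)$, expanding $a\,\hat{\psi}_1^{\Psi_{jk}}-\bar{a}\,\psi_1^{\Psi_{jk}}$ about the means gives, coordinatewise,
\[
  \hat{G}^{\Block_{j\ell}} - G^{\Block_{j\ell}}
  = \big(\hat{v}^{\Block_{j\ell}} - v^{\Block_{j\ell}}\big)
  - \bar{a}\big(\hat{\psi}_1^{\Block_{j\ell}} - \psi_1^{\Block_{j\ell}}\big)
  - \psi_1^{\Block_{j\ell}}\big(a-\bar{a}\big)
  - \big(a-\bar{a}\big)\big(\hat{\psi}_1^{\Block_{j\ell}} - \psi_1^{\Block_{j\ell}}\big),
\]
where $v^{\Block_{j\ell}} = (\EE_{\theta}(\tilde{\psi}_2\otimes\Psi_{jk}):k\in\Block_{j\ell})$ and $\hat{v}^{\Block_{j\ell}}$ is its empirical analogue. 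I would then bound the four pieces separately: (i) $\norm{\hat{v}^{\Block_{j\ell}} - v^{\Block_{j\ell}}}$ by a $(1/2)$-net of the unit ball of $\RR^N$ (cardinality $\leq 24^N$, e.g.\ \cite[Theorem~4.3.34]{GN16}) combined with Lemma~\ref{lem:paulin} applied to $h(y_1,y_2)=\tilde{\psi}_2(y_1)h_u(y_2)$, whose proxy variance is $\EE_{\theta}(h^2)\leq L^2\norm{h}_{L^2}^2=L^2$ by Lemma~\ref{lem:2} and whose sup-norm is $\lesssim\tau 2^{j/2}$, giving a term of order $L\sqrt{x/(n\gamma^*)}+\tau 2^{j/2}x/(n\gamma^*)$ on an event of probability $\geq 1-24^N e^{-x}$; (ii) $\norm{\hat{\psi}_1^{\Block_{j\ell}} - \psi_1^{\Block_{j\ell}}}$ directly by Proposition~\ref{pro:8a}, of order $\sqrt{Lx/(n\gamma^*)}+2^{j/2}x/(n\gamma^*)$ with probability $\geq 1-24^N e^{-x}$; (iii) $|a-\bar{a}|$ by Lemma~\ref{lem:paulin} applied to $h=\tilde{\psi}_2$ (proxy variance $\leq L$, sup-norm $\leq\tau$), of order $\sqrt{Lx/(n\gamma^*)}+\tau x/(n\gamma^*)$ with probability $\geq 1-e^{-x}$; and (iv) the deterministic bounds $|\bar{a}|\leq\sqrt{L}$ (Cauchy--Schwarz and Lemma~\ref{lem:2}) and $\norm{\psi_1^{\Block_{j\ell}}}\leq\norm{\psi_1}_{L^2}\leq\max(\norm{f_0}_{L^2},\norm{f_1}_{L^2})\leq\sqrt{L}$ (convexity of $\psi_1$ and $\norm{f_i}_{L^2}^2\leq\norm{f_i}_{\infty}\leq L$). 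A union bound over the three stochastic events gives failure probability at most $(2\cdot 24^N+1)e^{-x}\leq 4\cdot 24^N e^{-x}$, and since $x\leq n\gamma^*$ the cross-terms of order $(x/(n\gamma^*))^{3/2}$ and $(x/(n\gamma^*))^2$ arising from multiplying (ii)--(iv) are absorbed into the $\sqrt{x/(n\gamma^*)}$ and $x/(n\gamma^*)$ terms, yielding the claim after a final adjustment of $C$.

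I do not expect a genuine obstacle here: this is a routine block-level version of Proposition~\ref{pro:7b}, and all the ingredients (Lemma~\ref{lem:paulin}, Lemma~\ref{lem:2}, Lemma~\ref{lem:compute-m}, Proposition~\ref{pro:8a}, the $\varepsilon$-net argument) are already in place. The points demanding care are purely bookkeeping: using the level-$j$ localisation constant $2^{j/2}$ (not $2^{J_n/2}$) consistently in every sup-norm estimate, so that it propagates correctly into the deviation terms; feeding Lemma~\ref{lem:paulin} with the right proxy variance in each instance ($L^2$ for the order-two functional $\tilde{\psi}_2\otimes h_u$ versus $L$ for the order-one functional $\tilde{\psi}_2$); and verifying that the crude $x>n\gamma^*$ bound is dominated by the right-hand side uniformly in the admissible $j$, which is immediate since that step does not use $j\geq J_n$ and $x\geq n\gamma^*\geq 1$.
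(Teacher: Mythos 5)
Your proposal is correct and is exactly the argument the paper intends: its proof of Proposition~\ref{pro:8b} is the one-line remark that it is identical to Proposition~\ref{pro:7b}, and you have carried out precisely that transcription — same four-term decomposition, same $(1/2)$-net of cardinality $24^N$, same applications of Lemmas~\ref{lem:paulin} and~\ref{lem:2} with proxy variances $L^2$ and $L$, and the same absorption of cross-terms using $x\le n\gamma^*$. The only nitpick is the aside ``$x\ge n\gamma^*\ge 1$'' in the crude-regime step: the hypothesis is only $n\gamma^*\ge 1/99$, but all you actually need there is $x/(n\gamma^*)\ge 1$, which holds whenever $x>n\gamma^*$, so nothing breaks.
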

\begin{proof}
  The proof is identical to  Proposition~\ref{pro:7b}.
\end{proof}

\begin{lemma}
  \label{lem:pro:7}
  On the event $\Omega_n$, for all $j\geq J_n$ and all $\ell$:   \begin{align*}
    \norm{\hat{f}_{\pm}^{\Block_{j\ell}} - f_{\pm}^{\Block_{j\ell}}}%
    &\leq \norm{\hat{\psi}_1^{\Block_{j\ell}} - \psi_1^{\Block_{j\ell}}}%
      + \frac{4g\norm{ \hat{G}^{\Block_{j\ell}} - G^{\Block_{j\ell}}}}{\abs{m_1}} %
      + \frac{\abs{\hat{\omega}_{\pm} - \omega_{\pm}} \norm{G^{\Block_{j\ell}}}}{2},
  \end{align*}
  and similarly for $\norm{\hat{f}_{\pm}^{J_n} - f_{\pm}^{J_n}}_{L^2}$.
\end{lemma}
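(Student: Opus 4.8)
The plan is to reduce the block‑wise error to its three natural ingredients by a single telescoping step, exploiting that on each block both the estimator and its target are affine combinations of the coefficient vectors of $\psi_1$ and of $G = m_1\psi_2/\tilde{\mathcal{I}}$.

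First I would record this affine structure. By Lemma~\ref{lem:invert-param} one has $f_\pm = \psi_1 + \tfrac12\omega_\pm G$, with $\omega_\pm$ as in \eqref{eqn:def:omegapm}, so that $\Mother_\pm = \ip{\Psi_{jk},\psi_1} + \tfrac12\omega_\pm\ip{\Psi_{jk},G}$, the coefficients $\ip{\Psi_{jk},G}$ being precisely the targets of $\hat G^{\Psi_{jk}}$ by Lemma~\ref{lem:compute-m}; and by construction of the estimators (Section~\ref{subsec:smooth} and the proof of Theorem~\ref{thm:1}), $\HatMother_\pm = \PP_n^{(1)}(\Psi_{jk}) + \tfrac12\hat\omega_\pm\hat G^{\Psi_{jk}}$, with the analogous identities for the father coefficients. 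Hence, on a block $\Block_{j\ell}$, writing $\hat\psi_1^{\Block_{j\ell}}$, $\psi_1^{\Block_{j\ell}}$, $\hat G^{\Block_{j\ell}}$, $G^{\Block_{j\ell}}$ for the corresponding coefficient vectors, I would write
\[
  \hat f_\pm^{\Block_{j\ell}} - f_\pm^{\Block_{j\ell}}
  = \bigl(\hat\psi_1^{\Block_{j\ell}} - \psi_1^{\Block_{j\ell}}\bigr)
  + \tfrac12\hat\omega_\pm\bigl(\hat G^{\Block_{j\ell}} - G^{\Block_{j\ell}}\bigr)
  + \tfrac12(\hat\omega_\pm - \omega_\pm)\,G^{\Block_{j\ell}},
\]
and apply the triangle inequality for the Euclidean norm. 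This leaves only the bound $\tfrac12\abs{\hat\omega_\pm}\le 4g/\abs{m_1}$ on $\Omega_n$ to be checked.

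For that bound I would first note that $\hat\phi_1 = \hat m_3/[4\hat m_1^2(\hat m_2)_+ + \hat m_3^2]^{1/2}$ always satisfies $\abs{\hat\phi_1}\le 1$, so $\abs{1\mp\hat\phi_1}\le 2$ and therefore $\tfrac12\abs{\hat\omega_\pm}\le \hat g/\abs{\hat m_1}$ (on $\Omega_n$ the indicator $\1_{\{\hat m_1\ne 0\}}$ is $1$, since $\abs{\hat m_1}\ge\abs{m_1}/2>0$ there). It then suffices to establish $\hat g\le 2g$ and $\abs{\hat m_1}\ge\abs{m_1}/2$ on $\Omega_n$: the latter is immediate from the definition of $\Omega_n$, and for the former I would write $\hat g = \sqrt{\hat v}/\hat m_2$ with $\hat v = 4\hat m_1^2(\hat m_2)_+ + \hat m_3^2$, use that the deviation bound $\max_j\abs{\hat m_j-m_j}\le gm_2/(44\max(1,g))$ built into $\Omega_n$ forces $\hat m_2$ and $\hat m_1$ to lie within a factor $1\pm 1/44$ of $m_2$, resp.\ $m_1$, and invoke Lemma~\ref{lem:controlofv} with the same deviation bound to get $\abs{\hat v - v}\le v/2$, where $v = 4m_1^2m_2+m_3^2$; recalling from Lemma~\ref{lem:basic-relations} that $\sqrt v = \abs{\tilde{\mathcal{I}}}\,m_2\phi_3$ and $g=\phi_3\abs{\tilde{\mathcal{I}}}=\sqrt v/m_2$, this yields $\hat g\le 2g$, hence $\tfrac12\abs{\hat\omega_\pm}\le \hat g/\abs{\hat m_1}\le 4g/\abs{m_1}$. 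Substituting into the display above gives the block inequality.

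Finally, the inequality for $\norm{\hat f_\pm^{J_n}-f_\pm^{J_n}}_{L^2}$ would follow verbatim: since the $S$‑regular boundary‑corrected wavelets form an orthonormal system, the $L^2$‑norm of a finite wavelet partial sum equals the Euclidean norm of its coefficient vector; $\hat f_\pm^{J_n}$ and $f_\pm^{J_n}$ have the same affine form in $(\hat\psi_1^{J_n},\hat G^{J_n})$ and $(\psi_1^{J_n},G^{J_n})$; and the same telescoping and triangle‑inequality steps apply, with $\Block_{j\ell}$ replaced by the coarse‑level projection. The only step requiring genuine care is the verification that $\tfrac12\abs{\hat\omega_\pm}$ is controlled by a constant times $g/\abs{m_1}$ on $\Omega_n$; the rest is routine bookkeeping, and even that step is already essentially packaged in Lemma~\ref{lem:controlofv} and the definition of $\Omega_n$.
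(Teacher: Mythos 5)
Your proof is correct and follows essentially the same route as the paper's: the identical telescoping decomposition $\hat f_\pm^{\Block_{j\ell}} - f_\pm^{\Block_{j\ell}} = (\hat\psi_1^{\Block_{j\ell}}-\psi_1^{\Block_{j\ell}}) + \tfrac12\hat\omega_\pm(\hat G^{\Block_{j\ell}}-G^{\Block_{j\ell}}) + \tfrac12(\hat\omega_\pm-\omega_\pm)G^{\Block_{j\ell}}$, followed by $|\hat\phi_1|\le 1$, $\hat g\le 2g$ and $|\hat m_1|\ge|m_1|/2$ on $\Omega_n$. The only cosmetic difference is that you re-derive $\hat g\le 2g$ from Lemma~\ref{lem:controlofv} and the definition of $\Omega_n$, whereas the paper simply cites Proposition~\ref{pro:4}, which packages the same computation.
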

\begin{proof}
  Trivially,
  \begin{align*}
    \hat{f}_{\pm}^{\Block_{j\ell}} - f_{\pm}^{\Block_{j\ell}}%
    &= \hat{\psi}_1^{\Block_{j\ell}} - \psi_1^{\Block_{j\ell}}%
      + \frac{\hat{\omega}_{\pm}}{2}\Big( \hat{G}^{\Block_{j\ell}} - G^{\Block_{j\ell}} \Big)%
      + \frac{\hat{\omega}_{\pm} - \omega_{\pm}}{2} G^{\Block_{j\ell}}.%
  \end{align*}
  The conclusion follows since on $\Omega_n$, Proposition \ref{pro:4} implies that $\hat{g} \leq 2g$ and $|\hat{m}_1| \geq |m_1|/2 > 0$. (Recall also that $\abs{\phi_1}\leq 1$.)
\end{proof}

\begin{proposition}
  \label{pro:9}
  On the event $\Omega_n$
  \begin{equation*}
    |\hat{\omega}_{\pm} - \omega_{\pm}|%
    \leq \frac{83\max(1,\phi_3|\tilde{\mathcal{I}}|)}{|m_1m_2|}\max_{j=1,2,3}|\hat{m}_j - m_j|.
  \end{equation*}
\end{proposition}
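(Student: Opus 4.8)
The plan is to replace $\hat{\omega}_{\pm}$ and $\omega_{\pm}$ by indicator-free rational expressions valid on $\Omega_n$, and then to run a standard perturbation estimate, feeding in the consequences of $\Omega_n$ recorded in Proposition~\ref{pro:4}, the bound on $\abs{\hat v - v}$ from Lemma~\ref{lem:controlofv}, and the identities in Lemmas~\ref{lem:basic-relations} and~\ref{lem:invert-param}. On $\Omega_n$ one has $\hat m_1\neq 0$ and $\hat m_2\geq m_2/2>0$, so $(\hat m_2)_+=\hat m_2$, and the definitions collapse to $\hat\phi_1=\hat m_3/\sqrt{\hat v}$, $\hat g=\sqrt{\hat v}/\hat m_2$ with $\hat v=4\hat m_1^2\hat m_2+\hat m_3^2$, and likewise $\tilde s\phi_1=m_3/\sqrt v$, $g=\sqrt v/m_2$ with $v=4m_1^2m_2+m_3^2$. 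Writing $\sigma\in\{+1,-1\}$ for the branch of the $\pm$, this gives the clean form
\[
\hat\omega_\sigma=\frac{\sigma\sqrt{\hat v}-\hat m_3}{\hat m_1\hat m_2},\qquad \omega_\sigma=\frac{\sigma\sqrt v-m_3}{m_1 m_2},
\]
and both branches can be handled at once, since every bound below is insensitive to $\sigma$ and to the sign of $m_3$.

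The next step is to telescope
\[
\hat\omega_\sigma-\omega_\sigma=\underbrace{\frac{(\sigma\sqrt{\hat v}-\hat m_3)-(\sigma\sqrt v-m_3)}{\hat m_1\hat m_2}}_{T_1}+\underbrace{(\sigma\sqrt v-m_3)\Bigl(\tfrac{1}{\hat m_1\hat m_2}-\tfrac{1}{m_1 m_2}\Bigr)}_{T_2}
\]
and to bound $\abs{T_1}$ and $\abs{T_2}$ separately, writing $\Delta\coloneqq\max_{j=1,2,3}\abs{\hat m_j-m_j}$. On $\Omega_n$ the denominators satisfy $\abs{\hat m_1}\geq\abs{m_1}/2$ and $\hat m_2\geq m_2/2$, so $\abs{\hat m_1\hat m_2}\geq\abs{m_1}m_2/4$. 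For $T_1$ the numerator is at most $\abs{\sqrt{\hat v}-\sqrt v}+\abs{\hat m_3-m_3}\leq\abs{\hat v-v}/\sqrt v+\Delta$; Lemma~\ref{lem:controlofv}, whose required conditions $\abs{\Delta_j}\leq\abs{m_j}$ hold on $\Omega_n$ as a consequence of the definition of $\Omega_n$ and $\abs{\phi_1}\leq1$ (cf.\ the discussion in Section~\ref{sec:proof-prop-refpr}), combined with $\sqrt v=\abs{r(\phi)\phi_2\phi_3\tilde{\mathcal{I}}^3}$ and $\abs{\phi_1}\leq1$ (Lemma~\ref{lem:basic-relations}), then gives $\abs{\sqrt{\hat v}-\sqrt v}\leq 10\max(1,\phi_3\abs{\tilde{\mathcal{I}}})\Delta$ (here $\Delta_2=(\hat m_2)_+-m_2=\hat m_2-m_2$ on $\Omega_n$, so $\max_j\abs{\Delta_j}=\Delta$), whence $\abs{T_1}\leq 44\max(1,\phi_3\abs{\tilde{\mathcal{I}}})\Delta/(\abs{m_1}m_2)$. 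For $T_2$, from $\sqrt v=\phi_3\abs{\tilde{\mathcal{I}}}m_2$ and $\abs{m_3}=\abs{\phi_1}\phi_3\abs{\tilde{\mathcal{I}}}m_2\leq\sqrt v$ one gets $\abs{\sigma\sqrt v-m_3}\leq 2\phi_3\abs{\tilde{\mathcal{I}}}m_2$, while $\abs{(\hat m_1\hat m_2)^{-1}-(m_1 m_2)^{-1}}\leq\abs{m_1 m_2-\hat m_1\hat m_2}/(\abs{\hat m_1\hat m_2}\abs{m_1 m_2})$ with $\abs{m_1 m_2-\hat m_1\hat m_2}\leq\abs{m_1}\abs{\hat m_2-m_2}+\abs{\hat m_2}\abs{\hat m_1-m_1}\leq\tfrac52\abs{m_1}\Delta$ (using $\hat m_2\leq\tfrac32 m_2$ and $m_2\leq\abs{m_1}$ from Lemma~\ref{lem:basic-relations}); hence $\abs{T_2}\leq 20\phi_3\abs{\tilde{\mathcal{I}}}\Delta/(\abs{m_1}m_2)$. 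Adding the two contributions and using $\abs{m_1 m_2}=\abs{m_1}m_2$ yields $\abs{\hat\omega_\sigma-\omega_\sigma}\leq 64\max(1,\phi_3\abs{\tilde{\mathcal{I}}})\Delta/\abs{m_1 m_2}\leq 83\max(1,\phi_3\abs{\tilde{\mathcal{I}}})\Delta/\abs{m_1 m_2}$, which is the claim.

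Apart from the arithmetic the argument is routine; the only place calling for care is the estimate of $\abs{\sqrt{\hat v}-\sqrt v}$, which is where Lemma~\ref{lem:controlofv} enters (together with the verification that on $\Omega_n$ its hypotheses are in force), and in checking that the bookkeeping goes through uniformly over the two $\pm$ branches and over the unknown sign $\tilde s$. The slack between the resulting constant $64$ and the stated $83$ is harmless.
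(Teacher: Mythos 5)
Your proof is correct, but it follows a different decomposition from the paper's. The paper keeps $\hat{\omega}_{\pm}=\pm\hat{g}(1\mp\hat{\phi}_1)/\hat{m}_1$ in its factored form and telescopes over the three factors, writing $\hat{\omega}_{\pm}-\omega_{\pm}$ as a sum of a $(\hat g-g)$ term, a $(\hat\phi_1-\tilde s\phi_1)$ term, and a $(\hat m_1^{-1}-m_1^{-1})$ term, then imports the separate perturbation bounds from Lemma~\ref{lem:3} and Lemma~\ref{lem:10AA} (each of which in turn rests on Lemma~\ref{lem:controlofv}). You instead collapse $\hat g(1\mp\hat\phi_1)$ into the single rational expression $(\pm\sqrt{\hat v}-\hat m_3)/\hat m_2$ — valid on $\Omega_n$ since there $(\hat m_2)_+=\hat m_2>0$ and $\hat m_1\neq 0$ — and run a two-term telescoping on $(\pm\sqrt{\hat v}-\hat m_3)/(\hat m_1\hat m_2)$, invoking Lemma~\ref{lem:controlofv} directly for $|\sqrt{\hat v}-\sqrt v|\leq|\hat v-v|/\sqrt v$. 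This buys a cleaner argument that bypasses the intermediate quantities $\hat g$ and $\hat\phi_1$ (and hence Lemmas~\ref{lem:3} and~\ref{lem:10AA}) altogether, and it yields a slightly better constant ($64$ versus $83$); the paper's route, by contrast, reuses bounds it needs elsewhere anyway, so nothing is wasted there. Your arithmetic checks out: the identities $\hat\phi_1=\hat m_3/\sqrt{\hat v}$, $g=\sqrt v/m_2$, $\tilde s\phi_1=m_3/\sqrt v$ and $|m_3|\leq\sqrt v$ are all correct, as are the denominator bounds $|\hat m_1\hat m_2|\geq|m_1|m_2/4$ and $|m_1m_2-\hat m_1\hat m_2|\leq\tfrac52|m_1|\Delta$. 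One caveat you rightly flag but do not fully resolve is the hypothesis $|\Delta_3|\leq|m_3|$ needed in the proof of Lemma~\ref{lem:controlofv}: when $|\phi_1|$ is small this is not an obvious consequence of the definition of $\Omega_n$. However, the paper's own proof of the proposition inherits exactly the same dependency through Lemmas~\ref{lem:10AA} and~\ref{lem:3}, so this is a shared feature of both arguments rather than a gap specific to yours.
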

\begin{proof}
	On $\Omega_n$ we have $\hat{g}\leq 2g$ by Proposition~\ref{pro:4} to follow, and note that $\abs{\hat{m}_1}\geq \abs{m_1}/2>0$. Consequently, by straightforward computations, using  Lemmas \ref{lem:10AA} and~\ref{lem:3}, 
  \begin{align*}
    |\hat{\omega}_{\pm} - \omega_{\pm}|%
    &= \abs[\Big]{\frac{1}{m_1}(\hat{g}-g)(1\mp \hat{\phi}_1) + \frac{g}{m_1}(1\mp \hat{\phi}_1-(1\mp \tilde{s}\phi_1)) + \hat{g}(1\mp \hat{\phi}_1)(\frac{1}{\hat{m}_1}-\frac{1}{m_1})}\\
    &\leq \frac{2|\hat{g} - g|}{|m_1|}%
      + \frac{g|\hat{\phi}_1 - \tilde{s}\phi_1| }{|m_1|}%
      + \frac{8 g | \hat{m}_1 - m_1|}{m_1^2}\\
    &\leq \Bigg(\frac{22\max(1,\phi_3|\tilde{\mathcal{I}}|)}{|m_1m_2|}%
      + \frac{53\max(1,\phi_3|\tilde{\mathcal{I}}|) g}{|m_1|\phi_2^2\phi_3^3|\tilde{\mathcal{I}}|^3}%
      + \frac{8g}{m_1^2}
      \Bigg) \max_{j=1,2,3}|\hat{m}_j - m_j|\\
    &\leq \frac{83\max(1,\phi_3|\tilde{\mathcal{I}}|)}{|m_1m_2|}\max_{j=1,2,3}|\hat{m}_j - m_j|
  \end{align*}
  because $m_2 = \frac{1}{4}(1-\phi_1^2)\phi_2^2\phi_3^2 \tilde{\mathcal{I}}^2$, because $g = \phi_3|\tilde{\mathcal{I}}|$, and because $m_2 = m_1\phi_2 \leq |m_1|$.
\end{proof}

\begin{proposition}
  \label{pro:4}
  On the event $\Omega_n$, we have $|\frac{\hat{g}}{g} - 1| \leq \frac{1}{2}$. Consequently, $\frac{1}{4}S_n \leq \hat{S}_n \leq 4S_n$ and $\abs{\hat{\omega}_\pm}\leq 8\abs{g/m_1}$ on $\Omega_n$.
\end{proposition}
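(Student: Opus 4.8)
The plan is to bound the ratio $\hat g/g$ on $\Omega_n$ by factoring it as $\hat g/g = (\sqrt{\hat v}/\sqrt v)\,(m_2/\hat m_2)$, where $v \coloneqq 4m_1^2m_2+m_3^2$ and $\hat v\coloneqq 4\hat m_1^2(\hat m_2)_+ +\hat m_3^2$, and controlling the two factors. First I would record the easy structural facts valid on $\Omega_n$: from $|\hat m_2/m_2-1|\le\tfrac12$ we get $\hat m_2\ge m_2/2>0$, so $(\hat m_2)_+=\hat m_2$ and $\hat g=\sqrt{\hat v}/\hat m_2$ (the indicator $\1_{\{\hat m_2>0\}}$ equals $1$); from $|\hat m_1/m_1-1|\le\tfrac12$ we get $|\hat m_1|\in[\tfrac12|m_1|,\tfrac32|m_1|]$, so $\hat m_1\ne0$ and the indicators $\1_{\{\hat m_1\ne 0\}}$ appearing in $\hat S_n$ and $\hat\omega_\pm$ equal $1$; and $\hat v\ge\hat m_3^2$ together with the definition $\hat\phi_1=\hat m_3/\sqrt{\hat v}$ gives $|\hat\phi_1|\le1$. (These remarks also show $\hat v>0$ there, so every quantity in sight is well defined, $m_1,m_2$ being nonzero as required for $\Omega_n$ itself to be meaningful.)

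For the first factor I would apply Lemma~\ref{lem:controlofv}, which gives $|\hat v-v|\le 10\max(|\phi_1|,(1-\phi_1^2)\phi_3|\tilde{\mathcal{I}}|)\,|r(\phi)\phi_2\phi_3\tilde{\mathcal{I}}^3|\max_j|\Delta_j|$ with the $\Delta_j$ as defined there. I would then simplify using three inputs: the identity $|r(\phi)\phi_2\phi_3\tilde{\mathcal{I}}^3|=\sqrt v=m_2 g$ from Lemma~\ref{lem:basic-relations}; the trivial bound $\max(|\phi_1|,(1-\phi_1^2)\phi_3|\tilde{\mathcal{I}}|)\le\max(1,g)$ (since $|\phi_1|\le1$ and $1-\phi_1^2\le1$); and the $1$-Lipschitzness of $x\mapsto x_+$, which gives $\max_j|\Delta_j|\le\max_{j=1,2,3}|\hat m_j-m_j|\le \frac{gm_2}{44\max(1,g)}$ on $\Omega_n$. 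Together these collapse the bound to $|\hat v-v|\le\tfrac{10}{44}(m_2 g)^2=\tfrac{5}{22}v$, so $\sqrt{\hat v}/\sqrt v\in[\sqrt{17/22},\sqrt{27/22}]$. For the second factor, the same $\Omega_n$ bound combined with $g/\max(1,g)=\min(g,1)\le1$ gives the sharper estimate $|\hat m_2-m_2|\le m_2/44$, hence $m_2/\hat m_2\in[44/45,44/43]$. Multiplying the two enclosing intervals shows $\hat g/g\in[\tfrac12,\tfrac32]$, i.e.\ $|\hat g/g-1|\le\tfrac12$. (The cruder estimate $m_2/\hat m_2\in[2/3,2]$ would be too lossy here, so the second, sharper deviation bound built into $\Omega_n$ is genuinely needed.)

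For the consequences: combining $\tfrac12 g\le\hat g\le\tfrac32 g$ with $|\hat m_1|\in[\tfrac12|m_1|,\tfrac32|m_1|]$ gives $\tfrac13\tfrac{g}{|m_1|}\le\tfrac{\hat g}{|\hat m_1|}\le 3\tfrac{g}{|m_1|}$, hence $\tfrac13\max(1,\tfrac{g}{|m_1|})\le\max(1,\tfrac{\hat g}{|\hat m_1|})\le 3\max(1,\tfrac{g}{|m_1|})$, and therefore $\tfrac14 S_n\le\tfrac13 S_n\le\hat S_n\le 3S_n\le 4S_n$. Finally, using $|\hat\phi_1|\le1$, $|\hat\omega_\pm|=\hat g\,|1\mp\hat\phi_1|/|\hat m_1|\le(\tfrac32 g)\cdot 2/(\tfrac12|m_1|)=6g/|m_1|\le 8|g/m_1|$.

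The main obstacle is the first factor: one must thread the algebraic identities relating $v$, $m_2$, $g$ and $r(\phi)$ carefully, and feed the $\Omega_n$-deviation bound (which is phrased through $gm_2/\max(1,g)$ rather than a power of the frontier parameters) into Lemma~\ref{lem:controlofv} so that $|\hat v-v|$ comes out as the clean fraction $\tfrac{5}{22}v$ of $v$; everything else is routine arithmetic.
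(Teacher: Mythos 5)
Your proof is correct and uses essentially the same ingredients as the paper's: the paper simply routes through Lemma~\ref{lem:3} (whose proof performs the analogous additive decomposition of $\hat{g}-g$ via Lemma~\ref{lem:controlofv}, the identity $\sqrt{v}=m_2 g$, and the bound $\max(|\phi_1|,(1-\phi_1^2)g)\leq\max(1,g)$) and then substitutes the $\Omega_n$ bound $\max_j|\hat{m}_j-m_j|\leq gm_2/(44\max(1,g))$, exactly as you do after your multiplicative factorisation $\hat{g}/g=(\sqrt{\hat v}/\sqrt{v})(m_2/\hat m_2)$. The numerical constants you obtain are consistent with (in fact slightly sharper than) the stated conclusion, and the deductions for $\hat S_n$ and $\hat\omega_\pm$ match the paper's.
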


\begin{proof}
  It suffices to remark that 
  \begin{equation*}
    \frac{gm_2}{44\max(1,g)}%
    \leq \frac{g m_2}{20\max(|\phi_1|,(1-\phi_1^2)\phi_3|\tilde{\mathcal{I}}|)},
  \end{equation*}
  since $-1 \leq \phi_1 \leq 1$, so that Lemma~\ref{lem:3} applies. Replacing $\max_j\abs{\hat{m}_j-m_j}$ by its bound $gm_2/44\max(1,g)$ on the event $\Omega_n$ yields the result for $\hat{g}$. For $S_n$, recalling the definitions $S_n=\sqrt{(\log n)/n} \max(1,g/\abs{m_1})$, $\hat{S}_n=\sqrt{(\log n)/n}\max(1,\hat{g}/\abs{\hat{m}_1})\II\braces{\hat{m}_1\neq 0}$ and inserting the bounds $g/2\leq \hat{g}\leq 2g$, $\abs{m_1}/2\leq \hat{m}_1\leq 2\abs{m_1}$ 
  yields the bounds for $\hat{S}_n$.
\end{proof}

\begin{lemma}
  \label{lem:3}
  Suppose
  \begin{equation*}
    \max_{j=1,2}\big|\frac{\hat{m}_j}{m_j} - 1\big| \leq \frac{1}{2},\quad\mathrm{and},\quad%
    \max_{j=1,2,3}|\hat{m}_j - m_j| \leq
  \frac{m_2g }{20 \max(|\phi_1|,
    (1-\phi_1^2)g)}
\end{equation*}
Then,
  \begin{equation*}
    |\hat{g} - g|%
    \leq \frac{22\max(1,g)}{m_2} \max_{j=1,2,3}|\hat{m}_j - m_j|.
  \end{equation*}
\end{lemma}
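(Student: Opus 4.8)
The plan is to write $\hat{g}-g$ as a ``numerator error'' plus a ``denominator error'' and to control each using Lemma~\ref{lem:controlofv} and the identities of Lemma~\ref{lem:basic-relations}. Write $v \coloneqq 4m_1^2m_2 + m_3^2$ and $\hat{v} \coloneqq 4\hat{m}_1^2(\hat{m}_2)_+ + \hat{m}_3^2 \geq 0$, and recall that $g = \sqrt{v}/m_2$ while, on the event that the two displayed hypotheses hold, $\hat{g} = \sqrt{\hat{v}}/\hat{m}_2$: the indicator $\1_{\{ \hat{m}_2 > 0 \}}$ equals $1$ since the first hypothesis gives $\hat{m}_2 \geq m_2/2 > 0$ (in particular $m_2>0$, which is implicit in the first hypothesis; this also forces $v>0$). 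Put $\rho \coloneqq \max_{j=1,2,3}|\hat{m}_j - m_j|$; since $x\mapsto (x)_+$ is $1$-Lipschitz and $m_2 \geq 0$, the quantities $\Delta_j$ of Lemma~\ref{lem:controlofv} satisfy $\max_{j}|\Delta_j| \leq \rho$, and in particular $|\hat{m}_2 - m_2| \leq \rho$.

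First I would decompose
\[
  \hat{g} - g
  = \frac{\sqrt{\hat{v}} - \sqrt{v}}{\hat{m}_2} + \sqrt{v}\Bigl(\frac{1}{\hat{m}_2} - \frac{1}{m_2}\Bigr)
  = \frac{\sqrt{\hat{v}} - \sqrt{v}}{\hat{m}_2} - \frac{\sqrt{v}\,(\hat{m}_2 - m_2)}{\hat{m}_2\, m_2}.
\]
For the first summand, use $|\sqrt{\hat{v}} - \sqrt{v}| = |\hat{v} - v|/(\sqrt{\hat{v}} + \sqrt{v}) \leq |\hat{v} - v|/\sqrt{v}$. The two displayed hypotheses are exactly those under which Lemma~\ref{lem:controlofv} was applied in the proof of Lemma~\ref{lem:10AA}: indeed the threshold $\tfrac{m_2 g}{20\max(|\phi_1|,(1-\phi_1^2)g)}$ equals $\tfrac{|\tilde{\mathcal{I}}|^3 r(\phi)\phi_2\phi_3}{20\max(|\phi_1|,(1-\phi_1^2)\phi_3|\tilde{\mathcal{I}}|)}$ because $g = \phi_3|\tilde{\mathcal{I}}|$ and $m_2 g = \sqrt{v} = |r(\phi)\phi_2\phi_3\tilde{\mathcal{I}}^3|$ by Lemma~\ref{lem:basic-relations}. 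Hence Lemma~\ref{lem:controlofv} gives $|\hat{v}-v| \leq 10\max(|\phi_1|,(1-\phi_1^2)g)\sqrt{v}\,\rho$; the $\sqrt{v}$ cancels, and using $\hat{m}_2 \geq m_2/2$ together with $\max(|\phi_1|,(1-\phi_1^2)g) \leq \max(1,g)$ shows the first summand is at most $\tfrac{20\max(1,g)}{m_2}\rho$. For the second summand, $\hat{m}_2 \geq m_2/2$ and $\sqrt{v} = m_2 g$ give $\sqrt{v}\,|\hat{m}_2 - m_2|/(\hat{m}_2 m_2) \leq 2\sqrt{v}\,\rho/m_2^2 = 2g\rho/m_2 \leq \tfrac{2\max(1,g)}{m_2}\rho$. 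Adding the two bounds yields $|\hat{g}-g| \leq \tfrac{22\max(1,g)}{m_2}\rho$, which is the claim.

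The argument is essentially routine once Lemma~\ref{lem:controlofv} is available; the points needing a little care are largely notational: verifying that the two displayed hypotheses coincide with those of Lemma~\ref{lem:controlofv} (in particular that they force $|\Delta_j|\leq|m_j|$ for $j=1,2,3$, exactly as in the proof of Lemma~\ref{lem:10AA}, so that the estimate for $|\hat v - v|$ is legitimate), and tracking the identities $g = \sqrt{v}/m_2$ and $\sqrt{v} = m_2 g = |r(\phi)\phi_2\phi_3\tilde{\mathcal{I}}^3|$ so that the $\sqrt{v}$ factors cancel and the bound emerges in the stated multiplicative form. Here the second hypothesis is what makes the Lemma~\ref{lem:controlofv} estimate applicable, while the first is what guarantees $\hat{m}_2 > 0$ (so that $\hat{g} = \sqrt{\hat{v}}/\hat{m}_2$) and supplies the factor $\hat{m}_2 \geq m_2/2$ appearing in both denominators.
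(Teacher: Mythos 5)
Your proof is correct and follows essentially the same route as the paper: the identical two-term decomposition of $\hat{g}-g$ into a numerator error $(\sqrt{\hat v}-\sqrt{v})/\hat m_2$ and a denominator error, the bound $|\sqrt{\hat v}-\sqrt{v}|\le|\hat v-v|/\sqrt v$ combined with Lemma~\ref{lem:controlofv}, and the identities $\sqrt v=m_2 g$ and $\hat m_2\ge m_2/2$, yielding the same $20+2=22$ constant. The one caveat you inherit from the paper itself is the applicability of Lemma~\ref{lem:controlofv} (its proof uses $|\Delta_3|\le|m_3|$, which you, like the paper, assert follows from the displayed hypotheses), so there is nothing to change relative to the paper's own argument.
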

	Recall that $g=\phi_3\abs{\tilde{\mathcal{I}}}$ and $m_2=\phi_2r(\phi)\tilde{\mathcal{I}}^2$, so that the conditions of Lemma~\ref{lem:3} match those of Lemma~\ref{lem:10AA}.
\begin{proof}
  We let $\Delta_1 = \hat{m}_1 - m_1$, $\Delta_2 = (\hat{m}_2)_+ - m_2$, $\Delta_3 = \hat{m}_3 - m_3$, $\hat{v} \coloneqq 4\hat{m}_1^2(\hat{m}_2)_+ + \hat{m}_3^2$, $v \coloneqq 4m_1^2m_2 + m_3^2$, and $h \coloneqq \hat{v} - v$. Then, since $\hat{m}_2 \geq m_2/2 > 0$ under the assumption of the lemma 
  \begin{align*}
    \hat{g} - g%
    &= \frac{\sqrt{v + h}}{m_2 + \Delta_2} - \frac{\sqrt{v}}{m_2}\\
    &= \frac{\sqrt{v+ h} - \sqrt{v}}{m_2 + \Delta_2}
      - \frac{\Delta_2\sqrt{v}}{m_2(m_2 + \Delta_2)}\\
    &= \frac{h}{(\sqrt{v + h} + \sqrt{v})(m_2 + \Delta_2) } - \frac{\Delta_2\sqrt{v}}{m_2(m_2 + \Delta_2)}.
  \end{align*}
  Hence it must be that
  \begin{align*}
    |\hat{g} - g|%
    &\leq \frac{2|h|}{m_2\sqrt{v}}%
      + \frac{2\sqrt{v}}{m_2^2}|\Delta_2|.
  \end{align*}
Lemma~\ref{lem:controlofv}, together with the fact that $\abs{\phi_1}\leq 1$, tells us that $$|h| \leq 10 \max(1,\phi_3|\tilde{\mathcal{I}}|) |r(\phi)\phi_2\phi_3 \tilde{\mathcal{I}}^3| \max_{j=1,2,3}|\Delta_j|$$ and $\sqrt{v} = |\tilde{\mathcal{I}}|^3 r(\phi) \phi_2\phi_3 = |\tilde{\mathcal{I}}| m_2\phi_3 \leq m_2\max(1,\phi_3|\tilde{\mathcal{I}}|)$, thus
  \begin{align*}
    |\hat{g} - g|
    &\leq \frac{20 \max(1,\phi_3|\tilde{\mathcal{I}}|)}{m_2}\max_{j=1,2,3}|\Delta_j|%
    + \frac{2\max(1,\phi_3|\tilde{\mathcal{I}}|)}{m_2}|\Delta_2|
  \end{align*}
  concluding the proof.
\end{proof}

\subsection{Proof of Theorem \ref{thm:rough}}
\label{sec:proof:thm:rough}

In the whole proof, since $\tilde{\psi}_2$ is computed independently of the rest, we assume for convenience and without loss of generality that $\tilde{\psi}_2$ is non random  and we work implicitly conditional on $\tilde{\psi}_2$. It is assumed that $\tilde{\psi}_2$ satisfies the properties stated in the Theorem~\ref{thm:psitilde2}. The loss function is almost-surely bounded by $\check{T}^2$ so the contribution of estimating $\tilde{\psi}_2$ to the risk is easily deduced from the Theorem~\ref{thm:psitilde2}.

\subsubsection{Definitions and rationale}
\label{sec:defin-rati}

To avoid issues with the non-identifiability, we once again define $p_{\pm}$ and $f_{\pm}$ as in Lemma~\ref{lem:invert-param}. The starting point of the proof is to remark that $f_{\pm}$ can be rewritten as
\begin{equation*}
  f_{\pm}%
  = \Bigg[\frac{2 \psi_1}{1 \pm \tilde{s}\phi_1} \Bigg]%
  + \Bigg[- \Bigg( \frac{1 \mp \tilde{s}\phi_1}{1 \pm \tilde{s}\phi_1}\psi_1 \mp \frac{g(1 \mp \tilde{s}\phi_1)}{2m_1}G \Bigg)\Bigg].
\end{equation*}
Then each of the two functions in brackets in the previous display is estimated separately using block-thresholded wavelets estimators. The population mother coefficients are defined as
\begin{equation*}
  \AMother_{\pm}%
  \coloneqq
  \frac{2 \psi_1^{\Psi_{jk}}}{1 \pm \tilde{s}\phi_1},\qquad%
  \BMother_{\pm}%
  \coloneqq%
  - \Bigg( \frac{1 \mp \tilde{s}\phi_1}{1 \pm \tilde{s}\phi_1}\psi_1^{\Psi_{jk}} \mp \frac{g(1 \mp \tilde{s}\phi_1)}{2m_1}G^{\Psi_{jk}} \Bigg)
\end{equation*}
and the corresponding empirical versions are
\begin{equation*}
  \HatAMother_{\pm}\coloneqq \frac{2 \hat{\psi}_1^{\Psi_{jk}}}{1 \pm \hat{\phi}_1}\1_{\{\hat{\phi}_1 \ne \mp 1\}},\qquad%
  \HatBMother_{\pm}\coloneqq%
  - \Bigg(\frac{1 \mp \hat\phi_1}{1 \pm \hat\phi_1}\1_{\{\hat{\phi}_1 \ne \mp 1\}}\hat{\psi}_1^{\Psi_{jk}}%
  \mp \frac{\hat{g}(1\mp \hat\phi_1)}{2\hat{m}_1}\1_{\{ \hat{m}_1 \ne 0 \}} \hat{G}^{\Psi_{jk}}\Bigg).
\end{equation*}
Then, the untruncated estimators can be rewritten as (here $\HatFather_{\pm}$ are the father coefficients that were defined in the beginning of Section~\ref{sec:proof-thm:1})
\begin{align*}
  \hat{f}_{\pm}^R%
  &\coloneqq \sum_{k=0}^{2^{J_n}-1} \HatFather_{\pm} \Phi_{J_nk}%
  + \sum_{j=J}^{J_n-1}\sum_{k=0}^{2^j-1}\HatMother_{\pm}\Psi_{jk}\\%
  &\quad+ \sum_{j=J_n}^{\tilde{\jmath}_n}\sum_{\ell=0}^{2^j/N-1}\Bigg(\sum_{k\in \Block_{j\ell}}\HatAMother_{\pm} \Psi_{jk} \Bigg)\1_{ \{ \norm{\hat{\psi}_1^{\Block_{j\ell}}} > \Gamma\sqrt{\log(n)/n} \} }\\%
  &\quad+ \sum_{j=J_n}^{\tilde{\jmath}_n}\sum_{\ell=0}^{2^j/N-1}\Bigg(\sum_{k\in \Block_{j\ell}}\HatBMother_{\pm} \Psi_{jk} \Bigg)\1_{ \{  \norm{\hat{\B}_{\pm}^{\Block_{j\ell}}} > \Gamma\hat{T}_n \} }
\end{align*}
while the truncated versions are
\begin{equation*}
  \check{f}_{\pm}^R \coloneqq \max\big(0,\, \min\big(\check{T},\, \hat{f}_{\pm}^R\big)\big).
\end{equation*}

\subsubsection{Decomposition of the error}
\label{sec:decomposition-error}

We define auxiliary events $$\Xi_n^{(1)} \coloneqq \Set*{\forall j=J_n,\dots,\tilde{\jmath}_n,\ \forall \ell,\ \norm{\hat{\psi}_1^{\Block_{j\ell}} - \psi_1^{\Block_{j\ell}}} \leq c_0\Gamma \sqrt{\log(n)/n} },$$ and $$\Xi_n^{(2)} \coloneqq \Set*{\forall j=J_n,\dots,\tilde{\jmath}_n,\ \forall \ell,\ \norm{\hat{G}^{\Block_{j\ell}} - G^{\Block_{j\ell}}} \leq c_1\Gamma \sqrt{\log(n)/n} }$$. We let $\Xi_n$ denote the intersection of both of these events. Then by the same argument used in Section~\ref{sec:proof-thm:1}
\begin{align*}
  \EE_{\theta}\big(\norm{\check{f}^R_{\pm} - f_{\pm}}_{L^2}^2  \big)%
  &\leq \check{T}^2\big(\PP_{\theta}(\Omega_n^c) + \PP_{\theta}(\Xi_n^c) \big)%
    + \EE_{\theta}\big(\norm{ \hat{f}^R_{\pm} - f_{\pm}}_{L^2}^2\1_{\Omega_n\cap \Xi_n} \big).
\end{align*}
The probability of the event $\Omega_n^c$ is bounded in Proposition~\ref{pro:2}, while the probability of $\Xi_n^c$ is bounded in Lemma~\ref{lem:1} (to follow). We bound the remaining term by decomposing it into several terms. For this matter, we introduce the events $$E_{j\ell} \coloneqq \Set*{ \max_{j=1,2,3}|\hat{m}_j - m _j| \norm{G^{\Block_{j\ell}}} \leq c_2 |m_1m_2| \Gamma T_n / \max(1,g) }$$ and we decompose
\begin{align*}
  &\EE_{\theta}\big(\norm{ \hat{f}^R_{\pm} - f_{\pm}}_{L^2}^2\1_{\Omega_n\cap \Xi_n} \big)
    = \EE_{\theta}\Big(\norm{\hat{f}_{\pm}^{J_n} - f_{\pm}^{J_n}}_{L^2}^2\1_{\Omega_n\cap \Xi_n} \Big)\\
  &\quad%
    \begin{aligned}
      + \sum_{j=J_n}^{\tilde{\jmath}_n}\sum_{\ell}\EE_{\theta}\Big(
      &\norm[\big]{\hat{\A}_{\pm}^{\Block_{j\ell}}+
        \hat{\B}_{\pm}^{\Block_{j\ell}} - \A_{\pm}^{\Block_{j\ell}} -
        \B_{\pm}^{\Block_{j\ell}} }^2\\
          & \times \1_{\Omega_n\cap \Xi_n}
      \1_{n\norm{\hat{\psi}_1^{\Block_{j\ell}}}^2>\Gamma^2\log(n)
      }\1_{\norm{\hat{\B}_{\pm}^{\Block_{j\ell}}} > \Gamma \hat{T}_n}
      \1_{E_{j\ell}^c}\Big)
    \end{aligned}\\
  &\quad%
    \begin{aligned}
      + \sum_{j=J_n}^{\tilde{\jmath}_n}\sum_{\ell}\EE_{\theta}\Big(
      &\norm{\hat{\A}_{\pm}^{\Block_{j\ell}}+ \hat{\B}_{\pm}^{\Block_{j\ell}} - \A_{\pm}^{\Block_{j\ell}} - \B_{\pm}^{\Block_{j\ell}} }^2\\
      &\times \1_{\Omega_n\cap \Xi_n} 
    \1_{n\norm{\hat{\psi}_1^{\Block_{j\ell}}}^2 > \Gamma^2\log(n)}
      \1_{\norm{\hat{\B}_{\pm}^{\Block_{j\ell}}} > \Gamma \hat{T}_n}\1_{E_{j\ell}}\Big)
    \end{aligned}\\
  &\quad%
    \begin{aligned}
      + \sum_{j=J_n}^{\tilde{\jmath}_n}\sum_{\ell}\EE_{\theta}\Big(
      &\|\hat{\A}_{\pm}^{\Block_{j\ell}} - \A_{\pm}^{\Block_{j\ell}} - \B_{\pm}^{\Block_{j\ell}} \|^2\\
      &\times \1_{\Omega_n\cap \Xi_n} 
    \1_{n\|\hat{\psi}_1^{\Block_{j\ell}}\|^2 > \Gamma^2\log(n)}
      \1_{\|\hat{\B}_{\pm}^{\Block_{j\ell}}\| \leq \Gamma \hat{T}_n}\1_{E_{j\ell}}\1_{\|\psi_1^{\Block_{j\ell}}\| > \frac{g(1 \pm \tilde{s}\phi_1)}{|m_1|}\|G^{\Block_{j\ell}}\|}\Big)
    \end{aligned}\\
  &\quad%
    \begin{aligned}
      + \sum_{j=J_n}^{\tilde{\jmath}_n}\sum_{\ell}\EE_{\theta}\Big(
      &\|\hat{\A}_{\pm}^{\Block_{j\ell}} - \A_{\pm}^{\Block_{j\ell}} - \B_{\pm}^{\Block_{j\ell}} \|^2\\
      &\times \1_{\Omega_n\cap \Xi_n} 
    \1_{n\|\hat{\psi}_1^{\Block_{j\ell}}\|^2 > \Gamma^2\log(n)}
  \1_{\|\hat{\B}_{\pm}^{\Block_{j\ell}}\| \leq \Gamma \hat{T}_n}\1_{E_{j\ell}}\1_{\|\psi_1^{\Block_{j\ell}}\| \leq \frac{g(1 \pm \tilde{s}\phi_1)}{|m_1|}\|G^{\Block_{j\ell}}\|}\Big)
    \end{aligned}\\
  &\quad%
    + \sum_{j=J_n}^{\tilde{\jmath}_n}\sum_{\ell}\EE_{\theta}\big(\|\hat{\B}_{\pm}^{\Block_{j\ell}} - \A_{\pm}^{\Block_{j\ell}} - \B_{\pm}^{\Block_{j\ell}} \|^2\1_{\Omega_n\cap \Xi_n} 
    \1_{n\|\hat{\psi}_1^{\Block_{j\ell}}\|^2 \leq \Gamma^2\log(n)}
      \1_{\|\hat{\B}_{\pm}^{\Block_{j\ell}}\| > \Gamma \hat{T}_n}\1_{E_{j\ell}}\big)\\
    &\quad%
    + \sum_{j=J_n}^{\tilde{\jmath}_n}\sum_{\ell}\EE_{\theta}\big(\|\A_{\pm}^{\Block_{j\ell}} + \B_{\pm}^{\Block_{j\ell}} \|^2\1_{\Omega_n\cap \Xi_n}
    \1_{n\|\hat{\psi}_1^{\Block_{j\ell}}\|^2 \leq \Gamma^2\log(n)}
    \1_{\|\hat{\B}_{\pm}^{\Block_{j\ell}}\| \leq \Gamma \hat{T}_n}\1_{E_{j\ell}}\big)\\
  &\quad%
    + \sum_{j> \tilde{\jmath}_n}\sum_k|f_{\pm}^{\Psi_{jk}}|^2\PP_{\theta}(\Omega_n \cap \Xi_n)
\end{align*}
where we have used the same convention as in Section~\ref{sec:proof-thm:1} to define $\hat{f}_{\pm}^{J_n}$ and $f_{\pm}^{J_n}$. We call $R_1(\theta),\dots,R_8(\theta)$, respectively, each of the terms of the previous right hand side. In the next subsections, after stating a couple of preliminary results, we prove the following bounds, uniformly over $\theta \in \SmoothClass \cap \RegulClass$ and for a universal constant $B > 0$:
\begin{align*}
  R_1(\theta) &\leq \frac{B L^2}{\delta^2\epsilon^2\zeta^2}\frac{\log(n)}{n\sg}%
  + \frac{BL^3}{\delta^2\epsilon^4\zeta^4}\frac{1}{n\sg}%
  +\frac{B\max(\tau,L)^6}{\delta^2\epsilon^4\zeta^4}\frac{1}{(n\sg)^2}.\\%
  R_2(\theta) &\leq \frac{B}{\delta^2\epsilon^4\zeta^4}\Big( \frac{L^3}{n \gamma^*} + \frac{\max(\tau,L)^6}{(n\gamma^*)^2} \Big).\\%
  R_3(\theta) &\leq \frac{BR^2}{\min(1, s_{\mp})} \frac{1}{\delta^2} \Big( \frac{\Gamma^2 }{R^2 n \epsilon^2\zeta^2} \Big)^{2s_{\mp}/(2s_{\mp} + 1)}.\\%
  R_4(\theta) &\leq \frac{BR^2}{\min(1,s_{\pm})}\Big(\frac{\Gamma^2}{nR^2\delta^2} \Big)^{2s_{\pm}/(2s_{\pm} + 1)}%
      +
      \frac{BR^2}{\min(1, s_{\mp})} \frac{1}{\delta^2} \Big( \frac{\Gamma^2 }{R^2 n \epsilon^2\zeta^2} \Big)^{2s_{\mp}/(2s_{\mp} + 1)}.\\%
  R_5(\theta) &\leq \frac{B}{\delta^2\epsilon^4\zeta^4}\Big( \frac{L^3}{n \gamma^*} + \frac{\max(\tau,L)^6}{(n\gamma^*)^2} \Big) + \frac{BR^2}{\min(1,s_{\pm})}\Big(\frac{\Gamma^2}{nR^2\delta^2} \Big)^{2s_{\pm}/(2s_{\pm} + 1)}\\%
  &\quad%
      +
      \frac{BR^2}{\min(1, s_{\mp})} \frac{1}{\delta^2} \Big( \frac{\Gamma^2 }{R^2 n \epsilon^2\zeta^2} \Big)^{2s_{\mp}/(2s_{\mp} + 1)}\\%
  R_6(\theta) &\leq \frac{BR^2}{\min(1, s_{\mp})} \frac{1}{\delta^2} \Big( \frac{\Gamma^2 }{R^2 n \epsilon^2\zeta^2} \Big)^{2s_{\mp}/(2s_{\mp} + 1)}.\\%
  R_7(\theta) &\leq \frac{BR^2}{\min(1,s_{\pm})}\Big(\frac{\Gamma^2}{nR^2\delta^2} \Big)^{2s_{\pm}/(2s_{\pm} + 1)}%
      +
      \frac{R^2}{\min(1, s_{\mp})} \frac{1}{\delta^2} \Big( \frac{\Gamma^2 }{BR^2 n \epsilon^2\zeta^2} \Big)^{2s_{\mp}/(2s_{\mp} + 1)}.\\%
  R_8(\theta) &\leq  \frac{BR^2}{\min(1,s_{\pm})}\Big( \frac{\tau^2 \log(n)}{n} \Big)^{2s_{\pm}}.%
\end{align*}

\subsubsection{Preliminary computations}
\label{sec:prel-comp}

\begin{lemma}
  \label{lem:1}
  For all $A > 0$ and for all choice of $c_0,c_1> 0$ there exists a constant $\beta_0 >0$ such that if $\Gamma \geq \beta \max(\frac{L}{\sqrt{\gamma^{*}}}, \frac{\sqrt{L}}{\tau \gamma^{*}})$ with $\beta \geq \beta_0$ then
  \begin{equation*}
    \PP_{\theta}(\Xi_n^c) \leq n^{-A}.
  \end{equation*}
\end{lemma}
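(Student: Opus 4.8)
The plan is to bound $\PP_\theta(\Xi_n^c) \leq \PP_\theta\big((\Xi_n^{(1)})^c\big) + \PP_\theta\big((\Xi_n^{(2)})^c\big)$ by a union bound over the blocks $\Block_{j\ell}$ with $J_n \leq j \leq \tilde\jmath_n$, feeding in the per-block concentration estimates of Proposition~\ref{pro:8a} (for $\hat\psi_1^{\Block_{j\ell}}$) and Proposition~\ref{pro:8b} (for $\hat G^{\Block_{j\ell}}$). First I would count the blocks: level $j$ carries $2^j/N$ of them, and since $N = 2^{J_n} \geq \log n$ while $2^{\tilde\jmath_n} \leq n/(\tau^2\log n)$ by construction, the total is $\sum_{j=J_n}^{\tilde\jmath_n}2^j/N \leq 2^{\tilde\jmath_n+1}/N \leq 2n/(\tau^2(\log n)^2) \leq 2n/(\log n)^2$.

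Next I would take $x = \kappa\log n$, with $\kappa = \kappa(A)$ a constant fixed at the end, and check that for this $x$ the right-hand deviation in Proposition~\ref{pro:8a} is $\leq c_0\Gamma\sqrt{\log(n)/n}$ and that of Proposition~\ref{pro:8b} is $\leq c_1\Gamma\sqrt{\log(n)/n}$, uniformly over $\theta \in \RegulClass$ and over $J_n \leq j \leq \tilde\jmath_n$, provided $\beta_0$ is large. Each deviation splits into a sub-Gaussian and a Bernstein piece. The sub-Gaussian pieces $C\sqrt{Lx/(n\gamma^*)}$ and $CL\sqrt{x/(n\gamma^*)}$ equal $C\sqrt{L\kappa}\,\sqrt{\log(n)/n}/\sqrt{\gamma^*}$ and $CL\sqrt\kappa\,\sqrt{\log(n)/n}/\sqrt{\gamma^*}$, which are $\leq \tfrac14 c_i\Gamma\sqrt{\log(n)/n}$ once $\Gamma \geq \beta L/\sqrt{\gamma^*}$ with $\beta$ large (using $L \geq 1$). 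For the Bernstein pieces I would use $2^j \leq 2^{\tilde\jmath_n} \leq n/(\tau^2\log n)$, so that $2^{j/2} \leq \tau^{-1}\sqrt{n/\log n}$ and $\tau 2^{j/2} \leq \sqrt{n/\log n}$; this gives $C2^{j/2}x/(n\gamma^*) \lesssim (\kappa/(\tau\gamma^*))\sqrt{\log(n)/n}$, $C\sqrt L\,2^{j/2}x/(n\gamma^*) \lesssim (\sqrt L\kappa/(\tau\gamma^*))\sqrt{\log(n)/n}$, $C\tau 2^{j/2}x/(n\gamma^*) \lesssim (\kappa/\gamma^*)\sqrt{\log(n)/n}$, and, after invoking $n\gamma^* \geq \tau^2(\log n)^2/L$ (so $\tau\log n \leq \sqrt{Ln\gamma^*}$), also $C\tau\sqrt L\,x/(n\gamma^*) \lesssim (L\kappa/\sqrt{\gamma^*})\sqrt{\log(n)/n}$. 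Each of these is absorbed into $\tfrac14 c_i\Gamma\sqrt{\log(n)/n}$ once $\beta_0$ is large, because $\Gamma \geq \beta\max\big(L/\sqrt{\gamma^*},\,\sqrt L/(\tau\gamma^*)\big)$, the remaining $\kappa/\gamma^*$ term being controlled by further combining $n\gamma^* \geq \tau^3$ with the hypothesis $\tilde\jmath_n > J_n$ (which forces $\tau^2 \leq n/(\log n)^2$). Consequently the bad event of each proposition has probability at most $24^N e^{-\kappa\log n}$, and each block contributes at most $(1+4)\,24^N e^{-\kappa\log n}$ to $\PP_\theta(\Xi_n^c)$.

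Finally, combining with the block count and using $24^N \leq 24^{2\log n} \leq n^{c_{24}}$ for a universal constant $c_{24}$, one gets $\PP_\theta(\Xi_n^c) \leq \tfrac{2n}{(\log n)^2}\cdot 5\cdot n^{c_{24}}\cdot n^{-\kappa} = \tfrac{10}{(\log n)^2}\,n^{\,1+c_{24}-\kappa}$, so the choice $\kappa \coloneqq A + c_{24} + 1$ yields $\PP_\theta(\Xi_n^c) \leq n^{-A}$ for $n \geq 3$ (for small $n$ there is nothing to prove, since the standing hypothesis $\tilde\jmath_n > J_n$ already forces $n$ large). The step I expect to be the main obstacle is the verification in the second paragraph that the Bernstein-type variance-proxy terms evaluated at the top resolution level $j = \tilde\jmath_n$ are absorbed into $\Gamma\sqrt{\log(n)/n}$: this is exactly what dictates the $\tau^{-2}$ factor in the definition of $\tilde\jmath_n$ and the appearance of $\sqrt L/(\tau\gamma^*)$ in the assumed lower bound on $\Gamma$, and it makes essential joint use of the standing assumptions $n\gamma^* \geq \tau^3$ and $n\gamma^* \geq \tau^2(\log n)^2/L$.
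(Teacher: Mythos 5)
Your route is exactly the paper's: a union bound over the at most $2^{\tilde\jmath_n+1}/N$ blocks, Propositions~\ref{pro:8a} and~\ref{pro:8b} evaluated at $x=\kappa\log n$, absorption of the resulting deviations into $c_i\Gamma\sqrt{\log(n)/n}$ using the lower bound on $\Gamma$, and a final choice of $\kappa=\kappa(A)$ to beat the block count and the $24^{N}\le n^{2\ln 24}$ factor. Your term-by-term verification is in fact more explicit than the paper's, and the treatment of the $\sqrt{L}\,2^{j/2}$, $\tau\sqrt L$ and sub-Gaussian pieces is correct.

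The one step you flag as the main obstacle is, however, not closed by the justification you give. The entry $\tau 2^{j/2}x/(n\gamma^*)$ of Proposition~\ref{pro:8b} reduces, as you compute, to $(\kappa/\gamma^*)\sqrt{\log(n)/n}$, so you need $1/\gamma^*\lesssim \Gamma/\beta$, i.e.\ $1/\gamma^*\lesssim\max\big(L/\sqrt{\gamma^*},\,\sqrt L/(\tau\gamma^*)\big)$, which amounts to requiring either $L\sqrt{\gamma^*}\gtrsim 1$ or $\tau\lesssim\sqrt L$. Neither $n\gamma^*\ge\tau^3$ nor $\tau^2\le n/(\log n)^2$ delivers this: take $L=1$, $\tau=10^3$, $\gamma^*=10^{-6}$ and $n$ large enough that all standing hypotheses hold; then $\max(L/\sqrt{\gamma^*},\sqrt L/(\tau\gamma^*))=10^3$ while $1/\gamma^*=10^6$, so no universal $\beta_0$ absorbs this term. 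To be fair, the paper's own proof elides exactly the same point (its displayed comparison $\max(\sqrt{n/\log n},\cdot,\cdot)\,\log(n)/n\le(\sqrt L/\tau)\sqrt{\log(n)/n}$ implicitly uses $\tau\le\sqrt L$), so you have not misread the strategy; but as written your second paragraph asserts an absorption that does not follow from the stated hypotheses, and closing it genuinely requires an extra relation between $\tau$, $L$ and $\gamma^*$ (or a $1/\gamma^*$ branch in the assumed lower bound on $\Gamma$, as in Theorem~\ref{thm:1}).
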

\begin{proof}
  By a union bound,
  \begin{align*}
    \PP_{\theta}\big((\Xi_n^{(1)})^c \big)%
    &\leq \sum_{j=J_n}^{\tilde{\jmath}_n}\sum_{\ell}\PP_{\theta}\Big(\|\hat{\psi}_1^{\Block_{j\ell}} - \psi_1^{\Block_{j\ell}}\| > c_0\Gamma \sqrt{\log(n)/n} \Big)\\
    &\leq \frac{2^{\tilde{\jmath}_n + 1}}{N}\max_{j\leq \tilde{\jmath}_n}\max_{\ell} \PP_{\theta}\Big(\|\hat{\psi}_1^{\Block_{j\ell}} - \psi_1^{\Block_{j\ell}}\| > c_0\Gamma \sqrt{\log(n)/n} \Big)\\
    &\leq n \max_{j\leq \tilde{\jmath}_n}\max_{\ell} \PP_{\theta}\Big(\|\hat{\psi}_1^{\Block_{j\ell}} - \psi_1^{\Block_{j\ell}}\| > c_0\Gamma \sqrt{\log(n)/n} \Big).
  \end{align*}
  Then choose $x = B \log(n)$ for some $B > 0$ to be chosen accordingly. Observe that for all $j \leq \tilde{\jmath}_n$ (recall $L \geq 1$)
  \begin{align*}
    C\sqrt{\frac{Lx}{n\gamma^{*}}}+ C2^{j/2} \frac{x}{n\gamma^{*}}%
    &\leq \frac{C\sqrt{BL}}{\sqrt{\gamma^{*}}}\cdot \sqrt{\frac{\log(n)}{n}}%
      + C \sqrt{\frac{n}{\log(n)\tau^2}} \frac{B \log(n)}{n\gamma^{*}}\\
    &\leq \frac{C\sqrt{B} + CB}{\beta}\Gamma \sqrt{\log(n)/n}.
  \end{align*}
  Hence by choosing $c_0 = (C\sqrt{B} + CB)/\beta$ we deduce from the Proposition~\ref{pro:8a} that
  \begin{equation*}
    \PP_{\theta}\big((\Xi_n^{(1)})^c \big)%
    \leq 24^N n^{-B + 1}.
  \end{equation*}
  The probability of $\Xi_n^{(2)}$ is bounded similarly, remarking that for $x = B \log(n)/n$ we have for all $j \leq \tilde{\jmath}_n$
  \begin{align*}
    &CL\sqrt{\frac{x}{n\gamma^{*}}} + C\max(\tau 2^{j/2},\sqrt{L}2^{j/2},\tau \sqrt{L})\frac{x}{n\gamma^{*}}\\
    &\qquad\qquad%
      \leq \frac{CL\sqrt{B}}{\sqrt{\gamma^{*}}} \sqrt{\frac{\log(n)}{n}}%
      + \frac{CB}{\gamma^{*}}\max\Bigg(\sqrt{\frac{n}{\log(n)}}, \frac{\sqrt{L}}{\tau}\sqrt{\frac{\log(n)}{n}}, \tau \sqrt{L} \Bigg)\frac{\log(n)}{n}\\
    &\qquad\qquad%
      \leq \frac{CL\sqrt{B}}{\sqrt{\gamma^{*}}} \sqrt{\frac{\log(n)}{n}}%
      + \frac{CB \sqrt{L}}{\gamma^{*}\tau} \sqrt{\frac{\log(n)}{n}}\\
    &\qquad\qquad%
      \leq \frac{C\sqrt{B} + CB}{\beta} \Gamma \sqrt{\log(n)/n},
  \end{align*}
  where the third line is true because by assumption $1 \leq 2^{J_n} \leq 2^{\tilde{\jmath}_n} \leq \frac{n}{\log(n)\tau^2}$ and hence $\tau \leq \sqrt{n/\log(n)}$ necessarily. We then deduce from Proposition~\ref{pro:8b} that
  \begin{equation*}
    \PP_{\theta}\big((\Xi_n^{(1)})^c \big)%
    \leq 4\cdot 24^N n^{-B+1}
  \end{equation*}
  which concludes the proof by taking $B$ sufficiently large.
\end{proof}

\begin{lemma}
  \label{lem:4}
  On the event $\Omega_n$
 \begin{equation*}
   \frac{1}{2} \leq  \frac{1 \pm \tilde{s}\phi_1}{1 \pm \hat{\phi}_1} \leq 2,\qquad\mathrm{and},\qquad%
   \frac{1}{2} \leq  \frac{1 \mp \tilde{s}\phi_1}{1 \mp \hat{\phi}_1} \leq 2.
 \end{equation*}
\end{lemma}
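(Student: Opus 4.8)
The plan is to reduce both displayed two-sided bounds to a single deterministic estimate: that on $\Omega_n$ one has
\[
  \abs{\hat{\phi}_1 - \tilde{s}\phi_1} \leq \tfrac12\,(1 - \abs{\phi_1}).
\]
Granting this, I would write $1 \pm \hat{\phi}_1 = (1 \pm \tilde{s}\phi_1) \pm (\hat{\phi}_1 - \tilde{s}\phi_1)$ (and similarly with $\mp$), and use $1 \pm \tilde{s}\phi_1 \geq 1 - \abs{\tilde{s}\phi_1} = 1 - \abs{\phi_1}$, so that $\abs{\hat{\phi}_1 - \tilde{s}\phi_1} \leq \tfrac12(1 \pm \tilde{s}\phi_1)$; hence $1 \pm \hat{\phi}_1 \in [\tfrac12(1\pm\tilde{s}\phi_1),\,\tfrac32(1\pm\tilde{s}\phi_1)]$ and each of the four ratios in the statement lies in $[\tfrac23,2] \subset [\tfrac12,2]$, as required.

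To obtain the displayed estimate I would invoke Lemma~\ref{lem:10AA}, whose hypotheses hold on $\Omega_n$ — this is exactly the remark already used in the proof of Theorem~\ref{thm:concent-pq}, that the defining constraint $\max_j\abs{\hat{m}_j - m_j} \leq gm_2/(44\max(1,g))$ of $\Omega_n$ is stronger than the bound Lemma~\ref{lem:10AA} demands (using $gm_2 = \abs{\tilde{\mathcal{I}}}^3 r(\phi)\phi_2\phi_3$, $\max(1,g) \geq \max(\abs{\phi_1},(1-\phi_1^2)g)$ and $44 \geq 20$). Lemma~\ref{lem:10AA} then gives $\abs{\hat{\phi}_1 - \tilde{s}\phi_1} \leq 53\max(1,g)\,\phi_2^{-2}\phi_3^{-3}\abs{\tilde{\mathcal{I}}}^{-3}\max_j\abs{\hat{m}_j - m_j}$; feeding in the $\Omega_n$ bound and the identities $g = \phi_3\abs{\tilde{\mathcal{I}}}$, $m_2 = r(\phi)\phi_2\tilde{\mathcal{I}}^2$, $r(\phi) = \tfrac14(1 - \phi_1^2)\phi_2\phi_3^2$ (so $gm_2 = \tfrac14(1-\phi_1^2)\phi_2^2\phi_3^3\abs{\tilde{\mathcal{I}}}^3$ and the $\max(1,g)$, $\phi_2$, $\phi_3$, $\tilde{\mathcal{I}}$ factors all cancel) collapses this to $\abs{\hat{\phi}_1 - \tilde{s}\phi_1} \leq c\,(1 - \phi_1^2)$ for an explicit absolute constant $c$. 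Since $1 - \phi_1^2 = (1 - \abs{\phi_1})(1 + \abs{\phi_1}) \leq 2(1 - \abs{\phi_1})$, the argument is complete as soon as $c \leq \tfrac14$.

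The only delicate point — and the only real work — is verifying $c \leq \tfrac14$. Quoting the constant $53$ from the statement of Lemma~\ref{lem:10AA} verbatim yields $c = 53/176$, which is marginally above $\tfrac14$, so I would instead return to the exact formula for $\hat{\phi}_1 - \tilde{s}\phi_1$ derived inside the proof of Lemma~\ref{lem:10AA} and re-read it on $\Omega_n$. There the bound $\max_j\abs{\hat{m}_j - m_j} \leq \sqrt{v}/(44\max(1,g))$ (with $v = 4m_1^2m_2 + m_3^2$, and $\sqrt{v} = \abs{\tilde{\mathcal{I}}}m_2\phi_3$ by Lemma~\ref{lem:basic-relations}) is, by a definite margin, tighter than the hypothesis of Lemma~\ref{lem:10AA} permits, so $\abs{h} = \abs{\hat{v} - v} \ll v$ and $\Delta_3^2/((v+h)^{1/2}+v^{1/2})^2 \ll 1$; consequently the multiplicative correction $1/(1 - \Delta_3^2/((v+h)^{1/2}+v^{1/2})^2)$ is essentially $1$ rather than merely $\leq \tfrac43$, the denominator $(v+h)^{1/2}[(v+h)^{1/2}+v^{1/2}]$ is $\geq (\tfrac12 + \tfrac{1}{\sqrt{2}})v$ rather than merely $\geq v$, and the two leading contributions $\phi_1\xi/v$ and $2\Delta_3(1 - \phi_1^2)v^{1/2}/v$ each evaluate on $\Omega_n$ to a small multiple of $(1 - \phi_1^2)$, with the higher-order terms negligible, so their sum sits comfortably below $\tfrac14(1 - \phi_1^2)$. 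Everything else is deterministic bookkeeping; there is no probabilistic content beyond the already-established control of $\Omega_n$.
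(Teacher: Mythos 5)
Your proposal is correct and follows the same route as the paper's proof: write $1 \pm \hat{\phi}_1 = (1 \pm \tilde{s}\phi_1) \pm (\hat{\phi}_1 - \tilde{s}\phi_1)$ and control $|\hat{\phi}_1 - \tilde{s}\phi_1|$ on $\Omega_n$ via Lemma~\ref{lem:10AA} together with the defining bound of $\Omega_n$. Your care over the constant is warranted: the paper's own final step asserts $\frac{53}{176}(1\pm\tilde{s}\phi_1)(1\mp\tilde{s}\phi_1) \leq \frac{1}{2}(1\pm\tilde{s}\phi_1)$, which fails verbatim when $1\mp\tilde{s}\phi_1$ is near $2$ since $\frac{53}{88} > \frac{1}{2}$; your repair — re-reading the proof of Lemma~\ref{lem:10AA} under the stronger $\Omega_n$ hypothesis $\max_j|\hat{m}_j - m_j| \leq \sqrt{v}/(44\max(1,g))$, where $|h| \leq \frac{10}{44}v$ and $|\Delta_3| \leq \sqrt{v}/44$ make the correction factors essentially $1$ and the denominator at least $(\tfrac12+\tfrac{1}{\sqrt{2}})v$ — does bring the constant below $\tfrac14$ and closes this (benign, constant-level) gap.
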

\begin{proof}
  Observe that
      \begin{equation*}
    \frac{1 \pm \tilde{s}\phi_1}{1 \pm \hat{\phi}_1}%
    = \frac{1}{1 + \frac{\hat{\phi}_1 - \tilde{s}\phi_1}{1 \pm \tilde{s} \phi_1 }}
  \end{equation*}
  But on the event $\Omega_n$, by Lemma~\ref{lem:10AA}
  \begin{align*}
    |\hat{\phi}_1 - \tilde{s}\phi_1|%
    &\leq \frac{53\max(1,g)}{gm_2 }\cdot \frac{1 - \phi_1^2}{4} \cdot \max_{j=1,2,3}|\hat{m}_j - m_j|\\
    &\leq \frac{53}{4\cdot 44}(1 \pm \tilde{s}\phi_1)(1 \mp \tilde{s}\phi_1)\\
    &\leq \frac{1 \pm \tilde{s}\phi_1}{2}
  \end{align*}
  which proves the first claim. The second claim is proven similarly.
\end{proof}

  \begin{lemma}
    \label{lem:7}
    On the event $\Omega_n$ we have $\hat{m}_1 \ne 0$ and $\hat{\phi}_1^2 \ne 1$.
  \end{lemma}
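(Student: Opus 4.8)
The plan is to exploit directly the two multiplicative bounds built into the definition of $\Omega_n$, namely $\max_{j=1,2}\lvert \hat m_j/m_j - 1\rvert \le 1/2$, together with the fact that on $\SmoothClass\cap\RegulClass$ both $m_1$ and $m_2$ are bounded away from zero. Recalling $m_1 = r(\phi)\tilde{\mathcal{I}}^2$ and using Lemma~\ref{lem:bounds-parameters} (which gives $\lvert r(\phi)\rvert \ge \delta\epsilon\zeta^2/4$) together with the standing assumption $\lvert\tilde{\mathcal{I}}\rvert = \lvert\ip{\psi_2,\tilde{\psi}_2}\rvert \ge 7/8$, one has $\lvert m_1\rvert \ge \tfrac{49}{64}\cdot\tfrac{\delta\epsilon\zeta^2}{4} > 0$. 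The inequality $\lvert \hat m_1/m_1 - 1\rvert \le 1/2$ then forces $\lvert \hat m_1\rvert \ge \lvert m_1\rvert/2 > 0$, which is the first claim $\hat m_1 \ne 0$.

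For the second claim I would rewrite $\hat\phi_1$ using its definition $\hat\phi_1 = \hat m_3\,[\,4\hat m_1^2(\hat m_2)_+ + \hat m_3^2\,]^{-1/2}$: whenever $v \coloneqq 4\hat m_1^2(\hat m_2)_+ + \hat m_3^2$ is strictly positive we have $1 - \hat\phi_1^2 = 4\hat m_1^2(\hat m_2)_+/v$, so it suffices to check $4\hat m_1^2(\hat m_2)_+ > 0$ on $\Omega_n$. We already know $\hat m_1 \ne 0$. Moreover $m_2 = r(\phi)\phi_2\tilde{\mathcal{I}}^2 = \tfrac14(1-\phi_1^2)\phi_2^2\phi_3^2\tilde{\mathcal{I}}^2 \ge \tfrac14\,\delta\epsilon^2\zeta^2\cdot\tfrac{49}{64} > 0$ by Lemmas~\ref{lem:bounds-parameters} and~\ref{lem:basic-relations} (and $\lvert\tilde{\mathcal{I}}\rvert \ge 7/8$), so $\lvert \hat m_2/m_2 - 1\rvert \le 1/2$ gives $\hat m_2 \ge m_2/2 > 0$, hence $(\hat m_2)_+ = \hat m_2 > 0$. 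Therefore $4\hat m_1^2(\hat m_2)_+ > 0$, so $v > 0$ and $\hat\phi_1^2 = \hat m_3^2/v < 1$, i.e.\ $\hat\phi_1^2 \ne 1$.

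Since both steps reduce to reading the strict positivity of $\lvert m_1\rvert$ and $m_2$ off Lemmas~\ref{lem:bounds-parameters}--\ref{lem:basic-relations} (using $\lvert\tilde{\mathcal{I}}\rvert \ge 7/8$) and then invoking the relative-error bounds that define $\Omega_n$, there is no genuine obstacle here. The only point to be careful about is to record explicitly that the ambient parameter is $\theta \in \SmoothClass\cap\RegulClass$: without the lower bounds $\delta,\epsilon,\zeta$ the quantities $m_1$ and $m_2$ need not be nonzero, and the statement would fail.
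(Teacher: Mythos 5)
Your proof is correct. For the first claim you argue exactly as the paper does: the relative-error bound $\lvert \hat m_1/m_1-1\rvert\le 1/2$ in the definition of $\Omega_n$, combined with $m_1=r(\phi)\tilde{\mathcal{I}}^2\ne 0$ on the relevant parameter class, forces $\lvert\hat m_1\rvert\ge\lvert m_1\rvert/2>0$. For the second claim you take a genuinely different and more elementary route. The paper deduces $\hat\phi_1^2\ne 1$ from Lemma~\ref{lem:4}, which gives the two-sided bounds $\tfrac12\le(1\pm\tilde s\phi_1)/(1\pm\hat\phi_1)\le 2$ and hence $1\pm\hat\phi_1>0$; that lemma in turn rests on the quantitative control of $\lvert\hat\phi_1-\tilde s\phi_1\rvert$ from Lemma~\ref{lem:10AA}, i.e.\ on the second (additive) condition defining $\Omega_n$. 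You instead read the conclusion directly off the algebraic definition $\hat\phi_1=\hat m_3[4\hat m_1^2(\hat m_2)_++\hat m_3^2]^{-1/2}$: once $\hat m_1\ne 0$ and $\hat m_2\ge m_2/2>0$ (the latter again from the relative-error condition and $m_2=\tfrac14(1-\phi_1^2)\phi_2^2\phi_3^2\tilde{\mathcal{I}}^2>0$), the denominator $v=4\hat m_1^2(\hat m_2)_++\hat m_3^2$ is strictly larger than $\hat m_3^2$, so $\hat\phi_1^2=\hat m_3^2/v<1$. Your argument uses only the first condition defining $\Omega_n$ and avoids importing Lemma~\ref{lem:10AA}; the paper's version is shorter on the page only because Lemma~\ref{lem:4} is already established for other purposes. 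Your closing remark that the statement implicitly requires $\theta\in\SmoothClass\cap\RegulClass$ and $\lvert\tilde{\mathcal{I}}\rvert$ bounded below (so that $m_1,m_2\ne 0$ and the ratios in $\Omega_n$ are meaningful) is an accurate and worthwhile observation.
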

  \begin{proof}
    The fact that $\hat{m}_1 \ne 0$ follows immediately from the definition of $\Omega_n$. The fact that $\hat{\phi}_1^2 \ne 1$ follows from Lemma~\ref{lem:4} (either one of the two inequalities would not hold if $\hat{\phi}_1^2 = 1$).
  \end{proof}

The next Proposition controls the empirical threshold $\hat{T}_n$ in term of its population version defined as
\begin{equation*}
  T_n \coloneqq%
  \sqrt{\frac{\log(n)}{n}}\max\Big(1,\, \frac{g}{|m_1|},\, \frac{1}{1 - \phi_1^2}\Big).
\end{equation*}

\begin{lemma}
  \label{lem:6}
  On the event $\Omega_n$, $\frac{1}{4}T_n \leq \hat{T}_n \leq 4T_n$.
\end{lemma}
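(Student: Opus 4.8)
The plan is to compare $\hat{T}_n$ with $T_n$ entry by entry inside the maxima that define them, using the multiplicative estimates already available on $\Omega_n$, and then to invoke the elementary fact that a two-sided multiplicative bound is preserved under taking maxima of finitely many positive quantities.

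First I would record that on $\Omega_n$, by \Cref{lem:7}, we have $\hat{m}_1 \neq 0$ and $\hat{\phi}_1^2 \neq 1$, so the two indicator functions appearing in the definition of $\hat{T}_n$ are identically $1$ and
\begin{equation*}
  \hat{T}_n = \sqrt{\frac{\log n}{n}}\,\max\Big(1,\ \frac{\hat{g}}{\abs{\hat{m}_1}},\ \frac{1}{1-\hat{\phi}_1^2}\Big),\qquad
  T_n = \sqrt{\frac{\log n}{n}}\,\max\Big(1,\ \frac{g}{\abs{m_1}},\ \frac{1}{1-\phi_1^2}\Big).
\end{equation*}
Here $1 - \phi_1^2 > 0$ because $\theta \in \SmoothClass$ forces $\abs{\phi_1} \leq (1-\delta)/(1+\delta) < 1$ by \Cref{lem:bounds-parameters}; and $1 - \hat{\phi}_1^2 > 0$ since $\abs{\hat{\phi}_1} \leq 1$ always (by construction of $\hat{\phi}_1$) while $\hat{\phi}_1^2 \neq 1$ on $\Omega_n$.

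Next I would show that each of the three arguments of the maximum for $\hat{T}_n$ lies between $\tfrac14$ and $4$ times the corresponding argument for $T_n$. The constant term is unchanged. For the second term, \Cref{pro:4} gives $g/2 \leq \hat{g} \leq 3g/2$ on $\Omega_n$, while the defining inequality $\abs{\hat{m}_1/m_1 - 1} \leq \tfrac12$ of $\Omega_n$ gives $\abs{m_1}/2 \leq \abs{\hat{m}_1} \leq 3\abs{m_1}/2$; combining, $\tfrac13 \cdot g/\abs{m_1} \leq \hat{g}/\abs{\hat{m}_1} \leq 3 \cdot g/\abs{m_1}$. For the third term, I would factor $1 - \hat{\phi}_1^2 = (1 - \hat{\phi}_1)(1 + \hat{\phi}_1)$ and $1 - \phi_1^2 = (1 - \tilde{s}\phi_1)(1 + \tilde{s}\phi_1)$ (using $\tilde{s}^2 = 1$) and apply \Cref{lem:4}, which bounds each of the ratios $(1 \pm \tilde{s}\phi_1)/(1 \pm \hat{\phi}_1)$ between $\tfrac12$ and $2$ on $\Omega_n$; multiplying the two ratios yields $\tfrac14 \leq (1 - \phi_1^2)/(1 - \hat{\phi}_1^2) \leq 4$, i.e.\ $\tfrac14 \cdot \tfrac{1}{1-\phi_1^2} \leq \tfrac{1}{1-\hat{\phi}_1^2} \leq 4 \cdot \tfrac{1}{1-\phi_1^2}$.

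Finally I would conclude with the elementary fact that if $a_i,b_i > 0$ satisfy $\tfrac14 b_i \leq a_i \leq 4 b_i$ for $i=1,2,3$, then $\tfrac14\max_i b_i \leq \max_i a_i \leq 4\max_i b_i$ (the upper bound since $\max_i a_i \leq \max_i 4b_i = 4\max_i b_i$, the lower symmetrically), applied to $(a_i) = (1,\ \hat{g}/\abs{\hat{m}_1},\ 1/(1-\hat{\phi}_1^2))$ and $(b_i) = (1,\ g/\abs{m_1},\ 1/(1-\phi_1^2))$; dividing by the common factor $\sqrt{\log(n)/n}$ is unnecessary since it cancels, and one reads off $\tfrac14 T_n \leq \hat{T}_n \leq 4 T_n$. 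There is no real obstacle here: the statement is pure bookkeeping from \Cref{pro:4,lem:4,lem:7}, the only points meriting care being that the indicators in $\hat{T}_n$ are not vacuous (this is exactly \Cref{lem:7}) and that $1-\hat{\phi}_1^2$ must be split into its two linear factors so that \Cref{lem:4} can be invoked factor by factor.
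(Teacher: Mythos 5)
Your proof is correct and follows essentially the same route as the paper: the paper writes $T_n = \max\big(S_n,\ \sqrt{\log(n)/n}/(1-\phi_1^2)\big)$, invokes Proposition~\ref{pro:4} for the $S_n$ comparison (which you re-derive entry by entry from $|\hat g/g-1|\le 1/2$ and $|\hat m_1/m_1-1|\le 1/2$), and then handles the remaining entry exactly as you do, by factoring $1-\hat\phi_1^2$ into $(1\pm\hat\phi_1)(1\mp\hat\phi_1)$ and applying Lemma~\ref{lem:4} to each factor. The only cosmetic difference is that the paper leans on Proposition~\ref{pro:4} as a black box for the first two entries of the maximum rather than redoing that bookkeeping.
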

\begin{proof}
  Notice that $T_n = \max\big(S_n,\, \frac{\sqrt{\log(n)/n}}{1 - \phi_1^2}\big)$. Thus, in view of Proposition~\ref{pro:4} it is enough to show that $\frac{1 - \phi_1^2}{4} \leq 1 - \hat{\phi}_1^2 \leq 4(1 - \phi_1^2)$. But,
  \begin{equation*}
    1 - \hat{\phi}_1^2%
    = (1 \pm \hat{\phi}_1)(1 \mp \hat{\phi}_1)%
    = \frac{1 \pm \hat{\phi}_1}{1 \pm \tilde{s\phi_1}}%
    \frac{1 \mp \hat{\phi}_1}{1 \mp \tilde{s}\phi_1}(1 \mp \tilde{s}\phi_1)(1\pm \tilde{s}\phi_1)%
    = \frac{1 \pm \hat{\phi}_1}{1 \pm \tilde{s\phi_1}}%
    \frac{1 \mp \hat{\phi}_1}{1 \mp \tilde{s}\phi_1}%
    (1 - \phi_1^2).
  \end{equation*}
  Thus the conclusion follows from Lemma~\ref{lem:4}.
\end{proof}

\begin{lemma}
  \label{lem:5}
  It is possible to choose $c_0,c_1,c_2$ such that on the event $E_{j\ell} \cap \Xi_n \cap \Omega_n$:
  \begin{enumerate}
    \item\label{item:lem:5:1} $\|\hat{\B}_{\pm}^{\Block_{j\ell}}\| > \Gamma \hat{T}_n \implies \|\B_{\pm}^{\Block_{j\ell}}\| > \frac{1}{32}\Gamma T_n$;
    \item\label{item:lem:5:2} $\|\hat{\B}_{\pm}^{\Block_{j\ell}}\| \leq \Gamma \hat{T}_n \implies \|\B_{\pm}^{\Block_{j\ell}}\| \leq 32\Gamma T_n$;
    \item\label{item:lem:5:3} $\|\hat{\psi}_1^{\Block_{j\ell}}\| > \Gamma \sqrt{\log(n)/n} \implies \|\psi_1^{\Block_{j\ell}}\| > \frac{1}{2}\Gamma \sqrt{\log(n)/n}$;
    \item\label{item:lem:5:4} $\|\hat{\psi}_1^{\Block_{j\ell}}\| \leq \Gamma \sqrt{\log(n)/n} \implies \|\psi_1^{\Block_{j\ell}}\| \leq \frac{3}{2}\Gamma \sqrt{\log(n)/n}$.
  \end{enumerate}
\end{lemma}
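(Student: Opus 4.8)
The plan is to separate the two essentially trivial implications \ref{item:lem:5:3}--\ref{item:lem:5:4} from the substantive pair \ref{item:lem:5:1}--\ref{item:lem:5:2}. For \ref{item:lem:5:3}--\ref{item:lem:5:4}: on $\Xi_n^{(1)}$ one has by definition $\norm{\hat{\psi}_1^{\Block_{j\ell}} - \psi_1^{\Block_{j\ell}}} \le c_0\Gamma\sqrt{\log(n)/n}$, hence $\bigl|\,\norm{\hat{\psi}_1^{\Block_{j\ell}}} - \norm{\psi_1^{\Block_{j\ell}}}\,\bigr| \le c_0\Gamma\sqrt{\log(n)/n}$ by the triangle inequality; taking $c_0 \le 1/2$ (which is legitimate, since by Lemma~\ref{lem:1} the constant $c_0$ in the definition of $\Xi_n^{(1)}$ may be chosen arbitrarily small at the cost of enlarging $\beta_0$) gives both statements at once. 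For \ref{item:lem:5:1}--\ref{item:lem:5:2}, the aim is the quantitative assertion that on $\Omega_n \cap \Xi_n \cap E_{j\ell}$ the empirical and population block norms of $\B_\pm$ differ by a controlled multiplicative factor plus an additive error of order $\Gamma T_n$ that is made small by shrinking $c_0,c_1,c_2$; the stated factors $\tfrac1{32}$ and $32$ then follow by the triangle inequality and the comparison $\tfrac14 T_n \le \hat{T}_n \le 4 T_n$ of Lemma~\ref{lem:6}.

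By Lemma~\ref{lem:7}, all indicators in the definition of $\hat{\B}_\pm$ equal $1$ on $\Omega_n$, so writing $\hat{\rho}_\pm = \tfrac{1\mp\hat{\phi}_1}{1\pm\hat{\phi}_1}$, $\rho_\pm = \tfrac{1\mp\tilde{s}\phi_1}{1\pm\tilde{s}\phi_1}$ one has
$$\hat{\B}_\pm^{\Block_{j\ell}} - \B_\pm^{\Block_{j\ell}} = -\hat{\rho}_\pm\bigl(\hat{\psi}_1^{\Block_{j\ell}} - \psi_1^{\Block_{j\ell}}\bigr) - \bigl(\hat{\rho}_\pm - \rho_\pm\bigr)\psi_1^{\Block_{j\ell}} + \tfrac12\bigl(\hat{\omega}_\pm\hat{G}^{\Block_{j\ell}} - \omega_\pm G^{\Block_{j\ell}}\bigr),$$
and the last bracket splits further as $\hat{\omega}_\pm(\hat{G}^{\Block_{j\ell}} - G^{\Block_{j\ell}}) + (\hat{\omega}_\pm - \omega_\pm)G^{\Block_{j\ell}}$. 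The first term is bounded on $\Xi_n^{(1)}$ using $\hat{\rho}_\pm \lesssim (1-\phi_1^2)^{-1}$ (Lemma~\ref{lem:4}) and $T_n \ge (1-\phi_1^2)^{-1}\sqrt{\log(n)/n}$; the term $\hat{\omega}_\pm(\hat{G}-G)$ on $\Xi_n^{(2)}$ using $\abs{\hat{\omega}_\pm} \lesssim g/\abs{m_1}$ on $\Omega_n$ (Proposition~\ref{pro:4}) and $T_n \ge (g/\abs{m_1})\sqrt{\log(n)/n}$; and the term $(\hat{\omega}_\pm - \omega_\pm)G^{\Block_{j\ell}}$ by combining Proposition~\ref{pro:9} (which gives $\abs{\hat{\omega}_\pm - \omega_\pm} \lesssim \tfrac{\max(1,g)}{\abs{m_1m_2}}\max_{j}\abs{\hat{m}_j - m_j}$ on $\Omega_n$) with the defining inequality of $E_{j\ell}$, which is included precisely so that $\max_j\abs{\hat{m}_j-m_j}\cdot\norm{G^{\Block_{j\ell}}}$ is of order $\Gamma T_n$.

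The main obstacle is the ratio-perturbation term $(\hat{\rho}_\pm - \rho_\pm)\psi_1^{\Block_{j\ell}}$: since $\norm{\psi_1^{\Block_{j\ell}}}$ is only bounded by a constant ($R$, from the Besov constraint on $\psi_1$), not by the noise level $\Gamma\sqrt{\log(n)/n}$, it cannot be absorbed directly. I would control it by writing $\abs{\hat{\rho}_\pm - \rho_\pm} = \tfrac{2\abs{\hat{\phi}_1 - \tilde{s}\phi_1}}{(1\pm\hat{\phi}_1)(1\pm\tilde{s}\phi_1)}$, invoking the sharp bounds $\abs{\hat{\phi}_1 - \tilde{s}\phi_1} \le \tfrac{53}{176}(1-\phi_1^2)$ and $1\pm\hat{\phi}_1 \ge \tfrac12(1\pm\tilde{s}\phi_1)$ valid on $\Omega_n$ (both already isolated in the proofs of Lemmas~\ref{lem:10AA} and~\ref{lem:4}), and then using the identities $\rho_\pm\psi_1^{\Block_{j\ell}} = \tfrac{1\mp\tilde{s}\phi_1}{2}\A_\pm^{\Block_{j\ell}} = -\B_\pm^{\Block_{j\ell}} + \tfrac12\omega_\pm G^{\Block_{j\ell}}$ to re-express $\psi_1^{\Block_{j\ell}}$ through the very quantities ($\B_\pm^{\Block_{j\ell}}$, $\hat{\B}_\pm^{\Block_{j\ell}}$, $G^{\Block_{j\ell}}$) occurring in the two implications, so that in the contrapositive regime of \ref{item:lem:5:1} (where $\norm{\B_\pm^{\Block_{j\ell}}} \le \tfrac1{32}\Gamma T_n$) and in the regime of \ref{item:lem:5:2} (where $\norm{\hat{\B}_\pm^{\Block_{j\ell}}} \le \Gamma\hat{T}_n$) this term becomes controllable, its residual $G$-contribution again being absorbed via $E_{j\ell}$. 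Tracking the numerical constants through these rearrangements so that everything closes against $\tfrac1{32}$ and $32$ (and checking consistency with the sub-indicators appearing in the error decomposition of Section~\ref{sec:decomposition-error}) is where essentially all of the bookkeeping sits; choosing $c_0,c_1,c_2$ small is what buys the necessary slack.
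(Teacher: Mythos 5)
Items \ref{item:lem:5:3}--\ref{item:lem:5:4} are fine, and your overall architecture (use $\Xi_n$ for $\hat{\psi}_1-\psi_1$ and $\hat{G}-G$, use $E_{j\ell}$ for products with $\norm{G^{\Block_{j\ell}}}$, use $\Omega_n$ for the parameter perturbations) is the right one. But your treatment of the term you correctly single out as the main obstacle, $(\hat{\rho}_\pm-\rho_\pm)\psi_1^{\Block_{j\ell}}$, does not close as written. With the constant bound $\abs{\hat{\phi}_1-\tilde{s}\phi_1}\leq \tfrac{53}{176}(1-\phi_1^2)$ you get only $\abs{\hat{\rho}_\pm-\rho_\pm}\leq \tfrac{53}{44}\rho_\pm$, and after substituting $\rho_\pm\psi_1^{\Block_{j\ell}}=-\B_\pm^{\Block_{j\ell}}+\tfrac12\omega_\pm G^{\Block_{j\ell}}$ the residual is of order $\tfrac{g}{\abs{m_1}}\norm{G^{\Block_{j\ell}}}$ \emph{with no factor of} $\max_j\abs{\hat{m}_j-m_j}$ \emph{remaining}. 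The event $E_{j\ell}$ only bounds the product $\max_j\abs{\hat{m}_j-m_j}\cdot\norm{G^{\Block_{j\ell}}}$; it says nothing about $\norm{G^{\Block_{j\ell}}}$ alone, which can be as large as $\abs{m_1}$, making the residual of constant order ($\leq g\leq\sqrt{2L}$) rather than $O(\Gamma T_n)=O(\sqrt{\log(n)/n}\cdot\max(1,g/\abs{m_1},(1-\phi_1^2)^{-1}))$. So the claim that this contribution is ``absorbed via $E_{j\ell}$'' is where the argument breaks. The repair is to carry the full bound of Lemma~\ref{lem:10AA}, $\abs{\hat{\rho}_\pm-\rho_\pm}\lesssim \rho_\pm\tfrac{\max(1,g)}{gm_2}\max_j\abs{\hat{m}_j-m_j}$, so that the factor $\max_j\abs{\hat{m}_j-m_j}$ survives to pair with $\norm{G^{\Block_{j\ell}}}$ inside $E_{j\ell}$; even then, for Item~\ref{item:lem:5:2} the self-referential inequality $\norm{\B_\pm^{\Block_{j\ell}}}\leq\norm{\hat{\B}_\pm^{\Block_{j\ell}}}+\tfrac{53}{44}\norm{\B_\pm^{\Block_{j\ell}}}+\dots$ has a coefficient exceeding $1$ (the constant $44$ in $\Omega_n$ is fixed and cannot be shrunk via $c_0,c_1,c_2$), so you must instead route the bound on $\rho_\pm\norm{\psi_1^{\Block_{j\ell}}}$ through $\hat{\rho}_\pm\norm{\hat{\psi}_1^{\Block_{j\ell}}}$ and the hypothesis $\norm{\hat{\B}_\pm^{\Block_{j\ell}}}\leq\Gamma\hat{T}_n$.

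The paper sidesteps all of this with a multiplicative rather than additive comparison: it rewrites $\norm{\hat{\B}_\pm^{\Block_{j\ell}}}>\Gamma\hat{T}_n$ as $\norm{\tfrac{\rho_\pm}{\hat{\rho}_\pm}\hat{\B}_\pm^{\Block_{j\ell}}}>\tfrac{\rho_\pm}{\hat{\rho}_\pm}\Gamma\hat{T}_n$. The rescaled vector $\tfrac{\rho_\pm}{\hat{\rho}_\pm}\hat{\B}_\pm^{\Block_{j\ell}}$ carries $\hat{\psi}_1^{\Block_{j\ell}}$ with the \emph{population} coefficient $\rho_\pm$, so its difference from $\B_\pm^{\Block_{j\ell}}$ involves only $\hat{\psi}_1^{\Block_{j\ell}}-\psi_1^{\Block_{j\ell}}$, $\hat{G}^{\Block_{j\ell}}-G^{\Block_{j\ell}}$, and a coefficient error times $\norm{G^{\Block_{j\ell}}}$ that genuinely retains the factor $\max_j\abs{\hat{m}_j-m_j}$; the term $(\hat{\rho}_\pm-\rho_\pm)\psi_1^{\Block_{j\ell}}$ never appears. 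The two factors of $4$ from $\tfrac{\rho_\pm}{\hat{\rho}_\pm}\in[\tfrac14,4]$ (Lemma~\ref{lem:4}) and $\hat{T}_n/T_n\in[\tfrac14,4]$ (Lemma~\ref{lem:6}) are exactly what produce the constants $\tfrac1{32}$ and $32$. You should adopt this rescaling; it eliminates the one step of your plan that cannot be made rigorous as stated.
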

\begin{proof}
  Before proving the items, we first remark that we never have $\hat{\phi}_1^2 = 1$ nor $\hat{m}_1 = 0$ on the event $\Omega_n$ thanks to Lemma~\ref{lem:7}.

  We establish Item~\ref{item:lem:5:1}. Notice that
  \begin{align*}
    \|\hat{\B}_{\pm}^{\Block_{j\ell}}\| > \Gamma \hat{T}_n%
    &\iff \Big\|\frac{1 \mp \hat{\phi}_1}{1 \pm \hat{\phi}_1}\hat{\psi}_1^{\Block_{j\ell}} \mp \frac{\hat{g}(1 \mp \hat{\phi}_1)}{2\hat{m}_1}\hat{G}^{\Block_{j\ell}} \Big\| > \Gamma \hat{T}_n\\
    &\iff \Big\|\frac{1 \mp \tilde{s}\phi_1}{1 \pm \tilde{s}\phi_1}\hat{\psi}_1^{\Block_{j\ell}} \mp  \frac{1 \pm \hat{\phi}_1}{1 \pm \tilde{s}\phi_1}\frac{\hat{g}(1 \mp \tilde{s}\phi_1)}{2\hat{m}_1}\hat{G}^{\Block_{j\ell}} \Big\| > \frac{1 \mp \tilde{s}\phi_1}{1 \pm \tilde{s}\phi_1}\frac{1 \pm \hat{\phi}_1}{1 \mp \hat{\phi}_1} \Gamma \hat{T}_n\\
    &\implies \Big\|\frac{1 \mp \tilde{s}\phi_1}{1 \pm \tilde{s}\phi_1}\hat{\psi}_1^{\Block_{j\ell}} \mp  \frac{1 \pm \hat{\phi}_1}{1 \pm \tilde{s}\phi_1}\frac{\hat{g}(1 \mp \tilde{s}\phi_1)}{2\hat{m}_1}\hat{G}^{\Block_{j\ell}} \Big\| > \frac{1}{16} \Gamma T_n
  \end{align*}
  ie.
  \begin{multline*}
    \|\hat{\B}_{\pm}^{\Block_{j\ell}}\| > \Gamma \hat{T}_n%
    \implies\\
    \|\B_{\pm}^{\Block_{j\ell}}\| > \frac{1}{16} \Gamma T_n - \frac{1 \mp \tilde{s}\phi_1}{1\pm \tilde{s}\phi_1}
      \|\hat{\psi}_1^{\Block_{j\ell}} - \psi_1^{\Block_{j\ell}}\|%
       - \Big\|\frac{1 \pm \hat{\phi}_1}{1 \pm \tilde{s}\phi_1}\frac{\hat{g}}{\hat{m}_1}\hat{G}^{\Block_{j\ell}} - \frac{g}{m_1}G^{\Block_{j\ell}} \Big\|
  \end{multline*}
  where we have used Lemmas~\ref{lem:4} and~\ref{lem:6}. But on the event $E_{j\ell} \cap \Xi_n \cap \Omega_n$
  \begin{equation*}
    \frac{1 \mp \tilde{s}\phi_1}{1\pm \tilde{s}\phi_1}
    \|\hat{\psi}_1^{\Block_{j\ell}} - \psi_1^{\Block_{j\ell}}\| \leq \frac{(1 \mp \tilde{s\phi_1})^2}{1 - \phi_1^2} \cdot c_0\Gamma \sqrt{\log(n)/n}%
    \leq c_0 \Gamma T_n
  \end{equation*}
  and
  \begin{align*}
    &\Big\|\frac{1 \pm \hat{\phi}_1}{1 \pm \tilde{s}\phi_1}\frac{\hat{g}}{\hat{m}_1}\hat{G}^{\Block_{j\ell}} - \frac{g}{m_1}G^{\Block_{j\ell}} \Big\|\\%
    &\leq \frac{1 \pm \hat{\phi}_1}{1 \pm \tilde{s}\phi_1} \frac{\hat{g}}{|\hat{m}_1|}\|\hat{G}^{\Block_{j\ell}} - G^{\Block_{j\ell}}\|%
      + \Big|\frac{1 \pm \hat{\phi}_1}{1 \pm \tilde{s}\phi_1}\frac{\hat{g}}{|\hat{m}_1|} - \frac{g}{m_1} \Big|\|G^{\Block_{j\ell}}\|\\
    &\leq \frac{1 \pm \hat{\phi}_1}{1 \pm \tilde{s}\phi_1} \frac{\hat{g}}{|\hat{m}_1|}\|\hat{G}^{\Block_{j\ell}} - G^{\Block_{j\ell}}\|\\
    &\qquad%
      + \Bigg(\frac{1 \pm \hat{\phi}_1}{1 \pm \tilde{s}\phi_1}\frac{|\hat{g} - g|}{|\hat{m}_1|}%
      + \frac{1 \pm \hat{\phi}_1}{1 \pm \tilde{s}\phi_1}\frac{g|\hat{m}_1 - m_1|}{|\hat{m}_1m_1|}%
      + \frac{g}{|m_1|}\Big| \frac{1 \pm \hat{\phi}_1}{1\pm \tilde{s}\phi_1} - 1 \Big|
      \Bigg)\|G^{\Block_{j\ell}}\|\\
    &\leq \frac{8 g}{|m_1|}\|\hat{G}^{\Block_{j\ell}} - G^{\Block_{j\ell}}\|%
      + \Bigg( \frac{4|\hat{g} - g|}{|m_1|}%
      + \frac{4g|\hat{m}_1 - m_1|}{m_1^2}%
      + \frac{g|\hat{\phi}_1 - \tilde{s}\phi_1|}{(1 - \phi_1^2)|m_1|}
      \Bigg)\|G^{\Block_{j\ell}}\|
  \end{align*}
  where the last line holds true on $\Omega_n$ by Lemmas~\ref{pro:4} and~\ref{lem:4}. Therefore by Lemmas~\ref{lem:10AA} and~\ref{lem:3}, there is a universal constant $C > 0$ such that
  \begin{multline*}
    \Big\|\frac{1 \pm \hat{\phi}_1}{1 \pm \tilde{s}\phi_1}\frac{\hat{g}}{\hat{m}_1}\hat{G}^{\Block_{j\ell}} - \frac{g}{m_1}G^{\Block_{j\ell}} \Big\|\\%
    \leq \frac{8 g}{|m_1|}\|\hat{G}^{\Block_{j\ell}} - G^{\Block_{j\ell}}\|%
    + \frac{C\max(1,g)}{|m_1m_2|}\max_{j=1,2,3}|\hat{m}_j - m_j|\|G^{\Block_{j\ell}}\|%
    \leq (8c_1 + Cc_2) \Gamma T_n
  \end{multline*}
  on the event $E_{j\ell}\cap \Xi_n \cap \Omega_n$ by definitions of these events. Therefore by choosing $c_0,c_1,c_2$ small enough, the Item~\ref{item:lem:5:1} claim follows. The proof of the Item~\ref{item:lem:5:2} is nearly identical. Items~\ref{item:lem:5:3} and~\ref{item:lem:5:4} are immediate from the definition of $\Xi_n$ provided $c_0 \leq 1/2$.
\end{proof}

In the next we make use of the symbol $\lesssim$ to denote inequalities that are valid up to a universal multiplicative constant. Furthermore, since $\hat{m}_1 \ne 0$ and $\hat{\phi}_1^2 \ne 1$ on the event $\Omega_n$ thanks to Lemma~\ref{lem:7}, and since all the terms we wish to control are conditional on $\Omega_n$, we will assume throughout the rest of the proof that $\hat{m}_1 \ne 0$ and $\hat{\phi}_1^2 \ne 1$ without justification.

\subsubsection{Control of $R_1$}

This has already been done in Section~\ref{sec:control-r_1}. We recall the result:
\begin{align*}
  \sup_{\theta \in \SmoothClass \cap \RegulClass}R_1(\theta)%
  \leq \frac{B L^2}{\delta^2\epsilon^2\zeta^2}\frac{\log(n)}{n\sg}%
  + \frac{BL^3}{\delta^2\epsilon^4\zeta^4}\frac{1}{n\sg}%
  +\frac{B\max(\tau,L)^6}{\delta^2\epsilon^4\zeta^4}\frac{1}{(n\sg)^2}.
\end{align*}

\subsubsection{Control of $R_2$}

\begin{align*}
  &\|\hat{\A}_{\pm}^{\Block_{j\ell}}\1_{\|\hat{\psi}_1^{\Block_{j\ell}}\|>\Gamma \sqrt{\log(n)/n} }+ \hat{\B}_{\pm}^{\Block_{j\ell}}\1_{\|\hat{\B}_{\pm}^{\Block_{j\ell}}\| > \Gamma \hat{T}_n} - \A_{\pm}^{\Block_{j\ell}} - \B_{\pm}^{\Block_{j\ell}} \|\\
  &\qquad%
    = \|\hat{\A}_{\pm}^{\Block_{j\ell}} + \hat{\B}_{\pm}^{\Block_{j\ell}} - \A_{\pm}^{\Block_{j\ell}} - \B_{\pm}^{\Block_{j\ell}}%
    - \hat{\A}_{\pm}^{\Block_{j\ell}}\1_{\|\hat{\psi}_1^{\Block_{j\ell}}\|\leq \Gamma \sqrt{\log(n)/n} }%
    - \hat{\B}_{\pm}^{\Block_{j\ell}}\1_{\|\hat{\B}_{\pm}^{\Block_{j\ell}}\|  \leq \Gamma \hat{T}_n}\|\\
  &\qquad%
    \leq \|\hat{\A}_{\pm}^{\Block_{j\ell}} + \hat{\B}_{\pm}^{\Block_{j\ell}} - \A_{\pm}^{\Block_{j\ell}} - \B_{\pm}^{\Block_{j\ell}}\|%
    + \frac{2\Gamma \sqrt{\log(n)/n}}{1 \pm \hat{\phi}_1}%
    + \Gamma \hat{T}_n\\
  &\qquad%
    \leq \|\hat{\A}_{\pm}^{\Block_{j\ell}} + \hat{\B}_{\pm}^{\Block_{j\ell}} - \A_{\pm}^{\Block_{j\ell}} - \B_{\pm}^{\Block_{j\ell}}\|%
    + 8 \Gamma T_n
\end{align*}
on the event $\Omega_n$ by Lemmas~\ref{lem:4} and \ref{lem:6}. Furthermore, letting $\hat{f}_{\pm}^{\Block_{j\ell}}$ and  $f_{\pm}^{\Block_{j\ell}}$ as defined in Section~\ref{sec:proof-thm:1}, it is easily seen that
\begin{align*}
  \hat{\A}_{\pm}^{\Block_{j\ell}} + \hat{\B}_{\pm}^{\Block_{j\ell}} - \A_{\pm}^{\Block_{j\ell}} - \B_{\pm}^{\Block_{j\ell}}%
  &= \hat{f}_{\pm}^{\Block_{j\ell}} - f_{\pm}^{\Block_{j\ell}}.
\end{align*}
Hence by Lemma~\ref{lem:pro:7}, on the event $\Xi_n \cap \Omega_n$,
\begin{align}
  \notag
  \|\hat{\A}_{\pm}^{\Block_{j\ell}} + \hat{\B}_{\pm}^{\Block_{j\ell}} - \A_{\pm}^{\Block_{j\ell}} - \B_{\pm}^{\Block_{j\ell}}\|%
  &\notag\leq c_0\Gamma \sqrt{\log(n)/n}%
    + \frac{4g}{|m_1|}c_1 \Gamma \sqrt{\log(n)/n}%
    + \frac{1}{2}|\hat{\omega}_{\pm} - \omega_{\pm}|\|G^{\Block_{j\ell}}\|\\
  &\notag\leq (c_0 + 4c_1)\Gamma T_n + \frac{1}{2}|\hat{\omega}_{\pm} - \omega_{\pm}|\|G^{\Block_{j\ell}}\|\\
  \label{eq:full-diff}
  &\leq (c_0 + 4c_1)\Gamma T_n + \frac{41.5 \max(1,g)}{|m_1m_2|}\max_{j=1,2,3}|\hat{m}_j - m_j|\|G^{\Block_{j\ell}}\|
\end{align}
where the last line follows by Proposition~\ref{pro:9}. Deduce from the definition of $E_{j\ell}$ that on the event $E_{j\ell}^c \cap \Xi_n \cap \Omega_n$ we must have
\begin{multline*}
  \|\hat{\A}_{\pm}^{\Block_{j\ell}}\1_{\|\hat{\psi}_1^{\Block_{j\ell}}\|>\Gamma \sqrt{\log(n)/n} }+ \hat{\B}_{\pm}^{\Block_{j\ell}}\1_{\|\hat{\B}_{\pm}^{\Block_{j\ell}}\| > \Gamma \hat{T}_n} - \A_{\pm}^{\Block_{j\ell}} - \B_{\pm}^{\Block_{j\ell}} \|\\
  \leq \Big(\frac{8 + c_0 + 4c_1}{c_2}+ 41.5 \Big)\frac{\max(1,g)}{|m_1m_2|}\max_{j=1,2,3}|\hat{m}_j - m_j|\|G^{\Block_{j\ell}}\|.
\end{multline*}
From this we obtain the estimate
\begin{align*}
  R_2(\theta)%
  &\lesssim%
    \frac{\max(1,g)^2}{m_1^2m_2^2} \EE_{\theta}\Big(\max_{j=1,2,3}|\hat{m}_j - m_j|^2 \Big) \sum_{j\geq J_n}\sum_{\ell}\|G^{\Block_{j\ell}}\|^2\\
  &\lesssim \frac{\max(1,g)^2}{m_2^2}\Big( \frac{C^2L^3}{n \gamma^*} + \frac{C^2\max(\tau,L)^6}{(n\gamma^*)^2} \Big)
\end{align*}
where the last line follows from Proposition~\ref{pro:expec-maxm}. Therefore we deduce from Lemma~\ref{lem:basic-relations} that
\begin{equation*}
  \sup_{\theta \in \SmoothClass \cap \RegulClass}R_2(\theta)%
  \lesssim%
  \frac{1}{\delta^2\epsilon^4\zeta^4}\Big( \frac{L^3}{n \gamma^*} + \frac{\max(\tau,L)^6}{(n\gamma^*)^2} \Big).
\end{equation*}

\subsubsection{Control of $R_3$}
\label{sec:control-r_3-1}

By equation~\eqref{eq:full-diff} and the definition of $E_{j\ell}$, it is found that on the event $E_{j\ell} \cap \Xi_n \cap \Omega_n$,
\begin{equation*}
  \|\hat{\A}_{\pm}^{\Block_{j\ell}} + \hat{\B}_{\pm}^{\Block_{j\ell}} - \A_{\pm}^{\Block_{j\ell}} - \B_{\pm}^{\Block_{j\ell}}\|%
  \leq (c_0 + 2c_1 + 41.5 c_2)\Gamma T_n.
\end{equation*}
Then we deduce from Lemma~\ref{lem:5} that
\begin{equation*}
  R_3(\theta)%
  \lesssim \Gamma^2 T_n^2 \sum_{j=J_n}^{\tilde{\jmath}_n}\sum_{\ell}\1_{ \{ \|\B_{\pm}^{\Block_{j\ell}}\| > \frac{1}{32}\Gamma T_n \} }.
\end{equation*}
Noting $\B_{\pm} = - \frac{1 \mp \tilde{s}\phi_1}{1 \pm \tilde{s}\phi_1} f_{\mp}$ and mimicking the proof in Section~\ref{sec:control-r_5}, it is found that
  \begin{equation}
    \label{eq:2:aa}
  \sum_{\ell}\1_{ \{ \|\B_{\pm}^{\Block_{j\ell}}\| > \frac{1}{32}\Gamma T_n \} }%
  \leq \min\Bigg(\frac{2^j}{N},\, \Big(\frac{1 \mp \tilde{s}\phi_1}{1 \pm \tilde{s}\phi_1}\Big)^2\frac{R^22^{-2j s_{\mp}}}{\Gamma^2T_n^2} \Bigg)
\end{equation}%
Letting $A = \sup\Set{ 0 \leq j \leq \tilde{\jmath}_n \given 2^{-j(s_{\mp}+ 1/2)}> \frac{\Gamma T_n}{R \sqrt{N}}\frac{1 \pm \tilde{s\phi_1}}{1 \mp \tilde{s}\phi_1} }$ it is found that
\begin{align*}
  R_3(\theta)%
  &\lesssim \Gamma^2T_n^2 \sum_{j=0}^A \frac{2^j}{N}%
    + \Gamma^2T_n^2 \sum_{j> A}\Big(\frac{1 \mp \tilde{s}\phi_1}{1 \pm \tilde{s}\phi_1}\Big)^2\frac{R^22^{-2j s_{\mp}}}{\Gamma^2T_n^2}\\
  &\lesssim \frac{\Gamma^2T_n^2}{N}2^A%
    + \Big(\frac{1 \mp \tilde{s}\phi_1}{1 \pm \tilde{s}\phi_1}\Big)^2 R^2 \frac{2^{-2A s_{\mp}}}{2^{2s_{\mp}} - 1}\\
  &\lesssim%
    \frac{\Gamma^2T_n^2}{N}\Bigg( \Big( \frac{1 \mp \tilde{s}\phi_1}{1 \pm \tilde{s}\phi_1} \Big)^2 \frac{R^2N}{\Gamma^2T_n^2} \Bigg)^{1/(2s_{\mp} + 1)}\\%
  &\quad%
    + \Big(\frac{1 \mp \tilde{s}\phi_1}{1 \pm \tilde{s}\phi_1}\Big)^2 R^2 \frac{1}{2^{2s_{\mp}} - 1}\Bigg(\Big( \frac{1 \pm \tilde{s}\phi_1}{1 \mp \tilde{s}\phi_1} \Big)^2\frac{\Gamma^2T_n^2}{R^2N} \Bigg)^{2s_{\mp}/(2s_{\mp}+1)}\\
  &\lesssim \frac{R^2}{\min(1, s_{\mp})} \Big( \frac{1 \mp \tilde{s}\phi_1}{1\pm \tilde{s}\phi_1} \Big)^{2/(2s_{\mp} + 1)} \Big( \frac{\Gamma^2T_n^2}{R^2N} \Big)^{2s_{\mp}/(2s_{\mp} + 1)}.
\end{align*}
It follows using the definition of $T_n$ and $\SmoothClass$ together with Lemma~\ref{lem:basic-relations} (recall that $\zeta \leq 1$ by assumption) that 
\begin{equation*}
  \sup_{\theta \in \SmoothClass \cap \RegulClass}R_3(\theta)%
  \lesssim%
  \frac{R^2}{\min(1, s_{\mp})} \frac{1}{\delta^2} \Big( \frac{\Gamma^2 }{R^2 n \epsilon^2\zeta^2} \Big)^{2s_{\mp}/(2s_{\mp} + 1)}.
\end{equation*}

\subsubsection{Control of $R_{4}$}

When $\|\psi_1^{\Block_{j\ell}}\| > \frac{g(1 \pm \tilde{s}\phi_1)}{|m_1|}\|G^{\Block_{j\ell}}\|$
\begin{align*}
  \|\B_{\pm}^{\Block_{j\ell}}\|%
  &= \Big\|\frac{1 \mp \tilde{s}\phi_1}{1 \pm \tilde{s}\phi_1}\psi_1^{\Block_{j\ell}} \mp \frac{g(1 \mp \tilde{s}\phi_1)}{2m_1}G^{\Block_{j\ell}} \Big\|\\
  &\geq \frac{1 \mp \tilde{s}\phi_1}{1 \pm \tilde{s}\phi_1}\|\psi_1^{\Block_{j\ell}}\| - \frac{g(1 \mp \tilde{s}\phi_1)}{2|m_1|}\|G^{\Block_{j\ell}}\|\\
  &\geq \frac{1}{2}\frac{1 \mp \tilde{s}\phi_1}{1 \pm \tilde{s}\phi_1}\|\psi_1^{\Block_{j\ell}}\|.
\end{align*}
Consequently,
\begin{align*}
  \|\hat{\A}_{\pm}^{\Block_{j\ell}} - \A_{\pm}^{\Block_{j\ell}} - \B_{\pm}^{\Block_{j\ell}} \|%
  &= \Big\|\frac{2}{1 \pm \hat{\phi}_1}\hat{\psi}_1^{\Block_{j\ell}} - \frac{2}{1 \pm \tilde{s}\phi_1}\psi_1^{\Block_{j\ell}} + \Big(\frac{1 \mp \tilde{s}\phi_1}{1 \pm \tilde{s}\phi_1}\psi_1^{\Block_{j\ell}} \mp \frac{g(1 \mp \tilde{s}\phi_1)}{2m_1}G^{\Block_{j\ell}} \Big) \Big\|\\
  &= \Big\|\frac{2(\hat{\psi}_1^{\Block_{j\ell}} - \psi_1^{\Block_{j\ell}})}{1 \pm \hat{\phi}_1} + \Big( \frac{1 \mp \hat{\phi}_1}{1 \pm \hat{\phi}_1}\psi_1^{\Block_{j\ell}} \mp  \frac{g(1 \mp \tilde{s}\phi_1)}{2m_1}G^{\Block_{j\ell}} \Big) \Big\|\\
  &\leq \frac{2\|\hat{\psi}_1^{\Block_{j\ell}} - \psi_1^{\Block_{j\ell}}\|}{1 \pm \hat{\phi}_1}%
    + \frac{1 \mp \hat{\phi}_1}{1 \pm \hat{\phi}_1}\|\psi_1^{\Block_{j\ell}}\|%
    + \frac{g(1 \mp \tilde{s}\phi_1)}{2|m_1|}\|G^{\Block_{j\ell}}\|
\end{align*}
Then on the event $\Xi_n\cap \Omega_n$, by Lemma~\ref{lem:4}
\begin{align*}
  \|\hat{\A}_{\pm}^{\Block_{j\ell}} - \A_{\pm}^{\Block_{j\ell}} - \B_{\pm}^{\Block_{j\ell}} \|%
  &\leq \frac{4c_0\Gamma \sqrt{\log(n)/n}}{1 \pm \tilde{s}\phi_1}%
    + \frac{1 \mp \tilde{s}\phi_1}{1 \pm \tilde{s}\phi_1}\Big(4 \|\psi_1^{\Block_{j\ell}}\| + \frac{g(1 \pm \tilde{s}\phi_1)}{2|m_1|}\|G^{\Block_{j\ell}}\| \Big)\\
  &\leq \frac{4c_0\Gamma \sqrt{\log(n)/n}}{1 \pm \tilde{s}\phi_1}%
    + 5\frac{1 \mp \tilde{s}\phi_1}{1 \pm \tilde{s}\phi_1}\|\psi_1^{\Block_{j\ell}}\|\\
  &\leq \frac{4c_0\Gamma \sqrt{\log(n)/n}}{1 \pm \tilde{s}\phi_1}%
    + 10 \|\B_{\pm}^{\Block_{j\ell}}\|.
\end{align*}
Deduce from Lemma~\ref{lem:5} that
\begin{align*}
  R_4(\theta)%
  &\lesssim%
    \frac{\Gamma^2 \log(n)/n}{(1 \pm \tilde{s}\phi_1)^2}\sum_{j=J_n}^{\tilde{\jmath}_n}\sum_{\ell}\1_{ \{ \|\psi_1^{\Block_{j\ell}}\| > \frac{1}{2}\Gamma \sqrt{\log(n)/n} \} }%
    + \sum_{j=J_n}^{\tilde{\jmath}_n}\sum_{\ell}\|\B_{\pm}^{\Block_{j\ell}}\|^2\1_{ \{ \|\B_{\pm}^{\Block_{j\ell}}\| \leq 32 \Gamma T_n \} }.
\end{align*}
Observe that $2\psi_1 = (1 + \tilde{s}\phi_1)f_+ + (1 - \tilde{s}\phi_1)f_-$. Therefore, for all $j \geq J_n$
  \begin{align}
    \notag
    \sum_k|\psi_1^{\Psi_{jk}}|^2%
    &\leq \frac{(1 + \tilde{s}\phi_1)^2}{2}\sum_k|f_+^{\Psi_{jk}}|^2%
      + \frac{(1 - \tilde{s}\phi_1)^2}{2}\sum_k|f_-^{\Psi_{jk}}|^2\\
    \label{eq:rough:estimatepsi1}%
    &\leq R^2 \frac{(1 + \tilde{s}\phi_1)^22^{-2js_+}+  (1 - \tilde{s}\phi_1)^22^{-2js_-}}{2},
  \end{align}
  whenever $\theta \in \SmoothClass$ (recall equation~\eqref{eq:def:besov}). Deduce that (see also Section~\ref{sec:control-r_5})
  \begin{align*}
    \sum_{\ell}\1_{ \{ \|\psi_1^{\Block_{j\ell}}\| > \frac{1}{2}\Gamma \sqrt{\log(n)/n} \} }%
    &\leq \min\Bigg(\frac{2^j}{N},\, \frac{2n R^2\big((1 + \tilde{s}\phi_1)^22^{-2js_+} +  (1 - \tilde{s}\phi_1)^22^{-2js_-} \big)}{\Gamma^2 \log(n)} \Bigg)\\
    &\leq\frac{1}{2}\min\Big(\frac{2^j}{N},\, \frac{4n R^2(1 + \tilde{s}\phi_1)^2 2^{-2js_+}}{\Gamma^2 \log(n)} \Big)\\%
    &\quad%
      + \frac{1}{2}\min\Big(\frac{2^j}{N},\, \frac{4n R^2(1 - \tilde{s}\phi_1)^2 2^{-2js_-}}{\Gamma^2 \log(n)} \Big)
  \end{align*}
  by convexity of $x\mapsto \min(2^j/N,x)$. Deduce that,
  \begin{align*}
    &\frac{\Gamma^2 \log(n)/n}{(1 \pm \tilde{s}\phi_1)^2}\sum_{j=J_n}^{\tilde{\jmath}_n}\sum_{\ell}\1_{ \{ \|\psi_1^{\Block_{j\ell}}\| > \frac{1}{2}\Gamma \sqrt{\log(n)/n} \} }\\
    &\qquad\lesssim \frac{\Gamma^2}{n(1 \pm \tilde{s}\phi_1)^2} \Big( \frac{n R^2(1 + \tilde{s}\phi_1)^2}{\Gamma^2} \Big)^{1/(2s_+ + 1)}\\%
    &\qquad\quad
      + \frac{1}{2^{2s_+} - 1}\frac{R^2(1 + \tilde{s}\phi_1)^2}{(1 \pm \tilde{s}\phi_1)^2}\Big(\frac{\Gamma^2}{nR^2(1 + \tilde{s}\phi_1)^2} \Big)^{2s_+/(2s_+ + 1)}\\
    &\qquad\quad%
      + \frac{\Gamma^2}{n(1 \pm \tilde{s}\phi_1)^2} \Big( \frac{n R^2(1 - \tilde{s}\phi_1)^2}{\Gamma^2} \Big)^{1/(2s_- + 1)}\\%
    &\qquad\quad%
      + \frac{1}{2^{2s_-} - 1}\frac{R^2(1 - \tilde{s}\phi_1)^2}{(1 \pm \tilde{s}\phi_1)^2}\Big(\frac{\Gamma^2}{nR^2(1 - \tilde{s}\phi_1)^2} \Big)^{2s_-/(2s_- + 1)}.
  \end{align*}
  That is,
  \begin{align*}
    &\frac{\Gamma^2 \log(n)/n}{(1 \pm \tilde{s}\phi_1)^2}\sum_{j=J_n}^{\tilde{\jmath}_n}\sum_{\ell}\1_{ \{ \|\psi_1^{\Block_{j\ell}}\| > \frac{1}{2}\Gamma \sqrt{\log(n)/n} \} }\\
    &\qquad\lesssim%
      \frac{R^2}{\min(1,s_+)}\Big(\frac{1 + \tilde{s}\phi_1}{1 \pm \tilde{s}\phi_1} \Big)^2%
      \Big( \frac{\Gamma^2}{nR^2(1+\tilde{s}\phi_1)^2} \Big)^{2s_+/(2s_+ + 1)}\\%
    &\qquad\quad%
      + \frac{R^2}{\min(1,s_-)}\Big(\frac{1 - \tilde{s}\phi_1}{1 \pm \tilde{s}\phi_1} \Big)^2%
      \Big( \frac{\Gamma^2}{nR^2(1-\tilde{s}\phi_1)^2} \Big)^{2s_-/(2s_- + 1)}.
  \end{align*}
  Regarding the remaining term, recall that $\beta_{\pm} = - \frac{1 \mp \tilde{s}\phi_1}{1 \pm \tilde{s}\phi_1}f_{\mp}$ and observe that
  \begin{align}
    \notag
    \sum_{j=J_n}^{\tilde{\jmath}_n}\sum_{\ell}\|\B_{\pm}^{\Block_{j\ell}}\|^2\1_{ \{ \|\B_{\pm}^{\Block_{j\ell}}\|\leq 32\Gamma T_n \} }%
    &\lesssim%
      \sum_{j=J_n}^{\tilde{\jmath}_n}\min\Bigg(\sum_{\ell}\|\B_{\pm}^{\Block_{j\ell}}\|^2,\, \frac{2^j\Gamma^2T_n^2}{N}\Bigg)\\
    \notag
    &\lesssim \sum_{j=J_n}^{\tilde{\jmath}_n}\min\Bigg(\sum_{\ell}\|\B_{\pm}^{\Block_{j\ell}}\|^2,\, \frac{2^j\Gamma^2T_n^2}{N}\Bigg)\\
    \notag
    &\lesssim%
      \sum_{j=J_n}^{\tilde{\jmath}_n}\min\Bigg(R^2\Big(\frac{1 \mp \tilde{s}\phi_1}{1 \pm \tilde{s}\phi_1} \Big)^2 2^{-2js_{\mp}},\, \frac{2^j\Gamma^2T_n^2}{N}\Bigg)\\
    \label{eq:sumsmallbetas}
    &\lesssim  \frac{R^2}{\min(1, s_{\mp})} \Big( \frac{1 \mp \tilde{s}\phi_1}{1\pm \tilde{s}\phi_1} \Big)^{2/(2s_{\mp} + 1)} \Big( \frac{\Gamma^2T_n^2}{R^2N} \Big)^{2s_{\mp}/(2s_{\mp} + 1)}
  \end{align}
  where the last line follows from the estimate in \eqref{eq:2:aa} and subsequent iterates. In the end,
  \begin{align*}
    R_4(\theta)%
    &\lesssim \frac{R^2}{\min(1,s_+)}\Big(\frac{1 + \tilde{s}\phi_1}{1 \pm \tilde{s}\phi_1} \Big)^2%
      \Big( \frac{\Gamma^2}{nR^2(1+\tilde{s}\phi_1)^2} \Big)^{2s_+/(2s_+ + 1)}\\%
    &\quad%
      + \frac{R^2}{\min(1,s_-)}\Big(\frac{1 - \tilde{s}\phi_1}{1 \pm \tilde{s}\phi_1} \Big)^2%
      \Big( \frac{\Gamma^2}{nR^2(1-\tilde{s}\phi_1)^2} \Big)^{2s_-/(2s_- + 1)}\\
    &\quad%
      + \frac{R^2}{\min(1, s_{\mp})} \Big( \frac{1 \mp \tilde{s}\phi_1}{1\pm \tilde{s}\phi_1} \Big)^{2/(2s_{\mp} + 1)} \Big( \frac{\Gamma^2T_n^2}{R^2N} \Big)^{2s_{\mp}/(2s_{\mp} + 1)}.
  \end{align*}
  Taking the suprema of each terms, with the help of Lemma~\ref{lem:basic-relations} it is found that
  \begin{align*}
    \sup_{\theta \in \SmoothClass \cap \RegulClass}R_4(\theta)%
    &\lesssim%
    \frac{R^2}{\min(1,s_{\pm})}\Big(\frac{\Gamma^2}{nR^2\delta^2} \Big)^{2s_{\pm}/(2s_{\pm} + 1)}\\%
    &\quad+ \frac{R^2}{\min(1,s_{\mp})}\frac{1}{\delta^2}\Big(\frac{\Gamma^2}{nR^2} \Big)^{2s_{\mp}/(2s_{\mp} + 1)}\\
    &\quad%
      +
      \frac{R^2}{\min(1, s_{\mp})} \frac{1}{\delta^2} \Big( \frac{\Gamma^2 }{R^2 n \epsilon^2\zeta^2} \Big)^{2s_{\mp}/(2s_{\mp} + 1)}.
  \end{align*}
  Namely,
  \begin{align*}
    \sup_{\theta \in \SmoothClass \cap \RegulClass}R_4(\theta)%
    &\lesssim%
    \frac{R^2}{\min(1,s_{\pm})}\Big(\frac{\Gamma^2}{nR^2\delta^2} \Big)^{2s_{\pm}/(2s_{\pm} + 1)}\\%
     &\quad +
      \frac{R^2}{\min(1, s_{\mp})} \frac{1}{\delta^2} \Big( \frac{\Gamma^2 }{R^2 n \epsilon^2\zeta^2} \Big)^{2s_{\mp}/(2s_{\mp} + 1)}.
  \end{align*}

\subsubsection{Control of $R_{5}$}

When $\|\psi_1^{\Block_{j\ell}}\| \leq \frac{g(1 \pm \tilde{s}\phi_1)}{|m_1|}\|G^{\Block_{j\ell}}\|$,
\begin{align*}
  \|\hat{\A}_{\pm}^{\Block_{j\ell}} - \A_{\pm}^{\Block_{j\ell}} - \B_{\pm}^{\Block_{j\ell}}\|
  &\leq \|\hat{\A}_{\pm}^{\Block_{j\ell}} - \A_{\pm}^{\Block_{j\ell}}\| + \|\B_{\pm}^{\Block_{j\ell}}\|\\
  &= \Big\|\frac{2}{1 \pm \hat{\phi}_1}\hat{\psi}_1^{\Block_{j\ell}} - \frac{2}{1 \pm \tilde{s}\phi_1}\psi_1^{\Block_{j\ell}} \Big\| + \|\B_{\pm}^{\Block_{j\ell}}\|\\
  &\leq \frac{2}{1 \pm \hat{\phi}_1}\|\hat{\psi}_1^{\Block_{j\ell}} - \psi_1^{\Block_{j\ell}}\| + 2\|\psi_1^{\Block_{j\ell}}\|\Big| \frac{1}{1 \pm \hat{\phi}_1} - \frac{1}{1 \pm \tilde{s}\phi_1}\Big| + \|\B_{\pm}^{\Block_{j\ell}}\|\\
  &\leq \frac{2}{1 \pm \hat{\phi}_1}\|\hat{\psi}_1^{\Block_{j\ell}} - \psi_1^{\Block_{j\ell}}\| + 2\|\psi_1^{\Block_{j\ell}}\| \frac{|\hat{\phi}_1 - \tilde{s}\phi_1|}{(1 \pm \hat{\phi}_1)(1 \pm \tilde{s}\phi_1)} + \|\B_{\pm}^{\Block_{j\ell}}\|
\end{align*}
So by Lemmas~\ref{lem:10AA} and~\ref{lem:4}, it holds on the event $E_{j\ell} \cap \Xi_n \cap \Omega_n$
\begin{align*}
  &\|\hat{\A}_{\pm}^{\Block_{j\ell}} - \A_{\pm}^{\Block_{j\ell}} - \B_{\pm}^{\Block_{j\ell}}\|\\%
  &\qquad\leq \frac{4c_0\Gamma \sqrt{\log(n)/n}}{1 \pm \tilde{s}\phi_1l
    }%
    + \frac{800 \max(1,g)}{\phi_2^2\phi_3^2\tilde{\mathcal{I}}^2 g}\frac{\max_{j=1,2,3}|\hat{m}_j - m_j|}{(1 \pm \tilde{s}\phi_1)^2}\|\psi_1^{\Block_{j\ell}}\| + \|\B_{\pm}^{\Block_{j\ell}}\|\\
  &\qquad\leq \frac{4c_0\Gamma \sqrt{\log(n)/n}}{1 \pm \tilde{s}\phi_1}%
    + \frac{800 \max(1,g)}{\phi_2^2\phi_3^2\tilde{\mathcal{I}}^2}\frac{\max_{j=1,2,3}|\hat{m}_j - m_j|}{|m_1|(1 \pm \tilde{s}\phi_1)}\|G^{\Block_{j\ell}}\| + \|\B_{\pm}^{\Block_{j\ell}}\|\\
  &\qquad\leq \frac{4c_0\Gamma \sqrt{\log(n)/n}}{1 \pm \tilde{s}\phi_1}%
    + \frac{800 \max(1,g)}{|m_1m_2|}\max_{j=1,2,3}|\hat{m}_j - m_j|\|G^{\Block_{j\ell}}\| + \|\B_{\pm}^{\Block_{j\ell}}\|.
\end{align*}
From here, it is seen that an upper bound on the supremum of $R_5$ is obtained by adding the bounds obtained on $R_2$ together with the bound on $R_4$, eventually up to a universal multiplicative constant.

\subsubsection{Control of $R_6$}

\begin{align*}
  &\|\hat{\B}_{\pm}^{\Block_{j\ell}} - \A_{\pm}^{\Block_{j\ell}} - \B_{\pm}^{\Block_{j\ell}} \|\\%
  &\qquad\leq \|\hat{\B}_{\pm}^{\Block_{j\ell}} - \B_{\pm}^{\Block_{j\ell}} \|%
    + \|\A_{\pm}^{\Block_{j\ell}}\|\\
  &\qquad= \Big\|\frac{1 \mp \hat{\phi}_1}{1\pm \hat{\phi}_1}\hat{\psi}_1^{\Block_{j\ell}} \mp \frac{\hat{g}(1 \mp \hat{\phi}_1)}{2\hat{m}_1}\hat{G}^{\Block_{j\ell}} - \Big(\frac{1 \mp \tilde{s}\phi_1}{1 \pm \tilde{s}\phi_1}\psi_1^{\Block_{j\ell}} \mp \frac{g(1 \mp \tilde{s}\phi_1)}{2m_1}G^{\Block_{j\ell}} \Big) \Big\|\\%
  &\qquad\quad%
    + \frac{2}{1 \pm \tilde{s}\phi_1}\|\psi_1^{\Block_{j\ell}}\|\\
  &\qquad\leq \frac{3}{1\pm \tilde{s}\phi_1}\|\psi_1^{\Block_{j\ell}}\|%
    + \frac{1}{1\pm \hat{\phi}_1}\|\hat{\psi}_1^{\Block_{j\ell}}\|%
    + \Big\|\frac{\hat{g}(1 \mp \hat{\phi}_1)}{2\hat{m}_1}\hat{G}^{\Block_{j\ell}} - \frac{g(1 \mp \tilde{s}\phi_1)}{2m_1}G^{\Block_{j\ell}} \Big\|
\end{align*}
but by Proposition~\ref{pro:9} on the event $\Omega_n$ we have
\begin{align*}
  \Big\|\frac{\hat{g}(1 \mp \hat{\phi}_1)}{2\hat{m}_1}\hat{G}^{\Block_{j\ell}} - \frac{g(1 \mp \tilde{s}\phi_1)}{2m_1}G^{\Block_{j\ell}} \Big\|%
  &= |\hat{\omega}_{\mp}|\|\hat{G}^{\Block_{j\ell}} - G^{\Block_{j\ell}}\| + |\hat{\omega}_{\mp} - \omega_{\mp}|\|G^{\Block_{j\ell}}\|\\
  &\lesssim \frac{g}{|m_1|}\|\hat{G}^{\Block_{j\ell}} - G^{\Block_{j\ell}}\|\\%
  &\quad
    + \frac{\max(1,g)}{|m_1m_2|}\max_{j=1,2,3}|\hat{m}_j - m_j|\|G^{\Block_{j\ell}}\|.
\end{align*}
Therefore on the event $E_{j\ell}\cap \Xi_n\cap \Omega_n$
\begin{align*}
  \|\hat{\B}_{\pm}^{\Block_{j\ell}} - \A_{\pm}^{\Block_{j\ell}} - \B_{\pm}^{\Block_{j\ell}} \|%
  &\lesssim \frac{\|\psi_1^{\Block_{j\ell}}\| + \|\hat{\psi}_1^{\Block_{j\ell}}\|}{1 \pm \tilde{s}\phi_1}%
    + \frac{g}{|m_1|} c_1\Gamma \sqrt{\log(n)/n} + c_2\Gamma T_n\\
  &\leq  \frac{\|\psi_1^{\Block_{j\ell}}\| + \|\hat{\psi}_1^{\Block_{j\ell}}\|}{1 \pm \tilde{s}\phi_1}%
    + (c_1 + c_2)\Gamma T_n.
\end{align*}
Deduce by Lemma~\ref{lem:5} that
\begin{equation*}
  R_6(\theta)%
  \lesssim \Gamma^2T_n^2 \sum_{j=J_n}^{\tilde{\jmath}_n}\sum_{\ell}\1_{\|\B_{\pm}^{\Block_{j\ell}}\| > \frac{1}{32}\Gamma T_n}.
\end{equation*}
Therefore, $R_6(\theta)$ admits the same upper bound as $R_3(\theta)$, eventually up to a universal multiplicative factor.

\subsubsection{Control of $R_7$}

\begin{align*}
  \|\A_{\pm}^{\Block_{j\ell}} + \B_{\pm}^{\Block_{j\ell}}\|%
  &\leq \|\A_{\pm}^{\Block_{j\ell}}\| + \|\B_{\pm}^{\Block_{j\ell}}\|
  = \frac{2}{1 \pm \tilde{s}\phi_1}\|\psi_1^{\Block_{j\ell}}\|%
    + \|\B_{\pm}^{\Block_{j\ell}}\|
\end{align*}
Therefore, we obtain from Lemma~\ref{lem:5} that
\begin{align*}
  R_7(\theta)%
  &\leq \frac{2}{(1\pm\tilde{s}\phi_1)^2}\sum_{j=J_n}^{\tilde{\jmath}_n}\sum_{\ell}\|\psi_1^{\Block_{j\ell}}\|^2\1_{\|\psi_1^{\Block_{j\ell}}\| \leq \frac{3}{2}\Gamma\sqrt{\log(n)/n} }\\%
  &\quad
    + 2\sum_{j=J_n}^{\tilde{\jmath}_n}\sum_{\ell}\|\B_{\pm}^{\Block_{j\ell}}\|^2\1_{\|\B_{\pm}^{\Block_{j\ell}}\| \leq 32 \Gamma T_n }
\end{align*}
From equation~\eqref{eq:rough:estimatepsi1},
\begin{align*}
  &\frac{2}{(1\pm\tilde{s}\phi_1)^2}\sum_{j=J_n}^{\tilde{\jmath}_n}\sum_{\ell}\|\psi_1^{\Block_{j\ell}}\|^2\1_{\|\psi_1^{\Block_{j\ell}}\| \leq \frac{3}{2}\Gamma\sqrt{\log(n)/n} }\\
  &\qquad\leq%
    \frac{2}{(1\pm\tilde{s}\phi_1)^2}\sum_{j=J_n}^{\tilde{\jmath}_n}\min\Bigg(
    \frac{9\Gamma^2 \log(n)}{4n} \frac{2^j}{N},\, \sum_{\ell}\|\psi_1^{\Block_{j\ell}}\|^2\Bigg)\\
  &\qquad%
    \lesssim \frac{1}{(1\pm \tilde{s}\phi_1)^2}\frac{\Gamma^2 \log(n)}{n}\sum_{j=J_n}^{\tilde{\jmath}_n}\min\Bigg(\frac{2^j}{N},\, nR^2\frac{(1+\tilde{s}\phi_1)^22^{-2js_+} +(1 - \tilde{s}\phi_1)^22^{-2js_-}}{\Gamma^2 \log(n)} \Bigg)
\end{align*}%
Then deduce from the series of estimates after \eqref{eq:rough:estimatepsi1} that
\begin{multline*}
  \frac{2}{(1\pm\tilde{s}\phi_1)^2}\sum_{j=J_n}^{\tilde{\jmath}_n}\sum_{\ell}\|\psi_1^{\Block_{j\ell}}\|^2\1_{\|\psi_1^{\Block_{j\ell}}\| \leq \frac{3}{2}\Gamma\sqrt{\log(n)/n} }\\
  \lesssim%
  \frac{R^2}{\min(1,s_+)}\Big(\frac{1 + \tilde{s}\phi_1}{1 \pm \tilde{s}\phi_1} \Big)^2%
  \Big( \frac{\Gamma^2}{nR^2(1+\tilde{s}\phi_1)^2} \Big)^{2s_+/(2s_+ + 1)}\\%
  + \frac{R^2}{\min(1,s_-)}\Big(\frac{1 - \tilde{s}\phi_1}{1 \pm \tilde{s}\phi_1} \Big)^2%
  \Big( \frac{\Gamma^2}{nR^2(1-\tilde{s}\phi_1)^2} \Big)^{2s_-/(2s_- + 1)}.
\end{multline*}
Next, it has been already established in \eqref{eq:sumsmallbetas} that
\begin{equation*}
  \sum_{j=J_n}^{\tilde{\jmath}_n}\sum_{\ell}\|\B_{\pm}^{\Block_{j\ell}}\|^2\1_{\|\B_{\pm}^{\Block_{j\ell}}\| \leq 32 \Gamma T_n }%
  \lesssim%
  \frac{R^2}{\min(1, s_{\mp})} \Big( \frac{1 \mp \tilde{s}\phi_1}{1\pm \tilde{s}\phi_1} \Big)^{2/(2s_{\mp} + 1)} \Big( \frac{\Gamma^2T_n^2}{R^2N} \Big)^{2s_{\mp}/(2s_{\mp} + 1)}.
\end{equation*}
Consequently, when passing to the supremum, $R_7$ will obey the same upper bound as $R_4$, eventually up to a universal multiplicative constant.

\subsubsection{Control of $R_8$}

This has already been done in Section~\ref{sec:control-r_6}. We recall the result:
\begin{equation*}
  \sup_{\theta\in \SmoothClass \cap \RegulClass}R_8(\theta)%
  \leq \frac{BR^2}{\min(1,s_{\pm})}\Big( \frac{\tau^2 \log(n)}{n} \Big)^{2s_{\pm}}.
\end{equation*}

\subsection{Proof of Theorem~\ref{thm:psitilde2}}
\label{sec:proof-theorem-psitilde2}

Recall $\tilde{V}$ is the leading eigenvector of the empirical Gram matrix $\tilde{\mathcal{G}}$ and $V_{\theta}$ the leading eigenvector of the Gram matrix $\mathcal{G}$ normalized such that $\|\tilde{V}\| = \|V_{\theta}\| = 1$. We use a Davis-Kahan argument to bound the norm $\|\tilde{V} - \sign(\Inner{\tilde{V},V_{\theta}})V_{\theta}\|$. In particular using the version of Davis-Kahan's theorem given in the Corollary~1 of \citep{MR3371006}, we know that
\begin{equation*}
  \|\tilde{V} - \sign(\Inner{\tilde{V},V_{\theta}})V_{\theta}\|%
  \leq \frac{2 \sqrt{2}\|\tilde{\mathcal{G}} - \mathcal{G}\|_{\textrm{op}}}{|\lambda|}
\end{equation*}
where $\lambda$ is the unique non-zero eigenvalue of $\mathcal{G}$, and $\|\cdot\|_{\textrm{op}}$ stands for the operator norm. It is rapidly seen that
\begin{equation*}
  \lambda = r(\phi) \sum_{\lambda\in \Lambda(M)}\Inner{\psi_2,e_{\lambda}}^2%
  = r(\phi)\Bigg(\sum_{k=0}^{2^J-1}\Inner{\psi_2,\Phi_{Jk}}^2%
  + \sum_{j=J}^M\sum_{k=0}^{2^j-1}\Inner{\psi_2,\Psi_{jk}}^2\Bigg).
\end{equation*}
We now bound $\|\tilde{\mathcal{G}} - \mathcal{G}\|_{\textrm{op}}$. By definition of the operator norm and then by a duality argument [here $U$ denotes the unit ball of $\Reals^{\Lambda(M)}$]
\begin{align*}
  \|\tilde{\mathcal{G}} - \mathcal{G}\|_{\textrm{op}}%
  &= \sup_{u\in U}\|\tilde{\mathcal{G}}u - \mathcal{G}u\|\\%
  &= \sup_{u\in U}\sup_{v\in U} v^T(\tilde{\mathcal{G}} - \mathcal{G})u\\
  &= \sup_{u\in U}\sup_{v\in U} \Big[\Big(\frac{u+v}{2}\Big)^T(\tilde{\mathcal{G}} - \mathcal{G})\frac{u+v}{2} - \Big(\frac{u-v}{2}\Big)^T(\tilde{\mathcal{G}} - \mathcal{G})\frac{u-v}{2} \Big]\\
  &\leq \sup_{u\in U}\sup_{v\in U} \Big[u^T(\tilde{\mathcal{G}} - \mathcal{G})u - v^T(\tilde{\mathcal{G}} - \mathcal{G})v \Big]\\
  &\leq 2\sup_{u\in U} u^T(\tilde{\mathcal{G}} - \mathcal{G})u.
\end{align*}

Then, let $\mathcal{N}$ be a $(1/8)$-net over $U$ in the euclidean norm, and let $\pi : U \to \mathcal{N}$ denote the map that projects elements of $U$ onto their closest element in $\mathcal{N}$. Then,
\begin{align*}
  \sup_{u\in U} u^T(\tilde{\mathcal{G}} - \mathcal{G})u%
  &= \sup_{u\in U}\Big[\pi(u)^T(\tilde{\mathcal{G}} - \mathcal{G})\pi(u)%
    + 2\pi(u)^T(\tilde{\mathcal{G}} - \mathcal{G})(u - \pi(u))\\%
  &\qquad\qquad%
    + (u - \pi(u))^T(\tilde{\mathcal{G}} - \mathcal{G})(u-\pi(u))
    \Big]\\
  &\leq \max_{u \in \mathcal{N}}u^T(\tilde{\mathcal{G}} - \mathcal{G})u%
    + \frac{3}{8}\|\tilde{\mathcal{G}} - \mathcal{G}\|_{\textrm{op}}
\end{align*}
and thus
\begin{equation*}
  \|\tilde{\mathcal{G}} - \mathcal{G}\|_{\textrm{op}}%
  \leq 8\max_{u \in \mathcal{N}}u^T(\tilde{\mathcal{G}} - \mathcal{G})u.
\end{equation*}
Next, we decompose $\tilde{\mathcal{G}} - \mathcal{G} = \Delta^{(1)} + \Delta^{(2)} + \Delta^{(3)} + \Delta^{(4)}$ with 
\begin{align*}
  \Delta^{(1)}_{\lambda\lambda'}
  &\coloneqq \frac{1}{2}\Big(\tilde{\PP}_n^{(1)}(e_{\lambda}\otimes e_{\lambda'} + e_{\lambda'}\otimes e_{\lambda}) - \EE_{\theta}(e_{\lambda}\otimes e_{\lambda'} + e_{\lambda'}\otimes e_{\lambda})  \Big)\\
  \Delta^{(2)}_{\lambda\lambda'}
  &\coloneqq - \EE_{\theta}(e_{\lambda'})\Big( \tilde{\PP}_n^{(1)}(e_{\lambda}) - \EE_{\theta}(e_{\lambda}) \Big)\\
  \Delta^{(3)}_{\lambda\lambda'}%
  &\coloneqq - \EE_{\theta}(e_{\lambda})\Big(\tilde{\PP}_n^{(1)}(e_{\lambda'}) - \EE_{\theta}(e_{\lambda'}) \Big)\\
  \Delta^{(4)}_{\lambda\lambda'}%
  &\coloneqq - \Big(\tilde{\PP}_n^{(1)}(e_{\lambda}) - \EE_{\theta}(e_{\lambda}) \Big)\Big(\tilde{\PP}_n^{(1)}(e_{\lambda'}) - \EE_{\theta}(e_{\lambda'}) \Big)
\end{align*}
Using Lemma~\ref{lem:paulin} applied to the function $h(y_1,y_2) = \frac{1}{2}\sum_{\lambda,\lambda'\in \Lambda(M)}u_{\lambda}u_{\lambda'}\big(e_{\lambda}(y_1)e_{\lambda'}(y_2) + e_{\lambda'}(y_1)e_{\lambda}(y_2) \big)$ we find that
\begin{align*}
  \PP_{\theta}\Big( \max_{u\in \mathcal{N}}|u^T\Delta^{(1)}u| \geq x \Big)%
  &\leq |\mathcal{N}| \max_{u\in |\mathcal{N}|}\PP_{\theta}\Big(|u^T\Delta^{(1)}u| \geq x \Big)\\
  &\leq 24^{2^M} \exp\Bigg(- \frac{Cn\sg x^2}{L^2 + 2^M x} \Bigg)
\end{align*}
because $\mathcal{N}$ can always be chosen to have cardinality no more than $24^{2^M}$ \citep[e.g.][Theorem 4.3.34]{GN16}, because $\EE_{\theta}(h^2) \leq L^2\norm{h}_{L^2}^2 = L^2$ for all $\theta \in \RegulClass$ by Lemma~\ref{lem:2}, and because
\begin{align*}
  \|h\|_{\infty}%
  &\leq \sup_{y_1,y_2}\Big|\sum_{\lambda\in \Lambda(M)}u_{\lambda}e_{\lambda}(y_1) \sum_{\lambda'\in \Lambda(M)}u_{\lambda'}e_{\lambda'}(y_2) \Big|\\
  &\leq  \Big( \sup_y\sum_{\lambda \in \Lambda(M)}\big|e_{\lambda}(y)\big|\Big)^2\\
  &\leq c 2^M
\end{align*}
for a constant $c>0$ depending only on the wavelet basis by a standard localization properties of wavelets  \citep[Theorem~4.2.10 or Definition~4.2.14]{GN16}. Next, note that
\begin{align*}
  u^T\Delta^{(2)}u = u^T\Delta^{(3)}u%
  = -\EE_{\theta}\Bigg(\sum_{\lambda\in \Lambda(M)}u_{\lambda} e_{\lambda} \Bigg)\Bigg(\sum_{\lambda \in \Lambda(M)}u_{\lambda}\Big(\tilde{\PP}_n^{(1)}(e_{\lambda}) - \EE_{\theta}(e_{\lambda}) \Big) \Bigg)
\end{align*}
and,
\begin{equation*}
  u^T\Delta^{(4)}u = - \Bigg(\sum_{\lambda\in \Lambda(M)}u_{\lambda}\Big(\tilde{\PP}_n^{(1)}(e_{\lambda}) - \EE_{\theta}(e_{\lambda}) \Big) \Bigg)^2.
\end{equation*}
Again using Lemma~\ref{lem:paulin}, this time applied to the function $h(y) = \sum_{\lambda\in \Lambda(M)}u_{\lambda} e_{\lambda}(y)$ which satisfies $\EE_{\theta}(h^2) \leq L$ for all $\theta \in \RegulClass$ and $\|h\|_{\infty} \leq c2^{M/2}$ for a universal constant $c>0$, we deduce that
\begin{equation*}
  \PP_{\theta}\Bigg(\max_{u\in \mathcal{N}}\Big|\sum_{\lambda\in \Lambda(M)}u_{\lambda}\Big(\tilde{\PP}_n^{(1)}(e_{\lambda}) - \EE_{\theta}(e_{\lambda}) \Big)\Big| \geq x \Bigg)%
  \leq 24^{2^{M}}\exp\Bigg(- \frac{Cn \sg x^2}{L + 2^{M/2}x} \Bigg).
\end{equation*}
Since $\abs{\EE_\theta h } \leq [\EE_\theta h^2]^{1/2}\leq \sqrt{L}$, 
using that $L,2^{M/2}\geq 1$,  we deduce that 
\begin{equation*}
  \PP_{\theta}\Big( \tfrac{1}{8}\norm{\tilde{\mathcal{G}} - \mathcal{G}}_{\textrm{op}} \geq (2\sqrt{L}+1)x + x^2 \Big)%
  \leq 2 \cdot 24^{2^M}\exp\Bigg(- \frac{Cn \sg x^2}{L^2 + 2^M x} \Bigg)
\end{equation*}
for a constant $C > 0$. This entails that
\begin{equation*}
  \PP_{\theta}\Bigg( \|\tilde{V} - \sign(\Inner{\tilde{V},V_{\theta}})V_{\theta}\|%
  \geq \frac{16\sqrt{2}\big( (2\sqrt{L}+1)x + x^2 \big)}{|r(\phi)| \sum_{\lambda\in\Lambda(M)}\Inner{\psi_2,e_{\lambda}}^2} \Bigg)%
  \leq 2 \cdot 24^{2^M}\exp\Bigg(- \frac{Cn\sg x^2}{L^2 + 2^M x} \Bigg)
\end{equation*}

Let us remark that the wavelets coefficients of $\psi_2$ are those of $(f_0 - f_1)/\phi_3$. Hence, whenever $\theta \in \SmoothClass$, from the definition of $\SmoothClass$ and of the Besov norm in equation~\eqref{eq:def:besov} it must be that 
\begin{equation}\label{eqn:BesovBound}
	\sup_{j\geq J}2^{2js_{*}}\sum_{k=0}^{2^j-1} \abs{\Inner{\psi_2,\Psi_{jk}}}^2 \leq \frac{4R^2}{\phi_3^2},
\end{equation}
Consequently since $\norm{\psi_2}_{L^2} = 1$:
\begin{align*}
  1
  &= \sum_{k=0}^{2^J-1}\Inner{\psi_2,\Phi_{Jk}}^2 + \sum_{j\geq J}\sum_{k=0}^{2^j-1}\Inner{\psi_2,\Psi_{jk}}^2\\
  &\leq \sum_{k=0}^{2^J-1}\Inner{\psi_2,\Phi_{Jk}}^2 +
    \sum_{j=J}^M\sum_{k=0}^{2^j-1}\Inner{\psi_2,\Psi_{jk}}^2%
    + \frac{4R^2}{\phi_3^2}\sum_{j>M}2^{-2j s_{*}}\\
  &= \sum_{\lambda\in \Lambda(M)}\Inner{\psi_2,e_{\lambda}}^2 + \frac{4R^2}{\phi_3^2}\frac{2^{-2M s_{*}}}{2^{2s_{*}}-1}.
\end{align*}
and hence $\sum_{\lambda\in \Lambda(M)}\Inner{\psi_2,e_{\lambda}}^2 \geq 3/4$ under the assumptions of the theorem. Observe that $\abs{r(\phi)} \leq \phi_3^2/4 \leq L/2$ by Lemmas~\ref{lem:ub-phi3} and~\ref{lem:bounds-parameters}. Then taking $x = \kappa \abs{r(\phi)}/\sqrt{L}$ for a small enough constant $\kappa$, we find that for some $C>0$
\begin{equation*}
  \PP_{\theta}\Bigg( \|\tilde{V} - \sign(\Inner{\tilde{V},V_{\theta}})V_{\theta}\| \geq \frac{1}{5}\Bigg)%
  \leq 2 \cdot 24^{2^M}\exp\Bigg(- \frac{Cn\sg r(\phi)^2}{L^3 + 2^M\sqrt{L}\abs{r(\phi)}} \Bigg).
\end{equation*}
Next, let define $t \coloneqq \sum_{\lambda \in \Lambda(M)}\tilde{V}_{\lambda}e_{\lambda}$ and $f(x) \coloneqq \max(-\tau,\min(\tau,x))$. Observe that
\begin{equation*}
  \|\psi_2\|_{\infty}%
  =\frac{\|f_0 - f_1\|_{\infty}}{\phi_3}%
  \leq \frac{L}{\zeta}
\end{equation*}
since $0 \leq f_0,f_1\leq L$ and $\phi_3 \geq \zeta$ when $\theta \in \SmoothClass\cap \RegulClass$. Then by assumption $\abs{\psi_2(x)} \leq \tau$ for all $x$, and thus $\psi_2(x) = f(\psi_2(x))$. Also $f$ is $1$-Lipschitz, and thus
\begin{equation*}
  \norm{f\circ t - \tilde{s}\psi_2}_{L^2} = \norm{f\circ t - f\circ (\tilde{s}\psi_2)}_{L^2} \leq \norm{t - \tilde{s} \psi_2}_{L^2}%
  = \norm{\tilde{V} - \sign\brackets[\big]{\Inner{\tilde{V},V_{\theta}}}V_{\theta} }.
\end{equation*}
Since $\tilde{\psi}_2 = f\circ t/\norm{f\circ t}_{L^2}$, we use that for any norm $\norm{a/\norm{a}-b/\norm{b}}\leq 2\norm{a-b}/(1-\norm{a-b})$ if $\norm{b}=1$, $\norm{a-b}<1$ to deduce that
\begin{align*}
  \norm{\tilde{\psi}_2 - \tilde{s}\psi_2}_{L^2}%
  \leq \frac{2\norm{\tilde{V} - \sign\brackets[\big]{\Inner{\tilde{V},V_{\theta}}}V_{\theta}} }{1 - \norm{\tilde{V} - \sign\brackets[\big]{\Inner{\tilde{V},V_{\theta}}} V_{\theta} }}.
\end{align*}
The conclusion follows since $\norm{\tilde{\psi}_2 - \tilde{s}\psi_2}_{L^2}^2 = 2 - 2\abs{\Inner{\tilde{\psi}_2,\psi_2}}$, and hence $\abs{\Inner{\tilde{\psi}_2,\psi_2}} \geq 1 - \frac{\norm{\tilde{\psi}_2 - \tilde{s}\psi_2}_{L^2}^2}{2}$.

\subsection{Proof of Corollary~\ref{cor:smooth-ub}}
\label{sec:proof-cor:smooth-ub}

Suppose $2^M = O(1)$, then $\frac{n\gamma^2\delta^2\epsilon^2\zeta^4}{L^3 + 2^M\sqrt{L}\delta\epsilon\zeta^2} \gtrsim n^{1-2a-2b-2c}$ so that the first exponential in the bound of Theorem~\ref{thm:1} is smaller than $\exp(-Kn^{1-2a-2b-2c})$ for some $K >0$, which is negligible. If $2^M$ is not $O(1)$, then in the considered regime $\frac{n\gamma^2\delta^2\epsilon^2\zeta^4}{L^3 + 2^M\sqrt{L}\delta\epsilon\zeta^2} \gtrsim n 2^{-M}\delta \epsilon \zeta^2 \gg 2^M$ so that the first exponential in the bound of Theorem~\ref{thm:1} is smaller than $\exp(-Kn^{(1-a-b-2c)/2})$ for some $K > 0$, which is negligible.

Also $n\delta^2\epsilon^4\zeta^6 \geq n^{1-2a-4b-6c}$ while $L^3 + \max(\tau,\sqrt{L})^3\delta\epsilon^2\zeta^3 \leq L^3 + \max(\tau,\sqrt{L})^3$ since $\delta\epsilon^2\zeta^3 \leq 1$. Hence, the second exponential term in the bound of Theorem~\ref{thm:1} is smaller than $\exp(-K n^{1-2a-4b-6c})$ for some $K > 0$ and is negligible.

We claim that the term $\frac{1}{\delta^2\epsilon^2\zeta^2}\frac{\log(n)}{n}$ never dominates. Indeed, for this term to dominate, it is necessary that $\epsilon^2\zeta^2 \gg \frac{1}{\log(n)}$ to dominate the term $\frac{1}{\delta^2\epsilon^4\zeta^4n}$ and that $\delta^2\epsilon^2\zeta^2n = O(\log(n)^{2s_i+1})$ to dominate the term $(\delta^2\epsilon^2\zeta^2n)^{-2s_i/(2s_i+1)}$, \textit{ie}. $\epsilon^2\zeta^2 = O(\frac{\log(n)^{2s_i+1}}{n\delta^2} ) = O(\frac{\log(n)^{2s_i+1}}{n^{1-2a}})$. Since $1-2a > 0$, the two requirements cannot be fulfilled simultaneously for $n$ large.

Finally, the term $\frac{1}{\delta^2\epsilon^4\zeta^6n^2}$ is clearly dominated by the term $\frac{1}{\delta^2\epsilon^4\zeta^6n}$ and the remaining term is clearly dominated by the term $(\delta^2\epsilon^2\zeta^2n)^{-2s_i/(2s_i+1)}$.

\subsection{Proof of Corollary~\ref{cor:rough-ub}}
\label{sec:proof-cor:rough-ub}

As for the proof of Corollary~\ref{cor:smooth-ub} the two first exponential terms in the bound of Theorem~\ref{thm:rough} cannot dominate in the considered regime. It has been shown in Corollary~\ref{cor:smooth-ub} that the term $\frac{\log(n)}{\delta^2\epsilon^2\zeta^2n}$ cannot simultaneously dominate the terms $\frac{1}{\delta^2\epsilon^4\zeta^4n}$ and $\delta^{-2}(n\epsilon^2\zeta^2)^{-2s_1/(2s_1+1)}$ [observe that $\delta^{-2}(n\epsilon^2\zeta^2)^{-2s_1/(2s_1+1)} \geq (n\delta^2\epsilon^2\zeta^2)^{-2s_1/(2s_1+1)}$]. Also using the arguments in the proof of Corollary~\ref{cor:smooth-ub} it is trivial that the terms $\frac{1}{\delta^2\epsilon^4\zeta^4n^2}$ and $(\log(n)/n)^{2s_0}$ cannot dominate.

To finish the proof, it is enough to show that the term $\delta^{-2}(n\epsilon^2\zeta^2)^{-2s_1/(2s_1+1)}$ is dominated by the term $(n\delta^2)^{-2s_0/(2s_0+1)}$. But in the considered regime $\delta^{-2}(n\epsilon^2\zeta^2)^{-2s_1/(2s_1+1)} = n^{-2s_1/(2s_1+1) + o(1)}$ and $(n\delta^2)^{-2s_0/(2s_0+1)} = n^{-2s_0/(2s_0+1) + o(1)}$. The conclusion follows since $s_1 > s_0$ by assumption.

\bibliography{bibliography}

\end{document}